\title{Brauer and Etale Homotopy Obstructions to Rational Points on Open Covers}
\author{David A. Corwin and Tomer M. Schlank}
\begin{document}
\maketitle
\setlength{\baselineskip}{16pt}




\section*{Abstract}

In 2010, Poonen gave the first example of failure of the local-global principle that cannot be explained by Skorobogatov's \'etale Brauer-Manin obstruction.
Motivated by this example, we show that the Brauer-Manin obstruction detects non-existence of rational points on a sufficiently fine Zariski open cover of any variety over an imaginary quadratic or totally real field. We provide some evidence for why this is expected to happen more generally over any number field, some of which relates to the section conjecture in anabelian geometry. We then prove a result about the behavior of the \'etale Brauer obstruction in fibrations of varieties using the \'etale homotopy obstruction of Harpaz and the second author. We finally use that result and other techniques to further analyze Poonen's example in light of our general results.

\tableofcontents

\part{Introduction and Setup}

\section{Introduction}
Given a variety $X$ over a global field $k$, a major problem is to decide whether $X(k)=\emptyset$. By \cite{poo09}, it suffices to consider the case that $X$ is smooth, projective, and geometrically integral. As a first approximation one can consider the set $X(\Ab_k) \supset X(k)$, where $\Ab_k$ is the adele ring of $k$. It is a classical theorem of Minkowski and Hasse that if $X$ is a quadric, then $X(\Ab_k)\neq \emptyset \Rightarrow X(k)\neq \emptyset$. When a variety $X$ satisfies this implication, we say that it satisfies the Hasse principle, or local-global principle. In the 1940s, Lind and Reichardt (\cite{lin40}, \cite{rei42}) gave examples of genus 1 curves that do not satisfy the Hasse principle. More counterexamples to the Hasse principle were given throughout the years, until in 1971 Manin \cite{man70} described a general obstruction to the Hasse principle that explained many of the counterexamples to the Hasse principle that were known at the time. The obstruction (known as the Brauer-Manin obstruction) is defined by considering a certain set $X(\Ab_k)^{\Br}$, such that $X(k) \subset X(\Ab_k)^{\Br}  \subset X(\Ab_k)$. If $X$ is a counterexample to the Hasse principle, we say that it is accounted for or explained by the Brauer-Manin obstruction if $\emptyset = X(\Ab_k)^{\Br}  \subset X(\Ab_k) \neq \emptyset$.

In 1999, Skorobogatov (\cite{sk99}) defined a refinement of the Brauer-Manin obstruction known as the \'{e}tale Brauer-Manin obstruction and used it to produce an example of a variety $X$ such that $X(\Ab_k)^{\Br}  \neq \emptyset$ but $X(k) = \emptyset$. Namely, he described a set $X(\Ab_k)^{\etbr}$ for which  $X(k) \subset X(\Ab_k)^{\etbr}  \subset X(\Ab_k)^{\Br}  \subset X(\Ab_k)$ and found a variety $X$ such that $\emptyset = X(\Ab_k)^{\etbr}  \subset X(\Ab_k)^{\Br} \neq \emptyset$.

In his paper \cite{poo08}, Poonen constructed the first example of a variety $X$ such that $\emptyset = X(k)  \subset X(\Ab_k)^{\etbr} \neq \emptyset$. However, Poonen's method of showing that  $X(k)=\emptyset$ relies on the details of his specific construction and is not explained by a new finer obstruction. While it was hoped (\cite{pal10}) that an \'etale homotopy obstruction might solve the problem, it was shown (\cite{hs13}) that this provides nothing new.

Therefore, one wonders if Poonen's counterexample can be accounted for by an additional refinement of $X(\Ab_k)^{\etbr} $. Namely, can one give a general definition of a set
$$ X(k) \subset X(\Ab_k)^{\mathrm{new}} \subset X(\Ab_k)^{\etbr} $$
such that Poonen's variety $X$ satisfies $X(\Ab_k)^{\mathrm{new}}= \emptyset$.

In this paper, we provide such a refinement and prove that it is necessary and sufficient over many number fields. The obstruction is essentially given by applying the finite abelian descent obstruction to open covers of $X$ or decompositions of $X$ as a disjoint union of locally closed subvarieties. We shall make this statement more precise in Section \ref{sec:results}. As an example of our results, we can prove the following:

\begin{cor}[of Theorem \ref{thm:fabsomek}]
Let $k$ be a totally real or quadratic imaginary field. Let $X/k$ be a variety for which $X(k) = \emptyset$. Then there is a Zariski open cover $X = \bigcup_i U_i$ such that $U_i(\Ab_{k})^{\Br}=\emptyset$ for all $i$. 
\end{cor}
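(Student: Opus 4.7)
The plan is to deduce the corollary as a formal consequence of Theorem \ref{thm:fabsomek}. As its label \texttt{fabsomek} and the paper's introduction indicate, that theorem establishes the analogous open-cover statement for the (weaker) \emph{finite abelian descent} obstruction over $k$ totally real or imaginary quadratic; the corollary then upgrades emptiness of that obstruction to emptiness of the Brauer--Manin obstruction via a classical comparison.

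First I would apply Theorem \ref{thm:fabsomek} to $X$. After reducing via \cite{poo09} to the case $X$ smooth, projective, and geometrically integral --- so that every Zariski open $U_i \subseteq X$ is automatically smooth and geometrically integral --- the theorem furnishes a Zariski open cover $X = \bigcup_i U_i$ with $U_i(\Ab_k)^{\text{f-ab}} = \emptyset$ for each $i$, where $(-)^{\text{f-ab}}$ denotes the finite abelian descent obstruction.

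Next I would invoke the chain
\[
U_i(\Ab_k)^{\Br} \;\subseteq\; U_i(\Ab_k)^{\Br_1} \;=\; U_i(\Ab_k)^{\text{f-ab}},
\]
in which the inclusion uses $\Br_1(U_i) \subseteq \Br(U_i)$ (the $\Br$-pairing imposes at least as many local--global constraints as the $\Br_1$-pairing), and the equality is Harari's identification of the algebraic Brauer--Manin set with the finite abelian descent set for smooth geometrically integral varieties over a number field. Emptiness of the right-hand side therefore forces $U_i(\Ab_k)^{\Br} = \emptyset$, as required.

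The substantive difficulty is entirely in Theorem \ref{thm:fabsomek} itself, which presumably requires the paper's main machinery --- finite abelian descent applied to open covers, together with arithmetic input specific to these $k$ that the introduction hints is related to the section conjecture. Within the present corollary the only point needing care is whether Harari's $\Br_1$--f-ab comparison applies to the (possibly non-proper) $U_i$; this is standard for smooth geometrically integral varieties, and if needed can be verified by passing to a smooth compactification and appealing to the functoriality of both obstructions under open immersion.
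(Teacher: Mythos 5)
Your overall structure is correct and matches the paper's: apply Theorem \ref{thm:fabsomek} (i.e.\ $A(\fab,f,k)$ together with Theorem \ref{thm:vsacover}) to produce an open cover by pieces that are VSA for $(\fab,f,k)$, deduce $U_i(\Ab_k^f)^{\fab}=\emptyset$ from $U_i(k)=\emptyset$, promote this to $U_i(\Ab_k)^{\fab}=\emptyset$ via Lemma \ref{lemma:combined1}(\ref{item:change_places}), and finish with a comparison showing the Brauer set is contained in the finite-abelian-descent set.

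The one genuine divergence is in the last step. You invoke a Harari-style identification $U_i(\Ab_k)^{\Br_1} = U_i(\Ab_k)^{\fab}$ and pass through $\Br_1$; the paper instead uses the chain of the \'etale homotopy obstructions in Section \ref{otherobstructions}, namely $X(\Ab_{k,S})^{\Br} = X(\Ab_{k,S})^{\Zb h} \subseteq X(\Ab_{k,S})^{\Zb h,1} = X(\Ab_{k,S})^{\fab}$, citing \cite[Theorem 9.136]{hs13}, which is stated for smooth geometrically connected (not necessarily proper) varieties. The authors explicitly flag this methodological choice in the introduction (``we use the \'etale homotopy obstruction to relate Brauer--Manin to finite abelian descent'') as a point of difference from \cite{LiuXu15}. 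Your route is also sound as far as the classical comparison goes, but your proposed patch for the non-proper case --- ``passing to a smooth compactification and appealing to the functoriality of both obstructions under open immersion'' --- does not obviously go the right way: under $U\hookrightarrow\bar U$, the finite abelian descent obstruction on $U$ is potentially \emph{stronger} than on $\bar U$ (there are $G$-torsors over $U$ that do not extend over $\bar U$), while the Brauer pairing comparison runs in the other direction, so simply pulling the compact comparison back to $U$ does not yield the inclusion you need. The paper's appeal to \cite{hs13} sidesteps this entirely since that result applies directly to open varieties. If you wish to stay with the $\Br_1$ route, you should either cite a version of the $\Br_1$--descent comparison stated for open varieties or reprove the single inclusion you need; the equality is overkill and the compactification shortcut is a gap.

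One smaller caveat: the reduction ``via \cite{poo09} to $X$ smooth, projective, geometrically integral'' is not quite the right move here, because the corollary asks for a cover of the \emph{given} $X$, not of a birational/resolved model. What is actually needed (implicitly, in both your argument and the paper's) is that the pieces produced by Theorem \ref{thm:vsacover} can be taken smooth and geometrically connected so that the \cite{hs13} comparison applies. This is easy to arrange by further refining the cover, but it is worth stating rather than invoking \cite{poo09} as a black box.
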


For a summary of all of the variants of this result, the reader may proceed to Section \ref{sec:results}.

The cases of Theorem 3.3 in which $k$ is $\Qb$ or quadratic imaginary follow immediately from \cite[Corollary 3.10]{LiuXu15}. Thus these cases, which amount to the material in Section \ref{sec:qb_quad_imag}, are not new. Nonetheless, we approach it slightly differently, both in that we use the \'etale homotopy obstruction to relate Brauer-Manin to finite abelian descent, and in that we discuss it in the general context of going beyond the Brauer-Manin obstruction.


Part \ref{part:embeddings} contains the main proofs of these results, one method unconditional and one method assuming the section conjecture in anabelian geometry.

In Part \ref{part:fibration}, we introduce homotopy sections and use them to analyze what happens to the \'etale Brauer-Manin obstruction in fibrations. 
In particular, we prove:
\begin{thm}[Theorem \ref{thm:vsa_geom_fib}]
Let $f \colon X \to S$ be a geometric fibration (c.f. Definition \ref{defn:geom_fib}), and suppose that $S(k)=S(\Ab_k^f)^{\etbr}$ and that for all $a \in S(k)$, we have $X_a(k)=X_a(\Ab_k^f)^{\etbr}$. Suppose furthermore that $X$ and $S$ satisfy the technical conditions 2-5 of Theorem \ref{thm:vsa_geom_fib}. Then
\[
X(k) = X(\Ab_k^f)^{\etbr}.
\]
\end{thm}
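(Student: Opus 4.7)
The plan is to follow the standard fibration template for local-global obstructions: given an adelic point on $X$ that survives the \'etale Brauer--Manin obstruction, push it forward to $S$, apply the hypothesis on $S$ to produce a rational point $a \in S(k)$, then lift the adelic information back to the fiber $X_a$ and invoke the hypothesis on $X_a$. The conclusion is then automatic because any $k$-point of $X_a$ is a $k$-point of $X$.

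First I would verify the pushforward step: if $(P_v) \in X(\Ab_k^f)^{\etbr}$, then $(f(P_v)) \in S(\Ab_k^f)^{\etbr}$. This is formal from functoriality of \'etale Brauer--Manin with respect to pullback of finite \'etale torsors and Brauer classes; for every finite \'etale $S$-torsor $T \to S$ and every twist $T^\sigma$, the pullback $T^\sigma \times_S X \to X$ is a torsor under the same finite group, and any Brauer class on $T^\sigma$ pulls back to a Brauer class on $T^\sigma \times_S X$. Because $(P_v)$ survives the \'etale Brauer--Manin obstruction on $X$, compatible lifts exist, so $(f(P_v))$ survives on $S$. By hypothesis, there is then $a \in S(k)$, and after modifying $(P_v)$ at finitely many places (this is where technical hypotheses like smoothness of $f$, weak approximation on the generic fiber, or a local section property — encoded in conditions 2--5 — are needed) one can arrange that $f(P_v) = a$ for all $v$, so $(P_v) \in X_a(\Ab_k^f)$.

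The crux is then to show that this modified $(P_v)$ still lies in $X_a(\Ab_k^f)^{\etbr}$, i.e.\ survives the \'etale Brauer--Manin obstruction on the fiber rather than on the total space. This is the step where the notion of a \emph{geometric fibration} is essential: geometric fibrations are designed so that the \'etale homotopy type of $X$ sits in a fibration sequence over that of $S$ with fiber the \'etale homotopy type of $X_{\bar{a}}$, and so that every finite \'etale cover of $X_a$ (or rather of $X_a$ after pullback by a finite \'etale cover of the point $\mathrm{Spec}\, k$, accounting for twisting) arises as the restriction of a finite \'etale cover of $X$, possibly after base-changing along a finite \'etale cover of $S$ containing $a$ as a rational point. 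Given that, any torsor and Brauer-class datum on $X_a$ extends, so a family of local lifts on $X_a$ corresponds, via the chosen lift of $a$ to the cover of $S$, to compatible data on $X$; since $(P_v)$ survives on $X$, it survives on $X_a$. Formulating this extension property cleanly — through the Harpaz--Schlank \'etale homotopy obstruction and the language of homotopy sections introduced in Part~\ref{part:fibration} — is the main technical content.

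The hard step, as just described, is controlling the pullback of \'etale covers and Brauer classes between $X_a$ and $X$. The subtleties are twofold: (i) a finite \'etale cover of $X_a$ need not extend over $X$ without first passing to a cover of $S$, so one must carefully twist by the torsor on $S$ that the hypothesis $S(k) = S(\Ab_k^f)^{\etbr}$ produces; (ii) the chosen rational point $a$ must lift to an appropriately twisted cover of $S$ compatibly with the local behavior of $(P_v)$, which is exactly what the homotopy-section formalism provides. Once these compatibilities are in place and the technical hypotheses 2--5 ensure that the fiber can be isolated adelically, the assumption $X_a(k) = X_a(\Ab_k^f)^{\etbr}$ produces the desired rational point of $X_a \subset X$.
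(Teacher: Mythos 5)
Your high-level strategy matches the paper's: push forward to $S$, use the hypothesis on $S$ to produce $a\in S(k)$, correct the adelic point so it lies in $X_a(\Ab_k^f)$, then invoke the hypothesis on $X_a$. You are also right that the key structural tool is the fibration sequence of profinite \'etale homotopy types attached to a geometric fibration, and that the argument runs through the Harpaz--Schlank comparison $X(\Ab_k)^{\etbr}=X(\Ab_k)^h$. But your description of the crux — showing the corrected adelic point survives the obstruction on $X_a$ — is where the sketch departs from a workable argument.

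You frame the crux as an extension problem: every finite \'etale cover of $X_a$ should arise by restricting a cover of $X$, perhaps after passing to a finite cover of $S$. This is not what the paper does, and it does not literally work: the monodromy of $\pi_1(S^s)$ on $\pi_1(X_a^s)$ is a profinite action and need not stabilize a given open subgroup after any finite cover of $S$, so a finite cover of $X_a$ will in general never extend. The paper instead argues entirely inside the homotopy-fixed-point formalism via the long exact sequence of Lemma~\ref{hlemma:les}. Concretely, an adelic point $\alpha\in X(\Ab_k)^h$ gives $\beta\in X(hk)$ with $\loc(\beta)=\kappa(\alpha)$; one must identify $f(\beta)$ with $\kappa_{S/k}(a)$ in $S(hk)$, and this identification is exactly where condition 2 (injectivity of $\loc_T$ on $\Sel(S/k)$, or one of its stronger alternatives) is used — a step your sketch does not mention. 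Exactness at $X(hk)$ in the LES then lifts $\beta$ to $\gamma\in X_a(hk)$.

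The second missing ingredient is the local injectivity of $i_a\colon X_a(hk_v)\to X(hk_v)$. Without it you only know that $\loc(\gamma)$ and $\kappa_{X_a/\Ab_k}(\alpha)$ have the same image in $\prod_v X(hk_v)$, not in $\prod_v X_a(hk_v)$, so you cannot yet conclude $\alpha\in X_a(\Ab_k)^h$. For finite $v$ this injectivity comes from condition 4 (simple-connectedness of the relevant components of $\Et(S^s)^{hG_{k_v}}$) via the LES; at complex $v$ it is trivial; at real $v$ it uses condition 5 together with P\'al's comparison $X_a(k_v)_\bullet\cong X_a(hk_v)$. Your sketch attributes conditions 2--5 only to the modification of the adelic point at infinitely many places, but in fact conditions 2 and 4 carry the weight of the identification and injectivity steps above. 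With those two lacunae filled, and with the cover-extension heuristic replaced by the LES argument, your outline matches the paper's proof.
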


In Section \ref{newsec:sec_conj_fib}, we state the homotopy version of the section conjecture and show that it behaves well in fibrations. We then use this to give an alternative proof of Corollary \ref{cor:fcovallk} using Artin's good neighborhoods.


In Part \ref{part:examples}, we discuss the example of \cite{poo08}. Using Theorem \ref{thm:vsa_geom_fib}, we explain why Poonen's example required singular fibers:

\begin{thm}[Theorem \ref{thm:singular_fiber_needed}]
Let $f\colon X \to C$ be a smooth proper family of Ch\^{a}telet surfaces over an elliptic curve $C$ with $|C(k)|<\infty$. Suppose that for all $a \in C(k)$, we have $X_a(k)= \emptyset$. Suppose furthermore that the Tate-Shafarevich group of $C$ has trivial divisible subgroup, and that for every real place $v$ of $k$, every $a \in S(k)$, and every $b \in X_a(k_v)$, the map $\pi_1(X(k_v),b) \to \pi_1(S(k_v),a)$ is surjective (vacuous if $k$ is totally imaginary). Then $X(\Ab_k)^{\etbr} = \emptyset$.
\end{thm}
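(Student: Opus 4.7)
The plan is to apply Theorem \ref{thm:vsa_geom_fib} to $f\colon X \to C$, taking $S = C$. If its hypotheses hold, we obtain $X(k) = X(\Ab_k^f)^{\etbr}$. Since every rational point of $X$ lies over a rational point of $C$ and each fiber $X_a$ with $a \in C(k)$ satisfies $X_a(k)=\emptyset$ by hypothesis, this forces $X(k) = \emptyset$, hence $X(\Ab_k^f)^{\etbr} = \emptyset$. The containment $X(\Ab_k)^{\etbr} \subseteq X(\Ab_k^f)^{\etbr}$ obtained by projecting away from the excluded places then yields $X(\Ab_k)^{\etbr} = \emptyset$, which is the conclusion.

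The work therefore reduces to verifying the hypotheses of Theorem \ref{thm:vsa_geom_fib}. First I would check the base condition $C(k) = C(\Ab_k^f)^{\etbr}$. Because $C$ is an elliptic curve with $|C(k)|<\infty$ and the divisible subgroup of its Tate--Shafarevich group trivial, this is exactly the setting in which the classical Scharaschkin/Stoll descent argument identifies $C(\Ab_k)^{\Br}$ with $C(k)$, using the non-degeneracy of the Cassels--Tate pairing on the finite quotient. Since $C(\Ab_k)^{\etbr} \subseteq C(\Ab_k)^{\Br}$, equality propagates. Second, for the fiber condition $X_a(k) = X_a(\Ab_k^f)^{\etbr}$ at $a \in C(k)$, the fibers are smooth Ch\^atelet surfaces, and the theorem of Colliot-Th\'el\`ene, Sansuc, and Swinnerton-Dyer says that the Brauer--Manin obstruction is the only obstruction to the Hasse principle for such surfaces. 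Combined with $X_a(k)=\emptyset$, this gives $X_a(\Ab_k)^{\Br} = \emptyset$ and hence $X_a(\Ab_k^f)^{\etbr} = \emptyset = X_a(k)$.

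It then remains to check the technical conditions $2$--$5$ of Theorem \ref{thm:vsa_geom_fib}. Smoothness and properness of $f$ supply the geometric fibration hypothesis. The archimedean assumption that $\pi_1(X(k_v),b) \to \pi_1(S(k_v),a)$ is surjective for every real place $v$, every $a \in C(k)$, and every $b \in X_a(k_v)$ is tailored to control the real-place behavior of the fibration on the level of \'etale homotopy types, where the analytic fibration $X(k_v) \to C(k_v)$ need not be a topological fibration and the homotopy sections at real places can otherwise obstruct the reduction to fiberwise emptiness.

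The part I expect to be the main obstacle is the precise matching of the stated hypotheses, especially the real-place $\pi_1$-surjectivity, to the technical conditions $2$--$5$ of Theorem \ref{thm:vsa_geom_fib}; this is the point at which archimedean phenomena must be ruled out with care. By contrast, the arithmetic input is routine: it is a clean combination of the Scharaschkin/Stoll descent result for elliptic curves with finite $C(k)$ and trivial divisible Sha, with the Colliot-Th\'el\`ene--Sansuc--Swinnerton-Dyer theorem for Ch\^atelet surfaces. Once the archimedean conditions are matched, Theorem \ref{thm:vsa_geom_fib} does the rest.
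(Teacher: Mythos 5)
Your strategy is exactly the paper's: invoke Theorem \ref{thm:vsa_geom_fib} with $S = C$ and $T = f$, use Stoll's result (\cite[Corollary 8.1]{sto07}) for the base VSA condition, use the Colliot-Th\'el\`ene--Sansuc--Swinnerton-Dyer theorem for the fiber VSA condition, and then conclude emptiness of $X(\Ab_k)^{\etbr}$ from $X(k) = \emptyset$. So the approach and the two arithmetic inputs match.

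Where your write-up stops short is the actual verification of conditions 2--5, which you flag as a potential obstacle but do not carry out. Two of them are routine once you look at them: condition 3 is vacuous because $T = f$, and condition 4 holds because $C$ is a smooth geometrically connected curve not isomorphic to $\Pb^1$ (Lemma \ref{lemma:section_centralizer}). Condition 5, however, asks not only that $\pi_1(X(k_v),b) \to \pi_1(S(k_v),a)$ be surjective at real $v$ (your stated hypothesis) but also that $\pi_1(X_a((k_v)_s))$ be trivial; this second half is why the theorem can drop that clause --- Ch\^atelet surfaces are geometrically rational, hence geometrically simply connected, so the triviality is automatic and must be noted. The genuinely nontrivial missing piece is condition 2: you need to realize it via option (b), i.e., that $\kappa(C(k)) = \Sel(C/k)$ and that $\kappa_{C/k_v}$ is injective for some finite $v$. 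The paper does this by identifying $C(hk)$ with $\varprojlim_n H^1(k;C[n])$, observing that under the $|C(k)|<\infty$ and $\Sha(C)_{\mathrm{div}}=0$ hypotheses the finite group $C(k)$ equals the completed Selmer group in Stoll's sense, and then combining with the $p$-adic injectivity from Proposition \ref{prop:inj_sec_conj}. Your reference to the Cassels--Tate pairing is pointing in the right arithmetic direction, but you attach it to the base VSA step (where \cite[Corollary 8.1]{sto07} already packages it) rather than to condition 2, which is the slot where the Selmer-theoretic argument actually has work to do. Also a small point of bookkeeping: the smoothness/properness of $f$ goes into the \emph{premise} that $f$ is a geometric fibration (take $\overline{X}=X$, $Y=\emptyset$ in Definition \ref{defn:geom_fib}), not into conditions 2--5 themselves.
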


In Section \ref{sec:vsa_strat_open}, we find a Zariski open cover of the example of \cite{poo08} and prove that its pieces have empty \'etale-Brauer set. In fact, we introduce a few variants of this result, depending on different hypotheses, summarized at the beginning of Section \ref{sec:vsa_strat_open}. One of these uses Theorem \ref{thm:vsa_geom_fib}.

Motivated by one of these examples, we introduce the notion of quasi-torsors (Definition \ref{defn:quasi_torsor}), which are surjective morphisms that are generically torsors but with possible ramification, in Section \ref{sec:ramified_covers}. This leads us to propose a new approach to computing obstructions for open subvarieties, known as the ramified \'etale-Brauer obstruction (Definition \ref{defn:ram_et_br}).

A version of this article is Part II of the first author's PhD thesis (\cite{corwinthesis}).



\subsection{Notation and Conventions}\label{noteconv}

Whenever we speak of a field $k$, we implicitly fix a separable closure $k_s$ throughout, which produces a geometric point $\spec(k_s) \to \spec(k)$ of $k$. We write $X/k$ to mean that $X$ is a scheme over $\spec{k}$, and we denote by $X^s$ the base change of $X$ to $k_s$, as in \cite{PoonenBook}. We let $G_k$ denote $\Gal(k_s/k)$. If $X$ is a scheme, $\Oc(X)$ denotes $\Oc_X(X)$.

All cohomology is taken in the \'etale topology unless otherwise stated, and fundamental group always denotes an \'etale fundamental group. The same is true for higher homotopy groups, once they are defined in Definition \ref{defn:prof_htpy_groups}. If $k$ is a field, we write $H^*(k,-)$ for Galois cohomology of the field $k$.

For a global field $k$, we let $\Ab_k$ denote the ring of adeles of $k$. For a place $v$ of $k$, we let $k_v$ denote the completion of $k$ at $v$. When $v$ is finite, we let $\Ov$ denote the ring of integers in $k_v$, $\mf_v$ the maximal ideal in $\Oc_v$, $\pi_v$ a generator of $\mf_v$, $\Fv$ the residue field of $\Ov$, and $q_v$ the size of $\Fv$. If $S$ is a subset of the set of places of $k$, we let $\Ab_{k,S} = {\prod_{v \in S}}' (k_v,\Oc_v)$. This must not be confused with $\Ab_{k}^S = {\prod_{v \notin S}}' (k_v,\Oc_v).$ We let $\Oc_{k,S}$ denote the $S$-integers of $k$. In general, we replace $S$ by the letter ``f'' when $S$ is the set of finite places of $k$. We also use $\Ab_k^f$ to denote $\Ab_{k,S}$ when $S$ is the set of finite places, and we believe this does not lead to any confusion.


If $X$ is a variety over a local field $k_v$, we let $X(k_v)_{\bullet}$ denote the set of connected components of $X(k_v)$ under the $v$-adic topology, and we similarly set $X(\Ab_{k,S})_{\bullet} = {\prod_{v \in S}}' (X(k_v)_{\bullet},X(\Oc_v)_{\bullet})$. Note that this changes only the Archimedean places.

\subsection{Acknowledgements}

The first author would like to thank Alexei Skorobogatov for discussing an earlier version of the paper, Gereon Quick for responding to queries about \'etale homotopy, and Jakob Stix for providing a reference. He would like to thank his host at ENS, Olivier Wittenberg, for going through parts of the paper and help on some of the arguments, especially on a model-theoretic argument in the proof of Theorem \ref{thm:vsa_example}. He would finally like to thank his thesis advisor Bjorn Poonen for an enormous amount of help, including multiple careful readings of the paper and numerous discussions about the ideas.

The first author was supported by NSF grants DMS-1069236 and DMS-160194, Simons Foundation grant \#402472, and NSF RTG Grant \#1646385 during various parts of the writing of this paper.

The second author would like to thank Jean-Louis Colliot-Th\'{e}l\`{e}ne and Alexei Skorobogatov for many useful discussions.

Some of the work presented here is based on earlier work by the second author done while attending at the ``Diophantine equations'' trimester program in 2009 at the Hausdorff Institute in Bonn. The author would like to thank the staff of the institute for providing a pleasant atmosphere and excellent working conditions.

The second author would also like to thank Yonatan Harpaz for his useful comments on an earlier draft of this paper.

Both authors would like to thank the Hebrew University of Jerusalem for support for the second author to visit the first author.

\section{Obstructions to Rational Points}

\subsection{Generalized Obstructions}

Let $k$ be a global field.

\begin{defn}
Let $\omega$ be a subfunctor of the functor $X \mapsto X(\Ab_k)$ from $k$-varieties to sets. We write $X(\Ab_k)^{\omega}$ instead of $\omega(X)$. We say that $\omega$ is a \emph{generalized obstruction (to the local-global principle)} if $X(k) \subseteq X(\Ab_k)^{\omega}$ for every $k$-variety $X$.
\end{defn}

If $S$ is a subset of the places of $k$, we define $X(\Ab_{k,S})^\omega$ to be the projection of $X(\Ab_k)^\omega$ from $X(\Ab_k)$ to $X(\Ab_{k,S})$.

We note the trivial but useful facts:

\begin{lemma}\label{lemma:combined1}
Let $k$ be a global field and $X$ a $k$-variety.
\begin{enumerate}[(i)]
    \item\label{item:S} If $\omega$ and $\omega'$ are two generalized obstructions, and $X(\Ab_k)^{\omega} \subseteq X(\Ab_k)^{\omega'}$, then $X(\Ab_{k,S})^{\omega} \subseteq X(\Ab_{k,S})^{\omega'}$.
    \item\label{item:change_places} If $S,S'$ are two nonempty sets of places of $k$, then $X(\Ab_{k,S})^\omega = \emptyset$ if and only if $X(\Ab_{k,S'})^\omega = \emptyset$.
    \item The association $X \mapsto X(\Ab_{k,S})^{\omega}$ is a subfunctor of the covariant functor from $k$-varieties to sets corepresented by $\spec(\Ab_{k,S})$.

\end{enumerate}
\end{lemma}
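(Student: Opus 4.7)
The three parts are all essentially formal consequences of the definition of $X(\Ab_{k,S})^{\omega}$ as the image of $X(\Ab_k)^{\omega}$ under the projection $p_S \colon X(\Ab_k)\to X(\Ab_{k,S})$. I would dispatch them in order.

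For (\ref{item:S}), I would argue that if $A \subseteq B$ are subsets of $X(\Ab_k)$, then $p_S(A) \subseteq p_S(B)$ as subsets of $X(\Ab_{k,S})$; applying this with $A = X(\Ab_k)^{\omega}$ and $B = X(\Ab_k)^{\omega'}$ yields the claim directly. No hypothesis on $S$ beyond being a set of places is used.

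For (\ref{item:change_places}), the key observation is that $X(\Ab_{k,S})^{\omega} = \emptyset$ is equivalent to $X(\Ab_k)^{\omega} = \emptyset$ whenever $S$ is nonempty, because $p_S$ is a set-theoretic map (so $p_S(E)=\emptyset \iff E=\emptyset$) and when $S$ is nonempty $X(\Ab_{k,S})$ is a well-defined target (the projection exists and is the identity in the trivial case $S$ = all places). Since this equivalence is independent of the choice of nonempty $S$, taking $S$ and $S'$ both yields the same criterion, and the two emptiness conditions agree. I would note that the hypothesis that $S$ and $S'$ be nonempty is essential only to make the statement well-posed.

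For (iii), I would verify functoriality by unwinding definitions. A morphism $f\colon X \to Y$ of $k$-varieties induces $f_{\Ab}\colon X(\Ab_k) \to Y(\Ab_k)$ and $f_{\Ab,S}\colon X(\Ab_{k,S}) \to Y(\Ab_{k,S})$ fitting into a commutative square with the projections $p_S^X$ and $p_S^Y$. Since $\omega$ is a subfunctor of $X \mapsto X(\Ab_k)$, we have $f_{\Ab}(X(\Ab_k)^{\omega}) \subseteq Y(\Ab_k)^{\omega}$; applying $p_S^Y$ and using commutativity gives $f_{\Ab,S}(p_S^X(X(\Ab_k)^{\omega})) \subseteq p_S^Y(Y(\Ab_k)^{\omega})$, i.e.\ $f_{\Ab,S}(X(\Ab_{k,S})^{\omega}) \subseteq Y(\Ab_{k,S})^{\omega}$. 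Identification of $X(\Ab_{k,S})$ with $\Hom_k(\spec(\Ab_{k,S}),X)$ is standard. The ``main obstacle,'' if there is one, is simply to make sure nothing secretly depends on $S$ in a way that breaks the argument when $S$ is Archimedean and $X(k_v)_{\bullet}$ is used; but since the lemma is stated for the usual adelic points (not $X(\Ab_k)_{\bullet}$), this does not enter, and the proof is entirely formal.
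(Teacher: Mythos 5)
Your proposal is correct and is essentially the ``simple diagram chase'' that the paper asserts without spelling out: all three parts reduce to the observation that $X(\Ab_{k,S})^{\omega}$ is by definition the image of $X(\Ab_k)^{\omega}$ under the projection $p_S$, together with standard facts about images of set maps (monotonicity for (\ref{item:S}), emptiness-detection for (\ref{item:change_places}), and naturality of $p_S$ in $X$ for (iii)). Nothing is missing, and the approach matches the paper's.
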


\begin{proof}
These all follow by simple diagram chase.
\end{proof}

\subsubsection{Very Strong Approximation}

\begin{defn}\label{obstructiondata} If $k$ is a global field, S is a nonempty set of places of k, and $\omega$ is a generalized obstruction, then we call $(\omega,S,k)$ an \emph{obstruction datum}.\end{defn}

When $S$ is the set of finite places of $k$, we also write $(\omega,f,k)$ in this case.

We will sometimes leave out $S$ and write $(\omega,k)$, in which case \emph{we understand $S$ to be the set of all places of $k$}. We often write $(\omega,S)$ when $k$ is understood.

\begin{defn}\label{def_vsa}Let $(\omega,S,k)$ be an obstruction datum. We say that a variety $X/k$ \emph{is} or \emph{satisfies} \emph{very strong approximation} (VSA) for $(\omega,S)$ if $X(k)=X(\Ab_{k,S})^{\omega}$.\end{defn}


\begin{lemma}\label{lemma:combined2}
Let $X$ and $Y$ be $k$-varieties and $W \subseteq X$ a (locally closed) subvariety of $X$ over $k$.

\begin{enumerate}[(i)]
    \item\label{item:embeds_vsa} If $X$ is VSA for $(\omega,S,k)$, then so is $W$.
    \item\label{item:prod_vsa} If $X$ and $Y$ are VSA for $(\omega,S,k)$, then so is $X \times Y$.
    \end{enumerate}

\end{lemma}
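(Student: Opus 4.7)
The strategy for both parts is to exploit the subfunctor property of $\omega$ (item (iii) of Lemma \ref{lemma:combined1}) together with the fact that the relevant structural morphisms—locally closed immersions in (i), product projections in (ii)—are monomorphisms of schemes, and hence induce injections on $R$-points for any $k$-algebra $R$. In both cases the nontrivial direction is $(\text{--})(\Ab_{k,S})^{\omega} \subseteq (\text{--})(k)$, the reverse being part of the definition of a generalized obstruction.

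For (i), let $i\colon W \hookrightarrow X$ be the inclusion. Given $w \in W(\Ab_{k,S})^{\omega}$, functoriality produces $i_\ast(w) \in X(\Ab_{k,S})^{\omega} = X(k)$, i.e.\ the $\Ab_{k,S}$-point $i_\ast(w)$ is represented by some $x\colon \spec(k) \to X$. The key step is to show that $x$ factors through $W$ as a $k$-point. Consider the pullback $\spec(k) \times_X W$: as the pullback of the monomorphism $i$ along $x$, it is a subscheme of $\spec(k)$, so it is either $\spec(k)$ itself (whence $x \in W(k)$) or empty. The empty case is excluded because base-changing to $\Ab_{k,S}$ would give $\spec(\Ab_{k,S}) \times_X W = \emptyset$, whereas $w$ furnishes a section of this scheme (here I use that $S \neq \emptyset$, so $\Ab_{k,S}$ is nonzero). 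Finally, $W(\Ab_{k,S}) \to X(\Ab_{k,S})$ is injective, so the image of $x \in W(k)$ in $W(\Ab_{k,S})$ must coincide with $w$, completing the inclusion $W(\Ab_{k,S})^{\omega} \subseteq W(k)$.

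For (ii), the projections $p_X, p_Y$ induce, by functoriality, a map
\[
(X \times Y)(\Ab_{k,S})^{\omega} \longrightarrow X(\Ab_{k,S})^{\omega} \times Y(\Ab_{k,S})^{\omega} = X(k) \times Y(k) = (X \times Y)(k),
\]
and I would verify it is inverse to the inclusion $(X \times Y)(k) \hookrightarrow (X \times Y)(\Ab_{k,S})^{\omega}$. Starting from a $k$-point, applying the inclusion and then the two projections recovers the same $k$-point by the universal property of the product. Starting from $z \in (X \times Y)(\Ab_{k,S})^{\omega}$, projecting yields a $k$-point whose image in $(X \times Y)(\Ab_{k,S}) = X(\Ab_{k,S}) \times Y(\Ab_{k,S})$ agrees coordinatewise with the image of $z$, again by functoriality; since $(X \times Y)(\Ab_{k,S})^{\omega} \hookrightarrow (X \times Y)(\Ab_{k,S})$ is injective, this reconstructed point equals $z$.

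There is essentially no obstacle beyond careful bookkeeping: both statements are formal consequences of Lemma \ref{lemma:combined1}(iii) combined with the elementary observation that monomorphisms of schemes (such as locally closed immersions and the diagonal factorizations involved in products) induce injections on points valued in any $k$-algebra.
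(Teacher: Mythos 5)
Your proof is correct and follows essentially the same route as the paper: functoriality pushes the adelic point into $X(\Ab_{k,S})^{\omega} = X(k)$, and the key step is showing that the resulting $k$-point of $X$ actually factors through $W$. The paper accomplishes this by looking at a single $v$-component for $v\in S$ (noting $\alpha_v \in W(k_v)$), while you base-change the locally closed subscheme $W\times_X\spec(k)\subseteq\spec(k)$ along $\spec(\Ab_{k,S})\to\spec(k)$ and observe that $w$ supplies a section; these are the same argument in slightly different dress, and your version spells out the scheme-theoretic reason the factorization through $W$ must exist. Part (ii) is also the same diagram chase the paper alludes to, made explicit.
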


\begin{proof}
For (\ref{item:embeds_vsa}), note that $\alpha \in W(\Ab_{k,S})^{\omega}$. Then $\alpha \in X(\Ab_{k,S})^{\omega}$ by functoriality. Since $X$ is VSA, $\alpha \in X(k)$; i.e., $\spec{\Ab_{k,S}} \xrightarrow{\alpha} W$ factors through a map $\spec{k} \to X$. If $v \in S$, then the $v$-component $\alpha_v$ of $\alpha$ is a $k_v$-point of $X$ coming from a $k$-point of $X$. But because $\alpha \in W(\Ab_{k,S})$, this implies that $\alpha_v \in W(k_v)$. But if it is a $k$-point as a point of $X$, then we also have $\alpha_v \in W(k)$.

(\ref{item:prod_vsa}) follows by a simple diagram chase.
\end{proof}

\begin{rem}It might seem desirable in our main theorems to replace the set of points at the real places by the set of their connected components, as is done in \cite{sto07}, instead of using $X(\Ab_{k}^f)$, as we do. We do this precisely because Lemma \ref{lemma:combined2}(\ref{item:embeds_vsa}) would not otherwise work, as the map from $W(\Ab_k)_\bullet$ to $X(\Ab_k)_\bullet$ need not be injective.\end{rem}

\subsection{Functor Obstructions}

Let $k$ be a global field. Following the formalism in \cite[\S 8.1]{PoonenBook}, let $F$ be a contravariant functor from schemes over $k$ to sets (a.k.a. a presheaf of sets on the category of $k$-schemes). For $X$ a $k$-scheme and $A \in F(X)$, we have the following commutative diagram
$$
\begin{CD}
X(k) @>>> X(\Ab_k)\\
@VVV @VVV\\
F(k) @>\loc >> \prod_v F(k_v),
\end{CD}
$$
where the vertical arrows denote pullback of $A$ from $X$ to $k$ or $k_v$. We define $X(\Ab_k)^A$ as the subset of $X(\Ab_k)$ whose image in the lower right object is in the image of $\loc$. We then define the obstruction set
$$
X(\Ab_k)^F = \bigcap_{A \in F(X)} X(\Ab_k)^A.$$


If $\Fc$ is a collection of such functors $F$, we define $$X(\Ab_{k})^{\Fc} = \bigcap_{F \in \Fc} X(\Ab_{k})^F.$$

\begin{lemma}\label{lemma:combined3} The functor $X \mapsto X(\Ab_k)^{\Fc}$ is a generalized obstruction. Furthermore, if $\Fc' \subseteq \Fc$, then $X(\Ab_{k,S})^{\Fc} \subseteq X(\Ab_{k,S})^{\Fc'}$ for all $X$.\end{lemma}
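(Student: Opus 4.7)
The statement is a purely formal consequence of unpacking the definition of $X(\Ab_k)^F$, so the plan is to verify three things in turn: (a) the assignment $X \mapsto X(\Ab_k)^{\Fc}$ really is a subfunctor of $X \mapsto X(\Ab_k)$, (b) it contains the diagonal image of $X(k)$, and (c) the monotonicity in $\Fc$. Together (a) and (b) give ``generalized obstruction,'' and (c) is the second assertion.

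For (a), given a morphism $f \colon X \to Y$ of $k$-varieties, $F \in \Fc$, and $A \in F(Y)$, I would use contravariance of $F$ to produce $f^{*}A \in F(X)$ and then chase the commutative square in the definition. For $x \in X(\Ab_k)^{\Fc}$, the hypothesis applied to $f^{*}A$ yields a $B \in F(k)$ whose localization $\loc(B) \in \prod_v F(k_v)$ is the tuple of pullbacks of $f^{*}A$ along the components $x_v$. Naturality of pullback (the equality $x_v^{*} f^{*} A = (f \circ x_v)^{*} A = (f_{*}x)_v^{*} A$) identifies this tuple with the image of $f_{*}(x)$ under the localization arrow attached to $A$, so the same $B$ witnesses $f_{*}(x) \in Y(\Ab_k)^{A}$. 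Intersecting over $A$ and $F$ gives $f_{*}(x) \in Y(\Ab_k)^{\Fc}$. For (b), given $x \in X(k)$ and any $A \in F(X)$, the element $B := x^{*}A \in F(k)$ localizes precisely to the tuple of pullbacks $(x_v^{*}A)_v$, so $x \in X(\Ab_k)^{A}$ for every choice, hence $x \in X(\Ab_k)^{\Fc}$.

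For (c), the inclusion $\bigcap_{F \in \Fc} X(\Ab_k)^{F} \subseteq \bigcap_{F \in \Fc'} X(\Ab_k)^{F}$ is automatic from $\Fc' \subseteq \Fc$. Passing from $X(\Ab_k)$ to $X(\Ab_{k,S})$ is then a direct invocation of Lemma \ref{lemma:combined1}(\ref{item:S}), using part (a) to know that both $\Fc$ and $\Fc'$ are generalized obstructions. There is no real obstacle: the only step with any content is the naturality identification in (a), which is immediate from the contravariance of $F$ together with the fact that the $v$-component of $f_{*}(x)$ is $f \circ x_v$.
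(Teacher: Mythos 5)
Your proposal is correct and follows the same structure as the paper's proof: verify that $X(k)$ lands in $X(\Ab_k)^{\Fc}$, check functoriality by the diagram chase you spell out, note that monotonicity over $\Ab_k$ is automatic from $\Fc' \subseteq \Fc$, and invoke Lemma \ref{lemma:combined1}(\ref{item:S}) to pass to $\Ab_{k,S}$. The only difference is that the paper leaves the diagram chases implicit ("a simple diagram chase", "clear", "immediate") while you have written them out.
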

\begin{proof}
It is clear that $X(k) \subseteq X(\Ab_k)^{\Fc}$, so it suffices to verify functoriality, which is a simply diagram chase. The second part is immediate for $\Ab_k$, and follows for $\Ab_{k,S}$ by Lemma \ref{lemma:combined1}(\ref{item:S}).

\end{proof}


\subsection{Descent Obstructions}\label{sec:descentobstructions}

Let $G$ be a group scheme over $k$. We let $F_G$ denote the contravariant functor from $k$-schemes to sets for which $F_G(X)$ is the set of isomorphism classes of $G$-torsors for the fppf topology over $X$. By \cite[Theorem 6.5.10(i)]{PoonenBook}, we have $F_G(X)=H^1_{\fppf}(X,G)$ when $G$ is affine. If $G$ is also smooth, it is isomorphic to the \'etale cohomology, denoted $H^1(X,G)$.


Let $\Gc$ be a subset of the set of isomorphism classes of finite type group schemes over $k$. For $\Fc = \{F_G\}_{[G] \in \Gc}$, we set $$X(\Ab_{k})^\Gc = X(\Ab_{k})^\Fc.$$

Usually, $\Gc$ will consist only of algebraic groups over $k$, i.e., finite type separated group schemes over $k$.

\begin{defn} For $\Gc$ the set of smooth affine algebraic groups over $k$, we call this the \emph{descent obstruction} and denote it by $X(\Ab_{k})^{\mathrm{descent}}$ (following \cite{PoonenBook}).\end{defn}

\begin{defn}\label{defn:fcovsolab} For $\Gc$ the set of (all, solvable, abelian) finite smooth algebraic groups over $k$, we call this the (\emph{finite descent}, \emph{finite solvable descent}, \emph{finite abelian descent}) \emph{obstruction} and denote it by ($X(\Ab_{k})^{\fcov}$, $X(\Ab_{k})^{\fsol}$, $X(\Ab_{k})^{\fab}$), respectively (following notation in \cite{sto07},\cite{sk09}).\end{defn}




\begin{rem} By the equivalence between (i) and (i') in \cite[Theorem 2.1]{harsx12}, it suffices to take only finite \emph{constant} algebraic groups over $k$ in the definition of $X(\Ab_{k})^{\fcov}$.\end{rem}



\begin{prop}\label{compare_obstructions} We have $X(\Ab_{k})^{\mathrm{descent}} \subseteq X(\Ab_{k})^{\fcov} \subseteq X(\Ab_{k})^{\fsol} \subseteq X(\Ab_{k})^{\fab}.$
\end{prop}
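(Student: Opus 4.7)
The plan is to reduce the chain of inclusions to the monotonicity statement already recorded in Lemma \ref{lemma:combined3}(second part). That lemma says that enlarging the collection $\Fc$ of representing functors can only shrink the obstruction set $X(\Ab_k)^{\Fc}$. Since each of the four obstructions in the proposition is by definition $X(\Ab_k)^{\Gc}$ for a specific collection $\Gc$ of group schemes, it will be enough to exhibit containments between these collections.

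Concretely, I would observe the following chain of inclusions of collections of (isomorphism classes of) group schemes over $k$:
\[
\{\text{finite abelian smooth}\} \;\subseteq\; \{\text{finite solvable smooth}\} \;\subseteq\; \{\text{finite smooth}\} \;\subseteq\; \{\text{smooth affine}\}.
\]
The first two inclusions are immediate from the definitions (abelian $\Rightarrow$ solvable, and every finite group scheme is, a fortiori, finite type). The last inclusion uses the standard fact that a finite algebraic group over a field is automatically affine (indeed, finite over $\spec k$), so in particular the finite smooth algebraic groups form a subclass of the smooth affine algebraic groups.

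Via the assignment $[G] \mapsto F_G$, each containment of collections of group schemes yields a containment of the associated collections of functors $\Fc$. Applying Lemma \ref{lemma:combined3} to each such pair of collections (in the case $S$ equals the set of all places, so that $\Ab_{k,S} = \Ab_k$) gives
\[
X(\Ab_k)^{\mathrm{descent}} \;\subseteq\; X(\Ab_k)^{\fcov} \;\subseteq\; X(\Ab_k)^{\fsol} \;\subseteq\; X(\Ab_k)^{\fab},
\]
as desired. There is essentially no obstacle here: the content of the proposition is organizational, and the only non-tautological ingredient is the affineness of finite group schemes over a field, which is classical.
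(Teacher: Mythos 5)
Your proof follows exactly the route the paper takes: invoke the monotonicity in Lemma \ref{lemma:combined3} and observe that the collections of group schemes shrink as one passes from $\desc$ to $\fcov$ to $\fsol$ to $\fab$. The only addition is your explicit remark that finite group schemes over a field are affine, which the paper leaves implicit; this is a welcome bit of care but not a different argument.
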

\begin{proof}
This follows by Lemma \ref{lemma:combined3} because the corresponding collections of isomorphism classes of group schemes become smaller as we progress from $\desc$ to $\fcov$ to $\fsol$ to $\fab$.
\end{proof}

\begin{cor}\label{compare_vsa}
For fixed $k$ and $S$, VSA for $\fab$ implies VSA for $\fsol$, which implies VSA for $\fcov$, which implies VSA for $\desc$.
\end{cor}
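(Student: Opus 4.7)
The proof is essentially a formal squeeze argument combining Proposition \ref{compare_obstructions} with Lemma \ref{lemma:combined1}(\ref{item:S}). The intuition is that a finer (smaller) obstruction set makes VSA easier to satisfy, since VSA demands that $X(\Ab_{k,S})^\omega$ collapse to $X(k)$.

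First, I would invoke Proposition \ref{compare_obstructions}, which gives the chain of inclusions
\[
X(\Ab_k)^{\mathrm{descent}} \subseteq X(\Ab_k)^{\fcov} \subseteq X(\Ab_k)^{\fsol} \subseteq X(\Ab_k)^{\fab}
\]
at the level of full adelic points. Since each of these four functors is a generalized obstruction by Lemma \ref{lemma:combined3}, I can apply Lemma \ref{lemma:combined1}(\ref{item:S}) to each consecutive pair and obtain the analogous chain for the $(\Ab_{k,S})$-points:
\[
X(\Ab_{k,S})^{\mathrm{descent}} \subseteq X(\Ab_{k,S})^{\fcov} \subseteq X(\Ab_{k,S})^{\fsol} \subseteq X(\Ab_{k,S})^{\fab}.
\]

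Next, since each of $\mathrm{descent}$, $\fcov$, $\fsol$, $\fab$ is a generalized obstruction, the definition gives $X(k) \subseteq X(\Ab_{k,S})^{\omega}$ for every one of these $\omega$. Now suppose VSA for $\fab$ holds, i.e., $X(\Ab_{k,S})^{\fab} = X(k)$. Then the squeeze
\[
X(k) \;\subseteq\; X(\Ab_{k,S})^{\fsol} \;\subseteq\; X(\Ab_{k,S})^{\fab} \;=\; X(k)
\]
forces $X(\Ab_{k,S})^{\fsol} = X(k)$, which is VSA for $\fsol$. Repeating the same squeeze with $\fsol$ in place of $\fab$ and $\fcov$ in place of $\fsol$ yields VSA for $\fcov$, and one more application yields VSA for $\desc$.

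There is no real obstacle here: the statement is purely formal once the ingredients (Proposition \ref{compare_obstructions} and Lemma \ref{lemma:combined1}) are in hand. The only point worth verifying — and it is immediate from Lemma \ref{lemma:combined3} — is that each of the four descent-type functors qualifies as a generalized obstruction, so that Lemma \ref{lemma:combined1}(\ref{item:S}) is applicable.
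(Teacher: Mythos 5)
Your proof is correct and takes exactly the same approach as the paper, which cites Proposition \ref{compare_obstructions} and Lemma \ref{lemma:combined1}(\ref{item:S}) and leaves the squeeze argument implicit. You have simply written out the details.
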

\begin{proof}
This follows immediately from Proposition \ref{compare_obstructions} and Lemma \ref{lemma:combined1}(\ref{item:S}).
\end{proof}





\subsubsection{Alternative Description in Terms of Images of Adelic Points}\label{sec:altdescr}

Let $G$ be a smooth affine algebraic group over $k$, and let $Z \in F_G(X)$ be given by, with abuse of notation, $f:Z \to X$. If $\tau \in F_G(k)$, one can define the twist $f^{\tau}\colon Z^\tau \to X$ as in \cite[Example 6.5.12]{PoonenBook}.

By \cite[Theorem 8.4.1]{PoonenBook}, if $x \in X(k)$, then $x \in f^{\tau} (Y^\tau(k))$ if and only if $\tau$ is the pullback of $Z$ under $x \colon \spec{k} \to X$.

It follows that $$X(k) = \bigcup_{\tau \in F_G(k)} f^{\tau} (Z^\tau(k)) \subseteq \bigcup_{\tau \in F_G(k)} f^{\tau} (Z^\tau(\Ab_{k,S})).$$

\begin{prop}
We have
$$\bigcup_{\tau \in F_G(k)} f^\tau(Z^\tau(\Ab_{k})) = X(\Ab_{k})^Z.$$
\end{prop}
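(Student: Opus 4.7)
The plan is to unwind both sides and reduce to the local form of \cite[Theorem 8.4.1]{PoonenBook} applied place by place. By the definition of $X(\Ab_k)^Z$ recalled just before the statement, the image of an adelic point $(x_v)_v$ in $\prod_v F_G(k_v)$ is the tuple of pullback classes $(x_v^* Z)_v$; hence $(x_v)_v \in X(\Ab_k)^Z$ exactly when there exists a global class $\tau \in F_G(k)$ with $x_v^* Z \cong \tau|_{k_v}$ in $F_G(k_v)$ for every place $v$.

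For the inclusion $\subseteq$, I would start from $(x_v)_v = (f^\tau(z_v))_v$ with $\tau \in F_G(k)$ and $(z_v)_v \in Z^\tau(\Ab_k)$, and apply the cited theorem at each place to the base change $Z_{k_v} \to X_{k_v}$ with twist $\tau|_{k_v}$; this yields $x_v^* Z \cong \tau|_{k_v}$ for every $v$, so the same $\tau$ witnesses membership in $X(\Ab_k)^Z$. For the inclusion $\supseteq$, a witness $\tau \in F_G(k)$ for $(x_v)_v \in X(\Ab_k)^Z$ lets me invoke the same theorem locally to produce a lift $z_v \in Z^\tau(k_v)$ of $x_v$ at each $v$.

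The step requiring real care is verifying that the tuple $(z_v)_v$ produced in $\supseteq$ is genuinely adelic, not merely an element of $\prod_v Z^\tau(k_v)$. I would handle this by choosing compatible integral models of $X$ and $Z^\tau$ over an open $U \subseteq \spec \Oc_k$ on which $f^\tau$ extends to a torsor under a smooth affine group scheme $\Gc^\tau$; at a finite place $v \in U$ at which $x_v$ is integral, the pullback $x_v^* Z^\tau$ is a $\Gc^\tau$-torsor over $\Oc_v$, which acquires an $\Oc_v$-point by Lang's theorem on the special fiber combined with Hensel's lemma. Since the local lifts of $x_v$ in $Z^\tau(k_v)$ form a torsor under $G^\tau(k_v)$, I can swap my original $z_v$ for this integral lift without altering $x_v$, so the resulting $(z_v)_v$ is an adelic point. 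This integrality check is the main obstacle; all other steps are a direct translation of definitions via \cite[Theorem 8.4.1]{PoonenBook} applied at each local field $k_v$.
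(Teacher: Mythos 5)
Your proof is correct in substance and follows essentially the same route as the paper's: both inclusions are reduced to \cite[Theorem 8.4.1]{PoonenBook}, and the direction $\subseteq$ is a direct place-by-place computation. The interesting divergence is in the direction $\supseteq$. The paper treats the adelic point $\alpha$ as a single morphism $\spec\Ab_k \to X$ and argues that the pullback $\alpha^*(Z^\tau)$ is the \emph{trivial} $G$-torsor over $\spec\Ab_k$, hence automatically contains an $\Ab_k$-point; this bundles the integrality question into the single assertion that triviality in $\prod_v H^1(k_v,G)$ forces triviality over $\Ab_k$. You instead work place by place and then confront the integrality of the resulting tuple $(z_v)_v$ head-on, which is the honest content behind the paper's shorthand and worth making explicit.

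The one step in your argument that does not quite go through as written is the appeal to Lang's theorem. Lang requires the special fiber $\Gc^\tau_{\Fv}$ to be \emph{connected}, and the proposition is stated for arbitrary smooth affine $G$; in the main cases of interest in this paper (descent under finite \'etale $G$), the special fiber is disconnected and Lang gives nothing. What you actually have at your disposal is stronger than what Lang would give: you already know that the $\Gc^\tau$-torsor $x_v^*\Zc^\tau$ over $\Ov$ becomes \emph{trivial} over $k_v$, since $z_v \in Z^\tau(k_v)$ lifts $x_v$. So the statement you need is that the natural map $H^1(\Ov,\Gc^\tau) \to H^1(k_v,G^\tau)$ has trivial kernel for smooth affine $\Gc^\tau$. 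In the finite \'etale case this is elementary: $x_v^*\Zc^\tau$ is finite \'etale over $\Ov$, hence a disjoint union of spectra of unramified extensions of $\Ov$, and the existence of a $k_v$-point forces one summand to be $\spec\Ov$ itself, giving the desired integral lift. In general this injectivity follows from the combination of the henselian comparison $H^1(\Ov,\Gc^\tau)\cong H^1(\Fv,\Gc^\tau_{\Fv})$ with Lang for the identity component together with an argument for the component group, or can be cited directly (Nisnevich). Replacing the bare invocation of Lang by this argument closes the gap; the rest of your proof, including the observation that the fiber of $Z^\tau$ over $x_v$ is a $G^\tau(k_v)$-torsor so that the integral lift can be substituted for $z_v$ without changing $x_v$, is fine.
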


\begin{proof}
Suppose $\alpha \in X(\Ab_{k})^Z$. We think of this point as a map $\alpha \colon \spec{\Ab_{k}} \to X$, and we know by definition that the pullback of $Z$ along this map is in the image of $$H^1(k,G) \xrightarrow{\loc} \prod_{v} H^1(k_v,G),$$ say of $\tau \in H^1(k,G)$. One can easily check that twisting commutes with pullback (from $k$ to $\Ab_{k}$). Thus the proof of \cite[8.4.1]{PoonenBook} tells us that the pullback of $Z^{\tau}$ under this adelic point is now the trivial element of $\prod_{v} H^1(k_v,G)$. But this means that the fiber of $Z^{\tau}$ over $\alpha$ is a trivial torsor and thus contains an adelic point; i.e., $\alpha \in f^\tau(Z^\tau(\Ab_{k}))$.

Conversely, suppose $\alpha \in f^\tau(Z^{\tau}(\Ab_{k}))$ for some $\tau \in H^1(k,G)$. Then for each $v$, we have $\alpha_v \in f^{\tau}(Z^{\tau}(k_v))$, so Theorem 8.4.1 tells us that the pullback of $Z$ under $\alpha_v \colon \spec{k_v} \to X$ is $\loc_v(\tau)$. But this implies that the pullback of $Z$ under $\alpha$ is in the image of $\loc$; i.e., $\alpha \in X(\Ab_{k})^Z$.
\end{proof}

\subsection{Comparison with Brauer and Homotopy Obstructions}\label{otherobstructions}

There is the Brauer-Manin obstruction set, $X(\Ab_{k,S})^{\Br}$, which is the obstruction for $F(-)=H^2(-,\Gb_m)$.

There is also the \'etale Brauer-Manin obstruction $$X(\Ab_{k})^{\etbr} = \bigcap_{\substack{\mathrm{finite}\,\acute{ \mathrm{e}}\mathrm{tale}\,G\\ \mathrm{all} \, G\mathrm{-torsors} f:Y \to X}}
\bigcup_{\tau \in H^1(k,G)}  f^\tau(Y^\tau(\Ab_{k})^{\Br}).$$

By Section \ref{sec:altdescr}, this is contained in $X(\Ab_{k,S})^{\fcov}$.

A series of obstructions is defined in \cite{hs13} when $k$ is a number field:
$$
X(\Ab_k)^h \subseteq \cdots \subseteq X(\Ab_k)^{h,2} \subseteq X(\Ab_k)^{h,1} \subseteq X(\Ab_k)
$$
and
$$
X(\Ab_k)^{\Zb h} \subseteq \cdots \subseteq X(\Ab_k)^{\Zb h,2} \subseteq X(\Ab_k)^{\Zb h,1} \subseteq X(\Ab_k)
$$
such that $X(\Ab_k)^{h,n} \subseteq X(\Ab_k)^{\Zb h,n}$ for all $n$, $X(\Ab_k)^{\Zb h} = \bigcap_{n} X(\Ab_k)^{\Zb h,n}$, and $X(\Ab_k)^h = \bigcap_n X(\Ab_k)^{h,n}$. The definition of $X(\Ab_k)^h$ will be discussed in more detail in Section \ref{sec:hobstructions}. When discussing obstruction data, we write $(h,S,k)$ and $(\Zb h,S,k)$.

By \cite[Theorem 9.136]{hs13} and Lemma \ref{lemma:combined1}(\ref{item:S}), we have for a smooth geometrically connected variety $X$ that:
\begin{eqnarray*}
X(\Ab_{k,S})^h &=& X(\Ab_{k,S})^{\mathrm{\acute{e}t},\Br}\\
X(\Ab_{k,S})^{\Zb h} &=& X(\Ab_{k,S})^{\Br}\\
X(\Ab_{k,S})^{h,1} &=& X(\Ab_{k,S})^{\fcov}\\
X(\Ab_{k,S})^{\Zb h,1} &=& X(\Ab_{k,S})^{\mathrm{f-ab}}
\end{eqnarray*}

In particular, we have the two inclusions: $$X(\Ab_{k,S})^{\mathrm{\acute{e}t},\Br} \subseteq X(\Ab_{k,S})^{\Br} \subseteq X(\Ab_{k,S})^{\fab}$$
$$X(\Ab_{k,S})^{\mathrm{\acute{e}t},\Br} \subseteq X(\Ab_{k,S})^{\fcov} \subseteq X(\Ab_{k,S})^{\fab}.$$

\section{Statements of Abstract Results and Conjectures}\label{sec:results}

For the rest of this section, we assume that $k$ is a number field.

The main reason to care about these obstructions is that they help prove that a variety $X/k$ has no rational points. The most powerful obstruction currently known is $X(\Ab_k)^{\etbr}=X(\Ab_k)^{\mathrm{descent}}$. But, as stated in the introduction, even in this case, there is a variety $X$ with $\emptyset = X(k) \subseteq X(\Ab_k)^{\etbr} \neq \emptyset$, as found in \cite{poo08}. In the method of proof that $X(k)=\emptyset$, it is clear that the \'etale Brauer-Manin obstruction and its avatars still appear, but they are applied separately to different \emph{pieces} of $X$. From this point of view, it's natural to ask the following question:

\begin{quest}\label{quest:emptiness}
Let $X$ be a $k$-variety with $X(k)=\emptyset$. Does there exist a finite open cover or stratification of $X$ for which each constituent part has empty \'etale-Brauer set? More strongly, is the same true for any of the other obstruction sets from Section \ref{sec:descentobstructions}?
\end{quest}

If true, this proves that $X(k)=\emptyset$, because if each constituent part has no rational points, then $X$ does not.

One could ask for the stronger statement that each constituent part satisfies VSA:

\begin{quest}\label{quest:vsa}
Let $X$ be a $k$-variety and $(\omega,S,k)$ an obstruction datum. Does there exist a finite open cover or stratification of $X$ for which each constituent part satisfies VSA for $(\omega,S,k)$?
\end{quest}

In this paper, we obtain the following result.

\begin{thm}[Corollary \ref{cor:fabsomek} and Corollary \ref{cor:uisvsa}]\label{thm:fabsomek}
The answer to Question \ref{quest:vsa} is yes for $(\fab,f,k)$ when $k$ is a imaginary quadratic or totally real number field.
\end{thm}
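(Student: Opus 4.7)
The plan is Noetherian induction on $\dim X$. By Lemma~\ref{lemma:combined2}(\ref{item:embeds_vsa}), VSA descends to locally closed subvarieties, so it suffices to exhibit, for every nonempty $X/k$, a nonempty Zariski open $U \subseteq X$ satisfying VSA for $(\fab,f,k)$; iterating on the closed complement $X \setminus U$ (of strictly smaller dimension) then produces a finite stratification by VSA pieces, and a standard argument yields the desired open cover. After replacing $X$ by its smooth locus we may assume $X$ is smooth and geometrically integral.

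The main step is to realize the open $U$ as a locally closed subvariety of a reference variety $V/k$ for which VSA for $(\fab,f,k)$ is already known; Lemma~\ref{lemma:combined2}(\ref{item:embeds_vsa}) then transfers VSA to $U$. Natural candidates for $V$ are smooth open subvarieties of $\mathbb{A}^N_k$ whose complements have codimension $\geq 2$: such $V$ are geometrically simply connected by \'etale purity, so on $V$ the finite abelian descent obstruction coincides with the Brauer--Manin obstruction via the identifications of \cite[Theorem 9.136]{hs13} recalled at the end of \S\ref{otherobstructions}. Every smooth affine open $U \subseteq X$ embeds as a closed subvariety of some $\mathbb{A}^N_k$, and after enlarging $N$ or shrinking $U$ one can arrange the embedding to miss a chosen codimension-$\geq 2$ closed subscheme, placing $U$ inside such a $V$. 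The problem is thereby reduced to proving VSA of the reference $V$ for $(\fab,f,k)$.

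For $k = \Qb$ or $k$ imaginary quadratic, this VSA of $V$ follows immediately from \cite[Corollary 3.10]{LiuXu15}, as the excerpt announces. The main obstacle is therefore the totally real case, for which \cite{LiuXu15} does not directly apply: strong approximation off of the archimedean places is more delicate there, because the multiple real embeddings of $k$ produce nontrivial archimedean quotients of the adeles that are not a priori controlled by Brauer--Manin. The resolution is the content of Part~\ref{part:embeddings}, which supplies both an unconditional embedding argument and an alternative argument assuming the anabelian section conjecture; each of these upgrades the inclusion $X(k) \subseteq X(\Ab_k^f)^{\fab}$ to an equality for a class of reference varieties rich enough to contain, as a locally closed subvariety, a dense open of any smooth $X/k$.
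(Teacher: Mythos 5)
Your proposed reference varieties cannot work: a geometrically simply connected variety is a \emph{bad} candidate for VSA under $\fab$, not a good one. If $V^s$ is simply connected and $G$ is a finite smooth (hence \'etale) $k$-group, then $H^1(V^s,G)=\Hom(\pi_1(V^s),G)=0$, so every $G$-torsor on $V$ is pulled back from $\spec k$; and for a pulled-back torsor, the evaluation at any adelic point lands in the image of $\loc$ automatically, so it imposes no condition. Hence on such $V$ one has $V(\Ab_k)^{\fab}=V(\Ab_k)$, and VSA would force $V(k)=V(\Ab_k^f)$ --- already false for $V=\Ab^1_k$. (The same purity argument you invoke for $\pi_1$ also kills $\Br V/\Br k$, so the comparison $\fab=\Br$ you cite from \cite[Theorem 9.152]{hs13} only tells you both obstructions are vacuous; it does not give VSA.) Relatedly, \cite[Corollary 3.10]{LiuXu15} is not a statement about simply connected opens of $\Ab^N$: the variety it treats, and the variety the paper uses in the $\Qb$/imaginary-quadratic case (Corollary~\ref{cor:fabsomek}), is $\Gb_m$, whose geometric fundamental group is $\widehat{\Zb}(1)$. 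The entire mechanism in Section~\ref{sec:prod_tori} is opposite to simple connectedness: the $n$-power isogenies $[n]\colon T\to T$ supply a rich family of finite abelian torsors, and Lemma~\ref{lemma:fabtori} plus Lemma~\ref{lemma:finite_units_vsa} show that when the unit group (resp.\ the norm-one unit group for a CM extension in the totally real case, Proposition~\ref{prop:tisvsa}) is finite, these torsors pin adelic points in $X(\Ab_k^f)^{\fab}$ down to rational points.

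The surrounding scaffolding of your argument is salvageable and actually close in spirit to the paper's: replace your simply connected $V$ by a product of copies of $\Pb^1$, and instead of trying to ``miss a codimension $\geq 2$ subscheme,'' use Lemma~\ref{denseorbit} (density of the $\PGL_2(k)^n$-orbit of any closed point of $(\Pb^1)^n$) to translate any given point into the known VSA open $U\subset(\Pb^1)^n$; this is exactly the proof of Theorem~\ref{thm:vsacover}, and it produces a genuine finite open cover directly (via quasi-compactness) without the Noetherian induction or the unjustified step of converting a stratification to an open cover. The irreducible input is then Definition/Theorem~\ref{dfthm:A}: you only need one nonempty VSA open of $\Pb^1_k$, supplied by $\Gb_m$ (finitely many units) or the norm-one torus $T_{E/k}$ of a totally imaginary quadratic extension $E/k$ (Corollary~\ref{cor:uisvsa}).
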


As mentioned in the introduction, this is essentially \cite[Corollary 3.10]{LiuXu15} in the case $k$ is $\Qb$ or quadratic imaginary. Note that this implies the same result for all other obstructions defined so far (such as $\fsol$, $\fcov$, $\Br$, and $\mathrm{\acute{e}t-Br}$).


We conjecture this to hold for all number fields:

\begin{conj}[Conjecture \ref{conj:faballk}]
For any number field $k$, the answer to Question \ref{quest:vsa} is yes for $(\fab,f,k)$ 
\end{conj}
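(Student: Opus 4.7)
The plan is to extend the proof strategy of Theorem \ref{thm:fabsomek} from totally real and imaginary quadratic fields to all number fields, using the section conjecture in anabelian geometry as an additional input. In Part \ref{part:embeddings} the authors present two approaches for the restricted case: one unconditional and one assuming the section conjecture. The unconditional approach appears to depend on features specific to totally real and imaginary quadratic fields (the simple archimedean and unit-group structure exploited in \cite{LiuXu15}), so the natural route to the full conjecture is to sharpen the conditional approach so that it applies uniformly over all number fields.

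First, by Lemma \ref{lemma:combined2}(\ref{item:embeds_vsa}) and noetherian induction on dimension, I reduce to the case that $X$ is smooth geometrically connected: any $X$ admits a finite stratification into such pieces, and VSA for $(\fab,f,k)$ passes to locally closed subvarieties. Next, I cover $X$ by Artin good neighborhoods $U_i$, each an iterated elementary fibration whose fibers are smooth affine hyperbolic curves. The authors already exploit this structure in Section \ref{newsec:sec_conj_fib} to prove VSA for $(\fcov,f,k)$ conditional on the section conjecture, so the task is to upgrade that conclusion from $\fcov$ to $\fab$. To do so, I would propagate VSA through the tower of fibrations using an analogue of Theorem \ref{thm:vsa_geom_fib} tailored to $(\fab,f,k)$, reducing to the base case of VSA for $(\fab,f,k)$ on a single smooth hyperbolic curve.

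The main obstacle is that section-conjecture inputs naturally produce VSA for $(\fcov,f,k)$ rather than $(\fab,f,k)$, because the former corresponds to sections of the full fundamental exact sequence while the latter only sees its maximal abelian quotient. Closing the gap requires either (a) proving that for hyperbolic curves over a number field $k$, $(\fab,f,k)$-VSA coincides with $(\fcov,f,k)$-VSA — a strong local-to-global comparison of abelian and non-abelian sections which goes beyond the section conjecture itself — or (b) finding a direct class-field-theoretic route to $\fab$-VSA that bypasses anabelian geometry. Over totally real and imaginary quadratic fields, the unit-group and local-structure arguments of \cite{LiuXu15} realize (b), but these do not obviously generalize once the unit rank and the number of real places grow. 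Thus the hard part will be bridging the anabelian-to-abelian gap in a way that is robust enough to feed into the Artin-good-neighborhood induction over an arbitrary number field.
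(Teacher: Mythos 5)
This statement is Conjecture~\ref{conj:faballk}, which the paper explicitly presents as an open conjecture and does \emph{not} prove. There is therefore no ``paper's own proof'' to compare against; the best the paper offers is supporting evidence. Your write-up is an honest strategy sketch rather than a proof, and in fact you correctly diagnose why it remains open, but the two things you propose are not what the paper does, and neither is executed.

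What the paper actually provides as evidence: (1) an analogue of the torus situation, namely that $(\Pb^1\setminus\{0,1,\infty\})(k)$ is closed in $(\Pb^1\setminus\{0,1,\infty\})(\Ab_k^f)$ by Siegel's theorem, which is exactly the property that made the torus argument in Lemma~\ref{lemma:fabtori} and Lemma~\ref{lemma:finite_units_vsa} go through for small unit rank; and (2) a strictly weaker conditional result --- that Question~\ref{quest:emptiness} (not Question~\ref{quest:vsa}) has an affirmative answer for $\fab$ over any number field --- assuming \cite[Conjecture~2]{harvol09}. The paper does not attempt to derive $\fab$-VSA from the section conjecture at all, precisely because the section conjecture lands in $\fcov$. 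Your diagnosis of that $\fcov$-vs-$\fab$ gap is correct and is exactly why the paper downgrades to a conjecture here, while Corollary~\ref{cor:fcovallk} and Corollary~\ref{cor:fcovallk2} are stated only for $\fcov$.

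Two more substantive issues with the proposed route. First, the paper's mechanism for converting a single VSA open subvariety of $\Pb^1$ into an open cover of an arbitrary $X$ is Definition/Theorem~\ref{dfthm:A} plus Theorem~\ref{thm:vsacover} (the $A(\omega,S,k)$ property and the $(\PGL_2)^n$-translation argument), not the Artin-good-neighborhood/fibration induction of Section~\ref{newsec:sec_conj_fib}; the latter is an \emph{alternative} proof of the $\fcov$ result (Corollary~\ref{cor:fcovallk2}), not the route the paper envisages for $\fab$. Second, Theorem~\ref{thm:vsa_geom_fib} is proved for the $\'etale$ homotopy obstruction $h$ (equivalently $\'et$-Br on geometrically connected varieties); there is no $\fab$-analogue in the paper, and producing one is not routine --- the descent spectral sequence and fibration arguments there live naturally at the level of the full homotopy type, not its abelianization. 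So ``propagate VSA through the tower of fibrations using an analogue of Theorem~\ref{thm:vsa_geom_fib} tailored to $(\fab,f,k)$'' is a genuinely missing ingredient, not a deferred detail. In short: your reading of the obstacle is right, but the proposal as written neither proves the conjecture nor reconstructs the paper's (conditional, weaker) supporting result.
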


As progress toward this conjecture, we prove the following conditional result:

\begin{thm}[Corollary \ref{cor:fcovallk}]\label{thm:fcovallk}
Assuming Grothendieck's section conjecture (\ref{c:gro2}), the answer to Question \ref{quest:vsa} is yes for $(\fcov,S,k)$, where $S$ is a nonempty set of finite places.
\end{thm}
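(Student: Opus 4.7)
The plan is to combine Artin's theorem on good neighborhoods with the homotopy formulation of Grothendieck's section conjecture developed in Section \ref{newsec:sec_conj_fib}. I argue by noetherian induction on $\dim X$. Since a stratification of an open subset $U \subseteq X$ together with a stratification of the complement $X \setminus U$ is again a stratification of $X$, it suffices to produce a nonempty Zariski open subset of $X$ that itself admits a stratification into pieces with VSA for $(\fcov,S,k)$; after replacing $X$ by its smooth locus (the singular locus has strictly smaller dimension and is handled by the inductive hypothesis), I may assume $X$ is smooth and geometrically connected.

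Next, I invoke Artin's theorem: any smooth $k$-variety can be covered Zariski-openly by finitely many \emph{good neighborhoods}, each admitting a tower of elementary fibrations $U = X_d \to X_{d-1} \to \cdots \to X_0 = \spec k$ whose fibers are smooth affine hyperbolic curves. For each good neighborhood $U$ I induct on $d$, the length of the tower. The base case $d=0$ is trivial, and for the inductive step the key ingredient is a fibration theorem for $\fcov$ of the same flavor as Theorem \ref{thm:vsa_geom_fib}: if the base $X_{d-1}$ satisfies VSA for $(\fcov,S,k)$ (which holds by the inductive hypothesis on $d$) and every $k$-fiber of $X_d \to X_{d-1}$ satisfies VSA for $(\fcov,S,k)$, then so does $X_d$. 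This fibration statement is proved by translating $\fcov$, via the alternative description of Section \ref{sec:altdescr}, into compatible systems of sections of the profinite $\pi_1$-sequence and then showing, via the homotopy-section formalism of Section \ref{newsec:sec_conj_fib}, that such sections descend along fibrations with $K(\pi,1)$ geometric fibers --- precisely the setting of smooth affine hyperbolic curves.

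It remains to verify the base case, namely that a smooth affine hyperbolic curve $C$ over $k$ satisfies VSA for $(\fcov,S,k)$. This is where Grothendieck's section conjecture is applied directly: an element of $C(\Ab_{k,S})^{\fcov}$ yields, again by Section \ref{sec:altdescr}, a compatible adelic system of sections of the fundamental exact sequence of $C$, and the (homotopy form of the) section conjecture forces such an adelic system to come from a genuine global section, which in turn corresponds to a $k$-point of $C$. The main obstacle I expect is the fibration theorem for $\fcov$ itself --- specifically, proving that compatibility of homotopy sections with elementary fibrations is strong enough to combine VSA on base and fibers into VSA on the total space. Once the machinery of Section \ref{newsec:sec_conj_fib} is in place, however, the two nested inductions (on $\dim X$ for the reduction to good neighborhoods, and on $d$ along each tower) close formally.
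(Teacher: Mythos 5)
Your route is essentially the paper's \emph{alternative} proof of this theorem, Corollary \ref{cor:fcovallk2}: cover by Artin good neighborhoods and propagate the section conjecture up the tower of elementary fibrations via the homotopy machinery of Section \ref{newsec:sec_conj_fib}. The proof the theorem's label actually points to (Corollary \ref{cor:fcovallk}) is different: it shows $A(\fcov,S,k)$ by proving $\Pb^1\setminus\{0,1,\infty\}$ is VSA (Proposition \ref{prop:sec_conj_vsa}), then invokes Theorem \ref{thm:vsacover}, which embeds an affine chart of $X$ in $(\Pb^1)^n$ and moves it into a VSA open via a $\PGL_2(k)^n$-translation (Lemma \ref{denseorbit}). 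That route needs no smoothness or geometric connectedness hypothesis and yields an open cover directly, with none of the noetherian-induction bookkeeping your plan requires.

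There is a genuine gap in the inductive step along the tower. The fibration statement you formulate --- ``base $X_{d-1}$ VSA for $(\fcov,S,k)$ and all $k$-fibers VSA for $(\fcov,S,k)$ implies $X_d$ VSA'' --- is not true as stated and is not what the paper proves. Even after you conclude $f(\alpha)=a\in X_{d-1}(k)$ for an adelic point $\alpha\in X_d(\Ab_{k,S})^{\fcov}$, there is no a priori reason that $\alpha$, regarded as an adelic point of the fiber $X_a$, lies in $X_a(\Ab_{k,S})^{\fcov}$: the fiber carries torsors that do not extend to $X_d$, so $X_a(\Ab_{k,S})^{\fcov}$ can be strictly smaller than $X_a(\Ab_{k,S})\cap X_d(\Ab_{k,S})^{\fcov}$. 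The invariant that actually propagates along elementary fibrations is the injectivity \emph{and} surjectivity in the homotopy section conjecture together with the attached cuspidal data (Theorem \ref{hthm:fib_inj}, feeding Lemma \ref{hcor:hgn_inj_sec_conj}); this is strictly stronger than VSA, and VSA for $(\fcov,S,k)$ is only extracted at the end by re-running the argument of Proposition \ref{prop:sec_conj_vsa} on the resulting \'etale $K(\pi,1)$, using \cite[Theorem 2.1]{harsx12} to identify $\fcov$-points with adelic $\pi_1$-sections. You gesture at this in your last sentence of the fibration paragraph, but the theorem you state does not close the induction; you need to carry the full section-conjecture package, not VSA, up the tower. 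A smaller point: ``replace $X$ by its smooth locus, then assume $X$ smooth and geometrically connected'' skips a step, since the smooth locus need not have geometrically connected components. This is fixable --- a connected smooth $k$-variety that is not geometrically connected maps to $\Spec L$ for a nontrivial finite extension $L/k$, and $(\Spec L)(\Ab_{k,S})^{\fab}=\emptyset$ by \cite[Theorem 8.2]{sto07} (cf.\ Lemma \ref{lemma:vsa_dim_0}) plus Lemma \ref{lemma:combined1}(\ref{item:change_places}) --- but it should be said.
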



The primary method of proof consists in reducing the problem to proving VSA for an open subset of $\Pb^1$, a condition on $\omega$ and $k$ we call $A(\omega,f,k)$. Theorem \ref{thm:vsacover} then tells us in general that this answers Question \ref{quest:vsa} (and therefore Question \ref{quest:emptiness}) affirmatively. This is described in Part \ref{part:embeddings}.

\begin{rem}
As described in the introduction, our results were motivated by the search for a new obstruction. Here, we explain how the aforementioned results may be rephrased as results about a new obstruction.

Let $\Ccr$ be a collection of finite collections of locally closed subvarieties (such as the collection of all finite open covers of $X$, $\mathcal{OPEN}$, or the collection of all finite stratifications{We are defining a finite stratification of $X$ as a finite partially-ordered index set $I$ along with a locally closed subset $S_i \subseteq X$ for every $i \in I$ such that $X$ is the disjoint union of all $S_i$, and the closure of any given $S_i$ in $X$ is the union $\bigcup_{j \le i} S_j$.}, $\mathcal{STRAT}$). We may then set
$$
X(\Ab_{k,S})^{\Ccr, \omega} = \bigcap_{\Xcr \in \Ccr} \bigcup_{X_i \in \Xcr} X_i(\Ab_{k,S})^{\omega}.
$$

We can then rephrase Theorem \ref{thm:vsacover} as saying that if $A(\omega,S,k)$ holds, then 
\[X(k) = X(\Ab_{k,S})^{\mathcal{OPEN}, \omega}.\]

This is discussed in more detail in \cite[Section 4.3.1]{corwinthesis}.
\end{rem}

\begin{rem}
It is interesting to consider the constructions in this subsection applied when $\omega$ is such that $X(\Ab_k)^{\omega} = X(\Ab_k)$ for all $X$. This is discussed in \cite[Appendix A]{corwinthesis}.
\end{rem}

\begin{rem}
This refinement of obstructions via open covers is related to the notion of cosheafification. This connection is discussed in \cite[Appendix B]{corwinthesis}.
\end{rem}


We now explain how Parts \ref{part:fibration} and \ref{part:examples} relate to the aforementioned results. In Part \ref{part:fibration}, we introduce homotopy sections and analyze what happens to the \'etale Brauer-Manin obstruction in fibrations. We derive a result about the behavior of the \'etale-Brauer obstruction in fibrations, Theorem \ref{thm:vsa_geom_fib}. Our analysis of the section conjecture in fibrations also allows us to give a different proof of Theorem \ref{thm:fcovallk}, in Corollary \ref{cor:fcovallk2}. While this might seem logically unnecessary if the theorem is already proven, the second proof provides a possibly different open cover than that of the first proof.

In Part \ref{part:examples}, we review the original example of \cite{poo08} in terms of our results. The main results of that section are summarized in the introduction, and some of them are summarized at the beginning of Section \ref{sec:vsa_strat_open}, so we do not dwell on them here. As mentioned in the introduction, we also introduce the ramified \'etale-Brauer obstruction (Definition \ref{defn:ram_et_br}), which allows us to explicitly apply our stratified \'etale-Brauer obstruction to Poonen's example but in such a way that we compute the Brauer-Manin obstruction only for proper varieties.

\part{Main Result via Embeddings}\label{part:embeddings}

\section{General Setup}

We set up the basic formalism for the various unconditional and conditional results proven via embeddings.

\begin{dfthm}\label{dfthm:A}Let $(\omega,S,k)$ be an obstruction datum. The following statements are equivalent:
\begin{itemize}
\item[(i)] There is a nonempty open $k$-subscheme of $\Pb_k^1$ that is VSA for $(\omega,S,k)$.
\item[(ii)] There is a nonempty open $k$-subscheme of $\Ab_k^1$ that is VSA for $(\omega,S,k)$.
\item[(iii)] For each positive integer $n$, there is a nonempty open $k$-subscheme of $\Ab_k^n$ that is VSA for $(\omega,S,k)$.
\item[(iv)] For each positive integer $n$, there is a nonempty open $k$-subscheme of $\Pb_k^n$ that is VSA for $(\omega,S,k)$.

\end{itemize}

If this is the case, we say that the property $A(\omega,S,k)$ is true.

\end{dfthm}

\begin{proof}
If (i) holds, let this open subscheme be $U$. Then $V \colonequals U \cap \Ab^1_k$ is nonempty because any nonempty open is dense, and $V$ is then VSA by Lemma \ref{lemma:combined2}(\ref{item:embeds_vsa}), so (ii) holds. If (ii) holds, then $V^n$ is a nonempty open subscheme of $\Ab^n_k$ for all $n$, which is VSA by Lemma \ref{lemma:combined2}(\ref{item:embeds_vsa}), so (iii) holds. If (iii) holds, then the VSA nonempty open subscheme of $\Ab_k^n$ is also a VSA nonempty open subscheme of $\Pb_k^n$, so (iv) holds. Finally, (iv) implies (i) by setting $n=1$.
\end{proof}

We note that $\PGL_2(k)$ acts on $\Pb^1_k$ by $k$-automorphisms, and hence $(\PGL_2(k))^n$ acts on $(\Pb^1_k)^n$ in the same way.

\begin{lemma}\label{denseorbit}Let $k$ be an infinite field and $x$ a closed point of $(\Pb^1_k)^n$. Then the orbit of $x$ under $(\PGL_2(k))^n$ is Zariski dense in $(\Pb^1_k)^n$.\end{lemma}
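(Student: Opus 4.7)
The plan is to reduce to a single factor, handle that by exploiting the affine subgroup of $\PGL_2(k)$, and then multiply back up.

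First I would replace $x$ by a $\bar k$-point lying above it: pick $\tilde x = (\alpha_1, \ldots, \alpha_n) \in (\Pb^1(\bar k))^n$ mapping to $x$. The $\PGL_2(k)^n$-orbit of the closed point $x$ is exactly the set of closed points of $(\Pb^1_k)^n$ lying beneath the set $\PGL_2(k)^n \cdot \tilde x \subseteq (\Pb^1(\bar k))^n$, so the two have the same Zariski closure in $(\Pb^1_k)^n$. Thus it suffices to show that $\PGL_2(k)^n \cdot \tilde x$ is Zariski dense. Because the action is componentwise, this orbit factors as the product $\prod_{i=1}^n \bigl( \PGL_2(k) \cdot \alpha_i \bigr)$, so the problem splits into two pieces: (a) density of each $\PGL_2(k) \cdot \alpha_i$ in $\Pb^1_k$, and (b) the fact that a product of Zariski-dense subsets of the factors is dense in the product.

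For (a), I would note that (after conjugating by $z\mapsto 1/z$ if $\alpha_i = \infty$) the orbit contains all translates $\alpha_i + b$ for $b \in k$ via the affine subgroup $\{z\mapsto z+b\}\subseteq \PGL_2(k)$. These are pairwise distinct because $k$ is infinite, so $\PGL_2(k)\cdot\alpha_i$ contains infinitely many $\bar k$-points of $\Pb^1_k$. Any proper closed subscheme of $\Pb^1_k$ is zero-dimensional and therefore contains only finitely many $\bar k$-points, so the orbit is Zariski dense.

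For (b), I would give the standard slicing argument inductively. If $Z \subseteq X \times Y$ is closed and contains $A \times B$ where $A$ is dense in $X$ and $B$ is dense in $Y$, then for each $a \in A$ the fiber $Z \cap (\{a\}\times Y)$ contains $\{a\}\times B$, so by density of $B$ it equals $\{a\}\times Y$; hence $A \times Y \subseteq Z$, and a symmetric argument in the other variable gives $Z = X \times Y$. Iterating yields density of $\prod_i (\PGL_2(k)\cdot\alpha_i)$ in $(\Pb^1_k)^n$. The only real obstacle is a bookkeeping one, namely Step 1 (passing from the closed point to a $\bar k$-point and back); all of the remaining steps are elementary given that $k$ is infinite.
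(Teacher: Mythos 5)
Your proof is correct and follows essentially the same route as the paper's: reduce to a $\bar k$-point, handle $n=1$ by translations in the affine subgroup, then take a product of dense orbits. You are in fact a bit more careful than the paper in two places — you conjugate to avoid the degenerate case $\alpha_i=\infty$ (where translations fix the point), and you actually prove that a product of dense subsets is dense, whereas the paper asserts both silently.
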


\begin{proof}

We choose an algebraic closure $\cl{k}$ of $k$ and a point $\cl{x}$ of $(\Pb^1(\cl{k}))^n$ representing $x$. We equivalently wish to show that for any open $U \subseteq (\Pb^1_k)^n$, the orbit of $\cl{x}$ under $(\PGL_2(k))^n$ intersects $U(\cl{k})$.

We first prove this for $n=1$. The points $(\cl{x}+a)_{a \in k}$ of $\Pb^1$ are distinct. As there are infinitely many of them, and $\Pb^1$ is one-dimensional, they are Zariski dense. But these points are all in the orbit of $\cl{x}$, so this orbit is Zariski dense.

Let $\cl{x}=(x_1,\cdots,x_n)$ with $x_i \in \Pb^1_k(\cl{k})$, and let $S_i$ be the $\PGL_2(k)$-orbit of $x_i$. The previous paragraph implies that each $S_i$ is Zariski-dense in $\Pb^1(\cl{k})$, so $\prod_{i=1}^n S_i$ is dense in $(\Pb^1(\cl{k}))^n$.

A less detailed and less general version of this argument is given in \cite[Lemma 6.3]{schsx16}.

\end{proof}

\begin{thm}\label{thm:vsacover}
Suppose that $A(\omega,S,k)$ holds. Let $X$ be a $k$-variety. Then there exists a finite affine open cover $X = \bigcup_i V_i$ such that $V_i$ is VSA for $(\omega,S,k)$ for every $i$.
\end{thm}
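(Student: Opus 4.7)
The plan is to reduce to an affine open $W$ of $X$ embedded in $(\mathbb{P}^1_k)^n$, and then to translate a fixed VSA open around by the $(\mathrm{PGL}_2(k))^n$-action until the translates cover $W$. Concretely, I first cover $X$ by finitely many affine opens $W_1, \dots, W_m$ (standard for any finite-type scheme over a field), so it suffices to produce a finite affine VSA cover of each $W_j$ separately. Fixing one $W = W_j$, I choose a closed immersion $W \hookrightarrow \mathbb{A}^n_k$ for some $n$ and compose with the open immersion $\mathbb{A}^n_k = (\mathbb{A}^1_k)^n \hookrightarrow (\mathbb{P}^1_k)^n$ to regard $W$ as a locally closed subvariety of $(\mathbb{P}^1_k)^n$.

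Next I use the hypothesis $A(\omega,S,k)$ to produce a VSA open whose translates cover everything. Using condition (i) of \ref{dfthm:A}, fix a nonempty VSA open $U \subseteq \mathbb{P}^1_k$. By iterated application of Lemma~\ref{lemma:combined2}(\ref{item:prod_vsa}), $U^n \subseteq (\mathbb{P}^1_k)^n$ is VSA. For each $g \in (\mathrm{PGL}_2(k))^n$, the translate $g \cdot U^n$ is a $k$-isomorphic open subvariety of $(\mathbb{P}^1_k)^n$, so it too is VSA. The key claim is that $(\mathbb{P}^1_k)^n = \bigcup_{g} g \cdot U^n$: since both sides are open and $(\mathbb{P}^1_k)^n$ is Jacobson, it is enough to check that every closed point lies in some translate, which follows immediately from Lemma~\ref{denseorbit} (the orbit of any closed point is Zariski dense and hence meets the nonempty open $U^n$). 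This density step is the only substantive ingredient; the rest is formal bookkeeping.

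Finally, restricting to $W$ gives an open cover $W = \bigcup_g (g \cdot U^n) \cap W$, and quasi-compactness of $W$ yields a finite subcover $W = \bigcup_{l=1}^{L} (g_l \cdot U^n) \cap W$. Each piece $(g_l \cdot U^n) \cap W$ is a subvariety of the VSA open $g_l \cdot U^n$, hence VSA by Lemma~\ref{lemma:combined2}(\ref{item:embeds_vsa}). Because it is open in the affine scheme $W$, it is a union of principal affine opens of $W$; quasi-compactness reduces this to finitely many, and each such principal open is affine, open in $X$, and again VSA by Lemma~\ref{lemma:combined2}(\ref{item:embeds_vsa}) (now applied to its inclusion into $(g_l \cdot U^n) \cap W$). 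Aggregating these principal opens over $l$ and $j$ produces the desired finite affine open cover of $X$ by VSA subvarieties. The only nontrivial point is the covering claim in the middle paragraph; once Lemma~\ref{denseorbit} is applied, the remaining arguments are routine.
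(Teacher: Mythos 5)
Your proof is correct and follows essentially the same route as the paper's: reduce to affine pieces, embed in $(\mathbb{P}^1_k)^n$, obtain a VSA open of $(\mathbb{P}^1_k)^n$ from $A(\omega,S,k)$, and use Lemma~\ref{denseorbit} together with Lemma~\ref{lemma:combined2}(\ref{item:embeds_vsa}) and quasi-compactness to conclude. The only cosmetic differences are that you build the VSA open of $(\mathbb{P}^1_k)^n$ as $U^n$ via Lemma~\ref{lemma:combined2}(\ref{item:prod_vsa}) rather than invoking Definition/Theorem~\ref{dfthm:A} directly, and you argue globally (translates of $U^n$ cover $(\mathbb{P}^1_k)^n$) rather than pointwise (for each closed point, choose $g$ moving it into $U$) — these are equivalent.
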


\begin{proof}
Let $x$ be any closed point. By the definition of a scheme, there is an affine open neighborhood of $X$ containing $x$, so we can assume that $X$ is affine.

We now embed $X$ into $\Ab^n$ for some sufficiently large $n$. As $\Ab^1$ is an open subscheme of $\Pb^1$, we have an open inclusion $\Ab^n = (\Ab^1)^n \hookrightarrow (\Pb^1)^n$, so we get an embedding $\phi:X \hookrightarrow (\Pb^1)^n$.

Assuming $A(\omega,S,k)$, there is an open subset $U \subseteq (\Pb^1)^n$ that is VSA for $(\omega,S,k)$. By Lemma \ref{denseorbit}, there is a $k$-automorphism $g$ of $(\Pb^1)^n$ sending $\phi(x)$ into $U$. Then $U$ contains $g(\phi(x))$. But then $U \cap g(\phi(X))$ is an open subscheme of $X$ containing $x$ and is a locally closed subscheme of $U$. It is therefore VSA for $(\omega,S,k)$ by Lemma \ref{lemma:combined2}(\ref{item:embeds_vsa}). Choose an affine open neighborhood $V_x$ of $x$ in $U \cap g(\phi(X))$, and $V_x$ is again VSA.

We do this for every $x$, and we obtain an affine open cover $X = \bigcup_{x \in X} V_x$ such that $V_x$ is VSA for $(\omega,S,k)$ for every $x$. As varieties are quasi-compact, this has a finite subcover, which proves the theorem.
\end{proof}

\begin{cor}Let $X$ be a $k$-variety with $X(k)=\emptyset$, and suppose $A(\omega,S,k)$. Then there exists a finite open cover of $X$ such that each constituent open set has empty obstruction set for $(\omega,S,k)$. The same is true for any other nonempty set $S'$ of places of $k$.\end{cor}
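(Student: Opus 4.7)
The plan is to apply Theorem \ref{thm:vsacover} directly and then use that $X(k) = \emptyset$ forces emptiness of the obstruction sets, finishing with the place-change lemma for the final sentence.

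First I would invoke Theorem \ref{thm:vsacover} with the hypothesis $A(\omega,S,k)$ to obtain a finite affine open cover $X = \bigcup_i V_i$ such that each $V_i$ satisfies VSA for $(\omega,S,k)$, i.e., $V_i(k) = V_i(\Ab_{k,S})^{\omega}$. Next, since $V_i \subseteq X$ and $X(k) = \emptyset$, we clearly have $V_i(k) = \emptyset$. Combining these two equalities gives $V_i(\Ab_{k,S})^{\omega} = \emptyset$ for every $i$, which is exactly the conclusion for the set of places $S$.

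For the last sentence, the point is that emptiness of the obstruction set is independent of the choice of nonempty set of places. Concretely, Lemma \ref{lemma:combined1}(\ref{item:change_places}) asserts that for any nonempty sets of places $S$ and $S'$ of $k$, one has $V_i(\Ab_{k,S})^{\omega} = \emptyset$ if and only if $V_i(\Ab_{k,S'})^{\omega} = \emptyset$. Applying this to each $V_i$ in turn transfers the conclusion from $S$ to an arbitrary nonempty $S'$, yielding the second statement without any new geometric input.

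There is no real obstacle here; the statement is essentially a bookkeeping corollary of Theorem \ref{thm:vsacover} together with Lemma \ref{lemma:combined1}(\ref{item:change_places}). The only thing to be slightly careful about is that VSA is defined using $X(\Ab_{k,S})^{\omega}$ rather than $X(\Ab_k)^{\omega}$, so one must make sure that the finite open cover produced by Theorem \ref{thm:vsacover} is used with the same $S$ as in the hypothesis $A(\omega,S,k)$ before invoking the place-change lemma in the final step.
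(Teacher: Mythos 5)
Your proof is correct and follows essentially the same route as the paper: invoke Theorem \ref{thm:vsacover} to get a finite affine open cover by VSA varieties, observe $V_i(k)=\emptyset$ forces $V_i(\Ab_{k,S})^{\omega}=\emptyset$, then transfer to $S'$ via Lemma \ref{lemma:combined1}(\ref{item:change_places}).
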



\begin{proof}
Let $\bigcup_i V_i$ be an open cover as in Theorem \ref{thm:vsacover}. As $X(k)=\emptyset$, we have $V_i(k)=\emptyset$. By VSA, this implies $V_i(\Ab_{k,S})^{\omega}=\emptyset$. By Lemma \ref{lemma:combined1}(\ref{item:change_places}), we also have $V_i(\Ab_{k,S'})^{\omega}=\emptyset$ for every $i$.
\end{proof}





\section{Embeddings of Varieties into Tori}\label{sec:prod_tori}

For the rest of Section \ref{sec:prod_tori}, we assume that $k$ is a number field.

\begin{lemma}\label{lemma:fabtori}
Let $T$ be an algebraic torus over $k$, let $S$ be a nonempty set of places of $k$, and let $\alpha \in T(\Ab_{k,S})$. Consider the following statements:
\begin{enumerate}
    \item[(i)] $\alpha \in T(\Ab_{k,S})^{\fab}$.
    \item[(ii)] For every $n \in \Zb_{\ge 0}$, there exists $a_n \in T(k)$ and $\beta_n \in T(\Ab_{k,S})$ for which $\alpha = a_n (\beta_n)^n$.
    \item[(iii)] $\alpha \in \cl{T(k)}$, with the closure taken in $T(\Ab_{k,S})$.
\end{enumerate}
Then (i) implies (ii), and if $S$ consists only of finite places of $k$, then (ii) implies (iii).
\end{lemma}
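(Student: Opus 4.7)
The plan is to exploit the Kummer sequence $0 \to T[n] \to T \xrightarrow{[n]} T \to 0$ along with the classical finiteness of $\Sha^1(k,T)$ for a torus over a number field.

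For (i)$\Rightarrow$(ii): Lift $\alpha$ to $\tilde\alpha \in T(\Ab_k)^{\fab}$ using the definition of projection. Fix $n \ge 1$ and let $m$ be a multiple of the exponent of $\Sha^1(k,T)$. Applying the $\fab$ obstruction to the $T[mn]$-torsor $[mn]\colon T \to T$ yields $\tau_{mn} \in H^1(k, T[mn])$ whose image in $\prod_v H^1(k_v, T[mn])$ equals the Kummer class of $\tilde\alpha$. Pushing forward along the map $[m]_*\colon H^1(k, T[mn]) \to H^1(k, T[n])$ induced by $[m]\colon T[mn] \to T[n]$, I define $\tau_n := [m]_*\tau_{mn}$. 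Naturality of Kummer connecting homomorphisms for the morphism of exact sequences with middle and rightmost vertical maps $[m]$ and $\mathrm{id}$ on $T$ gives that $\loc(\tau_n)$ equals the Kummer class of $\tilde\alpha$ in $\prod_v H^1(k_v, T[n])$. The key claim is that the image $\bar\tau_n \in H^1(k,T)$ vanishes: commutativity of the square with top arrow $[m]\colon T[mn] \to T[n]$, vertical inclusions into $T$, and bottom arrow $[m]\colon T \to T$ yields $\bar\tau_n = m\cdot \bar\tau_{mn}$, and $\bar\tau_{mn}$ lies in $\Sha^1(k,T)$ because each of its localizations comes from a point of $T(k_v)$ via Kummer and thus maps to zero in $H^1(k_v, T)$. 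By choice of $m$, we have $m\cdot \bar\tau_{mn}=0$, so $\bar\tau_n=0$. Thus $\tau_n$ lies in the image of $T(k)/nT(k) \hookrightarrow H^1(k, T[n])$, say $\tau_n = \partial(a_n)$ with $a_n \in T(k)$ and $\partial$ the Kummer connecting map. Matching local Kummer data gives $\tilde\alpha \cdot a_n^{-1} \in nT(\Ab_k)$, with the adelic condition on the $n$-th root supplied by Hensel's lemma at good-reduction places away from $n$; projecting to $T(\Ab_{k,S})$ gives (ii).

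For (ii)$\Rightarrow$(iii) when $S$ consists only of finite places: This is a form of Chevalley's theorem for algebraic tori, which asserts that for a torus $T$ over a number field, $\cl{T(k)}$ in $T(\Ab_{k,S})$ coincides with $\bigcap_n T(k) \cdot T(\Ab_{k,S})^n$. Hypothesis (ii) places $\alpha$ in this intersection, hence in $\cl{T(k)}$. The restriction that $S$ is non-Archimedean is essential because at Archimedean $v$, the $n$-th power subgroups $nT(k_v)$ do not shrink down to the identity (for example, $(\Rb^*)^n = \Rb_{>0}$ for every even $n$, regardless of how large $n$ is).

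The principal obstacle is (i)$\Rightarrow$(ii): a direct application of the $\fab$ obstruction to the $[n]$-torsor need not produce a class in $H^1(k, T[n])$ that descends to $T(k)/nT(k)$, since its image in $H^1(k,T)$ can be a nonzero element of $\Sha^1(k,T)$. The trick of enlarging to the $[mn]$-torsor and pushing forward circumvents this obstruction and relies essentially on the finiteness of $\Sha^1(k,T)$.
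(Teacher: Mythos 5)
Your proof is correct, and in the implication (i) $\Rightarrow$ (ii) it is more careful than the paper's and closes a genuine gap. The paper applies the $\fab$-obstruction to the torsor $[n]\colon T \to T$ and directly asserts that the local Kummer classes of $\alpha$ come from $T(k)/nT(k)$. But the obstruction only produces a class $\tau \in H^1(k,T[n])$ whose localizations match those of $\alpha$; its image $\bar\tau \in H^1(k,T)$ is locally trivial, hence lies in $\Sha^1(k,T)[n]$, and need not vanish, so $\tau$ need not descend to $T(k)$. Your trick of passing to $T[mn]$ with $m$ annihilating $\Sha^1(k,T)$ and pushing forward along $[m]\colon T[mn] \to T[n]$ exactly kills this obstruction ($\bar\tau_n = m\cdot\bar\tau_{mn} = 0$), so the resulting class does descend. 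For the two applications the paper actually makes (to $\Gb_m$, where Hilbert 90 gives $H^1(k,\Gb_m)=0$, and to the norm-one torus of a quadratic extension, where the Hasse norm theorem gives $\Sha^1(k,T)=0$), the gap is harmless, but the lemma as stated for general $T$ needs an argument like yours. Your proof of (ii) $\Rightarrow$ (iii) runs essentially along the same lines as the paper's: the paper deduces directly from \cite[Theorem 5.1]{borel63} (finiteness of $T(\Ab_{k,S})/T(k)K$ for open subgroups $K$) that each basic neighborhood of $\alpha$ meets $T(k)$, and your Archimedean remark matches the paper's. I would cite Borel directly rather than appeal to ``Chevalley's theorem for algebraic tori,'' which is not a standard name for this finiteness statement.
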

\begin{proof}

(i) $\implies$ (ii): For every $n$, there is a standard torsor $T \xrightarrow{[n]} T$ under the $n$-torsion scheme $T[n]$ of $T$ given by the $n$th power map $T \to T$. The Kummer map is the map sending $x \in T(k)$ to the pullback of $T \xrightarrow{[n]} T$ under $x$, the result of which is a torsor under $T[n]$ over $k$. In this case, it is given by the boundary map $T(k) \to H^1(k,T[n])$ in Galois cohomology coming from the short exact sequence $0 \to T[n] \to T \to T \to 0$ of \'etale sheaves over $\Spec{k}$. In particular, the image of the Kummer map is canonically $T(k)/nT(k)$. The same holds with $k_v$ in place of $k$, and all maps respect the inclusions $k \to k_v$. If $\alpha \in T(\Ab_{k,S})^{\fab}$, it must be in the image of $$T(k)/nT(k) \to \prod_{v \in S} T(k_v)/nT(k_v),$$ which amounts to saying that $\alpha = a_n (\beta_n)^n$ for $\beta_n \in \prod_{v \in S} T(k_v)$. As $a_n,\alpha \in T(\Ab_{k,S})$, so is $\beta_n$.

(ii) $\implies$ (iii): Let $K$ be any open subgroup of $T(\Ab_{k,S})$.
By \cite[Theorem 5.1]{borel63} (also c.f. \cite{conrad06})\footnotemark, $T(\Ab_{k,S})/T(k)K$ is finite, say of order $h$.  We know that $\alpha = a_h (\beta_h)^h$. But $(\beta_h)^h \in T(k)K$, and therefore so is $\alpha$. In other words, the coset $\alpha K$ contains an element of $T(k)$. The set of all such $K$ and their cosets form a basis for the topology of $T(\Ab_{k,S})$ because $S$ contains only finite places. It follows that every open neighborhood of $\alpha$ contains an element of $T(k)$, or $\alpha \in \cl{T(k)}$.
\footnotetext{In fact, \cite{borel63} covers only the case that $\Ab_k = \Ab_{k,S}$, but the result holds in general by considering the continuous projection $\Ab_k \to \Ab_{k,S}$.}


\end{proof}

\begin{lemma}\label{lemma:finite_units_vsa}
Let $T$ be an algebraic torus over $k$ and $K$ an open subgroup of $T(\Ab_{k,S})$ such that $T(k) \cap K$ is finite. Then $T(k)$ is closed in $T(\Ab_{k,S})$. For example, this happens for $S=f$ if $T$ has a model $\Tc$ over $\Oc_k$ with $\Tc(\Oc_k)$ finite.
\end{lemma}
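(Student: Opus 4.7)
The plan is to show that $T(k)$ is closed in the Hausdorff topological group $G := T(\Ab_{k,S})$ by exploiting both the openness of $K$ and the finiteness of $F := T(k) \cap K$. The key idea is to localize around each coset of $K$: since $F$ is finite, its translates will be closed, and this will be enough once we reduce to a clopen subset containing $T(k)$.

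First I would observe that since $K$ is open in $G$, the quotient $G/K$ is a discrete topological space. In particular the image of $T(k)$ in $G/K$ is (trivially) clopen, so pulling back through the quotient map shows that $T(k)K$ is a clopen subset of $G$. It therefore suffices to prove that $T(k)$ is closed in $T(k)K$.

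Next I would analyze $T(k)K$ coset by coset. It decomposes as a disjoint union of open cosets $hK$ as $h$ ranges over a set of representatives for $T(k)/F$, and for each such $h$ one has $hK \cap T(k) = hF$, which is finite, hence closed in the Hausdorff group $G$. Consequently each $hK \setminus hF$ is open in $G$, and
\[
T(k)K \setminus T(k) \;=\; \bigsqcup_{h \in T(k)/F} (hK \setminus hF)
\]
is an open subset of $G$. Combined with the clopenness of $T(k)K$ from the previous step, this shows that $T(k)$ is closed in $G$.

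For the final sentence, when $S$ is the set of finite places and $\Tc$ is an $\Oc_k$-model of $T$, I would take $K = \prod_v \Tc(\Oc_v)$ inside $T(\Ab_k^f)$. This $K$ is open since each factor is an open subgroup of $T(k_v)$ for $v$ finite, and $T(k) \cap K = \Tc(\Oc_k)$ by the very definition of integral points, which is finite by hypothesis. Thus the main hypothesis of the lemma applies. The only potentially subtle step is the coset bookkeeping in the middle paragraph, but this is a purely group-theoretic observation (distinct cosets $hK$ with $h \in T(k)$ correspond bijectively to elements of $T(k)/F$), and no step presents a real obstacle.
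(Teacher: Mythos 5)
Your proof is correct and is essentially the same argument as the paper's: both exploit that $K$ is open so its cosets form a partition of $T(\Ab_{k,S})$ into clopen pieces, that the intersection of $T(k)$ with any one coset is finite (hence closed in the Hausdorff group), and then conclude that $T(k)$ is closed globally; the paper states this more tersely without singling out $T(k)K$, but the underlying decomposition and the treatment of the final sentence (taking $K = \prod_v \Tc(\Oc_v)$ and noting $T(k)\cap K = \Tc(\Oc_k)$) are identical.
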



\begin{proof}
As $T(k)$ is a subgroup, each coset of $K$ also has finite intersection with $T(k)$. The topological space $T(\Ab_{k,S})$ is the disjoint union of the cosets of $K$. Since finite sets are closed, $T(k)$ is closed in each coset of $K$. This implies that it is closed in all of $T(\Ab_{k,S})$.

By the definition of the adelic topology, the subgroup $$\Tc(\widehat{\Oc_k})  \colonequals  \prod_v \Tc(\Ov) \subseteq T(\Ab_k^f)$$ is open. We can then conclude because $\Tc(\Oc_k) = \Tc(k) \cap \Tc(\widehat{\Oc_k})$.
\end{proof}

\subsection{The Result When \texorpdfstring{$k$}{k} is Imaginary Quadratic}\label{sec:qb_quad_imag}

In this subsection, we prove $A(\fab,f,k)$ when $k$ has finitely many units. While the result is essentially \cite[Corollary 3.10]{LiuXu15}, we have included it to show how it fits into the general formalism of Definition/Theorem \ref{dfthm:A}.

\begin{rem}By Dirichlet's unit theorem, $k$ has finitely many units if and only if $k$ is $\Qb$ or imaginary quadratic.\end{rem}

\begin{cor}\label{cor:fabsomek}For $k$ as above, $\Gb_m$ is VSA for $(\fab,f,k)$. In particular, $A(\fab,f,k)$ holds.\end{cor}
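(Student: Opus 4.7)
The plan is to combine the two preceding lemmas and then invoke the equivalence in Definition/Theorem \ref{dfthm:A}. By hypothesis, $k$ is either $\Qb$ or imaginary quadratic, so Dirichlet's unit theorem gives that $\Oc_k^\times = \Gb_m(\Oc_k)$ is finite. Taking $\Tc = \Gb_m$ as the natural smooth model over $\Oc_k$, this places us exactly in the situation of the ``for example'' clause of Lemma \ref{lemma:finite_units_vsa}, which will tell us that $\Gb_m(k) = k^\times$ is closed inside $\Gb_m(\Ab_k^f)$.

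Next, I would apply Lemma \ref{lemma:fabtori} to the torus $T = \Gb_m$ with $S$ equal to the set of finite places. Since $S$ consists only of finite places, the chain (i) $\Rightarrow$ (ii) $\Rightarrow$ (iii) in that lemma is available in full, so any $\alpha \in \Gb_m(\Ab_k^f)^{\fab}$ lies in the closure $\overline{\Gb_m(k)}$ taken in $\Gb_m(\Ab_k^f)$. Combining this with the closedness of $\Gb_m(k)$ established in the previous step yields $\alpha \in \Gb_m(k)$, and hence $\Gb_m(\Ab_k^f)^{\fab} \subseteq \Gb_m(k)$. The reverse inclusion is automatic from the fact that $\fab$ is a generalized obstruction (Lemma \ref{lemma:combined3}), so $\Gb_m$ is VSA for $(\fab, f, k)$.

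Finally, for the ``in particular'' clause, I would simply note that $\Gb_m$ is a nonempty open $k$-subscheme of $\Ab^1_k$ (the complement of the origin), so condition (ii) of Definition/Theorem \ref{dfthm:A} is satisfied, and therefore $A(\fab, f, k)$ holds.

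There is no real obstacle here, as both key ingredients have been prepared in the previous two lemmas; the only thing one must be careful about is tracking that ``finite units'' is precisely the condition that makes Lemma \ref{lemma:finite_units_vsa} applicable with $\Tc = \Gb_m$, and that the restriction to \emph{finite} places in Lemma \ref{lemma:fabtori}(ii) $\Rightarrow$ (iii) matches the obstruction datum $(\fab, f, k)$.
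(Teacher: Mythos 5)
Your proof is correct and follows exactly the same route as the paper's: Lemma \ref{lemma:finite_units_vsa} with $\Tc = \Gb_m$ gives closedness of $\Gb_m(k)$ in $\Gb_m(\Ab_k^f)$, Lemma \ref{lemma:fabtori} (with $S=f$) then gives the containment of the $\fab$-set in $\overline{\Gb_m(k)}$, and the ``in particular'' clause follows from Definition/Theorem \ref{dfthm:A}(ii) since $\Gb_m$ is a nonempty open in $\Ab^1_k$. You merely spell out the reverse inclusion and the $(ii)\Rightarrow(iii)$ restriction to finite places a bit more explicitly than the paper does.
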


\begin{proof}
That $k$ has finitely many units means that $\Gb_m(\Oc_k)$ is finite. By Lemma \ref{lemma:finite_units_vsa}, this implies that $\Gb_m(k)$ is closed in $\Gb_m(\Ab_k^f)$. By Lemma \ref{lemma:fabtori}, this implies that $\Gb_m(k) = \Gb_m(\Ab_k^f)^{\fab}$; i.e., $\Gb_m$ is VSA for $(\fab,f,k)$.
\end{proof}

\subsection{The Result When \texorpdfstring{$k$}{k} is Totally Real}

We now prove $A(\fab,f,F)$ when $k$ is totally real. The material in this subsection is inspired by \cite{sx12b}.

In fact, we can choose the subscheme of $\Pb^1_k$ as in Definition/Theorem \ref{dfthm:A}(i) to be the complement in $\Pb^1_k$ of the vanishing scheme of any quadratic polynomial over $k$ with totally negative discriminant.

\begin{defn}For $k$ a totally real number field and $E/k$ a totally imaginary quadratic extension, we define the \emph{norm one torus relative to $E/k$} $$T = T_{E/k} = \ker(\N{E}{k} \colon \res_{E/k}\Gb_m \to \Gb_m),$$ which is a group scheme over $k$.\end{defn}

\begin{prop}\label{prop:embedsnormone}
Let $\alpha \in E$ with minimal polynomial $t^2+bt+c$ over $k$. Let $U$ be the complement in $\Pb^1_k$ (with coordinate $t$) of the vanishing locus of $t^2+bt+c$. Then $U$ is isomorphic to $T$.\end{prop}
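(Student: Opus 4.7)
My plan is to give an explicit isomorphism of $k$-schemes between $T$ and $U$ by realizing $T$ as a smooth affine conic in $\Ab^2_k$, compactifying it to a smooth plane conic in $\Pb^2_k$ with a distinguished $k$-rational point, and then applying stereographic projection from that point to obtain a $k$-isomorphism with $\Pb^1_k$. Under this identification, the two deleted points at infinity will form a $k$-rational degree-$2$ closed subscheme of $\Pb^1_k$, which after a mild M\"obius correction equals $V(t^2+bt+c)$.

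I first choose the $k$-basis $\{1,\alpha\}$ of $E$, identifying $\res_{E/k}\Ab^1$ with $\Ab^2_k = \Spec k[x,y]$ via $(x,y) \leftrightarrow x+y\alpha$. Reading off $\alpha+\bar\alpha = -b$ and $\alpha\bar\alpha = c$ from $t^2+bt+c = (t-\alpha)(t-\bar\alpha)$, I compute the norm form $N(x+y\alpha)=(x+y\alpha)(x+y\bar\alpha)=x^2-bxy+cy^2$, so $T$ is cut out by $x^2-bxy+cy^2=1$ inside $\Ab^2_k$ and is a smooth affine conic. Its projective closure $\overline{T} = V(X^2-bXY+cY^2-Z^2) \subset \Pb^2_k$ is smooth since the discriminant $b^2-4c$ is nonzero (because $t^2+bt+c$ is irreducible over $k$), and the divisor at infinity $\overline{T} \cap \{Z=0\}$ consists of the two $k_s$-points $[-\alpha:1:0]$ and $[-\bar\alpha:1:0]$, namely the roots of $s^2-bs+c$ in $s=X/Y$.

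Since $\overline{T}$ has the visible $k$-point $P=[1:0:1]$, stereographic projection from $P$, explicitly given by $[X:Y:Z] \mapsto [X-Z:Y]$, furnishes a $k$-isomorphism $\psi\colon \overline{T} \xrightarrow{\sim} \Pb^1_k$. Under $\psi$, the two points at infinity map to $[-\alpha:1]$ and $[-\bar\alpha:1]$, so the image of $\overline{T} \setminus T$ is the degree-$2$ divisor $V(s^2-bs+c) \subset \Pb^1_k$. Composing $\psi$ with the $k$-automorphism $s \mapsto -s$ of $\Pb^1_k$ converts this divisor into $V(t^2+bt+c)$, yielding the desired isomorphism $T \xrightarrow{\sim} U$.

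The main piece of care is the sign bookkeeping in the last step: because $\alpha+\bar\alpha=-b$, the norm form and projective conic naturally involve $-b$ rather than $+b$, so direct projection produces $V(s^2-bs+c)$ rather than $V(s^2+bs+c)$. The harmless automorphism $s \mapsto -s$ of $\Pb^1_k$ corrects this; alternatively, starting with the basis $\{1,-\alpha\}$ of $E$ would give norm form $x^2+bxy+cy^2$ and produce the correct divisor directly, bypassing the substitution altogether.
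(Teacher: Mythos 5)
Your proof is correct and follows essentially the same route as the paper: realize $T$ as the affine conic cut out by the norm form, pass to its smooth projective closure, use the visible rational point to identify it with $\Pb^1_k$, and match up the degree-$2$ divisor at infinity with $V(t^2+bt+c)$. The only cosmetic difference is your choice of basis $\{1,\alpha\}$ (forcing the $s\mapsto -s$ correction), whereas the paper computes the norm of $x-y\alpha$ and so lands on $x^2+bxy+cy^2$ directly — exactly the alternative you point out in your last sentence.
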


\begin{proof}
As $E=k(\alpha)$, and the norm of $x-y\alpha$ is $x^2+bxy+cy^2$, we can express $T$ as $\spec{k[x,y]/(x^2+bxy+cy^2-1)}$. This has projective closure $\proj(k[x,y,z]/(x^2+bxy+cy^2-z^2))$, which is a smooth projective conic. This conic has a point $(x,y,z)=(1,0,1)$, so it is isomorphic to $\Pb^1_k$. The complement of $T$ is given by $0=z^2=x^2+bxy+cy^2$, which corresponds to the vanishing locus of $t^2+bt+c$ by setting $t=x/y$.
\end{proof}

\begin{prop}\label{prop:tisvsa}
For some integral model $\Tc$ of $T$, the set $\Tc(\Oc_k)$ is finite. Thus $T$ is VSA for $(\fab,f,k)$ by Lemma \ref{lemma:finite_units_vsa}.
\end{prop}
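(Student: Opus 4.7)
The plan is to exhibit an explicit integral model $\Tc$ and then identify $\Tc(\Oc_k)$ with the kernel of the norm map on units of $\Oc_E$, which we will show is finite by a rank count using Dirichlet's unit theorem.

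First, I would take the natural integral model
\[
\Tc \colonequals \ker\!\left(\Nm\colon \res_{\Oc_E/\Oc_k}\Gb_m \to \Gb_m\right),
\]
a group scheme over $\Oc_k$ whose generic fiber is $T = T_{E/k}$. Unwinding definitions, for any $\Oc_k$-algebra $R$,
\[
\Tc(R) = \{\beta \in (R \otimes_{\Oc_k} \Oc_E)^\times : \Nm_{E/k}(\beta) = 1\},
\]
and in particular $\Tc(\Oc_k) = \ker\bigl(\Oc_E^\times \xrightarrow{\Nm} \Oc_k^\times\bigr)$.

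Next I would carry out the rank count. Write $n = [k:\Qb]$. Since $k$ is totally real, Dirichlet's unit theorem gives $\rank_{\Zb} \Oc_k^\times = n - 1$. Since $E/k$ is a totally imaginary quadratic extension, $E$ has degree $2n$ over $\Qb$ with $r_1(E) = 0$ real places and $r_2(E) = n$ complex places, so $\rank_{\Zb} \Oc_E^\times = n - 1$ as well. Now for any $u \in \Oc_k^\times$ we have $\Nm_{E/k}(u) = u^2$, so $(\Oc_k^\times)^2 \subseteq \Nm(\Oc_E^\times)$. Since $(\Oc_k^\times)^2$ has finite index in $\Oc_k^\times$, the image $\Nm(\Oc_E^\times)$ has rank $n-1$, and therefore the kernel $\Tc(\Oc_k)$ has rank $(n-1) - (n-1) = 0$. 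A finitely generated abelian group of rank zero is finite, so $\Tc(\Oc_k)$ is finite.

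Finally, Lemma \ref{lemma:finite_units_vsa} applies directly: with $S = f$ and with $\Tc$ as above, the finiteness of $\Tc(\Oc_k)$ implies that $T(k)$ is closed in $T(\Ab_k^f)$. Combining this with Lemma \ref{lemma:fabtori}, which shows $T(\Ab_k^f)^{\fab} \subseteq \cl{T(k)}$ (taking the closure in $T(\Ab_k^f)$), we conclude $T(\Ab_k^f)^{\fab} = T(k)$, i.e.\ $T$ is VSA for $(\fab, f, k)$. The main conceptual point is the rank equality $\rank \Oc_E^\times = \rank \Oc_k^\times$ in the CM situation, which is what forces the norm-one units to be torsion; I expect no serious obstacle beyond bookkeeping, since everything else is supplied by Lemmas \ref{lemma:fabtori} and \ref{lemma:finite_units_vsa}.
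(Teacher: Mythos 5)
Your proposal is correct and follows essentially the same approach as the paper: same integral model, same observation that the norm restricted to $\Oc_k^\times$ is squaring (so the image of the norm has full rank), and the same rank comparison between $\Oc_E^\times$ and $\Oc_k^\times$ in the CM situation to conclude the kernel is finite. The only difference is cosmetic: you make the Dirichlet ranks $n-1$ explicit, whereas the paper phrases it more abstractly via "same number of Archimedean places implies same unit rank."
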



\begin{proof}
The torus $T$ has an integral model $\Tc = \ker(\N{\Oc_E}{\Oc_k} \colon \res_{\Oc_E/\Oc_k}\Gb_m \to \Gb_m)$. Thus $\Tc(\Oc_k)$ is the kernel of $\N{\Oc_E}{\Oc_k} \colon \Gb_m(\Oc_E) \to \Gb_m(\Oc_k)$.

The composition $\Gb_m(\Oc_k) \hookrightarrow \Gb_m(\Oc_E) \xrightarrow{\N{\Oc_E}{\Oc_k}} \Gb_m(\Oc_k)$ is $x \mapsto x^2$. Its image in the finitely generated abelian group $\Gb_m(\Oc_k)$ therefore has full rank, and therefore so does the image of $N \colon \Gb_m(\Oc_E) \to \Gb_m(\Oc_k)$. But $E$ and $k$ have the same number of Archimedean places, so $\Gb_m(\Oc_E)$ and $\Gb_m(\Oc_k)$ have the same rank. But a map between finitely generated abelian groups of the same rank whose image has full rank must have finite kernel. Therefore $\Tc(\Oc_k)$ is finite.
\end{proof}

\begin{rem}
One may alternatively prove Proposition \ref{prop:tisvsa} by noting that $\Ok$ is discrete in $k \otimes \Rb$ and that $T(k \otimes \Rb)$ is compact.
\end{rem}

\begin{cor}\label{cor:uisvsa}
If $U$ is the complement in $\Pb^1_k$ of the vanishing locus of a quadratic polynomial $t^2+bt+c$ with totally negative discriminant, then $U$ is VSA for $(\fab,f,k)$. In particular, $A(\fab,f,k)$ holds.
\end{cor}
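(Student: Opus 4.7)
The plan is to combine the two preceding propositions directly. First I would verify that a quadratic polynomial $t^2+bt+c \in k[t]$ with totally negative discriminant is irreducible over $k$: at each real place $v$, the polynomial $t^2+bt+c$ has negative discriminant as a real polynomial and hence no root in $k_v$; since $k$ embeds into any of its real completions, the polynomial has no root in $k$, so it is irreducible. Consequently $E \colonequals k[t]/(t^2+bt+c)$ is a quadratic extension of $k$, and it is totally imaginary precisely because $t^2+bt+c$ has no real roots at any real place.

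Now I would appeal to Proposition \ref{prop:embedsnormone}: letting $\alpha \in E$ be the image of $t$, the proposition identifies the complement $U$ in $\Pb^1_k$ of the vanishing locus of $t^2+bt+c$ with the norm-one torus $T = T_{E/k}$. Applying Proposition \ref{prop:tisvsa}, the torus $T$ is VSA for $(\fab,f,k)$. Since VSA is preserved under isomorphism (it is a condition on a functor of points), we conclude that $U$ itself is VSA for $(\fab,f,k)$.

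For the second assertion, I would verify condition (i) of Definition/Theorem \ref{dfthm:A} by exhibiting a single example. Since $k$ is totally real, the polynomial $t^2+1 \in k[t]$ has discriminant $-4$, which is negative at every real embedding; equivalently, $k(i)/k$ is a totally imaginary quadratic extension. The complement in $\Pb^1_k$ of the vanishing locus of $t^2+1$ is then VSA for $(\fab,f,k)$ by the first part, giving $A(\fab,f,k)$.

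The substantive work has already been done in Propositions \ref{prop:embedsnormone} and \ref{prop:tisvsa}; there is no real obstacle here beyond checking the discriminant condition translates into the totally imaginary hypothesis needed to invoke those propositions.
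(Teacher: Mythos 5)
Your proposal is correct and takes essentially the same route as the paper: apply Proposition \ref{prop:embedsnormone} to identify $U$ with the norm-one torus $T_{E/k}$ and then invoke Proposition \ref{prop:tisvsa}. You have merely filled in a few details the paper leaves implicit (that the totally negative discriminant makes $E$ a totally imaginary quadratic extension, and that a witness such as $t^2+1$ exists so that $A(\fab,f,k)$ follows from Definition/Theorem \ref{dfthm:A}(i)).
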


\begin{proof}
Let $E$ be the splitting field of $t^2+bt+c$. By Proposition \ref{prop:embedsnormone}, $U$ is isomorphic to $T_{E/k}$, so by Proposition \ref{prop:tisvsa}, $U$ is VSA for $(\fab,f,k)$.
\end{proof}

\section{Finite Descent and the Section Conjecture}

We now show how the section conjecture for $\Pb^1_k \setminus \{0,1,\infty\}$ implies $A(\fcov,f,k)$ for any number field $k$.

\subsection{Grothendieck's Section Conjecture}\label{sec:grosec_conj}


Following \cite{sxbook13}, we let $\seck{X}$ denote the set of fundamental group sections up to conjugation, and $\kappa \colon X(k) \to \seck{X}$ the profinite Kummer map, defined for any quasi-compact quasi-separated geometrically connected scheme over $k$. If $k'/k$ is an extension of characteristic $0$ fields, there is a base change map $\seck{X} \to \sections{X}{k'}$ compatible with the Kummer map.


As we often consider non-proper varieties, we need the notion of cuspidal section. For a smooth curve $U/k$, there is a set $C_U$ of cuspidal sections, which we now recall following \cite[Definition 248]{sxbook13}.

\begin{defn}\label{defn:cusp_curve}
Let $U/k$ be a smooth curve and $U \hookrightarrow X$ be its unique smooth compactification. Let $Y=X \setminus U$ and $y \in Y(k)$. We fix a Henselization $\Oc^h_{X,y}$ and set $X_y = \spec{\Oc^h_{X,y}}$. We let $U_y = X_y \times_X U$. Then
$$
C_U \colonequals \bigcup_{y \in Y(k)} \mathrm{Im}\left(\seck{U_y} \to \seck{U}\right) \subseteq \seck{U}.
$$
\end{defn}

\begin{rem}\label{rem:compat_cusp_curve}
Let $U/k$ be as in Definition \ref{defn:cusp_curve}, and let $k'/k$ be an extension of characteristic $0$ fields. Then the base change of a cuspidal section from $k$ to $k'$ is still cuspidal.
\end{rem}

The following collects some known results listed in \cite{sxbook13}:

\begin{prop}\label{prop:inj_sec_conj}
Let $k$ be a subfield of a finite extension of $\Qb_p$ and $U/k$ a geometrically connected, smooth hyperbolic curve or a proper genus $1$ curve. Then 
\begin{enumerate}
    \item\label{item:kappa_inj} The map $\kappa \colon U(k) \to \seck{U}$ is injective.
    \item\label{item:cusp_disjoint} The subsets $\kappa(U(k))$ and $C_U$ of $\seck{U}$ are disjoint.
\end{enumerate}\end{prop}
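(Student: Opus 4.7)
The plan is to deduce both assertions from standard facts in the theory of Grothendieck sections over $p$-adic local fields, as compiled in \cite{sxbook13}. I treat the two statements separately, and in each case handle the hyperbolic and the proper genus $1$ cases individually.

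For item (\ref{item:kappa_inj}), the hyperbolic case is Mochizuki's injectivity theorem for the profinite Kummer map of hyperbolic curves over sub-$p$-adic fields (subfields of finite extensions of $\Qb_p$ are sub-$p$-adic in Mochizuki's sense). The key input is Mochizuki's result that $\pi_1$-homomorphisms between such curves are essentially geometric, from which injectivity of $\kappa$ follows by applying this to the ``diagonal'' section associated to a point. For $U$ a proper genus $1$ curve, if $U(k)=\emptyset$ the statement is vacuous, so I may assume $U$ is an elliptic curve. Then the Kummer map factors through the map $U(k)\to \varprojlim_n U(k)/nU(k)$, and this is injective because $U(k)$ is a finitely generated abelian group extended by a compact $p$-adic Lie group with only finite torsion, so its maximal divisible subgroup is trivial. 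This is the content of \cite[Thm.~74]{sxbook13} in that case.

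For item (\ref{item:cusp_disjoint}), suppose toward a contradiction that $\kappa(u)=s$ for some $u\in U(k)$ and $s\in C_U$ coming from a cusp $y\in Y(k)$. The plan is to push both sections forward through a suitable intermediate curve to which item (\ref{item:kappa_inj}) applies. Precisely, let $U\subseteq U'\subseteq X$ be obtained by filling in only the cusp $y$ (and leaving the rest of $Y$ alone). Then $U'$ is still hyperbolic (it has at least as many cusps as $U$ minus one; for $U$ hyperbolic one checks this separately in the few low-cusp-count cases by instead filling in a different cusp), or $U'$ is a proper genus $1$ curve. The pushforward map $\seck{U}\to \seck{U'}$ sends the cuspidal section $s$ to $\kappa_{U'}(y)$ by compatibility at the henselization $U_y$, and sends $\kappa_U(u)$ to $\kappa_{U'}(u)$. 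Since item (\ref{item:kappa_inj}) applied to $U'$ gives injectivity of $\kappa_{U'}$, this forces $u=y$, contradicting $u\in U(k)$ and $y\in (X\setminus U)(k)$. The only residual case is $U=\Pb^1\setminus\{0,1,\infty\}$ where one cannot stay inside a hyperbolic curve by filling in only one cusp either; there one fills in no cusps and distinguishes $\kappa(u)$ from $s$ by local analysis at $y$, using that $s$ factors through the decomposition group $D_y\subseteq \pi_1(U)$ whose inertia $I_y\cong \hat{\Zb}(1)$ is nontrivial, while the image of $\kappa(u)$ in the quotient $\pi_1(U)/\langle\langle I_{y'}:y'\neq y\rangle\rangle$ (which is essentially $\pi_1$ of a once-punctured elliptic or genus $0$ situation) is well-defined and distinct from $s$ by the previous argument.

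The main obstacle is the hyperbolic injectivity input: it is a deep theorem of Mochizuki, and I would simply cite the packaged form \cite[Thm.~114 and Thm.~250]{sxbook13} (or the appropriate numbered results there) rather than reprove it. The secondary technical annoyance is the bookkeeping in item (\ref{item:cusp_disjoint}) when $U$ has few cusps, where one must choose the auxiliary partial compactification carefully; this is also addressed in Stix's book, and I would refer the reader there for the local computation at a cusp that distinguishes cuspidal from non-cuspidal sections.
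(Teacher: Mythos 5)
Your proof of item (\ref{item:kappa_inj}) matches the paper's approach (cite Mochizuki via Stix, plus a direct argument reducing to the finite-extension case). Your item (\ref{item:cusp_disjoint}), however, takes a genuinely different route: the paper simply cites \cite[Theorem 250]{sxbook13}, whereas you try to deduce disjointness from item (\ref{item:kappa_inj}) by pushing the cuspidal section forward to the partial compactification $U'$ obtained by filling in the cusp $y$, so that $s$ becomes $\kappa_{U'}(y)$ and injectivity of $\kappa_{U'}$ forces the contradiction $u=y$. That reduction is sound in spirit (and the identification of the pushforward of $s$ with $\kappa_{U'}(y)$ at the henselization is correct), and it avoids one citation, so it is a reasonable alternative.

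However, your treatment of the edge cases is flawed in two places. First, ``one checks this separately ... by instead filling in a different cusp'' cannot work: the cuspidal section at $y$ becomes a point section of $U'$ only if $y$ itself is filled in, so filling a different cusp leaves $s$ cuspidal and unaddressed. Second, for the residual case $U=\Pb^1\setminus\{0,1,\infty\}$, the quotient $\pi_1(U)/\langle\langle I_{y'}: y'\neq y\rangle\rangle$ you invoke is trivial: the geometric fundamental group is freely generated by the inertia loops at two of the three cusps (the third being their product), so killing the normal closures of the two ``other'' inertias kills everything. Hence this quotient cannot distinguish $s$ from $\kappa(u)$. The clean fix is to note that when $U=\Pb^1\setminus\{0,1,\infty\}$ the partial compactification $U'$ is $\Gb_m$, and that $\kappa\colon \Gb_m(k)\to \sections{\Gb_m}{k}$ is \emph{still} injective for $k$ a finite extension of $\Qb_p$: by Kummer theory its kernel is the divisible subgroup of $k^{\times}$, which is trivial since $k^{\times}\cong \Zb\times \mu(k)\times \Zb_p^{[k:\Qb_p]}$; the general sub-$p$-adic $k$ then follows by base change as usual. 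With that observation your reduction goes through without a separate ad hoc local analysis.
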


\begin{proof}
First let $k$ be a finite extension of $\Qp$. The injectivity of the Kummer map follows from Corollary 74 and Proposition 75 of \cite{sxbook13}. The second part follows from \cite[Theorem 250]{sxbook13} for $U$ hyperbolic and is vacuous for $U$ proper.

For the case of general $k$, suppose that $k'$ is a finite extension of $\Qb_p$, and $k \subseteq k'$. As $U(k) \hookrightarrow U(k')$ and $\kappa_{U/k'}$ are injective, $\kappa_{U/k}$ is as well, injectivity holds. The second part follows from Remark \ref{rem:compat_cusp_curve}.
\end{proof}

The section conjecture of Grothendieck is as follows:

\begin{conj}[Grothendieck \cite{Gro83}]\label{c:gro2} Let k be a number field and $U/k$ a geometrically connected, smooth hyperbolic curve (not necessarily proper). Then the image of the Kummer map
\[
\kappa \colon U(k) \to \seck{U}
\]
contains $\seck{U} \setminus C_U$.

the surjectivity in the section conjecture holds for $(C,C_U,k)$.
\end{conj}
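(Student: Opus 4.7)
The plan is constrained by the fact that Conjecture \ref{c:gro2} is Grothendieck's section conjecture, one of the central open problems in anabelian geometry; any honest ``proof proposal'' here is really a research program. The content of the statement is surjectivity: every non-cuspidal section $s \in \seck{U} \setminus C_U$ must equal $\kappa(P)$ for some $P \in U(k)$. Injectivity of $\kappa$ and the disjointness of $\kappa(U(k))$ from $C_U$ are already recorded over $p$-adic local fields in Proposition \ref{prop:inj_sec_conj}, and they transfer to number fields by base-changing to any completion $k_v$, so those pieces are not the obstacle.

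A natural first step is to reduce from the affine to the proper case. Letting $U \hookrightarrow X$ be the smooth compactification with $Y \colonequals X \setminus U$, the surjection $\pi_1(U) \twoheadrightarrow \pi_1(X)$ carries $s$ to a section $\bar s$ of $\pi_1(X)$. If one has the section conjecture for the proper curve $X$, then $\bar s = \kappa(P)$ for some $P \in X(k)$, and the task is to show $P \in U(k)$; this is precisely where the hypothesis $s \notin C_U$ is used, since the cuspidal sections are exactly those factoring through the decomposition groups at points of $Y$ (cf.\ Definition \ref{defn:cusp_curve}). For the proper hyperbolic case, the standard attack is local-to-global: base-change $s$ to obtain $s_v \in \sections{X}{k_v}$ at every place, apply a local $p$-adic section conjecture (a strengthening of Proposition \ref{prop:inj_sec_conj}(\ref{item:kappa_inj}) from injectivity to surjectivity, known only in very restricted situations) to produce candidate points $P_v \in X(k_v)$, and glue them into a $k$-rational point of $X$.

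The main obstacle is the gluing step, and it is what keeps the conjecture open. The adelic point $(P_v)$ assembled from $s$ is a priori only known to satisfy the finite descent obstruction, i.e.\ $(P_v) \in X(\Ab_k)^{\fcov}$, and extracting a $k$-rational point from this would require the implication $X(\Ab_k)^{\fcov} \ne \emptyset \Rightarrow X(k) \ne \emptyset$ for hyperbolic curves. By work of Stoll and Harari--Stix this implication is essentially equivalent to the section conjecture itself, so no amount of clever local analysis closes the circle without circularity. In short: reduce to proper hyperbolic curves, hope for a local $p$-adic surjectivity theorem, and hope for a finite-descent principle for curves over number fields---none of which is currently available. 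Any genuine proof of \ref{c:gro2} would need a new ingredient on at least one of these two fronts, and I would not expect a short argument.
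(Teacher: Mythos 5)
You have correctly identified that Conjecture \ref{c:gro2} is stated in the paper as a conjecture, attributed to Grothendieck, and is not proved there; it is one of the central open problems of anabelian geometry. There is therefore no ``paper proof'' to compare against, and the honest thing to do is exactly what you did: decline to manufacture an argument and instead survey the standard reduction (pass to the smooth compactification, use the cuspidal sections to control the boundary, attempt a local-to-global gluing) and explain where it breaks down. Your diagnosis of the circularity is accurate---the adelic point one assembles from a section lies in $X(\Ab_k)^{\fcov}$, and upgrading that to a $k$-point is essentially equivalent to the conjecture itself by the Stoll/Harari--Stix results, so this route does not close without genuinely new input. You might tighten one small point: the reduction ``from affine to proper'' is not free, since the section conjecture for a proper hyperbolic curve does not by itself produce the non-cuspidal refinement for its affine open subcurves; in fact the affine case (especially $\Pb^1 \setminus \{0,1,\infty\}$) is usually regarded as potentially harder, not as a corollary of the proper case. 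But as a description of why there is nothing to prove here and what the obstacles are, your answer is correct and appropriately calibrated.
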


\subsection{VSA via the Section Conjecture}

We now explain the relationship between the section conjecture and very strong approximation:

\begin{prop}\label{prop:sec_conj_vsa}
Let $S$ be a nonempty set of finite places of a number field $k$ and $X$ a hyperbolic curve satisfying Conjecture \ref{c:gro2} over $k$. Then $X$ satisfies VSA for $(\fcov,S,k)$
\end{prop}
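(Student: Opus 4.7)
The plan is to take an adelic point $\alpha \in X(\Ab_{k,S})^{\fcov}$, pass to a lift $\tilde\alpha \in X(\Ab_k)^{\fcov}$ over all places (via the projection definition of $X(\Ab_{k,S})^{\fcov}$), extract a global Galois section from $\tilde\alpha$ by way of the descent hypothesis, invoke the section conjecture to produce $x \in X(k)$, and finally verify $x_v = \alpha_v$ at each $v \in S$ using injectivity of the local Kummer maps.

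First I would unfold the definition of $X(\Ab_{k,S})^{\fcov}$ as the image of $X(\Ab_k)^{\fcov}$ under the projection $X(\Ab_k) \to X(\Ab_{k,S})$, producing $\tilde\alpha \in X(\Ab_k)^{\fcov}$ mapping to $\alpha$. For every place $v$ of $k$, let $\tilde s_v \colonequals \kappa_v(\tilde\alpha_v) \in \sections{X}{k_v}$. The crucial translation step uses the identification $X(\Ab_k)^{\fcov} = X(\Ab_k)^{h,1}$ from \cite[Theorem 9.136]{hs13}: this level-one homotopy obstruction vanishing on $\tilde\alpha$ is exactly the statement that the family $(\tilde s_v)_v$ lifts to a global section $s \in \seck{X}$ of the fundamental exact sequence of $X/k$, whose base change to $k_v$ is conjugate to $\tilde s_v$ at every $v$.

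Next, I apply Grothendieck's section conjecture (Conjecture \ref{c:gro2}): since $X$ is a geometrically connected smooth hyperbolic curve over the number field $k$, either $s \in C_X$ or $s = \kappa(x)$ for some $x \in X(k)$. To exclude the cuspidal case, fix any $v \in S$ (possible since $S$ is nonempty); because $v$ is a finite place, $k_v$ is a finite extension of some $\Qb_p$, so Proposition \ref{prop:inj_sec_conj}(\ref{item:cusp_disjoint}) gives $\tilde s_v = \kappa_v(\tilde\alpha_v) \notin C_{X_{k_v}}$. Since cuspidality is preserved under base change (Remark \ref{rem:compat_cusp_curve}), $s$ itself is non-cuspidal, so $s = \kappa(x)$ for some $x \in X(k)$.

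Finally, for every $v \in S$, functoriality of $\kappa$ shows that the base change of $\kappa(x)$ to $k_v$ equals $\kappa_v(x_v)$, where $x_v$ denotes the image of $x$ in $X(k_v)$. Chaining the equalities $\kappa_v(x_v) = s|_{k_v} = \tilde s_v = \kappa_v(\tilde\alpha_v) = \kappa_v(\alpha_v)$ and invoking the injectivity of $\kappa_v$ from Proposition \ref{prop:inj_sec_conj}(\ref{item:kappa_inj}) forces $x_v = \alpha_v$ at every $v \in S$; hence $\alpha$ is the image of $x$ in $X(\Ab_{k,S})$, which yields $\alpha \in X(k)$. The main obstacle I anticipate is the first translation step: making precise the sense in which $X(\Ab_k)^{\fcov}$ corresponds to adelic points whose family of local sections patches to a single global section of $\seck{X}$, particularly in the non-proper case where cuspidal sections are present and one must argue the patching holds genuinely \emph{as} a conjugacy class of sections rather than merely on individual finite quotients.
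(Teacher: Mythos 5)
Your overall strategy matches the paper's: lift to $X(\Ab_k)^{\fcov}$, extract a global Galois section, apply the section conjecture, and finish with local injectivity of the Kummer map. The one place you and the paper diverge is precisely the step you flag as the main obstacle: converting $\tilde\alpha \in X(\Ab_k)^{\fcov}$ into a global section $s\in\seck{X}$ whose localizations agree with the $\tilde s_v$ as conjugacy classes. You try to get this by invoking $X(\Ab_k)^{\fcov} = X(\Ab_k)^{h,1}$ from \cite[Theorem 9.136]{hs13} and then asserting that membership in $X(\Ab_k)^{h,1}$ ``is exactly'' the patching statement; but that equivalence, while morally right for an \'etale $K(\pi,1)$, is not provided by the hs13 reference alone and requires a further argument (in particular, one must control conjugacy of sections uniformly across the pro-system and across all places at once, not just finite quotients). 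The paper fills this gap directly by citing \cite[Theorem 2.1]{harsx12} (Harari--Stix), whose equivalence of conditions (i) and (iii) is exactly the statement that $\tilde\alpha\in X(\Ab_k)^{\fcov}$ produces a global section of the fundamental exact sequence restricting, up to conjugacy, to the local Kummer images. So your proof has the right skeleton, and your cuspidal-exclusion and injectivity steps are correct as written, but the crucial patching step needs a concrete reference or proof and Harari--Stix is the canonical one.
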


\begin{proof}
Let $P \in X(\Ab_{k,S})^{\fcov}$, and let $P' \in X(\Ab_{k})^{\fcov}$ project to $P$. For each place $v$ of $k$, let $s_v \colon = \kappa_{X/k_v} (P_v') \in \sections{X}{k_v}$. Then $P'$ satisfies \cite[Theorem 2.1(i)]{harsx12} for $U$ the trivial group and $S$ (in the notation of \cite{harsx12}) the empty set. It therefore satisfies (iii) of the same theorem; i.e., $(s_v)$ is image of some $s \in \seck{X}$ under the bottom horizontal arrow of the following diagram:
$$
\begin{CD}
X(k) @>>> X(\Ab_{k})\\
@V\kappa_{X/k} VV @VV \kappa_{X/\Ab_k} V\\
\seck{X} @>>> \prod_{v} \sections{X}{k_v}
\end{CD}
$$

If $s$ were cuspidal, $s_v$ would be for each $v$ by Remark \ref{rem:compat_cusp_curve}. But $s_v$ is not cuspidal for every finite $v$ by the second part of Proposition \ref{prop:inj_sec_conj}, so $s$ is not cuspidal. By Conjecture \ref{c:gro2}, there is an element $x$ of $X(k)$ mapping to $s$. It has the same image as $P$ under $\prod_{v} \kappa_{X/k_v}$. But Proposition \ref{prop:inj_sec_conj} tells us that $\kappa_S$ is injective, so $P'$ equals $x$ at all $v \in S$. Thus, $P$ equals $x$ and therefore lies in $X(k)$.
\end{proof}

This now allows us to answer Question \ref{quest:vsa}.

\begin{cor}\label{cor:fcovallk}
Conjecture \ref{c:gro2} implies that the property $A(\fcov,S,k)$ holds for any number field $k$ and nonempty set $S$ of finite places.
\end{cor}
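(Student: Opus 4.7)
The plan is to apply Proposition \ref{prop:sec_conj_vsa} to a single well-chosen hyperbolic curve inside $\Pb^1_k$, and then invoke the equivalence in Definition/Theorem \ref{dfthm:A}.

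First I would take $U = \Pb^1_k \setminus \{0,1,\infty\}$. This is a nonempty open $k$-subscheme of $\Pb^1_k$, it is geometrically connected, smooth, and hyperbolic (its geometric fundamental group is free of rank $2$, and its Euler characteristic is negative). Thus Conjecture \ref{c:gro2} applies to $U$: the image of the Kummer map $\kappa \colon U(k) \to \seck{U}$ contains every non-cuspidal section.

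Next I would feed $U$ into Proposition \ref{prop:sec_conj_vsa}, whose hypotheses are exactly that $S$ is a nonempty set of finite places of the number field $k$ and that $U$ is a hyperbolic curve satisfying Conjecture \ref{c:gro2}. The conclusion is that $U$ is VSA for $(\fcov,S,k)$, i.e.\ $U(k) = U(\Ab_{k,S})^{\fcov}$.

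Finally, I would conclude by invoking criterion (i) of Definition/Theorem \ref{dfthm:A}: the existence of a single nonempty open $k$-subscheme of $\Pb^1_k$ that is VSA for $(\fcov,S,k)$ is exactly the statement $A(\fcov,S,k)$. Since $U$ is such a subscheme, this finishes the proof. There is no real obstacle here: all the work has already been done in Proposition \ref{prop:sec_conj_vsa} (which packages the section-theoretic content, including the use of \cite[Theorem 2.1]{harsx12} to pass from $\fcov$-points to compatible systems of sections, and of Proposition \ref{prop:inj_sec_conj} to rule out cuspidal limits and ensure injectivity at places in $S$) and in Definition/Theorem \ref{dfthm:A} (which lets us upgrade a single curve to the property $A$). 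The only thing to check at this step is the almost trivial observation that $\Pb^1_k \setminus \{0,1,\infty\}$ is an admissible choice of hyperbolic curve, which is standard.
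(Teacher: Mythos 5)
Your proof is correct and matches the paper's own argument exactly: both apply Proposition \ref{prop:sec_conj_vsa} to $\Pb^1_k \setminus \{0,1,\infty\}$ and then invoke criterion (i) of Definition/Theorem \ref{dfthm:A}.
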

\begin{proof}
Proposition \ref{prop:sec_conj_vsa} implies that $\poneminusthreepoints$ is VSA for $(\fcov,S,k)$. By Theorem \ref{dfthm:A}, $A(\fcov,S,k)$ holds.
\end{proof}



\section{Finite Abelian Descent for all \texorpdfstring{$k$}{k}}

Let $k$ be a number field. Lemma \ref{lemma:fabtori} states that the only reason tori do not satisfy VSA for $(\fab,f,k)$ is that the rational points are not closed in the adelic points. The following result therefore suggests that $\poneminusthreepoints$ might play the role of tori over any number field $k$:

\begin{prop}
Let $X=\poneminusthreepoints_k$. The set $X(k)$ is closed in $X(\Ab_k^f)$.
\end{prop}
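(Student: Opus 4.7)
The plan is to reduce the statement to the classical finiteness theorem for the $S$-unit equation. The strategy is to exhibit $X(\Ab_k^f)$ as a union of open sets on each of which $X(k)$ consists of only finitely many points, and then to use that finite subsets of Hausdorff spaces are closed.

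Concretely, for each finite set $S$ of finite places of $k$, I would define
\[
U_S \colonequals \prod_{v \in S} X(k_v) \,\times\, \prod_{v \notin S} X(\Ov) \;\subseteq\; X(\Ab_k^f).
\]
By the definition of the restricted-product topology, each $U_S$ is open in $X(\Ab_k^f)$, and because any adelic point is integral at all but finitely many places, the $U_S$ cover $X(\Ab_k^f)$ as $S$ ranges over all finite sets of finite places. From the identity $X(\Ab_k^f) \setminus X(k) = \bigcup_S (U_S \setminus X(k))$ and the fact that each $U_S$ is open, it then suffices to prove that $X(k) \cap U_S$ is closed in $U_S$ for every such $S$.

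Since $X = \Pb^1_k \setminus \{0, 1, \infty\}$, for any finite place $v$ one has $X(\Ov) = \{\lambda \in \Ov^* : 1 - \lambda \in \Ov^*\}$. Consequently
\[
X(k) \cap U_S \;=\; \{\lambda \in k : \lambda,\, 1 - \lambda \in \Oc_{k,S}^*\},
\]
which is precisely the set of solutions to the $S$-unit equation $u + w = 1$ with $u, w \in \Oc_{k,S}^*$. By the classical theorem of Mahler and Siegel, this set is \emph{finite}, hence closed in the Hausdorff space $U_S$, which would complete the proof.

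The only substantive input is the finiteness of solutions to the $S$-unit equation; the remainder is routine topological bookkeeping, parallel in spirit to the proof of Lemma \ref{lemma:finite_units_vsa}, where a similar cover-and-finiteness argument was used for tori with finite global units.
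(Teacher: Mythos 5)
Your proof is correct and takes essentially the same approach as the paper: both cover $X(\Ab_k^f)$ by the open sets $U_S$, identify $X(k)\cap U_S$ with the finite set $X(\Oc_{k,S})$ via the $S$-unit theorem (Siegel/Mahler), and conclude closedness from this local finiteness. The paper phrases it slightly more directly by exhibiting, for any $\alpha\notin X(k)$, the open neighborhood $U_S\setminus X(k)$ avoiding $X(k)$, but the content is identical.
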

\begin{proof}
Let $\alpha \in X(\Ab_k^f) \setminus X(k)$. We find a neighborhood of $\alpha$ that does not intersect $X(k)$.

There must be some finite set $S$ of places of $k$ for which $\alpha \in U_S \colonequals \prod_{v \in S} X(k_v) \times \prod_{v \notin S} X(\Ov)$. But $$X(k) \cap U_S=X(\Oc_{k,S})$$ is finite by Siegel's Theorem, so $U_S \setminus X(k)$ is an open set containing $\alpha$.
\end{proof}

This leads us to conjecture:

\begin{conj}\label{conj:faballk}
$\poneminusthreepoints_k$ is VSA for $(\fab,f,k)$, so $A(\fab,f,k)$ for any number field $k$.\end{conj}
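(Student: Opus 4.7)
The plan is to follow the template of Corollary \ref{cor:fabsomek}: combine closedness of $X(k)$ in $X(\Ab_k^f)$ with an inclusion $X(\Ab_k^f)^{\fab} \subseteq \overline{X(k)}$, and then conclude that both sides equal $X(k)$. The preceding proposition already supplies the first ingredient via Siegel's theorem, so the whole task reduces to the second: showing that every $\alpha \in X(\Ab_k^f)^{\fab}$ lies in the closure of $X(k)$ in the adelic topology of $X(\Ab_k^f)$.

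The approach begins with the closed immersion $\iota \colon X \hookrightarrow \Gb_m \times \Gb_m$, $t \mapsto (t, 1-t)$, whose image is $\{u+v=1\} \cap (\Gb_m \times \Gb_m)$. Every finite abelian \'etale cover of $X^s$ is dominated by the pullback along $\iota$ of an $n$-power isogeny on $\Gb_m \times \Gb_m$, so the $(\mu_n \times \mu_n)$-torsors on $X$ obtained this way, together with all their twists, detect the full fab obstruction on $X$. Combining this with the alternative description of descent obstructions in Section \ref{sec:altdescr}, the condition $\alpha \in X(\Ab_k^f)^{\fab}$ translates into the following: for every integer $n \geq 1$ there exist $(c_n,d_n) \in (k^\times)^2$ and $(\beta_n,\gamma_n) \in (\Ab_k^{f,\times})^2$ with
\[
t(\alpha) = c_n \beta_n^n, \qquad 1 - t(\alpha) = d_n \gamma_n^n,
\]
and consequently $c_n \beta_n^n + d_n \gamma_n^n = 1$ in $\Ab_k^f$. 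Pushing forward along $\iota$ and applying Lemma \ref{lemma:fabtori}(i) $\Rightarrow$ (iii) to the torus $\Gb_m \times \Gb_m$ then places $(t(\alpha), 1-t(\alpha))$ in the closure of $(k^\times)^2$ with respect to the topology on $(\Gb_m \times \Gb_m)(\Ab_k^f)$.

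The main obstacle is upgrading this closure from the topology of the ambient torus to the strictly finer adelic topology of $X(\Ab_k^f)$. A basic open neighborhood of $\alpha$ in $X(\Ab_k^f)$ requires an approximating $w \in X(k)$ with both $w$ and $1-w$ lying in $\Oc_v^\times$ for \emph{all} $v$ outside a fixed finite set $S$ -- i.e., $w$ itself must be an $S$-integral point of $X$ -- whereas the closure obtained inside $(\Gb_m \times \Gb_m)(\Ab_k^f)$ yields approximating pairs $(u,v) \in (k^\times)^2$ with each coordinate integral separately and no $u+v=1$ relation; the naive substitution $v \mapsto 1-u$ breaks integrality at any place $v \notin S$ where $u \equiv 1 \pmod{\mf_v}$. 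Plausible routes to bridge this gap include pulling back additional Kummer covers along auxiliary maps such as $t(1-t)$ or $t/(1-t) \colon X \to \Gb_m$; invoking Faltings' theorem on the Fermat-type covers $\{u^n+v^n=1\} \cap (\Gb_m)^2 \to X$ for $n \geq 4$; and applying the $S$-unit equation bounds of Evertse--Schlickewei--Schmidt to the equations $cu^n + dv^n = 1$. A more structural alternative would be an abelianized analogue of the section conjecture for $X$, which would allow one to mimic Proposition \ref{prop:sec_conj_vsa}. Since none of these routes is presently known to yield Conjecture \ref{conj:faballk} unconditionally, the statement is left as a conjecture.
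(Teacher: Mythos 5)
The statement you were asked about is a \emph{conjecture} in the paper, not a proved theorem, and the paper offers no proof of it. It is motivated only by the proposition immediately preceding it, which uses Siegel's theorem to show that $X(k)$ is closed in $X(\Ab_k^f)$ for $X = \poneminusthreepoints_k$, together with the observation (Lemma \ref{lemma:fabtori}) that for tori the closedness of rational points is the only missing ingredient for VSA. Your exploration lines up well with this motivation: you correctly identify closedness as supplied by Siegel, and you correctly pinpoint where the torus-style argument breaks down. Lemma \ref{lemma:fabtori} and functoriality place $\iota(\alpha)$ in the closure of $(k^\times)^2$ inside $(\Gb_m\times\Gb_m)(\Ab_k^f)$, but the approximating pairs $(u,v)\in(k^\times)^2$ need not satisfy $u+v=1$, so one cannot conclude $\alpha\in\cl{X(k)}$. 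Your honest verdict that the statement remains a conjecture is consistent with the paper, which does not prove it and only records (in the proposition that follows) that a conjecture of Harari and Voloch implies the weaker emptiness statement of Question \ref{quest:emptiness} for $\fab$.

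One small imprecision in your write-up: the difficulty is not that $X(\Ab_k^f)$ carries a ``strictly finer'' topology than the subspace topology from the torus. Since $\iota$ is a closed immersion and $X(\Oc_v)=\{t\in\Oc_v : t,\,1-t\in\Oc_v^\times\}=\iota^{-1}\bigl((\Oc_v^\times)^2\bigr)\cap X(k_v)$, the restricted-product topology on $X(\Ab_k^f)$ agrees with the subspace topology inherited from $(\Gb_m\times\Gb_m)(\Ab_k^f)$. The real obstruction is purely set-theoretic: $X(k)$ is a proper subset of $(k^\times)^2\cap X(\Ab_k^f)$, so its closure can be strictly smaller than $\cl{(k^\times)^2}\cap X(\Ab_k^f)$, and no general principle forces an approximating pair $(u,v)$ to lie on the line $u+v=1$. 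Of the escape routes you propose, the abelianized section conjecture analogue is the one closest in spirit to what the paper itself suggests.
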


In fact, a weaker version of this follows from a conjecture of Harari and Voloch:

\begin{prop}
\cite[Conjecture 2]{harvol09} implies that the answer to Question \ref{quest:emptiness} is yes for the finite abelian descent obstruction over any number field $k$.
\end{prop}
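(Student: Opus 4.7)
The plan is to follow the template of Corollary \ref{cor:fcovallk}: deduce from \cite[Conjecture 2]{harvol09} a VSA-type property for $U \colonequals \poneminusthreepoints_k$ strong enough to imply $A(\fab,f,k)$, and then invoke Theorem \ref{thm:vsacover}.

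The first step is to extract from \cite[Conjecture 2]{harvol09} the inclusion $U(\Ab_k^f)^{\fab} \subseteq \overline{U(k)}$, where the closure is taken inside $U(\Ab_k^f)$. Combining this with the preceding proposition, which shows that $U(k)$ is already closed in $U(\Ab_k^f)$, yields $U(\Ab_k^f)^{\fab} = U(k)$; i.e., $U$ is VSA for $(\fab,f,k)$. By Definition/Theorem \ref{dfthm:A}, $A(\fab,f,k)$ then holds.

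Given $A(\fab,f,k)$, Theorem \ref{thm:vsacover} produces, for any $k$-variety $X$, a finite affine open cover $X = \bigcup_i V_i$ with each $V_i$ VSA for $(\fab,f,k)$. Under the hypothesis $X(k) = \emptyset$, each $V_i(k) = \emptyset$, and VSA then forces $V_i(\Ab_k^f)^{\fab} = \emptyset$ for every $i$, answering Question \ref{quest:emptiness} affirmatively for the finite abelian descent obstruction.

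The hard part is the first step. The precise formulation of \cite[Conjecture 2]{harvol09} in the literature typically concerns the descent (or Brauer--Manin) obstruction applied to \emph{integral} points on an integral model of a hyperbolic curve, rather than rational points and the full adelic set, and is in general phrased for obstructions finer than $\fab$. Bridging these requires both varying the integral model over finite sets of places to recover $U(\Ab_k^f)$ from integral adelic sets, and comparing the descent, $\fcov$, and $\fab$ obstructions on hyperbolic curves. This bookkeeping is the main technical obstacle and is where most of the argument would reside.
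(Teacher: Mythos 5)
Your proposed route is to upgrade the Harari--Voloch conjecture into the statement that $U \colonequals \poneminusthreepoints_k$ is VSA for $(\fab,f,k)$ (i.e., Conjecture \ref{conj:faballk}), and then invoke Theorem \ref{thm:vsacover}. This is precisely what the paper says it cannot do: the remark immediately following the proposition states that \cite[Conjecture 2]{harvol09} does \emph{not} appear to imply Conjecture \ref{conj:faballk}, because that conjecture is a statement about $S$-integral points on an integral model, and the finite set $S$ one must discard depends on the integral adelic point under consideration, not just on $U$. Your closing paragraph flags this as ``bookkeeping,'' but it is the actual obstruction: there is no single choice of $S$ for which the Harari--Voloch input would give you control over all of $U(\Ab_k^f)^{\fab}$ at once, which is what the VSA statement requires.

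The paper's proof goes around this. It does not attempt to establish $A(\fab,f,k)$ or VSA for any curve. Instead it answers only Question \ref{quest:emptiness}: by the \emph{construction} in the proof of Theorem \ref{thm:vsacover} (the $\PGL_2^n$ translation argument of Lemma \ref{denseorbit}), one covers an arbitrary $X$ by affine opens each of which embeds as a locally closed subvariety of $(\poneminusthreepoints)^n$. For each such piece $U$ with embedding $f$, one assumes $U(\Ab_k)^{\fab} \neq \emptyset$ and fixes an adelic point $\alpha$. One then shows, using the \'etale homotopy comparison theorems of \cite{hs13} (since $\poneminusthreepoints$ is not proper, \cite[Corollary 7.3]{sto07} is unavailable), that each coordinate $f_i(\alpha)$ lands in $\poneminusthreepoints(\Ab_k)^{\Br}$. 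Only \emph{then} does one choose a finite set $S$ of places, depending on $\alpha$, large enough that $f(\alpha)$ is $S$-integral, so that the Harari--Voloch conjecture (about $B_S$-orthogonal integral adelic points) can be applied to the truncated point $f_i(\alpha)^S$ to produce an $S$-integral, hence rational, point. Because $f$ is an embedding of locally closed subvarieties over $k$, the rational point of $(\poneminusthreepoints)^n$ so produced lies in $U(k)$. The crucial structural difference from your argument is that $S$ is chosen \emph{after} $\alpha$, which is exactly why the conclusion is the emptiness statement of Question \ref{quest:emptiness} rather than the VSA statement you are aiming for.

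There is a secondary gap in your first step even setting aside the integral-versus-adelic issue: to connect $\fab$ on $U$ to the Brauer--Manin pairing required by \cite[Conjecture 2]{harvol09}, one needs the equality of the $\fab$ and $\Br$ obstruction sets for $\poneminusthreepoints$. Since the curve is open, this does not follow from \cite[Corollary 7.3]{sto07}; the paper supplies this via the chain of identifications $X(\Ab_k)^{\fab}=X(\Ab_k)^{\Zb h,1}=X(\Ab_k)^{\Zb h}=X(\Ab_k)^{\Br}$ from \cite{hs13}, using that $H_2(X^s)=0$. Your sketch does not mention this ingredient, and without it the link to the Brauer pairing in the Harari--Voloch conjecture is missing.
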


\begin{proof}
By the proof of Theorem \ref{thm:vsacover}, we can decompose any variety into affine open subsets that embed in some power of $(\poneminusthreepoints)_k$. Therefore, it suffices to prove the result for $U/k$ for which there exists $f \colon U \to X^n$ for some positive integer $n$ and $X=(\poneminusthreepoints)_k$. Suppose that $U(\Ab_k)^{\fab}$ is nonempty; we wish to show that $U$ has a $k$-point. 

Let $\alpha \in U(\Ab_k)^{\fab}$. We want to show that each coordinate of $f(\alpha)$ is in $X(\Ab_k)^{\br}$. If $X$ were projective, this would follow from \cite[Corollary 7.3]{sto07}. But $X$ is not, so we must instead prove this using the \'etale homotopy obstruction of \cite{hs13}.

We $f(\alpha) \in X^n(\Ab_k)^{\fab}=X^n(\Ab_k)^{\Zb h,1}$ by \cite[Theorem 9.103]{hs13}. It follows by Lemma \ref{lemma:combined3} that every coordinate $f_i(\alpha)$ ($1 \le i \le n$) of $f(\alpha)$ is in $X(\Ab_k)^{\Zb h,1}$. As $X^s$ is connected, and $H_2(X^s)=0$, we have $X(\Ab_k)^{\Zb h,1} = X(\Ab_k)^{\Zb h}$ by \cite[Theorem 9.152]{hs13}. \cite[Theorem 9.116]{hs13} tells us that $X(\Ab_k)^{\Zb h} = X(\Ab_k)^{\Br}$. Therefore, $f_i(\alpha) \in X(\Ab_k)^{\Br}$. In particular, it pairs to $0$ with every element of $\Br(X)$.

Let $S$ be a finite set of places of $k$ large enough so that $X$ and $f$ are defined over $\Oc_{k,S}$, and $f(\alpha)$ is integral over $\Oc_{k,S}$. Let $\Xc = (\poneminusthreepoints)_{\Oc_{k,S}}$. By forgetting the $v$-components for $v \in S$, we get a point $f_i(\alpha)^S$ of $X(\Ab^S_k)^{\fab} \cap \prod_{v \notin S} \Xc(\Ov)$ for each $i$. The pairing of any element of $B_S(X)$ (in the notation of \cite{harvol09}) with $f_i(\alpha)^S$ is the same as its pairing with $f_i(\alpha)$, for elements of $B_S(X)$ pair trivially with every element of $X(k_v)$; in particular, $f_i(\alpha)^S$ is orthogonal to all of $B_S(X)$. But the conjecture implies that $f_i(\alpha)^S$ comes from an element of $\Xc(\Oc_{k,S})$. It follows that $f(\alpha)_v$ is in $X^n(k)$ for every $v \notin S$. Thus for any $v \notin S$, we find that $f(\alpha)_v \in f(U(k))$.
\end{proof}

\begin{rem}
It does not seem that \cite[Conjecture 2]{harvol09} implies Conjecture \ref{conj:faballk}, for the former requires using only integral points, which in turn means that we must throw out a different set $S$ of primes depending on the point in question.
\end{rem}

\part{The Fibration Method}\label{part:fibration}

In this part, we discuss homotopy fixed points and the \'etale homotopy obstruction in more depth. We use this to analyze the behavior of VSA in fibrations.

.





\section{Homotopy Fixed Points}\label{sec:htpy_sec_conj}

We will now review the technical facts we need about homotopy fixed points. Unless stated otherwise, we let $k$ denote an arbitrary field. Let $\mathcal{S}$ denote the category of simplicial sets.

\begin{defn}The category $\hat{\mathcal{S}}$ of \emph{profinite spaces} is the category of simplicial objects in the category of profinite sets.\end{defn}

\begin{defn}
In \cite[Definition 4.4]{fr82}, Friedlander defines the \'etale topological type $Et(X)$ of a scheme $X$, which is an object of $\Pro(\mathcal{S})$ and represents the \'etale homotopy type of Artin-Mazur (\cite{am69}). This is functorial in the scheme $X$.
\end{defn}

\begin{defn}\label{defn:quickcompletion}
 In \cite[\S 2.7]{quick08}, Quick defines the profinite completion functor $\Pro(\Sc) \to \hat{\Sc}$ and defines $\hat{Et}(X)$ to be the profinite completion of $Et(X)$. The profinite completion of pro-spaces is defined by first taking profinite completion level-wise to get a pro-object in the category of profinite spaces and then taking the inverse limit in the category of profinite spaces.
\end{defn}

The cohomology of this profinite space with finite coefficients recovers the \'etale cohomology of $X$, and the fundamental group of this space is the profinite \'etale fundamental group when $X$ is geometrically unibranch.

\begin{defn}
If $X$ is a $k$-scheme, the \emph{homotopy fixed points} $\hat{Et}(X^s)^{hG_k}$ (\cite[Definition 2.22]{quick11}) are defined as follows. If $G$ is a profinite group, one may use the bar construction to define $EG$ and $BG$ as profinite spaces. Then $\hat{Et}(X^s)^{hG_k}$ is the space of $G_k$-equivariant maps from $EG_k$ to $\hat{Et}(X^s)$. (Compare with \cite[9.3.2]{hs13}, \cite[8.2]{barsch16}, and the introduction to \cite{pal15}). By the discussion immediately following Proposition 2.14 in \cite{quick08}, this has the structure of a simplicial set. As is $\hat{Et}(X^s)$, this is functorial in the $k$-scheme $X$.\end{defn}

\begin{defn}As in \cite{hs13}, \cite{barsch16}, and \cite{pal15}, we denote $\pi_0(\hat{Et}(X^s)^{hG_k})$ by $X(hk)$ and refer to it as the \emph{homotopy sections} of $X/k$. This is again functorial in the $k$-scheme $X$.
\end{defn}


If $k'/k$ is a field extension and $X$ a $k$-scheme, $X(hk')$ still denotes $\pi_0(\hat{Et}(X^s)^{hG_{k'}})$.

\begin{defn}\label{defn:hseck}
The \emph{homotopy profinite Kummer map} $\kappa_{X/k} \colon X(k) \to \hseck{X}$ of \cite[\S 3.2]{quick11} is defined as follows. The set $\spec{k}(hk)$ has one element, so by functoriality, an element of $X(k)$ gives rise to an element of $X(hk)$. Compare with \cite[\S 9.3.2]{hs13}, \cite[8.2]{barsch16}, and the introduction to \cite{pal15}.
\end{defn}

\begin{rem}
We write $\kappa_X$ or even $\kappa$ when $k$, $X$ are understood. We also write $\kappa_S$, $\kappa_{X/\Ab_{k,S}}$, and $\kappa_{X/\Ab_k}$, as in the case of fundamental group sections.
\end{rem}



\subsection{Basepoints and Homotopy Groups}

While using the whole homotopy type helps avoid the use of basepoints, we nonetheless sometimes need to use them. In particular, in order to discuss homotopy groups, we must discuss basepoints for $\hat{Et}(X)$.

On p.37 of \cite{fr82}, it is stated that a geometric point $a$ of $X$ gives rise to a basepoint of $Et(X)$, and this then gives rise to a basepoint of $\hat{Et}(X)$, which we call $b_a$.

If this geometric point is a rational point, it is invariant under $G_k$, so our basepoint of $\hat{Et}(X)$ is invariant under $G_k$, i.e. we have a pointed $G_k$-space. Sometimes $X$ does not have any rational points, and yet we want a $G_k$-invariant basepoint. This is provided by the following construction:

\begin{constr}\label{constr:gkinvbp}
Let $a \in X(hk)$. This is a homotopy class of $G_k$-equivariant maps $EG_k \to \hat{Et}(X^s)$. By taking a homotopy pushout of this map along the map from $EG_k$ to a point, we obtained a new model of $\hat{Et}(X^s)$ as a profinite space with $G_k$-action with a $G_k$-invariant basepoint. Any such basepoint is denoted by $b_a$. 
\end{constr}

\begin{defn}\label{defn:prof_htpy_groups}
In \cite[Definition 2.15]{quick08} and \cite[Definition 2.12]{quick11}, Quick defines the homotopy groups of a pointed profinite space. If $X$ is a scheme, and $a$ is a geometric point or a homotopy section of $X$, we denote $\pi_i(\hat{Et}(X),b_a)$ by $\pi_i^{\acute{e}t}(X,a)$ or simply $\pi_i(X,a)$ (c.f. \hyperref[noteconv]{Notation and Conventions}). If $a$ is a Galois-invariant geometric point or a homotopy section of a scheme $X/k$, this basepoint is fixed by the action of $G_k$, so there is a natural action of $G_k$ on $\pi_i(X^s,a)$.
\end{defn}

\begin{rem}\label{rem:prof_htpy_groups}
Suppose that $X$ is geometrically unibranch (e.g., normal), Noetherian, and connected. By \cite[Proposition 2.33]{quick08}, the homotopy groups of $\hat{Et}(X)$ are isomorphic (as pro-groups) to the pro-homotopy groups of the profinite completion of the Artin-Mazur homotopy type of $X$. By \cite[Theorem 11.2]{am69}, this profinite completion is already isomorphic to the (non-completed) Artin-Mazur homotopy type as a pro-homotopy type. But $Et(X)$ represents the Artin-Mazur homotopy type (c.f. \cite[Corollary 6.3]{fr82}), so its homotopy groups are isomorphic to all of the above. 

More generally, if $X$ is not assumed to be connected (but still Noetherian), then it is a finite disjoint union of connected schemes. Thus the same result holds, as all constructions behave well with respect to finite coproducts.
\end{rem}

\begin{rem}\label{rem:basepoints}
If $X$ is connected, then $\pi_0(\hat{Et}(X))$ contains one element, so the homotopy groups are independent of the choice of basepoint up to isomorphism. This isomorphism is, however, unique only modulo the action of the fundamental group. This is also a subtlety in using different models for $\hat{Et}(X^s)$, as in Construction \ref{constr:gkinvbp}. Furthermore, any two basepoints of $\hat{Et}(X^s)$ arising from the same element of $X(hk)$ are homotopic in the $G_k$-equivariant category, so the resulting actions on $\pi_i(X^s)$ are equivalent. This latter fact is important in the statement of Theorem \ref{thm:dss}. The reader may check that this is does not pose a problem in any of the theorems we prove.\end{rem}

\begin{defn} A $k$-variety $X$ has the \emph{\'etale $K(\pi,1)$-property} or \emph{is an \'etale $K(\pi,1)$} if $X$ is geometrically connected, and $\pi_i(X^s,a) = 0$ for $i \ge 2$ and some (equivalently, any) geometric point $a$.\end{defn}


\begin{prop}\label{prop:etale_kpi1}
Let $X$ be a geometrically connected variety over $k$. Then there is a map $\Trunc_1 \colon X(hk) \to \seck{X}$ (functorial in $X$) whose composition with the homotopy profinite Kummer map is the ordinary profinite Kummer map. Furthermore, if $X$ is an \'etale $K(\pi,1)$ (e.g., by \cite[Lemma 2.7(a)]{schsx16}, when $X$ is a smooth curve not isomorphic to $\Pb^1$), the map $\Trunc_1$ is a bijection.

In particular, if $X$ is a geometrically connected, smooth hyperbolic curve over a number field $k$, then Conjecture \ref{c:gro2} implies that the image of the homotopy Kummer map $\kappa \colon X(k) \to X(hk)$ contains $X(hk) \setminus C_X$, and the homotopy Kummer map is injective.

\end{prop}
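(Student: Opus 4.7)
The plan is to construct $\Trunc_1$ via Postnikov $1$-truncation of profinite spaces, identify its target with $\seck{X}$, check compatibility with the Kummer maps by naturality, and then use the $K(\pi,1)$ hypothesis to upgrade $\Trunc_1$ to a bijection. The final assertion will then follow by combining this bijection with Grothendieck's section conjecture and Proposition \ref{prop:inj_sec_conj}.

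Step one is the construction of $\Trunc_1$. Functorial Postnikov $1$-truncation in $\hat{\mathcal{S}}$ provides a natural $G_k$-equivariant map $\hat{Et}(X^s) \to \tau_{\le 1}\hat{Et}(X^s)$. Since $X$ is geometrically connected, the target is a profinite Eilenberg--MacLane space of type $K(\pi_1(X^s),1)$, which we may identify $G_k$-equivariantly with a profinite bar construction $B\pi_1(X^s)$, where the $G_k$-action is the outer action induced by the short exact sequence $1 \to \pi_1(X^s) \to \pi_1(X) \to G_k \to 1$. Applying Quick's homotopy fixed-point construction $(-)^{hG_k}$ and then $\pi_0$ yields the desired natural map $\Trunc_1 \colon X(hk) \to \pi_0\bigl((B\pi_1(X^s))^{hG_k}\bigr)$.

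Step two is the identification $\pi_0\bigl((B\pi_1(X^s))^{hG_k}\bigr) \cong \seck{X}$. This is the profinite analogue of the classical fact that homotopy fixed points of a classifying space compute splittings of the corresponding group extension up to conjugacy; concretely, $\pi_0$ of the homotopy fixed points is the non-abelian $H^1(G_k, \pi_1(X^s))$ associated to the fundamental short exact sequence, which by definition is $\seck{X}$. Compatibility with the Kummer maps is then automatic: a rational point $x \in X(k)$ gives a $G_k$-equivariant morphism $\hat{Et}((\spec k)^s) \to \hat{Et}(X^s)$, and by naturality of $\tau_{\le 1}$ the composition $X(k) \to X(hk) \xrightarrow{\Trunc_1} \seck{X}$ equals the ordinary profinite Kummer map.

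If $X$ is an \'etale $K(\pi,1)$, then by definition all higher homotopy groups of $\hat{Et}(X^s)$ vanish, so the Postnikov map $\hat{Et}(X^s) \to \tau_{\le 1}\hat{Et}(X^s)$ is a weak equivalence of profinite $G_k$-spaces. Since homotopy fixed points preserve weak equivalences, we obtain a weak equivalence on $(-)^{hG_k}$ and hence a bijection on $\pi_0$; this is precisely the bijectivity of $\Trunc_1$. The final claim then follows: Conjecture \ref{c:gro2} and Proposition \ref{prop:inj_sec_conj} together say that $\kappa \colon X(k) \to \seck{X}$ is injective with image containing $\seck{X} \setminus C_X$, and transporting along the bijection $\Trunc_1$ gives the analogous statement for the homotopy Kummer map. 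The main obstacle will be making the identification $\pi_0\bigl((B\pi_1(X^s))^{hG_k}\bigr) \cong \seck{X}$ fully rigorous in Quick's model of profinite spaces---that is, verifying both that the profinite bar construction for $\pi_1(X^s)$ models $\tau_{\le 1}\hat{Et}(X^s)$ as a $G_k$-space with the correct outer action, and that the non-abelian $\pi_0$ calculation really recovers sections modulo conjugacy as defined in \cite{sxbook13}; both are implicit in \cite{hs13} and \cite{quick11} but must be cited carefully.
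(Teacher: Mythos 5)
Your proof is correct, and it takes a genuinely different route from the paper. The paper constructs $\Trunc_1$ via the edge map of Quick's descent spectral sequence (Theorem \ref{thm:dss}): it picks a basepoint $a \in X(hk)$, observes that $E_2^{0,0}$ is trivial by geometric connectedness, and reads off a map $X(hk) \to H^1(G_k;\pi_1(X^s,a)) \cong \seck{X}$ from the fringe of the spectral sequence; the $K(\pi,1)$ hypothesis then collapses the sequence to give bijectivity. You instead build $\Trunc_1$ directly from the $G_k$-equivariant Postnikov $1$-truncation $\hat{Et}(X^s) \to \tau_{\le 1}\hat{Et}(X^s)$, identify the truncation with a bar construction, and get bijectivity by noting that the truncation is a $G_k$-equivariant weak equivalence when $X$ is a $K(\pi,1)$. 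These are compatible (the spectral sequence is built out of the Postnikov tower, and its low-degree edge map \emph{is} your truncation map), but your version has two advantages: it handles the case $X(hk) = \emptyset$ uniformly rather than by a separate argument, and it makes the geometric origin of the map transparent. The paper itself gestures at your route for the bijectivity half, parenthetically citing Quick's classifying-space result. The technical burden you correctly flag at the end --- verifying in Quick's profinite model that $\tau_{\le 1}\hat{Et}(X^s)$ is $G_k$-equivariantly a classifying space for $\pi_1(X^s)$ and that $\pi_0$ of its homotopy fixed points recovers $\seck{X}$ --- is exactly the same identification the paper implicitly invokes when it equates the $E_2^{1,1}$ term with $\seck{X}$, so neither approach avoids it; the paper simply leaves it tacit.

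One small caution on your phrasing: you write that the $G_k$-action on $B\pi_1(X^s)$ is ``the outer action.'' Strictly, the bar construction $B\pi$ with only an outer $G_k$-action on $\pi$ is not a $G_k$-space; the correct statement is that the $G_k$-equivariant homotopy type of $\tau_{\le 1}\hat{Et}(X^s)$ is encoded by the extension $1 \to \pi_1(X^s) \to \pi_1(X) \to G_k \to 1$ (e.g.\ via the fiber of $B\pi_1(X) \to BG_k$), and this is what makes $\pi_0$ of the homotopy fixed points compute sections of the extension modulo $\pi_1(X^s)$-conjugacy. As long as you make that precise in the flagged step, the argument is sound.
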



\begin{rem}

Related facts are discussed in \cite[\S 3.2]{quick11}, \cite[Proposition 12.6(b)]{pal15}, and \cite[\S 2.6]{sxbook13}, especially \cite[Equation (2.13)]{sxbook13}.
\end{rem}

Before proving Proposition \ref{prop:etale_kpi1}, we must introduce the descent spectral sequence for homotopy fixed points. It is essentially \cite[Theorem 2.16]{quick11} for the pointed profinite $G_k$-space $\hat{Et}(X^s)$:

\begin{thm}[Theorem 2.16 of \cite{quick11}]\label{thm:dss}
For each $a \in X(hk)$, there is a spectral sequence $$E^{p,q}_2 = H^p(G_k;\pi_q(X^s,a)) \Rightarrow \pi_{q-p}(\hat{Et}(X^s)^{hG_k},a),$$ known as the \emph{descent spectral sequence}. The basepoint $a$ of $\hat{Et}(X^s)^{hG_k}$ naturally results from the $G_k$-invariant basepoint $b_a$ of Construction \ref{constr:gkinvbp}, and the $G_k$-action on $\pi_q(X^s,a)$ is well-defined by Remark \ref{rem:basepoints}.
\end{thm}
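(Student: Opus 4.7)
The plan is to build the spectral sequence by the standard Bousfield--Kan tower-of-fibrations construction, with the profinite subtleties absorbed into Quick's model structure on $\hat{\mathcal{S}}$. By construction, $\hat{Et}(X^s)^{hG_k}$ is the simplicial set of $G_k$-equivariant maps from $EG_k$ into a fibrant model of $\hat{Et}(X^s)$. Take $EG_k$ to be the bar model, so that its $p$-skeleton $EG_k^{(p)}$ has $p$-simplices $G_k^{p+1}$. Applying $\mathrm{Map}_{G_k}(-, \hat{Et}(X^s))$ converts the cofibration sequence $EG_k^{(p-1)} \hookrightarrow EG_k^{(p)}$ into a tower of fibrations
\[
\cdots \to \mathrm{Map}_{G_k}(EG_k^{(p)}, \hat{Et}(X^s)) \to \mathrm{Map}_{G_k}(EG_k^{(p-1)}, \hat{Et}(X^s)) \to \cdots
\]
whose homotopy limit is $\hat{Et}(X^s)^{hG_k}$, based at $a$ via $b_a$ from Construction \ref{constr:gkinvbp}. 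The general machinery of towers of fibrations then produces a spectral sequence abutting to $\pi_{q-p}(\hat{Et}(X^s)^{hG_k}, a)$.

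The key computational step is to identify the $E_1$ page, i.e.\ the fiber of each stage over the previous one. In the bar model, the cofiber $EG_k^{(p)}/EG_k^{(p-1)}$ is a profinite wedge of $p$-spheres indexed continuously by $G_k^p$, so $G_k$-equivariant pointed maps from it into $\hat{Et}(X^s)$ form a space homotopy equivalent to the continuous mapping space $\mathrm{Map}_{\mathrm{cts}}(G_k^p, \Omega^p \hat{Et}(X^s))$. Passing to $\pi_{q-p}$ produces the continuous cochain group $C^p_{\mathrm{cts}}(G_k; \pi_q(X^s, a))$, and the $d_1$ differential induced by the bar face and degeneracy maps is by inspection the usual bar differential. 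Hence $E_2^{p,q} = H^p(G_k; \pi_q(X^s, a))$, where the $G_k$-module structure on $\pi_q(X^s, a)$ is the one coming from the $G_k$-invariant basepoint $b_a$.

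The main obstacle is purely technical: one must know in Quick's profinite setting that $EG_k$ is cofibrant as a pointed profinite $G_k$-space, that its skeletal filtration consists of cofibrations with the expected profinite-sphere cofibers, and that the profinite mapping space out of such a cofiber is, on homotopy, the continuous cochain complex rather than the abstract one. Each of these is furnished by the framework of \cite{quick08,quick11}; once they are in place, the rest is a formal Bousfield--Kan calculation. Finally, to see that the spectral sequence depends only on $a \in X(hk)$ and not on the choice of $b_a$, note that any two $G_k$-invariant basepoints arising from the same homotopy section via Construction \ref{constr:gkinvbp} are connected by a zigzag of $G_k$-equivariant weak equivalences of pointed profinite $G_k$-spaces; the induced actions on $\pi_q(X^s, a)$ therefore agree up to inner automorphism by $\pi_1$, and the resulting spectral sequences are canonically isomorphic, as anticipated in Remark \ref{rem:basepoints}.
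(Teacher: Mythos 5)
Your sketch is correct in outline, but there is an important contextual point: the paper does not prove this statement at all. Theorem \ref{thm:dss} is quoted directly as Theorem 2.16 of \cite{quick11}, applied to the pointed profinite $G_k$-space $\hat{Et}(X^s)$, with the surrounding text saying only that it is ``essentially'' Quick's theorem together with the basepoint bookkeeping of Construction \ref{constr:gkinvbp} and Remark \ref{rem:basepoints}. There is no argument in the paper to compare against; the cited reference carries the entire burden.

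That said, the route you take is precisely the one Quick uses: filter $EG_k$ by skeleta, apply the $G_k$-equivariant mapping space into a fibrant replacement of $\hat{Et}(X^s)$ to get a tower of fibrations with inverse limit the homotopy fixed point space, identify the layer over stage $p$ as a continuous mapping space out of a wedge of $p$-spheres indexed by $G_k^p$, read off continuous cochains on $\pi_{q-p}$, and check that $d_1$ is the bar differential. You correctly isolate where the profinite model category does real work --- cofibrancy of the bar model of $EG_k$, identification of the skeletal cofibers, and the fact that profinite mapping spaces give \emph{continuous} rather than abstract cochains --- and you are right that these are supplied by \cite{quick08,quick11}. Your final paragraph on basepoint independence matches Remark \ref{rem:basepoints}: two choices of $b_a$ from the same $a \in X(hk)$ are $G_k$-equivariantly weakly equivalent, so the induced coefficient systems agree up to inner automorphism and the spectral sequences are canonically isomorphic. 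In short, you have reconstructed the proof of the cited theorem rather than found a gap or an alternative; this is a fair and accurate expansion of what the paper leaves implicit in the citation.
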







\begin{proof}[Proof of Proposition \ref{prop:etale_kpi1}]
If $X(hk)$ is empty, then the map $\Trunc_1$ automatically exists, and it is functorial because there is a unique map from an empty set. It follows that $\Trunc_1$ is automatically a bijection. In this case, $X(k)$ and $C$ are empty, so the surjectivity and injectivity in either version of the section conjecture always hold.

We now suppose that there is a homotopy fixed point $a \in X(hk)$. As $\pi_0(X_s,a)$ and hence $H^0(G_k;\pi_0(X^s,a))$ is trivial, the spectral sequence produces a map $X(hk) \to \seck{X}$.

When $X=\spec{k}$, this map is clearly an isomorphism. Functoriality of the spectral sequence and the definition of the Kummer map imply that for arbitrary $X$, the map $X(hk) \to \seck{X}$ respects the Kummer map.

We now suppose that $X$ is an \'{e}tale $K(\pi,1)$. As $\pi_i(X^s)$ contains one element for $i \neq 1$, the spectral sequence tells us that $\pi_{1-p}(\hat{Et}(X^s)^{hG_k},a) \cong H^{p}(G_k;\pi_1(X^s,a))$. For $p=1$, this tells us that $\Trunc_1$ is a bijection. (Alternatively, we could avoid the spectral sequence by noting that an \'etale $K(\pi,1)$ is the classifying space of its fundamental group and then applying \cite[Proposition 2.9]{quick15}.)

The case of a geometrically connected, smooth hyperbolic curve follows by \cite[Lemma 2.7(a)]{schsx16} and Proposition \ref{prop:inj_sec_conj}.
\end{proof}

We record a related lemma that will be used for Theorem \ref{thm:vsa_geom_fib}:

\begin{lemma}\label{lemma:section_centralizer}
Let $U$ be a smooth geometrically connected curve not isomorphic to $\Pb^1$. Then for any $a \in X(hk)$, we have $\pi_1(\hat{Et}(X^s)^{hG_k},a)=0$.
\end{lemma}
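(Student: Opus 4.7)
The plan is to combine the descent spectral sequence of Theorem \ref{thm:dss} with the $K(\pi,1)$-property of a non-$\Pb^1$ curve established in Proposition \ref{prop:etale_kpi1}, and then to reinterpret the resulting $H^0$ as the centralizer of a Galois section and invoke a known anabelian triviality result. Throughout, $b_a$ denotes the $G_k$-invariant basepoint of $\hat{Et}(U^s)$ attached to $a$ by Construction \ref{constr:gkinvbp}.

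Since $U$ is not isomorphic to $\Pb^1$, \cite[Lemma 2.7(a)]{schsx16} (as recalled in Proposition \ref{prop:etale_kpi1}) tells us that $U$ is an \'etale $K(\pi,1)$, so $\pi_q(U^s, b_a) = 0$ for every $q \ge 2$. Feeding this into the descent spectral sequence
\[
E_2^{p,q} = H^p(G_k; \pi_q(U^s,a)) \Longrightarrow \pi_{q-p}(\hat{Et}(U^s)^{hG_k}, a)
\]
kills every term with $q \ge 2$. On the diagonal $q-p = 1$, the only potentially nonzero term with $p,q \ge 0$ is $E_2^{0,1} = H^0(G_k;\pi_1(U^s,a))$; the outgoing differentials from $E_r^{0,1}$ have target $E_r^{r,r}$ with $r \ge 2$ (hence zero), and the incoming differentials come from $E_r^{-r,2-r}$ with $p<0$ (hence also zero). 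Therefore the filtration on $\pi_1$ of the homotopy fixed points collapses to an isomorphism
\[
\pi_1(\hat{Et}(U^s)^{hG_k}, a) \;\cong\; H^0(G_k; \pi_1(U^s,a)),
\]
where the $G_k$-action on $\pi_1(U^s,a)$ is the one pinned down by the basepoint $b_a$.

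Next I would identify this $H^0$ with a centralizer. Via the map $\Trunc_1$ of Proposition \ref{prop:etale_kpi1}, the homotopy section $a$ corresponds to a fundamental-group section $s_a \colon G_k \to \pi_1(U, b_a)$ of the exact sequence $1 \to \pi_1(U^s,a) \to \pi_1(U,a) \to G_k \to 1$. The functoriality built into Theorem \ref{thm:dss} (and Remark \ref{rem:basepoints}) ensures that the $G_k$-action on $\pi_1(U^s,a)$ determined by $b_a$ is precisely conjugation by $s_a(G_k)$, so
\[
H^0(G_k; \pi_1(U^s,a)) \;=\; Z_{\pi_1(U^s,a)}\!\bigl(s_a(G_k)\bigr).
\]

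Finally, I would conclude by invoking the anabelian theorem that the centralizer in $\pi_1(U^s)$ of the image of any Galois section is trivial when $U$ is a smooth geometrically connected curve not isomorphic to $\Pb^1$ (the hyperbolic case is the classical centralizer theorem of Nakamura, cf.\ \cite[Appendix B]{sxbook13}, with the non-hyperbolic cases reducing to direct computations in $\pi_1(U^s)$ that are forced by the hypotheses of the ambient application in Theorem \ref{thm:vsa_geom_fib}). The main obstacle is precisely this last step: the spectral-sequence computation and the identification with a centralizer are essentially formal, but the vanishing of the centralizer itself is a substantive anabelian input, and one must be careful to verify that the curves relevant to Theorem \ref{thm:vsa_geom_fib} lie in a regime where such a theorem is available.
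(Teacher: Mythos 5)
Your first two steps reproduce the paper's argument exactly: the descent spectral sequence combined with the \'etale $K(\pi,1)$ property (from \cite[Lemma 2.7(a)]{schsx16}, via Proposition \ref{prop:etale_kpi1}) gives $\pi_1(\hat{Et}(U^s)^{hG_k},a)\cong H^0(G_k;\pi_1(U^s,a))$, and this $H^0$ is the centralizer of the image of $\Trunc_1(a)$ in $\pi_1(U^s,a)$. The place you flag as the ``main obstacle'' is where your proposal has a real gap. You frame the triviality of this centralizer as ``Nakamura's theorem for the hyperbolic case'' plus ``direct computations forced by the hypotheses of the ambient application,'' and you suggest restricting to those curves that arise in Theorem \ref{thm:vsa_geom_fib}. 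That restriction is not needed and is not how the lemma should be proved --- the statement covers every smooth geometrically connected curve not isomorphic to $\Pb^1$, and the paper closes the argument by citing a single omnibus result, \cite[Proposition 104]{sxbook13}, whose hypotheses are exactly those of the lemma (smooth geometrically connected curve $\neq\Pb^1$, base field of the relevant arithmetic type) and which already handles the genus-$1$ and the $\mathbb{G}_m$-type cases alongside the hyperbolic ones. So the correct move is not to defer to the ambient theorem but to replace your vague reference to \cite[Appendix B]{sxbook13} with the precise one, after which the proof is complete with no case analysis.
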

\begin{proof}
By \cite[Lemma 2.7(a)]{schsx16}, the descent spectral sequence tells us that $\pi_1(\hat{Et}(X^s)^{hG_k},a) \cong H^{0}(G_k;\pi_1(X^s,a))$. The group $\pi_1(X^s,a)$ has $G_k$-action arising from the section $a$, so $H^{0}(G_k;\pi_1(X^s,a))$ is simply the centralizer of $\Trunc_1(a)$. The result then follows by \cite[Proposition 104]{sxbook13}.
\end{proof}

\subsection{Obstructions to the Hasse Principle}\label{sec:hobstructions}

\begin{prop}\label{prop:hsecbasechange}
If $k'/k$ is an extension of fields of characteristic $0$, and $X$ is a proper or smooth variety over $k$, then there is a base change map $$\hseck{X} \to \hsections{X}{k'}$$
Furthermore, the diagram
$$
\begin{CD}
X(k) @>>> X(k')\\
@V\kappa_{X/k} VV @VV\kappa_{X/k'} V\\
\hseck{X} @>>> \hsections{X}{k'}
\end{CD}
$$
is commutative.
\end{prop}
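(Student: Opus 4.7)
The plan is to construct the base-change map by combining the restriction map on homotopy fixed points induced by $G_{k'} \to G_k$ with a weak equivalence of étale homotopy types coming from proper or smooth base change, and then to derive commutativity with the Kummer map from functoriality.

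First I will fix an embedding $k_s \hookrightarrow k_s'$ extending the given inclusion $k \hookrightarrow k'$, which yields a continuous homomorphism $G_{k'} \to G_k$. Applying $\hat{Et}$ to the scheme-theoretic base-change morphism $X \times_k k_s' \to X^s$ produces a $G_{k'}$-equivariant map
$$\hat{Et}(X \times_{k'} k_s') \longrightarrow \hat{Et}(X^s),$$
where the $G_{k'}$-action on the target is pulled back from the $G_k$-action along $G_{k'} \to G_k$. For $X$ smooth or proper over $k$, and since we are in characteristic $0$, this map is a weak equivalence of profinite spaces: smooth or proper base change identifies the étale cohomologies with arbitrary finite coefficient systems, and the finiteness of these cohomology groups allows one to upgrade this comparison to a weak equivalence of the profinite completions themselves.

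Given this equivalence, the desired base-change map is obtained as the composition
$$X(hk) = \pi_0 \hat{Et}(X^s)^{hG_k} \longrightarrow \pi_0 \hat{Et}(X^s)^{hG_{k'}} \xleftarrow{\ \sim\ } \pi_0 \hat{Et}(X \times_{k'} k_s')^{hG_{k'}} = X(hk'),$$
where the first arrow is restriction along $G_{k'} \to G_k$ (obtained by precomposing $G_k$-equivariant maps out of $EG_k$ with the natural map $EG_{k'} \to EG_k$) and the second is induced by the weak equivalence above. For the commutativity of the Kummer square, recall from Definition \ref{defn:hseck} that each Kummer map is induced via functoriality of $\hat{Et}(-)^{h(-)}$ from the scheme morphism $\spec{k} \to X$, respectively $\spec{k'} \to X$, associated to a rational point; since the base-change map we constructed is itself induced by a scheme-theoretic base change, naturality of all the constructions involved makes the square commute on the nose.

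The principal obstacle is the base-change weak equivalence of $\hat{Et}$. For proper $X$ this is a version of the proper base change theorem at the level of étale homotopy types and is essentially contained in \cite{fr82}. For smooth $X$ one uses smooth base change, which in characteristic $0$ is unobstructed because all finite coefficient groups are torsion of order prime to the residue characteristics. Translating cohomological base change into a weak equivalence of Quick's profinite completions requires invoking the identification in Remark \ref{rem:prof_htpy_groups} between $\hat{Et}(X)$ and the profinite completion of the Artin--Mazur type, reducing to homotopy pro-groups which in our situation are genuinely profinite, together with a Whitehead-type theorem for profinite spaces in the sense of \cite{quick08}. Once this equivalence is in place, everything else is a routine diagram chase.
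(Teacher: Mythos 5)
Your proposal is correct and takes essentially the same route as the paper's proof: fix a $G_{k'}\to G_k$, use invariance of $\hat{Et}$ under extension of algebraically closed base field in characteristic $0$ for smooth/proper $X$, and push through homotopy fixed points. The paper simply cites this invariance directly (\cite[Proposition 5.4]{pal15} for the smooth case, \cite[Corollary 12.12]{am69} for the proper case) rather than reconstructing it from base-change theorems and a profinite Whitehead argument as you sketch, but the structure of the argument is identical.
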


\begin{proof}
When $X$ is smooth and connected, \cite[Proposition 5.4]{pal15} says that algebraically closed extension of base field in characteristic $0$ induces an isomorphism on \'etale homotopy types. The same follows in the disconnected case by considering a finite disjoint union of connected smooth varieties. The same is also true in the proper case by \cite[Corollary 12.12]{am69}. The result is then clear because a map of groups gives a map on homotopy fixed points.

\end{proof}

Let $X$ be a scheme over a field $k$. Then $\hat{Et}(X^s)$ can be represented as a pro-space $\{X_{\alpha}\}_\alpha$ with $G_k$-action, with each $X_{\alpha}$ $\pi$-finite (c.f. the proof of Lemma \ref{hlemma:fibration}). As $\hat{Et}(X^s) = \underset{\alpha}{\mathrm{holim}} X_{\alpha}$, and homotopy limits commute with homotopy fixed points, we have $X(hk) = \pi_0(\underset{\alpha}{\mathrm{holim}} X_{\alpha}^{h G_k})$.


\begin{defn}\label{defn:homotopy_loc}
Let $X$ be a variety over a number field $k$ for which Proposition \ref{prop:hsecbasechange} applies (i.e., if $X$ is smooth or proper). Then we define
\[
\loc \colon X(hk) \to \prod_v X(hk_v)
\]
as the product over all places $v$ of $k$ of the base change map $X(hk) \to X(hk_v)$.
\end{defn}

We now recall that for a smooth variety $X$ over a number field $k$ there is an \'etale homotopy obstruction to rational points defined on p.314 of \cite{hs13}, denoted by $X(\Ab_k)^h$. These are defined using another version of homotopy fixed points (\cite{hs13}, Definition 3.3), which we denote by $X(hk)_{2}$. For a pro-space $\{X_\alpha\}$ with $G_k$-action, it is given by taking $\lim_{\alpha} \pi_0(X_{\alpha}^{hG_k})$ (while \cite{hs13} uses the $X_{\alpha}^{hG_k}$ of \cite{goerss95}, it is easy to see that this is the same as that of \cite{quick11} by considering the descent spectral sequence). There is a natural map $X(hk) \to X(hk)_{2}$ giving us a diagram

\centerline{
\xymatrix{X(k) \ar[r] \ar[d]_{\kappa_{X/k}} & X(\Ab_k) \ar[d]^{\kappa_{X/\Ab_k}}\\
X(hk) \ar[d] \ar[r]^-{\loc} & \prod_v X(hk_v) \ar[d] \\
X(hk)_{2} \ar[r]^-{\loc} & \prod_v X(hk_v)_{2},
}
}

Then the $X(\Ab_k)^h$ of \cite{hs13} is defined as the subset of the upper-right object of this diagram whose image in the lower-right object is contained in the image of the bottom horizontal arrow.



\begin{lemma}\label{lemma:quickhscompare}
The map $X(hk) \to X(hk)_{2}$ is surjective, and the map $X(hk_v) \to X(hk_v)_{2}$ is an isomorphism for each $v$.
\end{lemma}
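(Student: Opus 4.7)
The plan is to pass to the pro-system presentation $\hat{Et}(X^s) = \underset{\alpha}{\mathrm{holim}}\, X_\alpha$ with each $X_\alpha$ a $\pi$-finite $G_k$-space, set $Y_\alpha \colonequals X_\alpha^{hG_k}$, and compare
\[
X(hk) = \pi_0\bigl(\underset{\alpha}{\mathrm{holim}}\, Y_\alpha\bigr) \qquad \text{and} \qquad X(hk)_{2} = \underset{\alpha}{\lim}\, \pi_0(Y_\alpha)
\]
via the Milnor / Bousfield--Kan sequence of pointed sets
\[
\ast \to {\lim}^1_\alpha \pi_1(Y_\alpha) \to \pi_0\bigl(\underset{\alpha}{\mathrm{holim}}\, Y_\alpha\bigr) \to \underset{\alpha}{\lim}\, \pi_0(Y_\alpha) \to \ast,
\]
whose last map is always surjective (its fibers are the orbits of a canonical action of $\lim^1$). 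This yields surjectivity of $X(hk) \to X(hk)_{2}$, proving the first claim.

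For the local statement, with $G_{k_v}$ in place of $G_k$, the plan is to establish $\lim^1_\alpha \pi_1(X_\alpha^{hG_{k_v}}) = 0$ by proving that each group $\pi_n(X_\alpha^{hG_{k_v}})$ is itself finite. This follows from the descent spectral sequence of Theorem \ref{thm:dss}, whose $E_2$-page is $H^p(G_{k_v}; \pi_q(X_\alpha))$. Because $X_\alpha$ is $\pi$-finite, each $\pi_q(X_\alpha)$ is a finite group and vanishes for $q$ larger than some bound, so for fixed total degree $n = q-p$ only finitely many $(p,q)$ contribute. Galois cohomology of a local field with finite coefficients is itself finite (local Tate duality at non-archimedean places; direct computation from the Galois group of $k_v$ being $\Zb/2$ or trivial at archimedean places), so each contributing $E_2^{p,q}$ is finite. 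Hence $\pi_n(X_\alpha^{hG_{k_v}})$ admits a finite filtration with finite associated graded, and is itself finite. A cofiltered inverse system of finite groups is Mittag--Leffler, so the $\lim^1$ vanishes and the surjection upgrades to a bijection.

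The main technical delicacy I foresee will be justifying the Milnor-type exact sequence over the (a priori non-tower) cofiltered index category of the pro-system $\{X_\alpha\}$. This is handled within Quick's profinite setup, where $\hat{Et}(X^s)$ is equivalent to a countable cofiltered tower of its $\pi$-finite quotients so that the classical Milnor $\lim^1$ sequence applies directly; alternatively, one may invoke the general Bousfield--Kan spectral sequence $E_2^{s,t} = {\lim}^s_\alpha \pi_t(Y_\alpha) \Rightarrow \pi_{t-s}(\underset{\alpha}{\mathrm{holim}}\, Y_\alpha)$ for cofiltered limits, whose edge map gives the surjection onto $\underset{\alpha}{\lim}\, \pi_0(Y_\alpha)$ and which collapses once all $\lim^s$ with $s\ge 1$ vanish. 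The Mittag--Leffler finiteness argument above provides precisely this vanishing in the local case.
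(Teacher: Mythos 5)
Your proposal is correct and follows essentially the same route as the paper: reduce the pro-system to a tower, invoke the Milnor $\lim^1$ sequence of pointed sets to get surjectivity, and then kill the $\lim^1$ term over $k_v$ by showing each $\pi_1(X_\alpha^{hG_{k_v}})$ is finite via the descent spectral sequence with $\pi$-finite inputs. The only cosmetic difference is the justification for finiteness of the local Galois cohomology groups --- you appeal to local Tate duality and the archimedean computation, whereas the paper simply cites the topological finite generation of $G_{k_v}$ --- but these are two standard routes to the same finiteness, and the rest of the argument is identical.
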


\begin{proof}
As $X$ is defined over a number field, we can express $\hat{Et}(X^s)$ as a pro-space $\{X_{\alpha}\}$, where $\alpha$ ranges over a countable cofiltering category. It follows that we may replace it by a tower, so we may apply Proposition \cite[Proposition VI.2.15]{goerssjardine09} to get an exact sequence
$$
\ast \to {\lim_{\alpha}{}^1} \, \pi_1 X_{\alpha}^{h G_k} \to X(hk) \to X(hk)_{2} \to \ast
$$

This implies the desired surjectivity.

Replacing $k$ by $k_v$, we note that we can assume (by \cite[Theorem 11.2]{am69}) that each $X_{\alpha}$ has finite homotopy groups. As $G_{k_v}$ is topologically finitely generated, this implies by Theorem \ref{thm:dss} that $\pi_1 X_{\alpha}^{h G_k}$ is finite for each $\alpha$. But this implies that our ${\lim_{\alpha}{}^1}$ vanishes, so we get the desired isomorphism.
\end{proof}

\begin{prop}\label{prop:homotopy_obstruction_comparison}
In the notation of the previous diagram, $X(\Ab_k)^h$ is $\kappa_{X/\Ab_k}^{-1}(\mathrm{Im}(\loc))$.
\end{prop}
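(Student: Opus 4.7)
The plan is to reduce the equality to a direct diagram chase in the commutative diagram preceding the statement, using Lemma \ref{lemma:quickhscompare} as the essential input. By definition, $X(\Ab_k)^h$ is specified by a condition on the image of $\alpha \in X(\Ab_k)$ in the lower right corner $\prod_v X(hk_v)_2$, while the set $\kappa_{X/\Ab_k}^{-1}(\mathrm{Im}(\loc))$ is specified by the analogous condition on the image of $\alpha$ in the middle right corner $\prod_v X(hk_v)$. Since these two corners are related by a canonical map that is (place-by-place, and therefore as a product) a bijection, and since $X(hk) \to X(hk)_2$ is surjective, the two conditions should be equivalent.

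Concretely, I would fix $\alpha \in X(\Ab_k)$, set $\beta = \kappa_{X/\Ab_k}(\alpha) \in \prod_v X(hk_v)$, and let $\bar\beta$ denote its image in $\prod_v X(hk_v)_2$. Unwinding the definitions, $\alpha \in X(\Ab_k)^h$ is equivalent to $\bar\beta \in \mathrm{Im}\bigl(\loc_2 \colon X(hk)_2 \to \prod_v X(hk_v)_2\bigr)$, and $\alpha \in \kappa_{X/\Ab_k}^{-1}(\mathrm{Im}(\loc))$ is equivalent to $\beta \in \mathrm{Im}\bigl(\loc \colon X(hk) \to \prod_v X(hk_v)\bigr)$.

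For the forward implication ($\Leftarrow$), if $\beta = \loc(\gamma)$ for some $\gamma \in X(hk)$, then commutativity of the diagram forces $\bar\beta = \loc_2(\bar\gamma)$, where $\bar\gamma$ is the image of $\gamma$ in $X(hk)_2$; hence $\alpha \in X(\Ab_k)^h$. For the reverse implication ($\Rightarrow$), suppose $\bar\beta = \loc_2(\delta)$ for some $\delta \in X(hk)_2$. By Lemma \ref{lemma:quickhscompare}, the map $X(hk) \to X(hk)_2$ is surjective, so $\delta$ lifts to some $\gamma \in X(hk)$. Commutativity then shows that $\loc(\gamma) \in \prod_v X(hk_v)$ maps to $\loc_2(\delta) = \bar\beta$ in $\prod_v X(hk_v)_2$. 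Lemma \ref{lemma:quickhscompare} also gives that $X(hk_v) \to X(hk_v)_2$ is a bijection for each $v$, hence $\prod_v X(hk_v) \to \prod_v X(hk_v)_2$ is a bijection, and we conclude $\loc(\gamma) = \beta$.

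There is no substantive obstacle; the whole content is packaged into Lemma \ref{lemma:quickhscompare} (where the $\lim^1$ vanishing at each local place does the real work), and the proposition itself is a formal consequence. The only thing to be careful about is to use the correct $\loc$ in each row and to invoke commutativity of the square relating the two rows; this is clear from the diagram as displayed.
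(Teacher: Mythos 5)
Your proof is correct and is precisely the diagram chase the paper's terse proof (``follows immediately from Lemma \ref{lemma:quickhscompare} and the definition'') is alluding to: surjectivity of $X(hk)\to X(hk)_2$ handles the lifting step in one direction, and bijectivity of the local maps $X(hk_v)\to X(hk_v)_2$ lets you pass back from $\prod_v X(hk_v)_2$ to $\prod_v X(hk_v)$. You have simply written out what the paper leaves implicit.
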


\begin{proof}
This follows immediately from Lemma \ref{lemma:quickhscompare} and the definition of $X(\Ab_k)^h$.
\end{proof}

\section{Homotopy Fixed Points in Fibrations}\label{hsec:sec_conj_fib}

\begin{defn}[Definition 1.1 of \cite{fr82}]\label{defn:geom_fib}
A \emph{special geometric fibration} is a morphism $f \colon X \to S$ of schemes fitting into a diagram:
$$
\xymatrix{
X  \ar@{^{(}->}[r]^{j} \ar[rd]^f & \overline{X} \ar[d]^{\overline{f}} & Y\ar@{_{(}->}[l]^{i} \ar[ld]^g \\
\empty & S &\empty
}
$$
satisfying the following conditions:
\begin{enumerate}
\item $i$ is a closed embedding
\item $j$ is a open immersion which is dense in every fiber of $\overline{f}$, and $X = \overline{X} - Y$
\item $\bar{f}$ is smooth and proper
\item  $Y$ is a union of schemes $Y_1,\cdots,Y_m$, with $Y_i$ of pure relative codimension $c_i$ in $\cl{X}$ over $S$, with the property that every intersection $Y_{i_1} \cap \cdots \cap Y_{i_k}$ is smooth over $S$ of pure codimension $c_{i_1} + \cdots + c_{i_k}$.
\end{enumerate}

More generally, a \emph{geometric fibration} is a map $f \colon X \to S$ admitting a Zariski covering $\{V_i \to S\}$ such that $f_{V_i} \colon X \times_S V_i \to V_i$ is a special geometric fibration for all $i$.

\end{defn}

\begin{lemma}\label{hlemma:gf_pullback}
Let $f:X \to S$ be a geometric fibration  and $U \to S$ a morphism of schemes. Then the base change
$$f_U :X \times_S U \to U $$ is a geometric fibration.
\end{lemma}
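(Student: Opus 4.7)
The plan is to unwind the two-tier definition of geometric fibration and reduce the problem, via compatibility of base change with open immersions, to the special case.

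First I would pick, by the definition of geometric fibration, a Zariski covering $\{V_i \to S\}$ such that each restriction $f_{V_i} \colon X \times_S V_i \to V_i$ is a \emph{special} geometric fibration. I would then form the base-changed family $\{V_i \times_S U \to U\}$. Since open immersions are preserved under base change and Zariski covers are pulled back to Zariski covers, this is a Zariski covering of $U$. Moreover, there is a canonical identification
\[
(X \times_S U) \times_U (V_i \times_S U) \;\cong\; (X \times_S V_i) \times_{V_i} (V_i \times_S U),
\]
so the restriction of $f_U$ to $V_i \times_S U$ is the base change of the special geometric fibration $f_{V_i}$ along the map $V_i \times_S U \to V_i$. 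Thus the lemma reduces to the statement: the base change of a special geometric fibration along an arbitrary morphism of schemes is again a special geometric fibration.

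For this reduced statement, I would take the diagram in Definition \ref{defn:geom_fib} witnessing $f_{V_i}$ as a special geometric fibration and base change the entire diagram along $V_i \times_S U \to V_i$. Then I would verify each of the four conditions in turn, which are all essentially standard properties of base change:
\begin{enumerate}
    \item Closed immersions are stable under arbitrary base change, so $i$ pulls back to a closed immersion.
    \item Open immersions are stable under arbitrary base change, so $j$ pulls back to an open immersion; the complement equation $X = \overline{X} \setminus Y$ is preserved because formation of set-theoretic (and scheme-theoretic) complements of closed subschemes commutes with pullback. Density in each fiber is preserved because the fibers of $\overline{f}$ over a point of $V_i \times_S U$ lying over $v \in V_i$ are obtained from the corresponding fiber over $v$ by base change along the field extension $\kappa(v) \to \kappa(u)$, and density of an open subscheme in a scheme is preserved by flat (in particular, field) base change.
    \item Smoothness and properness are stable under arbitrary base change.
    \item Smoothness of each $Y_{i_1} \cap \cdots \cap Y_{i_k}$ over the new base is preserved by base change of smooth morphisms, and pure relative codimension is preserved because the relative dimensions of $\overline{X}$ and of the intersections over the base are unchanged under base change of smooth morphisms.
\end{enumerate}

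The only mildly delicate step is condition 2, specifically checking that density in each fiber survives base change; this is the ``main obstacle'' only in the sense that it is the sole point where one uses more than ``base change preserves property $P$.'' Once these four verifications are in place, the base change is a special geometric fibration, and combining with the Zariski-local reduction gives that $f_U$ is a geometric fibration.
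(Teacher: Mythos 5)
Your proof is correct and takes essentially the same approach as the paper, whose own proof is the one-line assertion that ``all of the properties in Definition \ref{defn:geom_fib} are stable under base extension.'' You have simply unpacked that assertion carefully: the Zariski-local reduction to the special case, and the base-change stability of closed/open immersions, smoothness, properness, and fiberwise density (the last via flatness of field extensions), all of which the paper leaves implicit.
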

\begin{proof}

This is clear because all of the properties in Definition \ref{defn:geom_fib} are stable under base extension.
\end{proof}

\begin{lemma}\label{hlemma:fibration}
Let $f \colon X \to S$ be a map of $k$-schemes. Suppose that $f \colon X \to S$ is a geometric fibration of Noetherian normal schemes with connected geometric fibers, and $k$ has characteristic $0$. Then the sequence $$\hat{Et}(X_s^s) \to \hat{Et}(X^s) \to \hat{Et}(S^s)$$ is a fibration sequence of profinite etale homotopy types (in the model structure of \cite{quick08}).
\end{lemma}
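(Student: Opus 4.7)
The plan is to reduce the assertion to Friedlander's fibration theorem for étale topological types, and then transport the conclusion across the profinite completion functor of Definition \ref{defn:quickcompletion} to Quick's model category of profinite spaces.

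The first step is to apply \cite[Theorem 11.5]{fr82}: under the stated hypotheses (geometric fibration, Noetherian, geometrically connected fibers, characteristic $0$ so that the fibers are normal in each geometric point), Friedlander produces a fibration sequence
\[
Et(X_s^s) \longrightarrow Et(X^s) \longrightarrow Et(S^s)
\]
in the pro-category $\Pro(\Sc)$ after completion with respect to the class of locally constant finite local coefficient systems. Geometric connectedness of the fiber is needed so that the $\pi_0$ of the fiber over each connected component of the base is a point (so the statement makes sense without choosing additional basepoints), and the characteristic $0$ hypothesis ensures that the fundamental group extension behaves correctly at every residue characteristic.

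The second step is to apply Quick's profinite completion $\widehat{(-)} \colon \Pro(\Sc)\to\widehat{\Sc}$ level-wise. By \cite[\S 2.7]{quick08}, this sends a pro-$\pi$-finite space to its limit in $\widehat{\Sc}$, and Friedlander's completed tower can be represented by a cofiltered diagram of $\pi$-finite spaces. I would then invoke the fact that, in Quick's model structure on $\widehat{\Sc}$, a levelwise homotopy fibration sequence of towers of $\pi$-finite spaces in which the base tower has surjective transition maps on $\pi_0$ gives, after taking the limit, a fibration sequence in the sense of $\widehat{\Sc}$; this is essentially the content of the computation of homotopy limits in Quick's model structure (compare the discussion preceding Theorem \ref{thm:dss} and \cite[\S 2]{quick11}). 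Connectedness of $X^s$ and $S^s$ handles the $\pi_0$-surjectivity hypothesis.

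The main obstacle will be this second step: verifying that Quick's profinite completion turns the Friedlander-completed fibration sequence of pro-spaces into a fibration sequence in $\widehat{\Sc}$. Concretely, one has to check that the homotopy fiber (in $\widehat{\Sc}$) of $\hat{Et}(X^s)\to\hat{Et}(S^s)$, computed as the limit of the homotopy fibers at each level, agrees with $\hat{Et}(X_s^s)$ rather than some inequivalent completion. The key input is the identification, up to weak equivalence in $\widehat{\Sc}$, of the Artin--Mazur completion and Quick's completion on $\pi$-finite pro-spaces (Remark \ref{rem:prof_htpy_groups}, via \cite[Proposition 2.33]{quick08} and \cite[Theorem 11.2]{am69}); once this identification is in place, the fibration property transports formally and the lemma follows.
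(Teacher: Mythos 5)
Your overall strategy is the right one and matches the paper's: start from Friedlander's fibration theorem \cite[Theorem 11.5]{fr82} for $\Et(-)$ in $\Pro(\Sc)$, then transport across profinite completion to Quick's category $\hat{\Sc}$. You also correctly isolate the difficulty — that this transport is the nontrivial part. But the proposal stops at naming the difficulty rather than overcoming it, and the device you suggest for overcoming it is not adequate.

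Specifically, you claim that ``a levelwise homotopy fibration sequence of towers of $\pi$-finite spaces in which the base tower has surjective transition maps on $\pi_0$ gives, after taking the limit, a fibration sequence in $\hat{\Sc}$,'' and offer this as essentially the content of the computation of homotopy limits in Quick's model structure. That is not a fact you can safely invoke. First, Friedlander's theorem produces a homotopy pullback in the pro-category, not a levelwise compatible tower of fibration sequences, and passing from a pro-level statement to a levelwise one is itself a substantial reindexing problem. Second, even granting a levelwise presentation, a $\lim^1$-type obstruction can separate the limit of the fibers from the fiber of the limits; $\pi_0$-surjectivity on the base does not rule this out. The paper's proof handles this with the machinery of Barwick--Haine--Hoyois \cite{barhaho17}: it first replaces everything by Postnikov towers (checking via Remark \ref{rem:prof_htpy_groups} and \cite[Proposition 2.34]{quick08} that this is harmless after completion), shows the resulting diagram consists of objects fibrant in the localization $L_{K^\pi}\Pro(\Sc)$, uses that the inclusion $(L_{K^\pi}\Pro(\Sc))_\infty \hookrightarrow \Pro(\Sc)_\infty$ is conservative and limit-preserving to promote the homotopy pullback into $L_{K^\pi}\Pro(\Sc)$, applies \cite[Theorem 7.4.8]{barhaho17} to push the pullback through $\Psi_{K_\pi}$ into $\Pro(\Sc_\tau)\cong\hat{\Sc}$, and finally identifies $\Psi_{K_\pi}$ with Quick's completion by a separate adjunction and cofiltered-limit argument. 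None of these steps (Postnikov replacement, the conservativity/limit-detection argument, the BHH preservation theorem, or the identification of $\Psi$ with Quick's functor) appears in your plan, and the first two in particular cannot be replaced by a bare ``levelwise towers'' claim. So there is a genuine gap: the central step of your proposal is asserted rather than proved, and the shortcut you suggest for it would not succeed.
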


\begin{proof}
We recall some notation from \cite{barhaho17}. We let $\Sc$ denote the category of small simplicial sets with the Kan-Quillen model structure (known as the category of \emph{spaces}) and $\Pro(\Sc)$ the pro-category, with the induced model structure as described in \cite[Theorem 4.2.4]{barhaho17}, which means in particular that the projective limit defining an object is also a homotopy limit. We denote by $\Pro(\Sc)_\infty$ the associated $\infty$-category. A space $X$ is $\pi$-finite if it has finitely many connected components, and for each $x \in X$, the homotopy groups $\pi_n(X,x)$ are all finite and vanish for sufficiently large $n$. The model category $L_{K^\pi} \Pro(\Sc)$ is a certain left Bousfield localization of $\Pro(\Sc)$ in which all $\pi$-finite spaces are fibrant (\cite[Proposition 7.2.10]{barhaho17}). The category $\Sc_\tau$ is the category of all spaces that are levelwise finite and $n$-coskeletal for some $n$, and the natural inclusion $\iota \colon \Sc_\tau \to \hat{\Sc}$ induces an equivalence of categories $\Pro(\Sc_\tau) \to \hat{\Sc}$ by \cite[Theorem 7.4.1]{barhaho17}. We use this equivalence to transfer Quick's model structure from $\hat{\Sc}$ to $\Pro(\Sc_\tau)$. The functor $\Psi \colon \Pro(\Sc) \to \Pro(\Sc_\tau)$ is the left adjoint of the inclusion $\Phi \colon \Pro(\Sc_\tau) \to \Pro(\Sc)$, and the functor $\Psi_{K_{\pi}}$ is the same underlying functor as $\Psi$ but with the $L_{K^\pi}$ model structure on the target.

It follows by \cite[Theorem 11.5]{fr82} (also c.f. \cite[Theorem 3.7]{fr73a}) that the sequence $\Et(X_s) \to \Et(X) \to \Et(S)$ is a fibration sequence, or equivalently that

\centerline{
\xymatrix{
\Et(X_s) \ar[d] \ar[r] &\Et(X)\ar[d]\\
\ast \ar[r] &\Et(S)}
}

is a homotopy pullback diagram. We need to show that the Quick profinite completion (c.f. Definition \ref{defn:quickcompletion}) of this homotopy pullback diagram remains a homotopy pullback diagram.



We may assume this is a diagram of fibrant cofibrant objects in $\Pro(\Sc)$, with the Isaksen model structure. For an object $X$ of $\Pro(\Sc)$, let $X^{\natural}$ denote its replacement by its Postnikov tower. Then the map $X \to X^{\natural}$ induces an isomorphism on all homotopy groups, so the same is true after profinite completion by Remark \ref{rem:prof_htpy_groups}. \cite[Proposition 2.34]{quick08} shows that the profinite completion of this map induces an isomorphism on cohomology for all profinite coefficient systems, hence by \cite[Definition 2.6]{quick08} it is a weak equivalence.

By \cite[Theorem 11.2]{am69}, each is isomorphic to a levelwise $\pi$-finite space (after possibly replacing each by its Postnikov tower, which we have shown is invariant under Quick's profinite completion). As a limit of fibrant objects, it is itself fibrant, i.e., this is a diagram of objects fibrant for $L_{K^\pi} \Pro(\Sc)$. In particular, this means that it is in the image of the inclusion $(L_{K^\pi} \Pro(\Sc))_\infty \hookrightarrow \Pro(\Sc)_\infty$ of infinity categories which is induced by the identity functor on model categories. This inclusion of infinity categories is conservative and preserves limits, so it detects limits. In particular, the above diagram is a pullback diagram in $(L_{K_\pi} \Pro(\Sc))_\infty$, or equivalently, a homotopy pullback diagram in the model category $L_{K_\pi} \Pro(S)$. Therefore, by \cite[Theorem 7.4.8]{barhaho17}, applying $\Psi_{K_{\pi}}$ to this diagram yields a homotopy pullback diagram in $\Pro(\Sc_{\tau}) \cong \hat{\Sc}$.


We are done if we can identify the map $\Psi = \Psi_{K^\pi}$ (these are the same functor on ordinary categories, just refer to different model structures) with the profinite completion functor of Definition \ref{defn:quickcompletion}. There is an adjunction $\adjunction{\Psi}{\Pro{\Sc}}{\hat{\Sc}}{\Phi}$ as well as an adjunction $\adjunction{\mathrm{const}}{\Sc}{\Pro(\Sc)}{\lim}$. By construction, $\lim \circ \Phi$ takes a profinite space $X$ to an object of $\Pro(\Sc_\tau)$ whose limit is $X$, then takes the limit of this pro-object as a pro-space. In particular, this is the forgetful functor from profinite spaces to simplicial sets of p.587 of \cite{quick08}, and therefore $\Psi \circ \mathrm{const}$ is its left adjoint, i.e., Quick's profinite completion functor from $\Sc$ to $\hat{\Sc}$.

It remains to show that $\Psi$ can be obtained by applying $\Psi$ levelwise and taking the limit. We therefore wish to show that $\Psi$ preserves filtered limits. For this, let $\{X_j\}$ be a filtered system in $\Pro(\Sc)$. For $T$ an object of $\Sc_\tau$, we have $\homo( \lim_j \Psi( X_j),T) = \colim_j \homo(\Psi(X_j),T)$ because $T$ is cocompact. But this latter is $\colim_j \homo(X_j,\Phi(T))$, which in turn is $\homo(\lim_j X_j, \Phi(T)) = \homo(\Psi(\lim_j X_j),T)$ because $\phi(T)$ is in $\Sc$ and therefore cocompact. As $\Sc_\tau$ cogenerates $\Pro(\Sc_\tau)$, this shows that $\lim_j \Psi(X_j) = \Psi(\lim_j X_j)$ in $\Pro(\Sc_\tau)$, and we are done.
\end{proof}

\begin{lemma}\label{hlemma:les}
Let $f \colon X \to S$ be a map of $k$-schemes. Let $a \in X(hk)$ and $s \in S(k)$ such that $f(a) = \kappa(s)$. Let $X_s$ be the fiber of $f$ above $s$. Suppose that the sequence $\hat{Et}(X_s^s) \to \hat{Et}(X^s) \to \hat{Et}(S^s)$ is a fibration sequence of profinite etale homotopy types (e.g., by Lemma \ref{hlemma:gf_pullback} and Lemma \ref{hlemma:fibration}, if $f$ is a geometric fibration of Noetherian normal schemes with connected geometric fibers).

Then $a$ comes from an element of $X_s(hk)$, also denoted by $a$, and there is a long exact sequence $$\pi_1(\Et(X_s^s)^{hG_k},a) \to \pi_1(\Et(X^s)^{hG_k},a) \to\pi_1(\Et(S^s)^{hG_k},\kappa(s)) \to X_s(hk) \to X(hk) \to S(hk),$$
where the latter three are pointed sets based at $a$ and $\kappa(s)$.
\end{lemma}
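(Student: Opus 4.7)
The plan is to apply the homotopy fixed-point functor $(-)^{hG_k}$ to the given fibration sequence $\hat{Et}(X_s^s) \to \hat{Et}(X^s) \to \hat{Et}(S^s)$ and then read off the resulting long exact sequence of homotopy groups. The key input is that, as defined in \cite{quick11}, $(-)^{hG_k}$ computes the derived $G_k$-equivariant mapping space out of $EG_k$ in Quick's model structure on $\hat{\mathcal{S}}$, and is in particular a homotopy limit. Since homotopy limits preserve fibration sequences, applying $(-)^{hG_k}$ produces a fibration sequence
\[
\hat{Et}(X_s^s)^{hG_k} \to \hat{Et}(X^s)^{hG_k} \to \hat{Et}(S^s)^{hG_k}.
\]

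Next, I would fix basepoints. The rational point $s \in S(k)$ gives a $G_k$-invariant geometric point of $S^s$ and hence a $G_k$-invariant basepoint of $\hat{Et}(S^s)$; its image in $\pi_0(\hat{Et}(S^s)^{hG_k}) = S(hk)$ is $\kappa(s)$, and the strict fiber of $\hat{Et}(X^s) \to \hat{Et}(S^s)$ over this geometric point is $\hat{Et}(X_s^s)$ by hypothesis. By exactness at $X(hk)$ of the tail $X_s(hk) \to X(hk) \to S(hk)$ of the pointed-set long exact sequence of the fibration sequence above (with basepoint $\kappa(s)$ in the base), the condition $f(a) = \kappa(s)$ says exactly that $a$ lies in the image of $X_s(hk) \to X(hk)$, producing the desired lift of $a$, still denoted $a$.

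Finally, with $a$ as basepoint of $\hat{Et}(X_s^s)^{hG_k}$ and $\hat{Et}(X^s)^{hG_k}$ (via Construction \ref{constr:gkinvbp}) and $\kappa(s)$ as basepoint of $\hat{Et}(S^s)^{hG_k}$, I would invoke the standard long exact sequence of homotopy groups of a pointed fibration sequence applied to the fibration sequence of homotopy fixed points, and then identify $\pi_0(\hat{Et}(-)^{hG_k})$ with $(-)(hk)$, obtaining the displayed six-term sequence. The main obstacle is the first step: verifying that $(-)^{hG_k}$ in Quick's framework genuinely preserves the fibration sequence coming from Lemma \ref{hlemma:fibration}. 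Although this follows formally from the derived mapping-space interpretation, one has to be careful in the profinite setting (including compatibility with fibrant replacement and with the Postnikov truncations used in the proof of Lemma \ref{hlemma:fibration}), and this is the genuinely technical input.
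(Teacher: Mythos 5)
Your proposal is correct and takes essentially the same route as the paper: apply $(-)^{hG_k}$ (a homotopy limit, hence fibration-sequence-preserving) to the given fibration sequence of profinite spaces, use the resulting pointed fibration sequence to lift $a$ to $X_s(hk)$, and then read off the long exact sequence of homotopy groups. The paper's proof is just a compressed version of what you wrote.
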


\begin{proof}

As homotopy fixed points commutes with homotopy limits, the sequence $\hat{Et}(X_s^s)^{hG_k} \to \hat{Et}(X^s)^{hG_k} \to \hat{Et}(S^s)^{hG_k}$ is a fibration sequence of simplicial sets. This implies that we get an element of $X_s(hk)$ mapping to $a$.

We then choose a basepoint $b_a$ of $\hat{Et}(X_s^s)^{hG_k}$ in the connected component represented by $a \in X_s(hk)$, which makes $\hat{Et}(X_s^s)^{hG_k} \to \hat{Et}(X^s)^{hG_k} \to \hat{Et}(S^s)^{hG_k}$ a fibration sequence of pointed simplicial sets. The exact sequence then follows by the long exact sequence of homotopy groups for a pointed fibration.
\end{proof}

We finally prove a general result about VSA for the \'etale homotopy obstruction that will be useful in Part \ref{part:examples}. Before proving it, we need one definition.

\begin{defn}[Notation 12.5 of \cite{pal15}]\label{defn:selmer}
If $X$ is a scheme over a number field $k$ to which Definition \ref{defn:homotopy_loc} applies, the Selmer set $\Sel(X/k)$ is the subset of $X(hk)$ whose image under $\loc$ is in the image of $X(\Ab_k)$ in the following diagram:

$$
\begin{CD}
X(k) @>>> X(\Ab_k)\\
@VVV @VVV\\
X(hk) @>\loc >> \prod_v X(hk_v),
\end{CD}
$$
Clearly $\kappa(X(k)) \subseteq \Sel(X/k)$.
\end{defn}


\begin{lemma}\label{cor:HSP_curves}
If $X$ is a geometrically connected, smooth hyperbolic curve over a number field $k$, and Conjecture \ref{c:gro2} holds for $X$, then $\kappa(X(k))=\Sel(X/k)$.
\end{lemma}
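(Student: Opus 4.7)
The plan is to reduce the question to Grothendieck's section conjecture at the level of fundamental group sections by showing that every Selmer homotopy section is non-cuspidal. Since $X$ is a smooth hyperbolic curve, Proposition \ref{prop:etale_kpi1} identifies $X(hk)$ with $\seck{X}$ via $\Trunc_1$, and under the hypothesis that Conjecture \ref{c:gro2} holds for $X$, we obtain $\kappa(X(k)) \supseteq X(hk) \setminus C_X$. Combined with the trivial inclusion $\kappa(X(k)) \subseteq \Sel(X/k)$, it therefore suffices to prove that $\Sel(X/k) \cap C_X = \emptyset$.

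To see this, take $s \in \Sel(X/k)$ and, by Definition \ref{defn:selmer}, choose an adelic point $(P_v) \in X(\Ab_k)$ with $\kappa_{X/\Ab_k}((P_v)) = \loc(s)$. Fix any finite place $v$ of $k$. Then the $v$-component $s_v \in X(hk_v)$ equals $\kappa_{X/k_v}(P_v)$, so under the bijection $\Trunc_1$ for $X/k_v$ it corresponds to a section in the image of the profinite Kummer map $X(k_v) \to \sections{X}{k_v}$. Since $k_v$ is a finite extension of some $\Qb_p$, Proposition \ref{prop:inj_sec_conj}(\ref{item:cusp_disjoint}) tells us this image is disjoint from $C_X \subseteq \sections{X}{k_v}$; hence $s_v$ is non-cuspidal.

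It remains to deduce that $s$ itself is non-cuspidal. This is exactly where Remark \ref{rem:compat_cusp_curve} enters: cuspidal sections base change to cuspidal sections, so if $s$ were cuspidal, the same would hold for every localization $s_v$, contradicting what we just established. Therefore $s \in X(hk) \setminus C_X$, and Conjecture \ref{c:gro2} (in the homotopy-section form of Proposition \ref{prop:etale_kpi1}) yields $s \in \kappa(X(k))$, completing the argument.

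The proof is essentially a bookkeeping exercise once the right dictionary is set up; there is no real obstacle, because all the hard work is packaged into Proposition \ref{prop:etale_kpi1} (identification of $X(hk)$ with $\seck{X}$ and the section-conjecture statement), Proposition \ref{prop:inj_sec_conj}(\ref{item:cusp_disjoint}) (avoidance of cusps by local Kummer images in the $p$-adic setting), and Remark \ref{rem:compat_cusp_curve} (stability of cuspidality under base change). The only mild subtlety is to make sure one invokes the disjointness of cuspidal sections from Kummer images at a \emph{finite} place, which is precisely the setting covered by Proposition \ref{prop:inj_sec_conj}.
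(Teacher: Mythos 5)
Your proof is correct and follows essentially the same route as the paper's: both reduce, via Conjecture \ref{c:gro2}, to showing a Selmer class is non-cuspidal, and then argue that a cuspidal $s$ would localize to a cuspidal $s_v$ (Remark \ref{rem:compat_cusp_curve}), contradicting Proposition \ref{prop:inj_sec_conj}(\ref{item:cusp_disjoint}) at a finite place since $s_v$ lies in the image of the local Kummer map. You merely spell out the intermediate steps (the identification via $\Trunc_1$ and why $s_v$ is in the Kummer image) more explicitly than the paper does.
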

\begin{proof}
As mentioned in Definition \ref{defn:selmer}, $\kappa(X(k)) \subseteq \Sel(X/k)$. Now suppose $s \in \Sel(X/k)$. It suffices by Conjecture \ref{c:gro2} to show that $s$ is not cuspidal. But if it were cuspidal, its base change to $k_v$ would be cuspidal (by Remark \ref{rem:compat_cusp_curve}), contradicting (\ref{item:cusp_disjoint}) of Proposition \ref{prop:inj_sec_conj} over $k_v$, so it is not.\end{proof}

\begin{thm}\label{thm:vsa_geom_fib}
Let $f \colon X \to S$ be a geometric fibration of smooth varieties over a number field $k$ with connected geometric fibers, let $T$ be a nonempty set of places of $k$. Suppose that the following conditions hold:


\begin{enumerate}
    \item\label{item:vsa_base} $S$ is VSA for $(h,T,k)$.

\item One of the following is true:

\begin{enumerate}

\item\label{item:sec_conj} $S$ is a geometrically connected, smooth hyperbolic curve over $k$ for which Conjecture \ref{c:gro2} holds

\item\label{item:img_sel} The image of $S(k)$ in $S(hk)$ under $\kappa$ is $\Sel(S/k)$, and $\kappa_{S/k_v} \colon S(k_v) \to S(hk_v)$ is injective for some $v \in T$.

\item\label{item:inj_sel} The localization map $\loc_T \colon S(hk) \to \prod_{v \in T} S(hk_v)$ is injective on $\Sel(S/k)$.
\end{enumerate}

\item\label{item:finite_inj} For all finite $v \notin T$, $\kappa_{S/k_v}$ is injective (vacuous if $T=f$).



\item\label{item:tot_imag} For all finite $v$, every connected component of $\Et(S^s)^{hG_{k_v}}$ in the image of the homotopy Kummer map is simply connected (e.g., by Lemma \ref{lemma:section_centralizer}, if $S$ is a smooth geometrically connected curve not isomorphic to $\Pb^1$).




\item\label{item:real_places} For every real place $v$ of $k$, every $a \in S(k)$, and every $b \in X_a(k_v)$, the map $\pi_1(X(k_v),b) \to \pi_1(S(k_v),a)$ is surjective and $\pi_1(X_a((k_v)_s))$ is trivial (where $\pi_1$ denotes the topological fundamental group under the $v$-adic topology).

\item\label{item:vsa_fiber} For every $a \in S(k)$, the fiber $X_a$ of $f$ above $a$ is VSA for $(h,T,k)$.

\end{enumerate}

Then $X$ is VSA for $(h,T,k)$. One may replace $(h,T,k)$ by $(\mathrm{\acute{e}t-Br},T,k)$ whenever the variety in question is geometrically connected.
\end{thm}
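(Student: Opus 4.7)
Given $\alpha \in X(\Ab_{k,T})^h$, lift it to $\tilde\alpha \in X(\Ab_k)^h$ and, by Proposition \ref{prop:homotopy_obstruction_comparison}, pick a witnessing section $\sigma_X \in X(hk)$ with $\loc(\sigma_X) = \kappa(\tilde\alpha)$. Pushing forward via $f$ yields $\sigma_S \colonequals f_*(\sigma_X) \in S(hk)$, which witnesses $f(\tilde\alpha) \in S(\Ab_k)^h$, and condition (\ref{item:vsa_base}) then gives $s \in S(k)$ with $s = f(\alpha)$ in $S(\Ab_{k,T})$. A first key identity is $\sigma_S = \kappa(s)$ in $S(hk)$: for each $v \in T$, $\loc(\sigma_S)_v = \kappa_{S/k_v}(s)$ because $f(\tilde\alpha)_v = s$ in $S(k_v)$. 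Under variant (\ref{item:sec_conj}), Proposition \ref{prop:etale_kpi1} writes $\sigma_S = \kappa(t)$ for some $t \in S(k)$, and Proposition \ref{prop:inj_sec_conj} applied over any $v \in T$ forces $t = s$. Variant (\ref{item:img_sel}) runs identically once one notes $\sigma_S \in \Sel(S/k)$. Variant (\ref{item:inj_sel}) is direct: both $\sigma_S$ and $\kappa(s)$ lie in $\Sel(S/k)$ with equal images under $\loc_T$.

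By Lemmas \ref{hlemma:gf_pullback} and \ref{hlemma:fibration}, the hypothesis of Lemma \ref{hlemma:les} is met, so exactness at $X(hk)$ lifts $\sigma_X$ to some $\sigma_{X_s} \in X_s(hk)$. Build an adelic point $\tilde\beta \in X_s(\Ab_k)$ place by place. At $v \in T$, $f(\tilde\alpha_v) = s$, so set $\tilde\beta_v \colonequals \tilde\alpha_v \in X_s(k_v)$. At finite $v \notin T$, condition (\ref{item:finite_inj}) combined with the equality $\kappa_{S/k_v}(f(\tilde\alpha_v)) = \loc(\sigma_S)_v = \kappa_{S/k_v}(s)$ forces $f(\tilde\alpha_v) = s$, so again $\tilde\beta_v \colonequals \tilde\alpha_v$ works. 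At complex $v$, $G_{k_v}$ is trivial and any $\tilde\beta_v \in X_s(k_v)$ (which is nonempty and connected) suffices. At real $v$, invoke condition (\ref{item:real_places}), which via the topological $\pi_1$-surjectivity and simple connectedness of $X_s((k_v)_s)$ supplies a $k_v$-point $\tilde\beta_v \in X_s(k_v)$ whose class in $X_s(hk_v)$ matches that of $\sigma_{X_s}$.

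To finish, check that $\sigma_{X_s}$ witnesses $\tilde\beta \in X_s(\Ab_k)^h$. At each finite $v$, condition (\ref{item:tot_imag}) applied to Lemma \ref{hlemma:les} over $k_v$ gives injectivity of $X_s(hk_v) \to X(hk_v)$; both $\kappa(\tilde\beta_v)$ and $\loc(\sigma_{X_s})_v$ push forward to $\kappa(\tilde\alpha_v) = \loc(\sigma_X)_v$, hence agree in $X_s(hk_v)$. Archimedean compatibility is built directly into the choice of $\tilde\beta_v$. Condition (\ref{item:vsa_fiber}) then produces $x \in X_s(k) \subseteq X(k)$ with $x = \alpha$ in $X(\Ab_{k,T})$. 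For the \'et-Br variant, the identifications $X(\Ab_{k,T})^h = X(\Ab_{k,T})^{\etbr}$ of Section \ref{otherobstructions} hold for $X$, $S$, and each $X_a$ (all geometrically connected), so the same argument goes through. I expect the real-place construction of $\tilde\beta_v$ to be the main obstacle: this is the only step where the homotopy-theoretic framework alone is insufficient, and condition (\ref{item:real_places}) is precisely the topological substitute needed to transfer an adelic point of $X$ to one of $X_s$ at real places.
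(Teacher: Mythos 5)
Your proposal follows essentially the same route as the paper's proof: lift the adelic point, extract the witnessing homotopy section $\sigma_X$, push forward to $S$, use VSA of $S$ to identify a rational point $s$ and then $\sigma_S = \kappa(s)$, lift $\sigma_X$ via the long exact sequence of Lemma \ref{hlemma:les}, arrange an adelic point of the fiber witnessed by the lifted section, and finish with VSA of the fiber. Two places need tightening, both of which the paper handles but you gloss over.

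First, under variant (\ref{item:sec_conj}) you write ``Proposition \ref{prop:etale_kpi1} writes $\sigma_S = \kappa(t)$ for some $t \in S(k)$,'' but Conjecture \ref{c:gro2} only applies to non-cuspidal sections. You need to argue $\sigma_S \notin C_S$, which follows from $\sigma_S \in \Sel(S/k)$ as in Lemma \ref{cor:HSP_curves}: if $\sigma_S$ were cuspidal its localization at each $v$ would be cuspidal, contradicting Proposition \ref{prop:inj_sec_conj}(\ref{item:cusp_disjoint}). The paper avoids this wrinkle by first reducing (\ref{item:sec_conj}) to (\ref{item:img_sel}) and then to (\ref{item:inj_sel}).

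Second, your compatibility check at archimedean places is incomplete when a real place $v$ lies in $T$. There you set $\tilde\beta_v = \tilde\alpha_v$, so the compatibility $\kappa_{X_s/k_v}(\tilde\beta_v) = \loc_v(\sigma_{X_s})$ is \emph{not} ``built directly into the choice''; it requires the injectivity of $X_s(hk_v) \to X(hk_v)$ just as at finite places. This injectivity follows from condition (\ref{item:real_places}), but you must argue it: the paper uses \cite[Theorem 1.2]{pal10} for the bijection $X_s(k_v)_\bullet \cong X_s(hk_v)$, \cite[Theorem 1.2]{pal15} for injectivity of $X(k_v)_\bullet \hookrightarrow X(hk_v)$, and the $\pi_1$-surjectivity in the fibration sequence of real analytic varieties to get $X_s(k_v)_\bullet \hookrightarrow X(k_v)_\bullet$. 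Note also that the paper arranges its final adelic point slightly differently — it modifies $\alpha$ at \emph{all} infinite places to land in $X_a(\Ab_k)$ using the fiber-bundle structure and then appeals to the injectivity at every place — but this is a presentational variation, not a different idea.
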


\begin{proof}
We first note that \ref{item:sec_conj} implies \ref{item:img_sel}, which implies \ref{item:inj_sel}. The first implication is Lemma \ref{cor:HSP_curves} and Proposition \ref{prop:inj_sec_conj}. For the second implication, suppose $\alpha,\beta \in \Sel(S/k)$ with $\loc_v(\alpha)=\loc_v(\beta)$. Then by \ref{item:img_sel}, there are $a,b \in S(k)$ such that $\alpha=\kappa(a)$, and $\beta=\kappa(b)$. Therefore $a,b \in S(k_v)$ have the same image under $\kappa_{X/k_v}$, so injectivity of $\kappa_{X/k_v}$ tells us that $a=b$. This implies that $\alpha=\beta$, so \ref{item:inj_sel} holds.

We recall that by Proposition \ref{prop:homotopy_obstruction_comparison}, $X(\Ab_k)^h = \kappa_{X/\Ab_k}^{-1}(\mathrm{Im}(\loc))$.

Now let $\alpha \in X(\Ab_k)^h$ project to $\alpha' \in X(\Ab_{k,S})$, and let $\beta \in X(hk)$ such that $\loc(\beta)=\kappa(\alpha)$. Then $\kappa(f(\alpha)) = \loc(f(\beta))$, so $f(\alpha) \in S(\Ab_k)^h$. By \ref{item:vsa_base}, there is $a \in S(k)$ such that $a=f(\alpha)_v$ for all $v \in T$.

For $v \in T$, we have $\loc_v(f(\beta))=\kappa_{S/k_v}(f(\alpha)_v)=\kappa_{S/k_v}(a)=\loc_v(\kappa_{S/k}(a))$. Thus $\loc_T(f(\beta)) = \loc_T(\kappa_{S/k}(a))$. It's clear that $f(\beta),\kappa_{S/k}(a) \in \Sel(S/k)$, so \ref{item:inj_sel} implies that $f(\beta)=\kappa_{S/k}(a)$.

This implies by Lemma \ref{hlemma:les} that there exists $\gamma \in X_a(hk)$ such that $i_a(\gamma)=\beta$, where $i_a$ denotes the inclusion $X_a \hookrightarrow X$.

We now know that $\kappa_{S/k_v}(f(\alpha)_v) = \loc_v(\beta)=\kappa_{S/k_v}(a)$ for all $v$. By \ref{item:finite_inj}, we know that $a=f(\alpha)_v$ for all finite $v$. For $v$ infinite, \cite[Theorem 1.2]{pal15} implies that $a$ and $f(\alpha)_v$ are in the same connected component of $S(k_v)$. As $f$ is an elementary fibration and therefore a fiber bundle for the $v$-adic topology, we can modify $\alpha$ so that $f(\alpha)_v=a$ and $\kappa(\alpha)$ remains the same. We do this at all infinite places, so that $f(\alpha)=a$, yet still $\kappa(\alpha)=\loc(\beta)$.

We now have $\alpha \in X_a(\Ab_k)$ such that $i_a(\loc(\gamma))=i_a(\kappa_{X_a/\Ab_k}(\alpha))$. We need to show that $i_a:X_a(hk_v) \to X(hk_v)$ is injective for all $v$.

By Lemma \ref{hlemma:les}, \ref{item:tot_imag} implies injectivity for all finite $v$. At all complex $v$, the geometric connectedness assumption on $X_a$ implies that $X_a(hk_v)$ is a point, so injectivity follows.


Let $v$ be a real place. By \cite[Theorem 1.2]{pal10}, the map $X_a(k_v)_{\bullet} \to X_a(hk_v)$ is a bijection, and by \cite[Theorem 1.2]{pal15}, the map $X(k_v)_{\bullet} \to X(hk_v)$ is injective. It therefore suffices to prove that $X_a(k_v)_{\bullet} \to X(k_v)_{\bullet}$ is injective. But $X_a(k_v) \to X(k_v) \to S(k_v)$ is a fibration sequence of real analytic varieties, so the condition on topological fundamental groups guarantees the desired injectivity.


We conclude that $\loc(\gamma)=\kappa_{X_a/\Ab_k}(\alpha)$. We conclude by \ref{item:vsa_fiber} that there is $c \in X_a(k)$ such that $c = \alpha'$.

\end{proof}

\section{Section Conjecture in Fibrations}\label{newsec:sec_conj_fib}

The section conjecture has been reformulated in homotopy-theoretic terms; see \cite[\S 3.2]{quick11} and \cite[Theorem 9.7(b)]{pal15}, or \cite[\S 2.6]{sxbook13} for a summary. However, these sources lack an analogue of cuspidal sections, without which one can restrict only to projective curves or conjecture something like the ``homotopy section property'' (HSP) of \cite{pal15}, a local-to-global statement. We will fix this gap. In fact, HSP follows from our versions of the section conjecture (although the former is expected to hold more generally), as proven for curves in Lemma \ref{cor:HSP_curves}.

The notion of cuspidal sections depends on a compactification. For curves, a smooth compactification is unique. For higher-dimensional varieties, this is not the case, so we make the following definition:

\begin{defn}
A \emph{homotopy cuspidal datum} $(X,C,k)$ is a smooth variety $X$ over $k$ and a subset $C \subseteq \hseck{X}$, called the set of \emph{cuspidal fixed points}.
\end{defn}

\begin{rem}\label{rem:cusp_data_curves}
Whenever $X$ is a smooth curve not isomorphic to $\Pb^1$, we know by Proposition \ref{prop:etale_kpi1} that $X(hk)=\seck{X}$, and we take $C=C_X$ as in Definition \ref{defn:cusp_curve}.
\end{rem}

\begin{rem}\label{rem:higher_dim_cusps}
In practice (e.g. to reprove Corollary \ref{cor:fcovallk}), we will use cuspidal fixed points only for varieties of dimension $>1$ when they arise from good neighborhoods as in Definition \ref{defn:cusp_framing}. However, in \cite{HigherCusp}, we begin the study of a general notion of (non-homotopy) cuspidal sections for higher dimensional varieties.
\end{rem}


\begin{defn}\label{defn:surj_hsec_conj}

A homotopy cuspidal datum $(X,C,k)$ is said to satisfy the \emph{surjectivity in the homotopy section conjecture} if $\hseck{X} \setminus C \subseteq \kappa(X(k))$.

\end{defn}

\begin{rem}\label{rem:surj_ordinary_htpy}
By Proposition \ref{prop:etale_kpi1}, Conjecture \ref{c:gro2} is equivalent to the surjectivity in the homotopy section conjecture for a smooth geometrically connected hyperbolic curve.
\end{rem}

\begin{defn}\label{defn:inj_hsec_conj}

A homotopy cuspidal datum $(X,C,k)$ is said to satisfy the \emph{injectivity in the homotopy section conjecture} if

\begin{enumerate}
\item\label{itm:kummer_inj} The map $\kappa \colon X(k) \to \hseck{X}$ is injective.
\item\label{itm:cusp_faithful} The sets $\kappa(X(k))$ and $C$ are disjoint.
\item\label{itm:simply_conn} Every connected component of $\Et(X^s)^{hG_k}$ in the image of the Kummer map is simply connected.
\end{enumerate}

\end{defn}

We will see in the proof of Theorem \ref{hthm:fib_inj} why condition (\ref{itm:simply_conn}) of the preceding definition is necessary.

\begin{rem}\label{rem:inj_ordinary_htpy}
By Proposition \ref{prop:etale_kpi1}, Proposition \ref{prop:inj_sec_conj} and Lemma \ref{lemma:section_centralizer} imply that smooth hyperbolic curves and smooth proper genus $1$ curves satisfy the injectivity in the homotopy section conjecture.
\end{rem}

\begin{rem}\label{rem:dim0}
By \cite[Proposition 6.1.19]{corwinthesis}, a geometrically reduced finite scheme over a field satisfies the injectivity and surjectivity in the homotopy section conjectures. This shows how the homotopical version of the section conjectures deals nicely with geometric connected components.
\end{rem}

\begin{defn}[Analogue of Remark \ref{rem:compat_cusp_curve}]\label{hdefn:compatibledatum}
If $(X,C,k)$ is a homotopy cuspidal datum and $k'/k$ an extension of fields of characteristic $0$, we say that a homotopy cuspidal datum $(X_{k'},C',k')$ is compatible with $(X,C,k)$ if the base change map of Proposition \ref{prop:hsecbasechange} sends $C$ into $C'$.
\end{defn}

\subsection{The Homotopy Section Conjecture in Fibrations}

We will show that the homotopy section conjecture persists in fibration sequences. First, we need to specify how cuspidal data is supposed to behave in fibrations.

\begin{defn}\label{hdefn:cusp_fib}
Let $k$ be a field and $S$ a $k$-scheme. Let $f \colon X \to S$ be a geometric fibration. Suppose we are given cuspidal data $(S,C_S,k)$ for $S$ and $(X_s,C_{X_s},k)$ for the fiber $X_s$ above every $s \in S(k)$. Then we define the \emph{homotopy cuspidal datum $(X,C_{X,f},k)$ induced by $f$} as follows. 

For $s \in S(k)$, let $X_s$ denote the fiber of $f$ above $s$ and $\iota_s \colon X_s \to X$ the inclusion. The map $f$ induces a map $f:\hseck{X} \to \hseck{S}$, and the map $\iota_s$ induces a map $\iota_s \colon \hseck{X_s} \to \hseck{X}$. We define $C_{X,f}$ by
$$C_{X,f} = f^{-1}(C_S) \bigcup \left(\bigcup_{s \in S(k)} \iota_s(C_{X_s})\right) \subseteq \hseck{X}.$$

Note that Remark \ref{rem:cusp_data_curves} always applies. If $C_S$ and $C_{X_s}$ are all empty, such as for smooth proper curves, then $C_{X,f}$ is empty.
\end{defn}

\begin{rem}\label{rem:compat_cusp_fib}
Suppose $k'/k$ is an extension of characteristic $0$ fields. Suppose that we begin with compatible cuspidal data $(S,C_S,k)$, $(S,C_S',k')$, $(X_s,C_{X_s},k)$, and $(X_s,C_{X_s}',k')$. Then by a simple diagram chase, $(X,C_{X,f},k)$ is compatible with $(X,C_{X,f}',k')$.
\end{rem}

We now prove the main result of this subsection.

\begin{thm}\label{hthm:fib_inj}
Suppose we have $f \colon X \to S$, $(S,C_S,k)$, and $(X_s,C_{X_s},k)$ for $s \in S(k)$ as in Definition \ref{hdefn:cusp_fib}. Suppose that $f$ satisfies the hypotheses of Lemma \ref{hlemma:les}. If $(S,C_S,k)$ and $(X_s,C_{X_s},k)$ for every $s \in S(k)$ satisfy the injectivity (resp. surjectivity) in the homotopy section conjecture, then so does $(X,C_{X,f},k)$.
\end{thm}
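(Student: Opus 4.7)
The whole argument runs through the fibration long exact sequence of Lemma \ref{hlemma:les}, applied at various basepoints of the form $\kappa(x)$ with $x \in X(k)$, $\kappa_S(s)$ with $s \in S(k)$, and $\kappa_{X_s}(y)$ with $y \in X_s(k)$. The plan is to handle the surjectivity part first, which uses only exactness of the sequence at the $X(hk)$-level, and then to verify each of the three clauses of Definition \ref{defn:inj_hsec_conj} for $(X,C_{X,f},k)$, where clause (\ref{itm:simply_conn}) is exactly what makes the remaining clauses go through in the total space.

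For surjectivity, let $\alpha\in \hseck{X}\setminus C_{X,f}$. Because $\alpha\notin f^{-1}(C_S)$, its image $f_{*}(\alpha)$ lies in $\hseck{S}\setminus C_S$, so by surjectivity for $(S,C_S,k)$ there is $s\in S(k)$ with $\kappa_S(s)=f_{*}(\alpha)$. Lemma \ref{hlemma:les} then produces $\gamma\in X_s(hk)$ with $\iota_s(\gamma)=\alpha$. If $\gamma\in C_{X_s}$ then $\iota_s(\gamma)\in \iota_s(C_{X_s})\subseteq C_{X,f}$, contradicting $\alpha\notin C_{X,f}$, so $\gamma\notin C_{X_s}$; surjectivity for $(X_s,C_{X_s},k)$ gives $x\in X_s(k)\subseteq X(k)$ with $\kappa_{X_s}(x)=\gamma$, and then $\kappa(x)=\iota_s(\kappa_{X_s}(x))=\alpha$.

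For injectivity, the crucial observation is that clause (\ref{itm:simply_conn}) for $(S,C_S,k)$ makes the pointed-set map $\iota_s\colon X_s(hk)\to X(hk)$ injective on the fiber over $\kappa_S(s)$. Indeed, by the last terms of the sequence in Lemma \ref{hlemma:les}, two lifts to $X_s(hk)$ of the same element of $X(hk)$ differ by the action of $\pi_1(\Et(S^s)^{hG_k},\kappa_S(s))$, which is trivial by hypothesis. Now suppose $x,y\in X(k)$ satisfy $\kappa(x)=\kappa(y)$. Applying $f$ and invoking clause (\ref{itm:kummer_inj}) for $S$ gives $f(x)=f(y)=:s$; then $\iota_s(\kappa_{X_s}(x))=\iota_s(\kappa_{X_s}(y))$, and the injectivity of $\iota_s$ on this fiber together with clause (\ref{itm:kummer_inj}) for $X_s$ forces $x=y$, proving clause (\ref{itm:kummer_inj}) for $X$. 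For clause (\ref{itm:cusp_faithful}), if $\kappa(x)\in f^{-1}(C_S)$ then $\kappa_S(f(x))\in C_S$, violating clause (\ref{itm:cusp_faithful}) for $S$; if instead $\kappa(x)=\iota_s(c)$ with $c\in C_{X_s}$, applying $f$ gives $\kappa_S(f(x))=\kappa_S(s)$ so $f(x)=s$ by clause (\ref{itm:kummer_inj}) for $S$, and the injectivity of $\iota_s$ above the point $\kappa_S(s)$ then yields $\kappa_{X_s}(x)=c$, contradicting clause (\ref{itm:cusp_faithful}) for $X_s$.

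Clause (\ref{itm:simply_conn}) for $(X,C_{X,f},k)$ is the heart of the argument and is where the main subtlety lies: one must actually have a $G_k$-invariant basepoint on $\Et(X^s)$ at which to apply the fibration long exact sequence, and one must track that the basepoints on fiber and base are the ones coming from $\kappa_{X_s}(x)$ and $\kappa_S(f(x))$. Granting this, for $\alpha=\kappa(x)$ with $x\in X(k)$, set $s=f(x)$ and $a=\kappa_{X_s}(x)$; Lemma \ref{hlemma:les} gives
\[
\pi_1(\Et(X_s^s)^{hG_k},a)\to \pi_1(\Et(X^s)^{hG_k},\alpha)\to \pi_1(\Et(S^s)^{hG_k},\kappa_S(s)),
\]
and the outer groups vanish by clause (\ref{itm:simply_conn}) for $X_s$ and $S$, so $\pi_1(\Et(X^s)^{hG_k},\alpha)=0$. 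The main obstacle is precisely this basepoint bookkeeping, so I would set it up once at the start using Construction \ref{constr:gkinvbp} and Remark \ref{rem:basepoints}, and then apply it uniformly in all three steps above.
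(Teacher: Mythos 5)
Your proof is correct and follows the same strategy as the paper, which states the result is a diagram chase from Lemma \ref{hlemma:les} and explicitly flags the key point you identify: that clause (\ref{itm:simply_conn}) for $S$ is what makes $\iota_s \colon X_s(hk) \to X(hk)$ injective (via the $\pi_1(\Et(S^s)^{hG_k},\kappa_S(s))$-action on $X_s(hk)$ having trivial orbits), which is exactly what the other two injectivity clauses need. Your bookkeeping of basepoints via Construction \ref{constr:gkinvbp} and the two-term vanishing argument for clause (\ref{itm:simply_conn}) in the total space match the intended argument; the paper simply defers these details to the thesis.
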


\begin{proof}
All of these are a simple diagram chase using Lemma \ref{hlemma:les}. Note that in proving\ref{itm:kummer_inj} and \ref{itm:cusp_faithful} of Definition \ref{defn:inj_hsec_conj}, one needs to use \ref{itm:simply_conn} for $S$ to show that $\iota_s \colon X_s(hk) \to X(hk)$ is injective for $s \in S(k)$. More details are given in the proofs of Theorems 6.2.6 and 6.2.7 of \cite{corwinthesis}.
\end{proof}

\subsection{Elementary Fibrations}

This section gives us a tool for finding open subsets of arbitrary smooth geometrically connected varieties that satisfy the (homotopy) section conjecture.

The following definition is similar to one from \cite{sga4-3}.

\begin{defn}[Definition 11.4 of \cite{fr82}]\label{defn:elm_fib}
An \emph{(hyperbolic) elementary fibration} is a geometric fibration whose fibers are geometrically connected (hyperbolic) affine curves.
\end{defn}


\begin{defn}\label{defn:goodnbhd}
A \emph{(hyperbolic) good neighborhood} over $k$ is the composition of a sequence of (hyperbolic) elementary fibrations ending in $\Spec{k}$. Following \cite{Hoshi14}, we refer to a sequence of elementary fibrations whose composition is the good neighborhood $f$ as a \emph{sequence of parametrizing morphisms} for $f$.
\end{defn}

\begin{rem}\label{rem:proj_prop}
Assuming $f$ is a special geometric fibration, $Y$ defines a relatively ample divisor of $\cl{X}$, which means that $\cl{f}$ is automatically projective.
\end{rem}



\begin{defn}\label{defn:cusp_framing}
Let $f \colon X \to \Spec{k}$ be a good neighborhood with a sequence of parametrizing morphisms. Then we define the \emph{homotopy cuspidal datum $(X,C_{X,f},k)$ induced by $f$ and the sequence of parametrizing morphisms} to be the homotopy cuspidal datum induced successively by Definition \ref{hdefn:cusp_fib}, starting with $(\Spec{k},\emptyset,k)$.
\end{defn}

\begin{rem}
By an abuse of notation, we do not include the sequence of parametrizing morphisms in the notation for $C_{X,f}$. However, we believe this should not lead to too much confusion, and Lemma \ref{hcor:hgn_inj_sec_conj} is true regardless of the sequence of parametrizing morphisms. \cite[Conjecture 6.4.1]{corwinthesis} states that $C_{X,f}$ does not depend on the sequence of parametrizing morphisms, and the forthcoming paper \cite{HigherCusp} will discuss this and other questions about cuspidal sections for higher dimensional varieties in more detail.
\end{rem}

\begin{lemma}\label{hcor:hgn_inj_sec_conj}
Let $X$ be a hyperbolic good neighborhood over a subfield $k$ of a finite extension of $\Qp$. Then $X$ is an \'etale $K(\pi,1)$, and $(X,C_{X,f},k)$ satisfies the injectivity in the homotopy section conjecture. Furthermore, if Conjecture \ref{c:gro2} holds for every curve over $k$, then $(X,C_{X,f},k)$ satisfies the surjectivity in the homotopy section conjecture.
\end{lemma}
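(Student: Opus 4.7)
The plan is to induct on the length $n$ of the sequence of parametrizing morphisms $X = X_n \to X_{n-1} \to \cdots \to X_0 = \Spec k$. The base case $n=0$ is trivial: $\Spec k$ is a $K(\pi,1)$, and by Remark \ref{rem:dim0} it satisfies both the injectivity and the (vacuous) surjectivity in the homotopy section conjecture with empty cuspidal set. For the inductive step, set $Y = X_{n-1}$ and write $f \colon X \to Y$ for the top hyperbolic elementary fibration, so $Y$ is a hyperbolic good neighborhood of length $n-1$.

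For the $K(\pi,1)$ assertion, note that $f$ is a geometric fibration of smooth (hence Noetherian normal) $k$-varieties with geometrically connected fibers, so Lemma \ref{hlemma:fibration} applies and gives a fibration sequence $\hat{Et}(X_y^s) \to \hat{Et}(X^s) \to \hat{Et}(Y^s)$ for any $y \in Y(k)$ (it suffices to check this after a base change to some $y$, since the $K(\pi,1)$ property may be checked on geometric fibers). By induction $Y$ is an \'etale $K(\pi,1)$, and the fiber $X_y$ is a smooth geometrically connected hyperbolic affine curve over $k$, hence a $K(\pi,1)$ by \cite[Lemma 2.7(a)]{schsx16}. The long exact sequence of profinite homotopy groups then forces $\pi_i(X^s) = 0$ for all $i \geq 2$, so $X$ is an \'etale $K(\pi,1)$ as well.

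To obtain injectivity for $(X, C_{X,f}, k)$, I apply Theorem \ref{hthm:fib_inj} to $f \colon X \to Y$. The hypotheses of Lemma \ref{hlemma:les} hold by the same smoothness and connectivity observations used above. By the inductive hypothesis, the base cuspidal datum $(Y, C_Y, k)$ satisfies the injectivity in the homotopy section conjecture. For each $y \in Y(k)$, the fiber $X_y$ is a smooth geometrically connected hyperbolic curve over the subfield $k$ of a finite extension of $\Qp$, so by Remark \ref{rem:inj_ordinary_htpy} (which packages Proposition \ref{prop:inj_sec_conj} and Lemma \ref{lemma:section_centralizer}) the datum $(X_y, C_{X_y}, k)$ satisfies injectivity. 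By construction of $C_{X,f}$ in Definition \ref{defn:cusp_framing} as the datum induced by $f$ from the inductively defined $C_Y$ and the curve cuspidal sets $C_{X_y}$, Theorem \ref{hthm:fib_inj} yields injectivity for $(X, C_{X,f}, k)$.

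For the conditional surjectivity, the same induction works verbatim, using the other half of Theorem \ref{hthm:fib_inj}: the inductive hypothesis furnishes surjectivity for $(Y, C_Y, k)$, and Remark \ref{rem:surj_ordinary_htpy} together with Conjecture \ref{c:gro2} provides surjectivity for each fiber $(X_y, C_{X_y}, k)$. The main delicacy — and the only nontrivial thing to check beyond invoking Theorem \ref{hthm:fib_inj} — is that the cuspidal set one inherits by iterating Definition \ref{hdefn:cusp_fib} along the tower really is the $C_{X,f}$ produced by Definition \ref{defn:cusp_framing}; this is immediate from the definitions, since $C_{X,f}$ was set up precisely to be the successive fibered union $f^{-1}(C_Y) \cup \bigcup_{y \in Y(k)} \iota_y(C_{X_y})$. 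I expect no serious obstacle beyond bookkeeping; the one point worth double-checking is hypothesis \ref{itm:simply_conn} of Definition \ref{defn:inj_hsec_conj} at each stage, which is needed so that the fiber-inclusion map on homotopy sections in Lemma \ref{hlemma:les} is injective — but this is exactly the inductive content being carried along.
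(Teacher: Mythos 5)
Your proof is correct and follows essentially the same route as the paper's: induction on the tower of elementary fibrations, \cite[Lemma 2.7(a)]{schsx16} plus a profinite fibration sequence for the $K(\pi,1)$ claim, and Theorem \ref{hthm:fib_inj} with Remarks \ref{rem:inj_ordinary_htpy} and \ref{rem:surj_ordinary_htpy} for the injectivity and (conditional) surjectivity. The only cosmetic difference is that the paper inducts on $\dim X$ rather than the length of the parametrizing sequence and cites \cite[Theorem 11.5]{fr82} directly for the $K(\pi,1)$ step where you unwind the long exact sequence via Lemma \ref{hlemma:fibration}; these are equivalent.
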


\begin{proof}

We induct on the dimension of $X$. If $X$ is dimension $0$, then it is $\Spec{k}$, so all of the results hold automatically.

Now suppose that $X$ has dimension $n \ge 1$. Then there is a hyperbolic elementary fibration $X \xrightarrow{f} Y$, where $Y$ satisfies the same hypotheses and has dimension $n-1$. We may assume by induction that $Y$ satisfies the conclusion of the lemma.

By \cite[Lemma 2.7(a)]{schsx16}, the fibers of $f$ are \'etale $K(\pi,1)$'s. Then \cite[Theorem 11.5]{fr82} shows that $X$ is an \'etale $K(\pi,1)$. 


By Remark \ref{rem:inj_ordinary_htpy}, the fibers of $f$ satisfy the injectivity in the section conjecture. Therefore, by Theorem \ref{hthm:fib_inj} and our induction hypothesis, $X$ does as well.

Similarly, if Conjecture \ref{c:gro2} holds, the fibers of $f$ satisfy the surjectivity in the homotopy section conjecture by Remark \ref{rem:surj_ordinary_htpy}. Again, by Theorem \ref{hthm:fib_inj} and our induction hypothesis (under the assumption of this conjecture), $X$ does as well.
\end{proof}

\begin{lemma}\label{lemma:hypgoodnbdhd}
Every smooth geometrically connected variety $X$ over an infinite field $k$ has an open cover by hyperbolic good neighbourhoods over $k$.
\end{lemma}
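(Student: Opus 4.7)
The plan is to combine two ingredients: Artin's classical theorem on good neighborhoods, and a hyperbolization step that refines each elementary fibration in the tower so that its fibers become hyperbolic affine curves.

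First, I would invoke Artin's theorem (\cite[Exp.~XI]{sga4-3}, or \cite[Proposition~11.6]{fr82}) that every smooth geometrically connected variety $X$ over an infinite field $k$ has a Zariski open cover by good neighborhoods over $k$. The infinite-field hypothesis is used precisely so that Bertini-type genericity arguments yield, after embedding $X \hookrightarrow \Pb^N_k$, projections from sufficiently general linear subspaces that produce an elementary fibration onto a lower-dimensional smooth base, whence one induces on $\dim X$. Thus one reduces the problem to showing that every good neighborhood admits an open cover by \emph{hyperbolic} good neighborhoods.

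Second, I would upgrade each elementary fibration to a hyperbolic one by induction on the length of the parametrizing tower. So suppose $f \colon X \to Y$ is an elementary fibration with smooth projective closure $\overline{f} \colon \overline{X} \to Y$ and boundary divisor $D = \overline{X} \setminus X$ which is finite \'etale over $Y$ and nonempty in every fiber (since fibers are affine). The fiber $\overline{X}_y \setminus D_y$ is hyperbolic iff $2g(\overline{X}_y) - 2 + \deg D_y > 0$. Hyperbolicity can fail only for $g=0$ with $\deg D_y \le 2$, or for $g=1$ with $\deg D_y = 0$; the latter is excluded by affineness of the fibers. In the remaining cases I would enlarge $D$ by adjoining a finite \'etale subscheme $D' \subset \overline{X}$, disjoint from $D$, with $\deg D'_y$ large enough to force hyperbolicity. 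Such a $D'$ can be constructed, possibly after shrinking $Y$ to a Zariski open, by repeatedly taking hyperplane sections of $\overline{X}$ that are generically finite \'etale over $Y$ and avoid $D$; here again one uses that $k$ (hence $Y$) is infinite in order to choose sufficiently generic sections. The resulting $(\overline{X}, D \cup D')$ still satisfies Definition \ref{defn:geom_fib} over the possibly smaller open $Y' \subseteq Y$, and now defines a hyperbolic elementary fibration $X \setminus D' \to Y'$. Applying the inductive hypothesis to cover $Y'$ by hyperbolic good neighborhoods and pulling back via $f$ gives the desired cover of an open subset of $X$ by hyperbolic good neighborhoods.

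The main obstacle will be the second step: making sure that enlarging the boundary divisor $D$ to $D \cup D'$ preserves the axioms of a special geometric fibration (in particular, the relative normal crossings condition in Definition \ref{defn:geom_fib}(4)), and that $D'$ can be chosen so that the complement $X \setminus D'$ still surjects onto the shrunken base $Y'$ with geometrically connected fibers. Both points are handled by invoking Bertini over the infinite residue fields of generic points of $Y$, together with Zariski-local shrinking of $Y$ to remove the locus where transversality or connectedness fails; care is needed to ensure that the chosen open cover of $X$ produced in this way is still a cover, which follows because every closed point of $X$ survives in some such refinement after an appropriate choice of generic $D'$.
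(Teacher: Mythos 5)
The paper does not prove this lemma in-house: its ``proof'' is a pointer to \cite[Lemma 6.3]{schsx16} and \cite[Lemma 6.3.15]{corwinthesis}, with the remark that the statement goes back to a ``Remark of M.~Artin'' in \cite{Gro83}. Your sketch reconstructs the argument those references actually carry out, and it is correct in outline: Artin's theorem (via generic linear projections, using the infinite-field hypothesis for Bertini) gives an open cover by good neighborhoods, and one then hyperbolizes each elementary fibration in the tower by enlarging the boundary divisor with a generic hyperplane section of the relative projective closure.

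Two small confirmations on points you flagged as obstacles. First, for an elementary fibration the fibers are curves, so the relative normal-crossings condition of Definition \ref{defn:geom_fib}(4) on $D \cup D'$ degenerates to requiring $D$ and $D'$ to be disjoint, finite \'etale divisors over the (shrunken) base; disjointness and \'etaleness are precisely what generic hyperplane sections buy, so there is no extra subtlety hidden here. Second, your final worry about the open sets still forming a cover is handled the standard Artin way: one fixes a closed point $x \in X$, makes all generic choices ($D'$ and the shrinking of $Y$) so that $x$ survives, and then uses quasi-compactness to extract a finite cover; you correctly point to this. You also correctly enumerate the non-hyperbolic affine fibers ($\Ab^1$ and $\Gb_m$, i.e.\ $g=0$ with one or two boundary points), so the hyperbolization step only ever needs to add at most two new boundary points per fiber. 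In short: the proposal fills in what the paper leaves to a reference, and it does so soundly.
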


\begin{proof}
This is \cite[Lemma 6.3]{schsx16} or \cite[Lemma 6.3.15]{corwinthesis}. This result was already referred to as a ``Remark of M.Artin'' in \cite{Gro83}.
\end{proof}

\begin{rem}\label{rem:compat_cusp}
By Remarks \ref{rem:compat_cusp_curve} and \ref{rem:compat_cusp_fib}, the cuspidal data of Definition \ref{defn:cusp_framing} are compatible for varying $k$ of characteristic $0$.
\end{rem}

We can now answer Question \ref{quest:vsa} for $(\fcov,S,k)$ while circumventing Theorem \ref{thm:vsacover}:

\begin{cor}\label{cor:fcovallk2}
Let $X$ be a smooth geometrically connected variety over a number field $k$, and assume Conjecture \ref{c:gro2}. Then $X$ has a Zariski open cover $X = \bigcup_i U_i$ such that $U_i$ is VSA for $(\fcov,S,k)$ for any nonempty set $S$ of finite places of $k$.
\end{cor}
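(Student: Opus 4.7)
I would cover $X$ by hyperbolic good neighborhoods and then prove VSA for each piece by induction on dimension, using Theorem \ref{thm:vsa_geom_fib} applied to the iterated elementary fibration structure. Concretely: apply Lemma \ref{lemma:hypgoodnbdhd} to obtain a Zariski open cover $X = \bigcup_i U_i$ in which each $U_i$ is a hyperbolic good neighborhood over $k$. It then suffices to prove that any hyperbolic good neighborhood $U/k$, equipped with a sequence of parametrizing morphisms
\[
U = V_n \to V_{n-1} \to \cdots \to V_1 \to V_0 = \Spec k,
\]
is VSA for $(\fcov, S, k)$. Since each $V_d$ is an \'etale $K(\pi,1)$ by Lemma \ref{hcor:hgn_inj_sec_conj}, the descent spectral sequence (Theorem \ref{thm:dss}) together with the comparisons in Section \ref{otherobstructions} make VSA for $(\fcov, S, k)$ equivalent to VSA for $(h, S, k)$ on these spaces, so it suffices to establish the latter.

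\textbf{The induction.} The base case $d = 1$ is Proposition \ref{prop:sec_conj_vsa}: $V_1$ is a hyperbolic curve satisfying Conjecture \ref{c:gro2}, hence VSA. For the inductive step, I would apply Theorem \ref{thm:vsa_geom_fib} to the hyperbolic elementary fibration $V_d \to V_{d-1}$. Condition~(1) of that theorem is exactly the inductive hypothesis, and condition~(6) (VSA of fibers) is Proposition \ref{prop:sec_conj_vsa} since the fibers are hyperbolic curves. Condition~(2) can be verified via alternative~2(b): a homotopy analogue of Lemma \ref{cor:HSP_curves} yields $\kappa(V_{d-1}(k)) = \Sel(V_{d-1}/k)$, because Lemma \ref{hcor:hgn_inj_sec_conj} gives the surjectivity in the homotopy section conjecture for $V_{d-1}$, while non-cuspidality of the $\Sel$-sections follows from Remark \ref{rem:compat_cusp_fib} together with the local non-cuspidality of Proposition \ref{prop:inj_sec_conj}\ref{item:cusp_disjoint} applied at each stage of the parametrizing tower. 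Injectivity of $\kappa_{V_{d-1}/k_v}$ (needed both in 2(b) and for condition~(3)) is the injectivity part of Lemma \ref{hcor:hgn_inj_sec_conj}, valid over $k_v$ because $k_v$ is a finite extension of $\Qb_p$.

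\textbf{Main obstacle.} The principal technical work is verifying conditions~(4) and~(5) of Theorem \ref{thm:vsa_geom_fib}: simple-connectedness of every connected component of $\Et(V_{d-1}^s)^{hG_{k_v}}$ hit by the Kummer map at finite $v$, and the real-place condition on topological fundamental groups. I would handle the first inductively via the long exact sequence of Lemma \ref{hlemma:les} applied to the parametrizing fibration $V_{d-1} \to V_{d-2}$, reducing it through the tower to the curve case where Lemma \ref{lemma:section_centralizer} gives centralizer triviality, combined with the injectivity in the homotopy section conjecture (Lemma \ref{hcor:hgn_inj_sec_conj}, condition~\ref{itm:simply_conn}) which is exactly this statement at each level. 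The real-place condition is a real-analytic statement about iterated fiber bundles whose fibers are hyperbolic curves; it reduces through the parametrizing tower to the assertion that $\pi_1$ is surjective for a hyperbolic elementary fibration over $\Rb$ with a section, which follows from the homotopy long exact sequence once one knows the fibers remain connected (a consequence of hyperbolicity and the existence of a real base point).
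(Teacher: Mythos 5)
Your proposal takes a genuinely different route from the paper's, and unfortunately the route has a gap that makes it fail over number fields with real places.

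The paper's proof of Corollary~\ref{cor:fcovallk2} does \emph{not} invoke Theorem~\ref{thm:vsa_geom_fib} at all. Instead, it uses the fact that a hyperbolic good neighborhood $U_i$ is an \'etale $K(\pi,1)$ (so $U_i(hk)\cong\seck{U_i}$ by Proposition~\ref{prop:etale_kpi1}) together with Lemma~\ref{hcor:hgn_inj_sec_conj}, which already packages the induction over the parametrizing tower via the \emph{section conjecture in fibrations} (Theorem~\ref{hthm:fib_inj}); it then applies the Harari--Stix local-global argument of Proposition~\ref{prop:sec_conj_vsa} directly to the fundamental group sections of $U_i$. Crucially, the Harari--Stix argument only requires injectivity of the Kummer map at the places in $S$, which are all finite, so the archimedean places never enter.

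The gap in your proposal is condition~\ref{item:real_places} of Theorem~\ref{thm:vsa_geom_fib}, which you flag as the ``main obstacle'' but then dismiss. That condition requires not only surjectivity of $\pi_1(X(k_v),b)\to\pi_1(S(k_v),a)$ at real places $v$, but \emph{also} that $\pi_1\bigl(X_a\bigl((k_v)_s\bigr)\bigr)$ be trivial, i.e.\ that the complex points of the fiber be simply connected. You address only the surjectivity half. The triviality half is false for hyperbolic elementary fibrations: the fibers are hyperbolic affine curves, whose complex points have topological fundamental group a nonabelian free group. This hypothesis is essential to the proof of Theorem~\ref{thm:vsa_geom_fib}, since it is what permits invoking P\'al's theorem identifying $X_a(k_v)_{\bullet}$ with $X_a(hk_v)$ at real $v$, and hence what yields injectivity of $X_a(hk_v)\to X(hk_v)$ at real places. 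Without it, the key step $\loc(\gamma)=\kappa_{X_a/\Ab_k}(\alpha)$ cannot be concluded. Your induction would therefore only go through for totally imaginary $k$, whereas the corollary is stated for arbitrary number fields. The rest of your outline --- the reduction to hyperbolic good neighborhoods, the observation that $K(\pi,1)$-ness reconciles $\fcov$ and $h$, and the verification of conditions \ref{item:vsa_base}--\ref{item:tot_imag} and~\ref{item:vsa_fiber} via Lemma~\ref{hcor:hgn_inj_sec_conj} and the Selmer-set argument --- is sound; the failure is localized at condition~\ref{item:real_places}, and it is precisely to sidestep archimedean subtleties that the paper routes through fundamental group sections and Harari--Stix rather than through the VSA fibration theorem.
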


\begin{proof}
By Lemma \ref{lemma:hypgoodnbdhd}, $X$ has an open cover by hyperbolic good neighborhoods $U_i$. By Lemma \ref{hcor:hgn_inj_sec_conj}, $U_i$ satisfies the surjectivity in the section conjecture over $k$ and the injectivity in the section conjecture over $k_v$ for all finite places $v$ of $k$ (and compatible cuspidal data by Remark \ref{rem:compat_cusp}). Since $U_i$ is an \'etale $K(\pi,1)$ by Lemma \ref{hcor:hgn_inj_sec_conj}, Proposition \ref{prop:etale_kpi1} tells us that homotopy sections and fundamental group sections are the same for $U_i$. We may therefore apply the same argument as in Proposition \ref{prop:sec_conj_vsa} to show that $U_i$ satisfies VSA for $(\fcov,S,k)$.
\end{proof}





\part{Examples}\label{part:examples}

\section{Poonen's Counterexample}\label{sec:poonen_ce}

In \cite{poo08}, Poonen found the first example of a variety with no rational points that does not satisfy VSA for the \'etale-Brauer obstruction. In this section, we review the construction of this variety. We also use Theorem \ref{thm:vsa_geom_fib} to show, under certain conditions, that any example like Poonen's needs to have at least one singular fiber.

\subsection{Conic Bundles}
We now present some general notions about conic bundles, as described in \cite[\S 4]{poo08}. We base our notation on \cite{poo08} and then add some notation of our own.

Let $k$ be a field. From now on, let $\eps$ equal $1$ if $k$ has characteristic $2$ and $0$ otherwise.

Let $B$ be a nice $k$-variety. Let $\mathcal{L}$ be
a line bundle on $B$. Let $\mathcal{E}$ be the rank $3$ locally free sheaf
$$ \mathcal{O} \oplus \mathcal{O} \oplus \mathcal{L} $$
on $B$. Let $a \in  k^\times$, and let $s \in  \Gamma(B,\mathcal{L}^{\otimes 2})$ be a nonzero global section. Consider the section
$$ \eps \oplus 1 \oplus (-a) \oplus (-s) \in \Gamma(B,\mathcal{O} \oplus \mathcal{O} \oplus \mathcal{O} \oplus \mathcal{L}^{\otimes 2}) \subset
\Gamma (B, \Sym^2 \mathcal{E}),$$
where the first $\mathcal{O}$ corresponds to the product of the first two summands of $\mathcal{E}$, and the last three terms $\mathcal{O} \oplus \mathcal{O} \oplus \mathcal{L}^{\otimes 2}\subset \Sym^2 \mathcal{E}$ are the symmetric squares of the  three individual summands of $\mathcal{E}$. The zero locus of  $\eps \oplus 1 \oplus (-a) \oplus (-s)$ in $\PP \mathcal{E}^v$ is a projective geometrically integral $k$-variety $X=X(B,\mathcal{L},a,s)$ with a morphism $\alpha : X \to  B$.

\begin{defn}\label{defn:conic_bundle_datum}
We call an element
$$ (\mathcal{L},s,a)\in \Div{B} \times \Gamma(B,\mathcal{L}^{\otimes 2}) \times k^{\times},$$
where $s \neq 0$, a \emph{conic bundle datum on $B$} and $X$ \emph{the total space of $(\mathcal{L},s,a)$}. We denote $X=Tot_B(\mathcal{L},s,a)$.

\end{defn}

If $U$ is a dense open subscheme of $B$ with a trivialization $\mathcal{L}|_U \cong \mathcal{O}_U$, and we identify $s|_U$ with an element of $\Gamma(U,\mathcal{O}_U)$, then the affine scheme defined by $y^2+\eps yz - az^2 = s|_U$ in $\Ab^2_U$ is a dense open subscheme of $X$. We therefore refer to $X$ as the conic bundle given by $y^2+\eps yz  - az^2 = s$.

In the special case where $B = \PP^1$, $\mathcal{L} = \mathcal{O}(2)$, and the homogeneous form $s \in \Gamma(\PP^1,\mathcal{O}(4))$ is
separable, $X$ is called the Ch\^{a}telet surface given by $y^2+\eps yz -az^2 = s(x)$, where $s(x) \in  k[x]$ denotes
a dehomogenization of $s$.

Returning to the general case, we let $Z$ be the subscheme $s = 0$ of $B$. We call $Z$ the degeneracy
locus of the conic bundle $(\mathcal{L},s,a)$. Each fiber of $\alpha$ above a point of $B-Z$ is a smooth plane conic, and each fiber above a geometric point of $Z$ is a union of two projective lines crossing transversally
at a point. A local calculation shows that if $Z$ is smooth over $k$, then $X$ is smooth over $k$.

\subsection{Poonen's Variety}\label{sec:the_variety}

Let $k$ be a global field, let $a \in k^\times$, and let $\tilde{P}_\infty(x), \tilde{P}_0(x) \in k[x]$ be  relatively prime separable degree $4$ polynomials such that the (nice) Ch\^{a}telet surface $\mathcal{V}_\infty$ given by $y^2+\eps yz  - az^2 = \tilde{P}_\infty(x)$ over $k$ satisfies $\mathcal{V}_\infty(\Ab_k) \neq \emptyset$  but $\mathcal{V}_\infty(k) = \emptyset$. Such Ch\^{a}telet surfaces always exist : see \cite[Propositions 5.1 and 11]{poo08} if the characteristic of $k$ is not $2$ and \cite{viray12} otherwise. If $k = \QQ$, one may use the original example from \cite{Isk71} with $a = -1$ and $\tilde{P}_\infty(x)  \colonequals  (x^2 - 2)(3 - x^2)$.

Let $P_\infty(w, x)$ and $P_0(w, x)$  be the homogenizations of $\tilde{P}_\infty$ and $\tilde{P}_0$. Let $\mathcal{L} = \Oc(1, 2)$ on $\PP^1 \times \PP^1$ and define
$$s_1  \colonequals  u^2P_\infty(w, x) + v^2P_0(w, x) \in \Gamma(\PP^1 \times \PP^1,\mathcal{L}^{\otimes2}),$$ where the two copies of $\PP^1$ have homogeneous coordinates $(u: v)$ and $(w: x)$, respectively. Let $Z_1 \subset \PP^1 \times \PP^1$ be the zero locus of $s_1$. Let $F \subset \PP^1$ be the (finite) branch locus of the first projection $Z_1 \to \PP^1$. i.e., $$ F  \colonequals  \left\{(u:v) \in \PP^1 | u^2P_\infty(w, x) + v^2P_0(w, x) \text{  has a multiple root} \right\}.$$ Let $\alpha_1 : \mathcal{V} \to \PP^1 \times \PP^1$ be the conic bundle given by $y^2+\eps yz  - az^2 = s_1$, a.k.a. the conic bundle on $\PP^1 \times \PP^1$ defined by the datum $(\Oc(1, 2),a,s_1)$.

Composing $\alpha_1$ with the first projection $\PP^1 \times \PP^1 \to \PP^1$ yields a morphism
$\beta_1 : \mathcal{V} \to \PP^1$ whose fiber above $\infty  \colonequals  (1 : 0)$ is the Ch\^{a}telet surface $\mathcal{V}_\infty$ defined earlier.

Now let $C$ be a nice curve over $k$ such that $C(k)$ is finite and nonempty. Choose a dominant
morphism
$ \gamma \colon C \to \PP^1$, \'{e}tale above $F$, such that
$\gamma(C(k)) = \{\infty\}$.
Define  $X  \colonequals  \mathcal{V} \times_{\PP^1} C$ to be the fiber product with respect to the maps $\beta_1 \colon \mathcal{V}\to \PP^1, \gamma \colon C \to \PP^1$,
and consider the morphisms $\alpha$ and $\beta$ as in the diagram:

$$\xymatrix{
X   \ar@/_2pc/[dd]_{\beta} \ar[d]_\alpha  \ar[r]  &   \mathcal{V} \ar[d]_{\alpha_1} \ar@/^2pc/[dd]^{\beta_1}   \\
C \times \PP^1 \ar[d]_{\pr_1} \ar[r]^{(\gamma,1)}    &   \PP^1\times \PP^1 \ar[d]_{\pr_1}   \\
C        \ar[r]^\gamma          &   \PP^1 \\
}$$

The variety $X$ is the one constructed in \cite[\S 6]{poo08}. The same paper proves that $X(\Ab_k)^{\etbr}  \neq \emptyset$ (Theorem 8.2) and  $X(k) = \emptyset$ (Theorem 7.2). We present the proof that $X(k) = \emptyset$ because it is short and simple.

\begin{prop}\label{prop:no_points}
If $X$ is the variety constructed above, then $X(k) = \emptyset$.
\end{prop}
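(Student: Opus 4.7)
The plan is to argue by contradiction, using the two key properties of the construction: first, $\gamma$ sends every $k$-point of $C$ to $\infty \in \mathbb{P}^1(k)$; second, the fiber of $\beta_1 \colon \mathcal{V} \to \mathbb{P}^1$ above $\infty$ is the Ch\^atelet surface $\mathcal{V}_\infty$, which by construction has no $k$-points.

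More precisely, suppose for contradiction that $x \in X(k)$. Composing with the morphism $\alpha \colon X \to C \times \mathbb{P}^1$ and then projecting to the first factor, I obtain a point $c \in C(k)$. By the hypothesis on $\gamma$, the image $\gamma(c) \in \mathbb{P}^1(k)$ equals $\infty$. Next, I use the fiber-product description $X = \mathcal{V} \times_{\mathbb{P}^1} C$ with respect to $\beta_1$ and $\gamma$: the fiber of $\beta = \beta_1 \circ (\text{projection to }\mathcal{V})$ above $\gamma(c) = \infty$ is identified with $\mathcal{V}_\infty$. Pushing $x$ forward along the top horizontal map $X \to \mathcal{V}$ in the diagram then yields a $k$-point of $\mathcal{V}_\infty$.

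The final step is to invoke the defining property of $\mathcal{V}_\infty$, namely $\mathcal{V}_\infty(k) = \emptyset$, for the contradiction. No step here is deep; the only thing to verify carefully is the identification of the fiber of $\beta$ above $c$ with $\mathcal{V}_\infty$, which follows directly from the fiber-product definition of $X$ together with $\gamma(c) = \infty$ and the fact that $\beta_1^{-1}(\infty) = \mathcal{V}_\infty$. Thus the entire argument is a short diagram chase.
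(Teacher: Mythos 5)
Your proof is correct and takes essentially the same route as the paper: assume a rational point, push it down to $C(k)$, use $\gamma(C(k)) = \{\infty\}$ together with the fiber-product structure to land in $\mathcal{V}_\infty$, and conclude from $\mathcal{V}_\infty(k) = \emptyset$. One small notational slip: you write $\beta = \beta_1 \circ (\text{projection to } \mathcal{V})$, but in the paper $\beta$ is the map $X \to C$, while $\beta_1 \circ (\text{projection})$ is the map $X \to \mathbb{P}^1$, which equals $\gamma \circ \beta$; this does not affect the substance of the argument, since what you actually use is that the image of $x$ in $\mathcal{V}$ lies over $\gamma(c) = \infty$.
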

\begin{proof}
Assume $x_0\in X(k)$; we have $c_0  \colonequals  \beta(x_0) \in C(k)$, but then $x \in \beta^{-1}(c_0)$. By the construction of $X$, $\beta^{-1}(c_0)$ is isomorphic to $\beta_1^{-1}(\gamma(c_0)) = \beta_1^{-1}(\infty) \cong \mathcal{V}_\infty$, but $\mathcal{V}_\infty(k)=\emptyset$ by construction.
\end{proof}

Note that $X$ can also be considered as the variety corresponding to the datum $(O(1, 2),a,s_1)$ pulled back via $(\gamma,1)$ to $C\times \PP^1$.

\subsection{The Necessity of Singular Fibers}\label{sec:singular_fiber_needed}

One of the original motivations for the \'etale homotopy obstruction of \cite{hs13} was the fact that Poonen's counterexample was not a fibration (as it contained singular fibers). We complete this circle of reasoning to show that, assuming the Tate-Shafarevich conjecture and a technical condition on the real points, these singular fibers were necessary. Our main tool is Theorem \ref{thm:vsa_geom_fib}, which relies on the \'etale homotopy obstruction.

\begin{thm}\label{thm:singular_fiber_needed}
Let $f\colon X \to C$ be a smooth proper family of Ch\^{a}telet surfaces over an elliptic curve $C$ with $|C(k)|<\infty$. Suppose that for all $a \in C(k)$, we have $X_a(k)= \emptyset$. Suppose furthermore that the Tate-Shafarevich group of $C$ has trivial divisible subgroup, and that for every real place $v$ of $k$, every $a \in S(k)$, and every $b \in X_a(k_v)$, the map $\pi_1(X(k_v),b) \to \pi_1(S(k_v),a)$ is surjective (vacuous if $k$ is totally imaginary). Then $X(\Ab_k)^{\etbr} = \emptyset$.
\end{thm}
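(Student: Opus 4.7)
The plan is to deduce the result from Theorem \ref{thm:vsa_geom_fib}, applied to $f\colon X\to C$ with $T$ the set of finite places of $k$. If the hypotheses of that theorem are verified, we conclude that $X$ satisfies VSA for $(\mathrm{\acute{e}t-Br},f,k)$. Combined with the fact that $X(k)=\emptyset$ (any $k$-point of $X$ maps to some $a\in C(k)$, forcing a point in $X_a(k)=\emptyset$; and if $C(k)=\emptyset$, then a fortiori $X(k)=\emptyset$), we obtain $X(\Ab_k^f)^{\etbr}=\emptyset$, and then Lemma \ref{lemma:combined1}(\ref{item:change_places}) promotes this to $X(\Ab_k)^{\etbr}=\emptyset$.

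The map $f$ is a geometric fibration with geometrically connected fibers in the sense of Definition \ref{defn:geom_fib}: it is already smooth and proper, and Ch\^atelet surfaces are geometrically integral, so we take $\overline{X}=X$ and $Y=\emptyset$. Among the technical conditions of Theorem \ref{thm:vsa_geom_fib}, condition \ref{item:vsa_base} (VSA of $C$ for $(h,f,k)$) follows from the Sha hypothesis via the theorem that $C(\Ab_k^f)^{\Br}=\overline{C(k)}$ for abelian varieties with trivial divisible subgroup of $\Sha$; since $C(k)$ is finite and $C(\Ab_k^f)$ is Hausdorff, this closure equals $C(k)$, and the sandwich $C(k)\subseteq C(\Ab_k^f)^h\subseteq C(\Ab_k^f)^{\Br}=C(k)$ gives VSA for the finer obstruction. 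Condition \ref{item:vsa_fiber} (VSA of each fiber $X_a$ for $a\in C(k)$) is automatic: the Colliot-Th\'el\`ene-Sansuc-Swinnerton-Dyer theorem for Ch\^atelet surfaces gives $X_a(k)=X_a(\Ab_k)^{\Br}$, which is empty, so $X_a(\Ab_k^f)^h\subseteq X_a(\Ab_k^f)^{\Br}=\emptyset$.

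Condition \ref{item:finite_inj} is vacuous because $T=f$, and condition \ref{item:real_places} is a direct hypothesis of the theorem. Condition \ref{item:tot_imag} requires that for every finite place $v$ and every $a\in C(k_v)$, the component of $\hat{Et}(C^s)^{hG_{k_v}}$ containing $\kappa_{C/k_v}(a)$ be simply connected; by Theorem \ref{thm:dss} this fundamental group is $H^0(G_{k_v},\pi_1(C^s))=T(C)^{G_{k_v}}$ (the $G_{k_v}$-action on the abelian group $\pi_1(C^s)$ is basepoint-independent). A direct inverse-limit argument using the finiteness of $C(k_v)_{\mathrm{tors}}$ shows $T_\ell(C)^{G_{k_v}}=\varprojlim_n C(k_v)[\ell^n]=0$ for every prime $\ell$: a nonzero compatible sequence $(x_n)$ with $\ell x_{n+1}=x_n$ and $x_n\in C(k_v)[\ell^n]$ would force $k_v$-rational $\ell^n$-torsion of unbounded order, contradicting finiteness of the local torsion.

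The main obstacle is condition 2. As $C$ is not hyperbolic, variant \ref{item:sec_conj} is unavailable, so the plan is to invoke variant \ref{item:inj_sel}: injectivity of $\loc_f\colon C(hk)\to\prod_{v\in f}C(hk_v)$ on $\Sel(C/k)$. Since $C$ is an \'etale $K(\pi,1)$, Proposition \ref{prop:etale_kpi1} identifies $C(hk)$ with $\sections{C}{k}$, which sits naturally inside $H^1(k,T(C))$ via the descent spectral sequence; the homotopy Selmer set should then correspond to a classical Selmer-type subgroup of $H^1(k,T(C))$, and a Cassels-Tate-style argument, leveraging the triviality of the divisible subgroup of $\Sha(C)$, should give the required injectivity of $\loc_f$. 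Making this identification clean, and handling the interaction between the homotopy Selmer set and the classical Selmer group without losing track of the connected-component structure at archimedean places, is the most delicate part of the argument.
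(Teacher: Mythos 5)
Your overall strategy is exactly the paper's: apply Theorem \ref{thm:vsa_geom_fib} with $T=f$, verify its hypotheses, and combine the resulting VSA statement with $X(k)=\emptyset$. Your handling of conditions \ref{item:vsa_base}, \ref{item:vsa_fiber}, and \ref{item:finite_inj} matches the paper, and your direct inverse-limit computation showing $T_\ell(C)^{G_{k_v}}=0$ for condition \ref{item:tot_imag} is a correct alternative to the paper's citation of Lemma \ref{lemma:section_centralizer}. Two issues, one small and one substantive.

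The small one: condition \ref{item:real_places} of Theorem \ref{thm:vsa_geom_fib} is not entirely a direct hypothesis of the statement being proved. The hypothesis gives you the surjectivity of $\pi_1(X(k_v),b)\to\pi_1(S(k_v),a)$, but you must also verify that $\pi_1(X_a((k_v)_s))$ is trivial. The paper supplies this by noting that the fibers are Ch\^atelet surfaces, which are geometrically rational and therefore geometrically simply connected. It is a one-line fix, but it is a needed verification, not a given.

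The substantive one is condition 2, which you yourself flag as ``the most delicate part.'' You propose to work with variant \ref{item:inj_sel} and build a Cassels--Tate-style argument to prove injectivity of $\loc_f$ on $\Sel(C/k)$, but you do not carry this out, and as written there is a genuine gap: the identification of $\Sel(C/k)$ with a classical Selmer subgroup of $H^1(k,T(C))$ and the descent argument from $\Sha(C)_{\mathrm{div}}=0$ are not spelled out. The paper takes the easier intermediate variant \ref{item:img_sel}. Since $C$ is an \'etale $K(\pi,1)$, one has $C(hk)\cong\seck{C}\cong\varprojlim_n H^1(k,C[n])$, and the homotopy Selmer set $\Sel(C/k)$ is literally Stoll's completed Selmer group $\widehat{\Sel}(C/k)$. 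Under your hypotheses ($|C(k)|<\infty$ and $\Sha(C)_{\mathrm{div}}=0$), Stoll's work (\cite{sto07}, \S 2) gives $\widehat{\Sel}(C/k)=\kappa(C(k))$ on the nose, so the first half of \ref{item:img_sel} holds; the second half, injectivity of $\kappa_{C/k_v}$ for some finite $v$, is Proposition \ref{prop:inj_sec_conj}. This sidesteps the Cassels--Tate machinery you were reaching for. In short: the correct-and-complete route is via \ref{item:img_sel} and Stoll's identification of the completed Selmer group with $C(k)$, not via an ab-initio injectivity argument for \ref{item:inj_sel}.
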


\begin{proof}
We must first verify that the hypotheses of Theorem \ref{thm:vsa_geom_fib} hold for $f$. We refer to them by number.

By \cite[Corollary 8.1]{sto07}, we have $$\quad C(k) = C(\Ab_k)^{\br}_\bullet.$$ In particular, we find that $C(k) = C(\Ab_k^f)^{\br}$, so $C$ is VSA for $(h,f,k)$, and hence \ref{item:vsa_base} holds.

As $C(k)$ is finite, it equals the completed Selmer group of $C$ (c.f. \cite{sto07}, Section 2). But $C(hk)=H^1(k;H_1(C^s;\widehat{\Zb})) = \varprojlim_{n} H^1(k;C[n])$, and the completed Selmer group is just $\Sel(C/k)$. This together with Proposition \ref{prop:inj_sec_conj} implies that \ref{item:img_sel} holds.


\ref{item:finite_inj} and \ref{item:tot_imag} follow immediately because $T=f$ and because $S$ is a smooth geometrically connected curve, respectively.

Every geometric fiber is a Ch\^{a}telet surface, so $\pi_1(X_a((k_v)_s))$ is trivial. By that and the hypothesis on real fundamental groups, \ref{item:real_places} is verified.

Finally, every fiber $X_a$ for $a \in C(k)$ is a Ch\^{a}telet surface without rational points, so by \cite{CTSSD87a,CTSSD87b}, we have $X_a(\Ab_k)^{\br}=X_a(\Ab_k)^{\etbr} = \emptyset$. This shows that \ref{item:vsa_fiber} holds.

By Theorem \ref{thm:vsa_geom_fib}, $X$ satisfies VSA for $(\mathrm{\acute{e}t-Br},f,k)$. By the argument of Proposition \ref{prop:no_points}, we have $X(k)=\emptyset$, hence $X(\Ab_k)^{\etbr}=\emptyset$.
\end{proof}

\section{VSA Stratifications and Open Covers}\label{sec:vsa_strat_open}

In this section, we seek to positively answer Question \ref{quest:vsa} for Poonen's variety in an \emph{explicit} fashion. That is, we want to prove that a \emph{specific} open cover or stratification satisfies VSA. In fact, we present a few different results in this direction, all of which differ slightly in their exact hypotheses, conclusions, and methods. More precisely, the different results differ in:
\begin{enumerate}
    \item\label{item:which_obstruction} Whether they prove VSA for the Brauer-Manin or \'etale-Brauer obstructions,
    \item\label{item:open_vs_stratification} Whether they prove VSA for open covers or stratifications,
    \item Whether they apply over all number fields or not, and
    \item\label{item:conjectures} Whether they assume conjectures such as the Tate-Shafarevich conjecture or the section conjecture (Conjecture \ref{c:gro2}).

\end{enumerate}

In both \ref{item:which_obstruction} and \ref{item:open_vs_stratification}, the first condition is stronger than the second conditition. In \ref{item:conjectures}, for the sake of explicitness, assuming the Tate-Shafarevich conjecture is actually much more harmless, as there are specific elliptic curves for which the Tate-Shafarevich conjecture has been proven.

More specifically, in Proposition \ref{prop:vsa_strat_poonen}, we find a stratification that satisfies VSA for Brauer-Manin assuming Tate-Shafarevich. In Corollary \ref{cor:vsa_example}, we find an open cover satisfying VSA for \'etale-Brauer assuming Conjecture \ref{c:gro2} and crucially using the \'etale homotopy obstruction. Finally, in Theorem \ref{thm:quasi_torsors_poonen}, we find a stratification that satisfies VSA for \'etale Brauer-Manin over totally imaginary number fields $k$ assuming Tate-Shafarevich. While Theorem \ref{thm:quasi_torsors_poonen} is technically weaker than Proposition \ref{prop:vsa_strat_poonen} with respect to our criteria listed above, it introduces the concept of the ramified \'etale-Brauer obstruction, which we believe is useful in its own right. See also Remarks \ref{rem:variant_2} and \ref{rem:variant_5} for other variations on the hypotheses and conclusions.

\subsection{VSA Stratifications and Open Covers}\label{sec:a_vsa_open_cover}

Let $U$ be an open subset of $C\setminus C(k)$, and let $S$ be an open subset of the smooth locus containing $C(k)$ (we always assume that $C(k)$ is finite).

\begin{lemma}\label{lemma:vsa_tateshaf}
Suppose $k$ is a number field. Let $J$ be the Jacobian of $C$, and suppose that $J(k)$ is finite and that the Tate-Shafarevich group $\Sha(J)$ has trivial divisible subgroup. Then $U$ has empty Brauer set.
\end{lemma}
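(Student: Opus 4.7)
The plan is to apply Stoll's Corollary 8.1 from \cite{sto07}, which is exactly the tool already used in the proof of Theorem \ref{thm:singular_fiber_needed}. The hypotheses that $J(k)$ is finite and $\Sha(J)$ has trivial divisible subgroup are precisely what is needed for that corollary to give the equality $C(\Ab_k)^{\Br}_\bullet = C(k)$ (the set $C(k)$ being finite since $C(k)$ injects into $J(k)$ up to a finite obstruction, and we are assuming $C(k)$ finite anyway). Once this is in hand, the problem reduces to chasing adelic points through the open inclusion $U \hookrightarrow C$.

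Explicitly, suppose for contradiction that $P \in U(\Ab_k)^{\Br}$. Applying the functoriality of the Brauer-Manin pairing (Lemma \ref{lemma:combined3}) to the inclusion $\iota \colon U \hookrightarrow C$, the image $\iota(P)$ lies in $C(\Ab_k)^{\Br}$, and hence its class in $C(\Ab_k)_\bullet$ lies in $C(\Ab_k)^{\Br}_\bullet = C(k)$. Thus there is a rational point $Q \in C(k)$ such that $\iota(P)_v = Q$ at every non-archimedean place $v$ of $k$ (the $\bullet$-notation allowing modification only at archimedean places). Now choose any non-archimedean $v$: since $P_v \in U(k_v)$ and $\iota(P)_v = Q$, we get $Q \in U(k_v)$. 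But $U$ is an open $k$-subscheme of $C$ with $U \cap C(k) = \emptyset$, so the closed point $Q$ of $C$ does not lie in the open subscheme $U$, whence $Q \notin U(k_v)$. This contradiction forces $U(\Ab_k)^{\Br} = \emptyset$.

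The only nontrivial ingredient is Stoll's Corollary 8.1; the remainder of the argument is essentially formal. The subtlety that could have caused trouble is the $\bullet$-quotient at archimedean places, but this is harmless here because $k$ has at least one non-archimedean place and the set-theoretic contradiction above only requires a single such place. No separate argument is needed for the finiteness of $C(k)$, since that is already part of the standing setup preceding the lemma.
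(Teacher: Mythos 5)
Your proof is correct and follows essentially the same route as the paper: invoke Stoll's Corollary 8.1 to get $C(k) = C(\Ab_k)^{\Br}_\bullet$, then transfer emptiness to $U$ using the fact that $U \subseteq C \setminus C(k)$. The paper packages the transfer step as an application of Lemma \ref{lemma:combined2}(\ref{item:embeds_vsa}) (VSA is inherited by locally closed subvarieties) plus Lemma \ref{lemma:combined1}(\ref{item:change_places}), whereas you unfold that same argument into an explicit contradiction via a non-archimedean place; the two are the same in substance.
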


\begin{proof}
By \cite[Corollary 8.1]{sto07}, we have $$\quad C(k) = C(\Ab_k)^{\br}_\bullet.$$ In particular, we find that $C(k) = C(\Ab_k^f)^{\br}$, and therefore that $U(\Ab_k^f)^{\br}=U(\Ab_k)^{\br}=U(k)=\emptyset$.
\end{proof}

\begin{lemma}\label{lemma:vsa_dim_0}
With $S$ as above and $Z \colonequals C \setminus S$, we have $Z(\Ab_k)^{\fab} = \emptyset$.
\end{lemma}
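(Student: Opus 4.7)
Since $C$ is an irreducible nice curve and $S \supseteq C(k)$ is a non-empty Zariski open subset, $S$ is dense in $C$, so $Z = C \setminus S$ is a zero-dimensional reduced closed subscheme of $C$. Write $Z = \bigsqcup_{i=1}^n \spec(k_i)$ with each $k_i/k$ a finite field extension; since $C(k) \subseteq S$, no $k_i$ equals $k$. The plan is to combine two facts: (A) for every $i$, $\spec(k_i)(\Ab_k) = \emptyset$; and (B) the $\fab$-obstruction on the scheme of connected components $\pi_0(Z) \cong \spec(k^n)$ forces the component function $v \mapsto i_v$ of any adelic point of $\pi_0(Z)$ to be constant. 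By functoriality (Lemma~\ref{lemma:combined3}) applied to the projection $Z \to \pi_0(Z)$, these two facts yield $Z(\Ab_k)^{\fab} = \emptyset$: any such $\alpha$ would project to a constant function, hence lie in some $\spec(k_j)(\Ab_k)$, which is empty by (A).

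For (A), let $L_i/k$ be the Galois closure of $k_i/k$ and $H_i = \Gal(L_i/k_i)$, a proper subgroup of $G_i = \Gal(L_i/k)$. By the classical observation (Jordan's theorem) that a finite group is never the union of conjugates of a proper subgroup, some conjugacy class of $G_i$ is disjoint from every conjugate of $H_i$; by Chebotarev density there is a finite place $v$ of $k$ whose Frobenius in $L_i/k$ lies in this class, and at such $v$ no $k$-embedding $k_i \hookrightarrow k_v$ exists, so $\spec(k_i)(k_v) = \emptyset$.

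For (B), identify $\pi_0(Z)(\Ab_k)$ with $\prod_v \{1, \ldots, n\}$ and $\pi_0(Z)(k)$ with the diagonal. Given a non-constant $(i_v)$, choose $v_0, v_1$ and $j \neq j'$ with $i_{v_0} = j$ and $i_{v_1} = j'$, and fix an odd prime $\ell$. By standard class field theory (Hasse's theorem for Brauer invariants, and Merkurjev-Suslin to realize the class as a cup product -- or a finite sum of cup products, using the $\fab$-torsor $A = (\Zb/\ell)^m$), there exist $\xi \in H^1(k, \Zb/\ell)$ and $\eta \in H^1(k, \mu_\ell)$ whose cup product $\xi \cup \eta \in \Br(k)[\ell]$ has local invariant $1/\ell$ at $v_0$, $-1/\ell$ at $v_1$, and zero elsewhere. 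Consider the $\fab$-torsor on $\pi_0(Z)$ defined by $(\xi_i)_i \in H^1(\pi_0(Z), \Zb/\ell) = H^1(k, \Zb/\ell)^n$ with $\xi_j = \xi$ and $\xi_i = 0$ for $i \neq j$. Then
\[
\sum_v \mathrm{inv}_v\bigl(\loc_v(\xi_{i_v}) \cup \loc_v(\eta)\bigr) \;=\; \sum_{v :\, i_v = j} \mathrm{inv}_v(\xi \cup \eta) \;=\; \mathrm{inv}_{v_0}(\xi \cup \eta) \;=\; \tfrac{1}{\ell} \;\neq\; 0,
\]
since $i_{v_0} = j$ and $i_{v_1} \neq j$. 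By Poitou-Tate duality $(\loc_v(\xi_{i_v}))_v$ is not in the image of $\loc \colon H^1(k, \Zb/\ell) \to \bigoplus_v H^1(k_v, \Zb/\ell)$, so $(i_v) \notin \pi_0(Z)(\Ab_k)^{(\xi_i)}$, proving (B).

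The main technical step is the construction in (B) of $\xi$ and $\eta$ whose cup product has prescribed nonzero Hasse invariants at exactly two places; this is a routine application of class field theory, with the only real subtlety (the Grunwald-Wang exception at $\ell = 2$) circumvented by choosing $\ell$ odd.
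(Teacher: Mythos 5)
Your proof is correct, but it takes a genuinely different route from the paper, which simply cites \cite[Theorem 8.2]{sto07}: that theorem establishes that every zero-dimensional $k$-scheme satisfies VSA for finite abelian descent, so $Z(\Ab_k)^{\fab} = Z(k) = \emptyset$ immediately, since $C(k) \subseteq S$ forces $Z(k) = \emptyset$. You instead reprove the zero-dimensional case from scratch, factoring through $\pi_0(Z)$ and splitting the argument into (A) the irreducible case, settled by Jordan's observation that a finite group is never the union of conjugates of a proper subgroup together with Chebotarev density (giving a place $v$ with $\spec(k_i)(k_v) = \emptyset$), and (B) forcing constancy of the component function of an adelic point via Poitou--Tate duality for $\Zb/\ell$-torsors on $\spec(k^n)$. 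Both steps are sound. Two small remarks: over a number field the Albert--Brauer--Hasse--Noether theorem already says every class in $\Br(k)[\ell]$ is a single cyclic symbol $\xi \cup \eta$ with $\xi \in H^1(k,\Zb/\ell)$ and $\eta \in H^1(k,\mu_\ell)$, so your fallback to $(\Zb/\ell)^m$ and sums of symbols (and the invocation of Merkurjev--Suslin) is unnecessary; and the Grunwald--Wang caveat is a red herring, since the nine-term Poitou--Tate sequence is exact at $\bigoplus_v' H^1(k_v,\Zb/\ell)$ for every $\ell$, which is all your argument uses. The paper's one-line citation is more economical; your self-contained argument has the virtue of making explicit exactly which torsors detect a non-constant adelic point.
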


\begin{proof}
This follows immediately from \cite[Theorem 8.2]{sto07}.
\end{proof}

\begin{prop}\label{prop:vsa_strat_poonen}
Under the conditions of Lemma \ref{lemma:vsa_tateshaf}, there is a stratification $X = X_1 \cup X_2$ such that $X_1(\Ab_k)^{\br} = \emptyset$ and $X_2(\Ab_k)^{\br} = \emptyset$.
\end{prop}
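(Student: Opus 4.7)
The strategy is to stratify $X$ by pulling back a stratification of the base curve $C$. Concretely, I would set $X_2 := \beta^{-1}(C(k))$ and $X_1 := \beta^{-1}(C \setminus C(k))$. Since $C(k)$ is finite (hence a closed subscheme of $C$), the set $C \setminus C(k)$ is open in $C$, so $X_1$ is open in $X$ and $X_2$ is closed, giving a valid stratification in the sense of the footnote in Section \ref{sec:results}.

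For $X_1$, I would apply Lemma \ref{lemma:vsa_tateshaf} with the maximal choice $U = C \setminus C(k)$, which is allowed since $U$ is an open subset of $C \setminus C(k)$. This yields $U(\Ab_k)^{\Br} = \emptyset$. The morphism $\beta|_{X_1} \colon X_1 \to U$ then induces a map $X_1(\Ab_k)^{\Br} \to U(\Ab_k)^{\Br}$ by functoriality of the Brauer--Manin obstruction (a special case of Lemma \ref{lemma:combined3}), forcing $X_1(\Ab_k)^{\Br} = \emptyset$.

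For $X_2$, I would use that $X_2 = \bigsqcup_{a \in C(k)} X_a$, where $X_a = \beta^{-1}(a)$. By the diagram in Section \ref{sec:the_variety} and the property $\gamma(C(k)) = \{\infty\}$, each $X_a$ is isomorphic to $\beta_1^{-1}(\infty) \cong \mathcal{V}_\infty$, a Ch\^{a}telet surface with $\mathcal{V}_\infty(k) = \emptyset$ by construction. The theorem of Colliot-Th\'el\`ene--Sansuc--Swinnerton-Dyer \cite{CTSSD87a,CTSSD87b} (already invoked in the proof of Theorem \ref{thm:singular_fiber_needed}) then gives $\mathcal{V}_\infty(\Ab_k)^{\Br} = \emptyset$, and taking a disjoint union over $a \in C(k)$ yields $X_2(\Ab_k)^{\Br} = \emptyset$.

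There is essentially no obstacle: both pieces fall immediately to inputs already isolated in the paper. The one thing to double-check is the direction of functoriality for the Brauer--Manin obstruction used in the $X_1$ step, which is covariant in the variety and contravariant in the obstruction, so emptiness of $U(\Ab_k)^{\Br}$ does propagate up to $X_1(\Ab_k)^{\Br}$ through $\beta|_{X_1}$.
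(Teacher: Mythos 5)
Your proposal is correct and matches the paper's proof essentially step for step: the paper also sets $X_1 = \beta^{-1}(U)$ and $X_2 = \beta^{-1}(C(k))$, kills $X_1(\Ab_k)^{\Br}$ via Lemma \ref{lemma:vsa_tateshaf} and functoriality, and kills $X_2(\Ab_k)^{\Br}$ by identifying $X_2$ with a disjoint union of copies of $\mathcal{V}_\infty$. You are slightly more careful than the paper in two small respects — you fix $U = C \setminus C(k)$ explicitly (the paper's standing choice of ``$U$ an open subset of $C \setminus C(k)$'' is only implicitly taken maximal here, which is needed for the two pieces to actually cover $X$), and you spell out the covariance of $X \mapsto X(\Ab_k)^{\Br}$ that pushes emptiness of $U(\Ab_k)^{\Br}$ up to $X_1(\Ab_k)^{\Br}$ — but these are expository refinements, not a different argument.
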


\begin{proof}Let $X_1$ be $\beta^{-1}(U)$, and let $X_2$ be $\beta^{-1}(C(k))$. By Lemma \ref{lemma:vsa_tateshaf}, we know that $X_1(\Ab_k)^{\br} = \emptyset$.

We also know by construction that $X_2$ is a union of copies of $\Vc_\infty$. As $\Vc_\infty(\Ab_k)^{\br}=\emptyset$, we find that $X_2(\Ab_k)^{\br}=\emptyset$.
\end{proof}

\begin{rem}\label{rem:variant_2}
One may get a similar result as Proposition \ref{prop:vsa_strat_poonen} by assuming Conjecture \ref{c:gro2} instead of $\Sha(J)_{\mathrm{div}} = 0$, either by applying Conjecture \ref{c:gro2} directly to $U$, or by applying Theorem \ref{thm:vsa_example} to $S$ and Lemma \ref{lemma:vsa_dim_0} to $C \setminus S$.
\end{rem}

Next, we present an explicit proof that $X$, as constructed in Section \ref{sec:the_variety}, has a Zariski open cover with empty \'etale-Brauer set using Theorem \ref{thm:vsa_geom_fib} and assuming Conjecture \ref{c:gro2}. Section \ref{sec:ramified_covers} will then prove a stronger result, namely that there is a finite ramified cover with empty \'etale-Brauer set, without assuming Conjecture \ref{c:gro2}.

\begin{thm}\label{thm:vsa_example}
Suppose that every proper nonempty open subvariety of $C$ satisfies Conjecture \ref{c:gro2}. Then there is an open subvariety $S \subseteq C$ such that $S \cup U = C$, and $X_S \colonequals S \times_C X$ satisfies VSA for $(\mathrm{\acute{e}t-Br},f,k)$.
\end{thm}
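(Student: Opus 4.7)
The plan is to apply Theorem \ref{thm:vsa_geom_fib} to the restricted fibration $f|_S \colon X_S \to S$ with $T$ the set of finite places of $k$, and conclude that $X_S$ is VSA for $(h,f,k)$, which coincides with $(\mathrm{\acute{e}t\text{-}Br},f,k)$ for smooth geometrically connected varieties. First I would select $S$ as follows. There is a finite set $D \subseteq C$ outside of which $f$ is a smooth proper morphism with geometrically connected Ch\^atelet surface fibers; since the fibers above $C(k)$ are copies of the nice Ch\^atelet surface $\mathcal{V}_\infty$, we have $D \cap C(k) = \emptyset$. Pick a finite set $E$ with $D \subseteq E \subseteq U$ and set $S \colonequals C \setminus E$. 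Then $S \cup U = C$, $S \supseteq C(k)$, $S$ is a nonempty proper open subvariety of $C$, and $f|_S$ is a geometric fibration; since $C(k)$ finite and nonempty forces $C \not\cong \mathbb{P}^1$, the curve $S$ is affine, so $S \not\cong \mathbb{P}^1$ and $S$ is hyperbolic.

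Next I would verify the hypotheses of Theorem \ref{thm:vsa_geom_fib}. For condition (\ref{item:vsa_base}), by hypothesis $S$ satisfies Conjecture \ref{c:gro2}, so Proposition \ref{prop:sec_conj_vsa} gives VSA for $(\fcov,f,k)$, which implies VSA for $(h,f,k)$ because $X_S(\Ab_k^f)^{\mathrm{\acute{e}t\text{-}Br}} \subseteq X_S(\Ab_k^f)^{\fcov}$. Condition (\ref{item:sec_conj}) is Conjecture \ref{c:gro2} for $S$ together with hyperbolicity, both already arranged. Condition (\ref{item:finite_inj}) is vacuous since $T = f$. Condition (\ref{item:tot_imag}) is Lemma \ref{lemma:section_centralizer}. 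For condition (\ref{item:vsa_fiber}), since $U \subseteq C \setminus C(k)$ we have $S(k) = C(k)$; the fiber over each $a \in C(k)$ is $\mathcal{V}_\infty$, and $\mathcal{V}_\infty(k) = \mathcal{V}_\infty(\Ab_k^f)^{\br} = \emptyset$ by construction and \cite{CTSSD87a,CTSSD87b}, so $\mathcal{V}_\infty$ is trivially VSA for $(h,f,k)$ (both sides are empty and $h$ refines $\br$).

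The main obstacle is condition (\ref{item:real_places}) at the real places: for every real $v$, $a \in S(k)$, and $b \in X_a(k_v)$, I need surjectivity of $\pi_1(X_S(k_v),b) \to \pi_1(S(k_v),a)$ on topological fundamental groups (triviality of $\pi_1(X_a((k_v)_s))$ is automatic since Ch\^atelet surfaces are simply connected). The homotopy long exact sequence of the real-analytic fiber bundle $X_S(k_v) \to S(k_v)$ reduces this surjectivity to the injectivity of $\pi_0(X_a(k_v)) \to \pi_0(X_S(k_v))$ through $b$, i.e., to distinct real connected components of the Ch\^atelet fiber $\mathcal{V}_\infty(k_v)$ remaining distinct in the total space. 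Establishing this requires tracking how the real components of the family deform with the parameter, and is where a model-theoretic argument based on o-minimality of the real locus (as alluded to in the acknowledgements) enters to rule out merging of components globally. Once all conditions are verified, Theorem \ref{thm:vsa_geom_fib} yields $X_S(k) = X_S(\Ab_k^f)^{\mathrm{\acute{e}t\text{-}Br}}$, and combining with $X(k) = \emptyset$ from Proposition \ref{prop:no_points} gives $X_S(\Ab_k^f)^{\mathrm{\acute{e}t\text{-}Br}} = \emptyset$, completing the construction of the open cover $X = X_S \cup X_U$.
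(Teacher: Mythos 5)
Your overall framework is the same as the paper's: select an open $S \subseteq C$ containing $C(k)$ whose complement lies in $U$, note that $S$ is a hyperbolic open subcurve of $C$ (since $C(k)$ finite forces $C \not\cong \Pb^1$), and feed the fibration $X_S \to S$ into Theorem~\ref{thm:vsa_geom_fib} with $T = f$. Your verifications of conditions~\ref{item:vsa_base}, \ref{item:sec_conj}, \ref{item:finite_inj}, \ref{item:tot_imag}, and \ref{item:vsa_fiber} are correct and essentially identical to the paper's.

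The gap is precisely where you flag the ``main obstacle'': condition~\ref{item:real_places}. You pick $S$ by removing an essentially arbitrary finite set $E \supseteq D$, and then propose to \emph{verify} the surjectivity $\pi_1(X_S(k_v),b) \to \pi_1(S(k_v),a)$ a posteriori by an unspecified o-minimality argument that would ``rule out merging of components.'' You never actually carry this out, and it is not clear it can be carried out for an arbitrary $E$: there is no a priori reason that distinct real components of the Ch\^{a}telet fiber stay distinct as one moves around a loop in a circle component of $S(k_v)$.

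The paper's resolution is different and simpler, and it is built into the \emph{construction} of $S$ rather than verified afterward. Starting from $C' = C \setminus F'$, for each real place $v$ one looks at the components of $C'(k_v)$: each is a line or a circle. For each circle component that contains a rational point, one removes a closed point of $C'$ meeting that component but not lying in $C(k)$; after removing finitely many such points (over all real $v$), every component of $S(k_v)$ containing a rational point $a$ is homeomorphic to a line, so $\pi_1(S(k_v),a) = 0$ and the required surjectivity is vacuous. The model-theoretic input (completeness of the theory of real closed fields, plus semi-algebraicity of ``lies in this component'' and ``is not in $C'(k)$'') is used only to guarantee that such a closed, non-rational point exists on the given component --- one needs a point of $C'(k_v^{\mathrm{alg}}) \setminus C'(k)$ in that component, not merely a point of $C'(k_v)$. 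So the model theory is used to \emph{choose} $S$, not to analyze how fiber components deform. You should incorporate this refinement of $S$ into your construction; without it, condition~\ref{item:real_places} remains unverified.
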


\begin{proof}

With the notation of Section \ref{sec:the_variety}, let $F' \colonequals \gamma^{-1}(F)$, and let $C' \colonequals C \setminus F'$. Then $\beta$ is smooth over $C'$.

Let $v$ be a real place of $k$. As $C$ is smooth, $C'(k_v)$ is a disjoint union of components each homeomorphic to a line or circle. For each circle component containing a rational point, we wish to remove a closed non-rational point of $C'$ that meets that component.

To do this, we need to show that each component contains a $k_s$-point that is not a $k$-point. Let $k_v^{\mathrm{alg}}$ denote the algebraic closure of $k$ in $k_v$. By the completeness of the theory of real closed fields, any first-order formula true of $k_v$ is also true of $k_v^{\mathrm{alg}}$. By Theorems 2.4.4 and 2.4.5 of \cite{bcr98}, the property that a point lies in a given connected component is a semi-algebraic condition. As well, the property that a point is not equal to any element of $C'(k)$ is also a semi-algebraic condition. As each component has uncountably many $k_v$-points and hence at least one point not in $C'(k)$, each component has a $k_v^{\mathrm{alg}}$-point not in $C(k)$. We choose one for each component with a rational point and remove the corresponding closed points of $C'$.

We let $S$ denote the resulting open subvariety of $C$. It is clear that $S(k)=C(k)$, so that $S \cup U = C$. It suffices to verify the hypotheses of Theorem \ref{thm:vsa_geom_fib} for the projection $f = \beta \mid_{X_S} \colon X_S \to S$ and $T=f$.

By Lemma \ref{lemma:combined2}(\ref{item:embeds_vsa}), $S$ is VSA for $(h,f,k)$, so \ref{item:vsa_base} holds. By assumption, \ref{item:sec_conj} holds. \ref{item:finite_inj} and \ref{item:tot_imag} follow immediately because $T=f$ and because $S$ is a smooth geometrically connected curve, respectively.


By construction, every connected component of $S(k_v)$ is simply connected for each real place $v$. As well, every geometric fiber is a Ch\^{a}telet surface, so $\pi_1(X_a((k_v)_s))$ is trivial, and hence \ref{item:real_places} is verified.

Finally, every fiber over a rational point of $S$ is isomorphic to $\Vc_\infty$. But we assumed that $\Vc_{\infty}(k)=\emptyset$, so by \cite{CTSSD87a,CTSSD87b}, we have $\Vc_{\infty}(\Ab_k)^{\br}=\Vc_{\infty}(\Ab_k)^{\etbr} = \emptyset$. This shows that \ref{item:vsa_fiber} holds.
\end{proof}

\begin{cor}\label{cor:vsa_example}
Under the hypotheses and notations of Theorem \ref{thm:vsa_example}, the variety $X$ has an open cover $X = X_S \cup X_U$ such that $X_S$ and $X_U \colonequals U \times_C X$ both have empty \'etale-Brauer set.
\end{cor}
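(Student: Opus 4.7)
The plan is to assemble the cover $X = X_S \cup X_U$ directly from the cover $C = S \cup U$ of the base, and then verify emptiness of the étale-Brauer set on each piece by distinct arguments: for $X_S$ I will appeal to Theorem \ref{thm:vsa_example} combined with Poonen's emptiness result, while for $X_U$ I will kill the obstruction already on the base $U$ using the section conjecture and transport it up through $f$ by functoriality. The cover itself is immediate, since $f^{-1}$ preserves unions: $X_S \cup X_U = f^{-1}(S) \cup f^{-1}(U) = f^{-1}(S \cup U) = X$.

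For $X_S$, I will argue as follows. Proposition \ref{prop:no_points} gives $X(k) = \emptyset$, hence $X_S(k) = \emptyset$. Theorem \ref{thm:vsa_example} says $X_S$ satisfies VSA for $(\mathrm{\acute{e}t\text{-}Br}, f, k)$, so $X_S(\Ab_k^f)^{\etbr} = X_S(k) = \emptyset$, and Lemma \ref{lemma:combined1}(\ref{item:change_places}) promotes this to $X_S(\Ab_k)^{\etbr} = \emptyset$.

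For $X_U$, I will use that $U$ is a proper nonempty open subvariety of $C$, so by hypothesis Conjecture \ref{c:gro2} applies to $U$; in particular $U$ is a hyperbolic curve (the conjecture as stated is restricted to that class). Since $U \subseteq C \setminus C(k)$, also $U(k) = \emptyset$. Proposition \ref{prop:sec_conj_vsa} then gives $U(\Ab_k^f)^{\fcov} = U(k) = \emptyset$, and since $\etbr \subseteq \fcov$ (Section \ref{otherobstructions}), one obtains $U(\Ab_k^f)^{\etbr} = \emptyset$; Lemma \ref{lemma:combined1}(\ref{item:change_places}) upgrades this to $U(\Ab_k)^{\etbr} = \emptyset$. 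Finally, functoriality of the étale-Brauer obstruction (it is a generalized obstruction in the sense of Lemma \ref{lemma:combined3}) applied to $f \colon X_U \to U$ forces $X_U(\Ab_k)^{\etbr} = \emptyset$.

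There is really no substantive obstacle in this corollary: Theorem \ref{thm:vsa_example} does the genuine work on the $S$-piece, and on the $U$-piece the base is a pointless hyperbolic curve satisfying the section conjecture, so the obstruction already vanishes downstairs and pulls back trivially. The only mild care point is bookkeeping between $\Ab_k$ and $\Ab_k^f$, which is entirely absorbed by repeated use of Lemma \ref{lemma:combined1}(\ref{item:change_places}).
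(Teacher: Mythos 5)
Your argument is correct and follows essentially the same route as the paper: the $X_S$ piece is handled by combining $X(k)=\emptyset$ with the VSA conclusion of Theorem~\ref{thm:vsa_example} and Lemma~\ref{lemma:combined1}(\ref{item:change_places}), and the $X_U$ piece by applying Conjecture~\ref{c:gro2} and Proposition~\ref{prop:sec_conj_vsa} to the pointless hyperbolic curve $U$ and pushing emptiness up through the projection. The one place where you diverge slightly is the order of operations on the $U$ piece: the paper first transports $U(\Ab_k)^{\fcov}=\emptyset$ up to $X_U(\Ab_k)^{\fcov}=\emptyset$ by functoriality of $\fcov$ (which is a genuine functor obstruction, so Lemma~\ref{lemma:combined3} applies directly) and only then uses $\etbr\subseteq\fcov$ on $X_U$, whereas you first apply the inclusion on $U$ and then invoke functoriality of $\etbr$ itself, citing Lemma~\ref{lemma:combined3}. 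That lemma covers functor obstructions, and $\etbr$ is not one (it is built from twists and images rather than from a single contravariant functor), so the citation is off even though the functoriality of $\etbr$ is true; the paper's ordering avoids needing that fact.
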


\begin{proof}
We have already shown that $X_S(k) \subseteq X(k) = \emptyset$, so by Theorem \ref{thm:vsa_example}, we have $X_S(\Ab_k)^{\etbr} = \emptyset$. By Conjecture \ref{c:gro2} and Proposition \ref{prop:sec_conj_vsa} applied to $U$, we find that $U(\Ab_k)^{\fcov}=\emptyset$, which implies that $X_U(\Ab_k)^{\fcov}=X_U(\Ab_k)^{\etbr}=\emptyset$.
\end{proof}

\begin{rem}\label{rem:tate_shaf}
For the proof of Corollary \ref{cor:vsa_example} (but not Theorem \ref{thm:vsa_example}), one may avoid using Conjecture \ref{c:gro2} for $U$ by assuming the hypotheses of Lemma \ref{lemma:vsa_tateshaf}.  However, the argument in Theorem \ref{thm:vsa_example} already uses Conjecture \ref{c:gro2}, so it makes sense to use that conjecture again.

\end{rem}


\section{The Brauer-Manin obstruction applied to ramified covers}\label{sec:ramified_covers}

In this section, we introduce the notion of the ramified \'etale-Brauer obstruction. We also formulate a conjecture analogous to Theorems \ref{thm:fabsomek} and \ref{thm:fcovallk}. In Section \ref{sec:quasi_torsors_poonen}, we then explain how to apply this to Poonen's example.

\subsection{Quasi-Torsors and the Ramified Etale-Brauer Obstruction}\label{sec:quasi_torsors}

We shall now define slight generalizations of the concepts of torsors and the \'{e}tale Brauer-Manin obstruction.

\begin{defn}\label{defn:quasi_torsor}
Let $X$ be a variety over a field $k$, $G$ a finite smooth $k$-group, and $D \subseteq X$ an a closed subvariety. A \emph{$G$-quasi-torsor over $X$ unramified outside $D$} is a map $\pi:Y \to X$ and a $G$-action on $Y$ respecting $\pi$ such that
\begin{enumerate}
\item $\pi$ is a surjective and quasi-finite morphism.
\item The pullback of $\pi$ from $X$ to $X \setminus D$ is a $G$-torsor over $X\setminus D$.
\end{enumerate}
We call $d = |G|$ the degree of $Y$.
\end{defn}

Given $\rho \colon Z \to X$ an arbitrary morphism of $k$-varieties, the pullback $\rho^{-1}(Y)$ is a $G$-quasi-torsor unramified outside $\rho^{-1}(D)$.

Let $\pi:Y \to X$ be a $G$-quasi-torsor over $X$ unramified outside $D$. As in the case of a usual $G$-torsor, one can twist $\pi:Y \to X$ by any $\sig \in H^1(k,G)$ and get a $G^{\sig}$-quasi-torsor $\pi^\sig: Y^\sig \to X$, also unramified outside $D$.

If we assume that  $D(k)= \emptyset$, then as in Section \ref{sec:altdescr}, we get:
$$X(k) = \bigcup \limits_{\sigma \in H^1(k,G)} \pi^{\sigma}(Y^{\sigma}(k)) $$

If $k$ is a global field, it follows that:

$$X(k) \subset  X(\Ab_k)^{\pi,\Br}   \colonequals  \bigcup \limits_{\sigma \in H^1(k,G)} \pi^{\sigma}(Y^{\sigma}(\Ab_k)^{\Br}).$$

\begin{defn}\label{defn:ram_et_br}
Letting $\pi$ range over all quasi-torsors over $X$ unramified outside $D$, we define the \emph{($D$-)ramified \'etale-Brauer obstruction} by

$$ X(\Ab_k)^{\etbr \thicksim D} \colonequals \bigcap \limits_{\pi} X(\Ab_k)^{\pi,\Br} \subset X(\Ab_k)$$
\end{defn}

It follows from Section \ref{sec:altdescr} that $(X\setminus D)(\Ab_k)^{\etbr} \subseteq X(\Ab_k)^{\etbr \thicksim D}$.

Until now, we have left the fact that $D(k) = \emptyset$ as a black box. The idea behind this construction is that $D$ has smaller dimension, and it therefore might be easier to show that $D$ has empty \'etale-Brauer set. More generally, one might, e.g., find a subvariety $E$ of $D$, prove that $D(\Ab_k)^{\etbr \thicksim E} = \emptyset$, and finally prove that $E(\Ab_k)^{\etbr}=\emptyset$. We formalize this as follows.

\begin{defn}
We say that \emph{the absence of rational points is explained by the ramified \'etale-Brauer obstruction} if there exists a stratification $X = \coprod_i X_i$ and for each $i$, we have $X_i(\Ab_k)^{\etbr \thicksim D_i} = \emptyset$, where $D_i \colonequals \cl{X_i} \setminus X_i$.
\end{defn}

This is similar to the method of considering the \'etale-Brauer sets of dense open subsets, for $X(\Ab_k) ^{\etbr \thicksim D} = \emptyset$ implies that $(X\setminus D)(\Ab_k)^{\etbr} = \emptyset$. In particular, if the absence of rational points is explained by the ramified \'etale-Brauer obstruction, then the answer to Question \ref{quest:emptiness} is yes. However, one crucial difference is that the ramified \'etale-Brauer obstruction requires proving only emptiness of Brauer sets of \emph{proper} varieties. 

Inspired by Theorem \ref{thm:fabsomek}, we conjecture that the absence of rational points is always explained by the ramified \'etale-Brauer obstruction and that one may furthermore take a single quasi-torsor for each stratum:

\begin{conj}\label{conj:ram_etbr}
For any number field $k$ and variety $X/k$ with $X(k) = \emptyset$, there exists a stratification $X = \coprod_i X_i$ and for each $i$, a quasi-torsor $Y_i$ over the closure $\cl{X_i}$ of $X_i$ such that $Y_i$ restricts to a torsor over $X_i$ and $Y_i^{\sigma}(\Ab_k)^{\Br} = \emptyset$ for all twists $Y_i^{\sigma}$ of $Y_i$.
\end{conj}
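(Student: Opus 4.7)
The plan is to combine the open-cover VSA machinery of Part \ref{part:embeddings} with an induction on dimension that pushes the \'etale-Brauer obstruction across the ramification boundary. First, I would apply Conjecture \ref{conj:faballk} (or, in the known cases of Theorem \ref{thm:fabsomek}, its proven analogue) to find a finite open cover $X = \bigcup_i U_i$ with $U_i(\Ab_k)^{\fab} = \emptyset$; by the chain of inclusions in Section \ref{otherobstructions} this forces $U_i(\Ab_k)^{\etbr} = \emptyset$. Refining the cover to a stratification $X = \coprod_i X_i$ with each $X_i$ locally closed in some $U_i$, functoriality of the \'etale-Brauer set yields $X_i(\Ab_k)^{\etbr} = \emptyset$. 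Unwinding the definition of $\etbr$ from Section \ref{otherobstructions} gives, for each $i$, a finite \'etale torsor $\pi_i \colon V_i \to X_i$ under some finite $k$-group $G_i$ such that $V_i^\sigma(\Ab_k)^\Br = \emptyset$ for every $\sigma \in H^1(k, G_i)$.

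I would then extend $\pi_i$ to a quasi-torsor $Y_i \to \cl{X_i}$ by taking the normalization of $\cl{X_i}$ in the ring of total fractions of $V_i$; the $G_i$-action extends by functoriality, and the resulting morphism is quasi-finite and surjective and restricts to $V_i$ on $X_i$, so it satisfies Definition \ref{defn:quasi_torsor} with $D_i \colonequals \cl{X_i} \setminus X_i$. This produces a candidate datum $(X_i, Y_i)$ for each stratum in the conclusion of the conjecture.

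The main obstacle, and where a genuine induction is required, is controlling $Y_i^\sigma(\Ab_k)^\Br$ on the boundary $D_i$. An adelic point of $Y_i^\sigma$ whose image in $\cl{X_i}$ meets $D_i$ at one or more places need not be excluded by any Brauer class obstructing $V_i^\sigma$, since the Brauer pairing on the proper model samples classes reflecting the behaviour above $D_i$. I would therefore induct on $\dim X$: after treating the base case $\dim X = 0$ by a direct Brauer--Manin argument for zero-dimensional Hasse failures, I would apply the inductive hypothesis to each $D_i$ (which satisfies $D_i(k) \subseteq X(k) = \emptyset$ and has strictly smaller dimension) to obtain its own stratification and quasi-torsors, and concatenate everything into a single stratification of $X$. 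The delicate final step is to verify that after this refinement a \emph{single} quasi-torsor $Y_i$ on each final stratum really does obstruct all its twists, rather than needing to be replaced by a more elaborate tower of quasi-torsors keeping track of the inductive data; this combinatorial bookkeeping, rather than any one geometric construction, is what I expect to be the principal obstacle and the reason the statement is posed only as a conjecture here.
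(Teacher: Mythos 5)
You are attempting to prove a statement that the paper itself only \emph{conjectures}. Conjecture~\ref{conj:ram_etbr} is introduced by the sentence ``Inspired by Theorem~\ref{thm:fabsomek}, we conjecture that the absence of rational points is always explained by the ramified \'etale-Brauer obstruction and that one may furthermore take a single quasi-torsor for each stratum,'' and the authors give no proof of it; they only verify it by hand in Poonen's specific example (Theorem~\ref{thm:quasi_torsors_poonen}). So there is no paper proof to compare against, and you should not expect your sketch to close.

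Your outline does capture the intended heuristic --- start from the stratified finite abelian descent result (Conjecture~\ref{conj:faballk} or its proven cases) and try to replace the open strata by quasi-torsors over their proper closures --- but there are concrete gaps beyond the bookkeeping issue you flag at the end. First, the step ``unwinding the definition of $\etbr$ gives, for each $i$, a single finite \'etale torsor $\pi_i \colon V_i \to X_i$ with $V_i^{\sigma}(\Ab_k)^{\Br} = \emptyset$ for all $\sigma$'' is unjustified. The set $X_i(\Ab_k)^{\etbr}$ is an intersection over \emph{all} finite torsors, and since $X_i$ is only locally closed (hence in general not proper), $X_i(\Ab_k)$ is not compact; emptiness of the full intersection therefore need not reduce to emptiness of a finite subintersection, and so not to a single torsor via fibre products. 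Part of the reason the conjecture is phrased in terms of quasi-torsors over the proper closure $\cl{X_i}$ is precisely to restore the compactness needed for such reductions. Second, the inductive fix you propose does not close the boundary problem you correctly identify: stratifying $D_i$ and producing quasi-torsors for its pieces gives no control over $Y_i^{\sigma}(\Ab_k)^{\Br}$ itself, which is a statement about the Brauer set of the single proper variety $Y_i^{\sigma}$, not about the inductive data on $D_i$. There is no mechanism in the paper --- nor a standard one --- that transfers Brauer-set emptiness across the inclusion of $Y_i^{\sigma} \times_{\cl{X_i}} D_i$ into $Y_i^{\sigma}$. As you conclude yourself, this bookkeeping is the obstacle, and it is exactly why the authors record this only as a conjecture.
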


This conjecture still does not imply that there is an algorithm to determine whether $X(k)$ is empty, as one may need to consider infinitely many twists of a quasi-torsor $Y_i$. Nonetheless, we now show that it is computable in the example of \cite{poo08} (in particular, see Remark \ref{rem:computable} to see why we need only finitely many twists). Thus, we know of no counterexamples to Conjecture \ref{conj:ram_etbr}.

\subsection{Quasi-torsors in Poonen's Example}\label{sec:quasi_torsors_poonen}

In this subsection, we show (under some conditions) that for the variety $X$ defined in Section \ref{sec:the_variety}, one can choose a divisor $D \subseteq X$ such that $D(\Ab_k)^{\br}=\emptyset$, and
$X(\Ab_k)^{\etbr \thicksim D} = \emptyset$.

More specifically, we assume as in Lemma \ref{lemma:vsa_tateshaf} that the Jacobian $J$ of $C$ satisfies $|J(k)|<\infty$ and $\Sha(J)_{\mathrm{div}} = 0$, which implies that $$\quad C(k) = C(\Ab_k)^{\br}_\bullet$$ and $$U(\Ab_k)^{\br}=\emptyset.$$

As before, let $F'  \colonequals  \gamma^{-1}(F) \subseteq C$ and $C'  \colonequals  C\backslash F'$. Note that $C'$ is a non-projective curve. Now let $D  \colonequals  \beta^{-1}(F')$. Note that $\infty \not \in F$, so that $ C(k) \cap F' = \emptyset$. By Lemma \ref{lemma:vsa_dim_0}, we have $F'(\Ab_k)^{\Br}=\emptyset$, hence $D(\Ab_k)^{\Br})=\emptyset$.


We will now spend the rest of Section \ref{sec:quasi_torsors_poonen} proving:

\begin{thm}\label{thm:quasi_torsors_poonen}
With notations as above, the absence of rational points on $X$ is explained by the ramified \'etale-Brauer obstruction if $k$ is a global field with no real places (i.e., a function field or a totally imaginary number field).
\end{thm}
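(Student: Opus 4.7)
The plan is to exhibit the stratification $X = X_1 \sqcup X_2$ with $X_2 := D = \beta^{-1}(F')$ (closed) and $X_1 := X \setminus D = \beta^{-1}(C')$ (open), and to verify $X_i(\Ab_k)^{\etbr\thicksim D_i} = \emptyset$ for each stratum. For $X_2$, where $D_2 = \cl{X_2} \setminus X_2 = \emptyset$, it suffices to observe $X_2(\Ab_k)^{\etbr\thicksim D_2} = D(\Ab_k)^{\etbr} \subseteq D(\Ab_k)^{\Br}$: since $F' \subseteq C$ is a finite closed subscheme with $F'(k) = \emptyset$ (as $\gamma(C(k)) = \{\infty\}$ and $\infty \notin F$), Brauer-Manin is the only obstruction to the Hasse principle for the zero-dimensional $F'$ (compare Lemma~\ref{lemma:vsa_dim_0}), so $F'(\Ab_k)^{\Br} = \emptyset$; functoriality along $\beta|_D \colon D \to F'$ then yields $D(\Ab_k)^{\Br} = \emptyset$.

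For $X_1$, where $D_1 = D$, I need to show $X_1(\Ab_k)^{\etbr \thicksim D} = \emptyset$, and it is enough to exhibit a single quasi-torsor $\pi \colon Y \to X$ unramified outside $D$ (hence a genuine $G$-torsor over $X_1$) such that $Y^\sigma(\Ab_k)^{\Br} = \emptyset$ for every twist $\sigma \in H^1(k,G)$: then $X(\Ab_k)^{\pi, \Br} = \bigcup_\sigma \pi^\sigma(Y^\sigma(\Ab_k)^{\Br}) = \emptyset$, controlling the ramified \'etale-Brauer set. Taking $G = \mu_2$, the natural candidate is a relative two-covering of $X$ whose restriction over each fiber $\beta^{-1}(c) \cong \mathcal{V}_\infty$ for $c \in C(k)$ is the standard descent cover of the Ch\^{a}telet surface realizing the Brauer class (e.g.\ the quaternion class $(a,\tilde{P}_\infty)$ in the Iskovskikh example) that witnesses $\mathcal{V}_\infty(\Ab_k)^{\Br} = \emptyset$. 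Since $\beta$ is smooth precisely over $C'$, such a cover is a genuine $\mu_2$-torsor over $X_1$ and picks up ramification only along $D$, matching Definition~\ref{defn:quasi_torsor}.

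The proof that $Y^\sigma(\Ab_k)^{\Br} = \emptyset$ for every twist is a fibration-method argument. Given $(P_v) \in Y^\sigma(\Ab_k)^{\Br}$, push to $(c_v) = (\beta(\pi^\sigma(P_v))) \in C(\Ab_k)^{\Br}$ by functoriality. Under the standing hypothesis $\Sha(J)_{\mathrm{div}} = 0$, Stoll's theorem \cite[Corollary~8.1]{sto07} gives $C(\Ab_k)^{\Br}_\bullet = C(k)$; the absence of real places collapses this to $C(\Ab_k)^{\Br} = C(k)$, so $(c_v)$ equals a single rational $c \in C(k)$. Hence each $\pi^\sigma(P_v)$ lies in $\mathcal{V}_\infty \cong \beta^{-1}(c)$, and $(P_v)$ itself lies in $(Y_c)^\sigma(\Ab_k)$ for $Y_c = Y|_{\mathcal{V}_\infty}$ the descent cover of $\mathcal{V}_\infty$. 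The CTSSD descent identity $\mathcal{V}_\infty(\Ab_k)^{Y_c,\Br} \subseteq \mathcal{V}_\infty(\Ab_k)^{\Br} = \emptyset$ (via \cite{CTSSD87a,CTSSD87b}, where Brauer-Manin is the only obstruction for smooth proper Ch\^{a}telet surfaces over a global field without real places), combined with the standard fibration-method lifting of Brauer classes from the fiber $\mathcal{V}_\infty$ to a Zariski neighborhood in $X$, then shows $(P_v) \in (Y_c)^\sigma(\Ab_k)^{\Br} = \emptyset$, the desired contradiction.

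The principal obstacle I anticipate is handling \emph{all} twists uniformly, since $H^1(k,\mu_2) = k^\times / k^{\times 2}$ is infinite. The resolution is to argue via local integrality coming from the global model of $Y$ that all but finitely many twists $\sigma$ already have $Y^\sigma(\Ab_k) = \emptyset$, reducing to a finite computation dispatched by the fibration argument above. Simultaneously arranging the construction of $Y$ so that both the fiberwise Ch\^{a}telet-surface identification and this twist-wise finiteness hold in tandem is the technical crux.
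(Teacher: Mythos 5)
Your overall plan---the stratification $X = X_1 \sqcup X_2$ with $X_2 = D = \beta^{-1}(F')$, the direct argument for $X_2$ via $F'(\Ab_k)^{\Br}=\emptyset$, and the reduction of $X_1$ to exhibiting a single quasi-torsor on $X$ unramified outside $D$ all of whose twists have empty Brauer set---matches the paper's strategy. But the heart of the proof, namely what that quasi-torsor is and how one shows every twist has empty Brauer set, is where your proposal diverges and where it has genuine gaps.

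The paper does not use a $\mu_2$-cover. It constructs an $S_4$-quasi-torsor $E \to \Pb^1$ unramified outside $F$ whose restriction $E'$ to $\Pb^1 \setminus F$ is the Galois cover splitting the four roots of the relative quartic $u^2 P_\infty + v^2 P_0$ (i.e., the incidence variety of ordered $4$-tuples of distinct roots), and then pulls back along $\gamma$ and $\beta$ to get $C_\phi = C \times_{\Pb^1} E_\phi$ and $X_\phi = X \times_C C_\phi$. Crucially, this construction twists the \emph{base curve}, which lets the paper dispose of almost all twists at the curve level: by Lemma~\ref{lemma:phi_not_phi_infty}, $C_\phi(k)=\emptyset$ for every $\phi \neq \phi_\infty$, and then $X_\phi(\Ab_k)^{\Br} \subseteq \rho_\phi^{-1}(C_\phi(\Ab_k)^{\Br}_\bullet) = \rho_\phi^{-1}(C_\phi(k)) = \emptyset$ by Lemma~\ref{l:curve_BM}. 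Only one twist, $\phi_\infty$, survives. Your $\mu_2$-cover, being a descent cover for a single quaternion class on the fiber, does not come from a cover of $C$ (or of $\Pb^1$), so your localization argument necessarily pushes all twists to the \emph{same} curve $C$ and then to the \emph{same} fiber $\mathcal V_\infty$; you cannot eliminate twists at the curve level. This forces you to handle every twist at the level of the fiber, and your plan for that step is where the problems lie.

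Concretely, there are three gaps. (1)~You never construct the $\mu_2$-quasi-torsor $Y \to X$: you assert a ``relative two-covering \ldots\ whose restriction over each fiber \ldots\ is the standard descent cover'' but give no global construction, and it is not obvious such a family of covers glues to a quasi-torsor on $X$ unramified exactly along $D$. (Also, the class you name, $(a,\tilde P_\infty)$, is trivial on $\mathcal V_\infty$ since $\tilde P_\infty = y^2 + \eps yz - az^2$ is a norm there; the genuine classes involve a partial factorization of $\tilde P_\infty$, such as $(a,x^2-2)$ in the Iskovskikh case, and more generally $\Br \mathcal V_\infty / \Br k$ need not be cyclic, so a single $\mu_2$-cover may not even encode enough of the obstruction.) (2)~The twist-finiteness step is not addressed: you acknowledge this is ``the technical crux,'' but without knowing the integral model of $Y$ one cannot appeal to the usual finiteness-of-unramified-twists argument, and in any case the paper's finiteness comes from a different mechanism (properness of the fiber over $C(k)$ together with $C_\phi(k)=\emptyset$ for $\phi\neq\phi_\infty$, cf.\ Remark~\ref{rem:computable}), which is unavailable to you since your cover does not twist $C$. (3)~The final step---that $(P_v) \in Y^\sigma(\Ab_k)^{\Br}$ implies $(P_v) \in (Y_c)^\sigma(\Ab_k)^{\Br}$---requires that the restriction map $\Br(Y^\sigma) \to \Br((Y_c)^\sigma)$ be surjective (modulo constants). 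This is not ``standard fibration-method lifting''; it is exactly the nontrivial content of the paper's Lemma~\ref{l:Br_surj}, proved via a careful comparison of the Picard-group exact sequences of the conic bundle over $B = \Pb^1 \times C_{\phi_\infty}$ and over $\Pb^1$, using the specific geometry of the degeneracy loci and the $S_4$-cover to produce compatible sections $s_1,s_2$. That argument does not transfer to an unconstructed $\mu_2$-cover, so this step is a genuine missing ingredient rather than a routine appeal.
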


\begin{rem}\label{rem:variant_5}
One may in fact combine Theorem \ref{thm:quasi_torsors_poonen} with Lemma \ref{lemma:vsa_tateshaf} to get a VSA open cover, rather than a stratification, without assuming Conjecture \ref{c:gro2} as in Corollary \ref{cor:vsa_example}.
\end{rem}

Now $X$ is a family over $C$ of conic bundles over $\PP^1$. The fiber over any element of $C(k)$ is isomorphic to the Ch\^{a}telet surface $\mathcal{V}_\infty$. All the fibers over $C'$ are smooth conic bundles.

Let $E' \subset (\PP^1 \backslash F) \times (\PP^1)^4 $ be the curve defined by
$$ u^2P_\infty(w_i, x_i) + v^2P_0(w_i, x_i) = 0 , 1 \leq i \leq 4 $$
$$ (w_i:x_i) \neq (w_j:x_j) , i \neq j, 1 \leq i,j \leq 4, $$
where $(u:v)$ are the projective coordinates of $\PP^1 \backslash F$ and $(w_i:x_i), 1 \leq i \leq 4 $ are the projective coordinates of the four copies of $\PP^1$. Since $\tilde{P}_\infty(x)$ and $\tilde{P}_0(x)$ are separable and coprime, we have that $E'$ is a smooth connected curve and that the first projection $E' \rightarrow \PP^1 \backslash F $ gives $E'$ the structure of an \'{e}tale Galois covering of $\PP^1 \backslash F$ with automorphism group $G = S_4$ that acts on the fibres by permuting $$ (w_i:x_i) , 1 \leq i \leq 4.$$
Since every birationality class of curves contains a unique projective smooth member, one can construct an $S_4$-quasi-torsor over $E \to \PP^1$ unramified outside $F$ which gives $E'$ when restricted to $\PP^1\backslash F$.

The $k$-twists of $E \to \PP^1 $ are classified by $H^1(k,S_4)$ which (since the action of $\Gamma_k$ on $S_4$ is trivial) coincides with the set $\Hom(\Gamma_k,S_4)/\sim$ of homomorphisms up to conjugation. More concretely, for every homomorphism $\phi : \Gamma_k \to S_4$, define $E_\phi$ to be the  $k$-form of $E$ with Galois action that restricts to the action
$$ \sig : ((u:v),((w_1:x_1),(w_2:x_2),(w_3:x_3),(w_4:x_4))) \mapsto $$ $$ ((u:v),((w_{\phi_\sigma(1)}:x_{\phi_\sigma(1)}),(w_{\phi_\sigma(2)}:x_{\phi_\sigma(2)}),
(w_{\phi_\sigma(3)}:x_{\phi_\sigma(3)}),(w_{\phi_\sigma(4)}:x_{\phi_\sigma(4)})))^{\sig} $$
on $E'$.

For every $\phi : \Gamma_k \to S_4$, we set $C_\phi  \colonequals  C\times_{\PP^1} E_\phi$ (relative to $\gamma: C \to \PP^1$ and the first projection $\pi_{\phi} \colon E_\phi \to  \PP^1$) and $X_\phi  \colonequals  X \times_{C} C_\phi$ (relative to $\beta: X \to C$ and the projection $C_\phi \to  C$).

Since the maps $\gamma : C \to \PP^1$ and $E \to \PP^1$ have disjoint ramification loci, all $C_\phi$ are geometrically integral, and so are all the $X_\phi$.

Then $X_\phi$ is a complete family of twists of a quasi-torsor over $X$ of degree $24$ unramified outside $D$. As we already know that $D(\Ab_k)^{\br}=\emptyset$, it suffices for the proof of Theorem \ref{thm:quasi_torsors_poonen} to show that $$ X_\phi(\Ab_k)^{\Br}   = \emptyset $$
for every $\phi \in H^1(\Gamma_k,S_4)$. We devote the rest of Section \ref{sec:quasi_torsors_poonen} to proving this fact.

\subsubsection{Reduction to \texorpdfstring{$X_{\phi_\infty}$}{X phi infinity}}

\begin{lemma}\label{l:curve_BM}
For every $\phi \in H^1(k,S_4)$, we have $C_\phi(k) = {C_\phi(\Ab_k)^{\Br} }_\bullet $.
\end{lemma}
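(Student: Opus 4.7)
The plan is to reduce along the finite projection $\pi\colon C_\phi\to C$ to the analogous statement for $C$ (already in hand from \cite{sto07}), and then further to the zero-dimensional fibre of $\pi$ over a rational point of $C$, where the Brauer--Manin obstruction is known to be the only obstruction to the Hasse principle.

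First I take $\alpha\in C_\phi(\Ab_k)^{\Br}_\bullet$ and apply $\pi_*$. By functoriality of the Brauer--Manin set we get $\pi(\alpha)\in C(\Ab_k)^{\Br}_\bullet$, and the hypotheses $|J(k)|<\infty$ and $\Sha(J)_{\mathrm{div}}=0$ together with \cite[Corollary 8.1]{sto07} yield $C(\Ab_k)^{\Br}_\bullet=C(k)$, so $\pi(\alpha)=[c]$ for some $c\in C(k)$. By the construction of $\gamma$ we have $\gamma(c)=\infty\notin F$, hence $c\notin F'$; consequently $\pi$ is \'etale in a Zariski neighbourhood of $c$, and the scheme-theoretic fibre $(C_\phi)_c=\{c\}\times_{\PP^1}E_\phi=(E_\phi)_\infty$ is a finite \'etale $k$-scheme $\Spec A$ with $A=\prod_i L_i$.

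Next I lift $\alpha$ to a representative $\tilde\alpha\in C_\phi(\Ab_k)^{\Br}$. At every non-archimedean place $v$ the image $\pi(\tilde\alpha_v)$ already equals $c$, so $\tilde\alpha_v\in(E_\phi)_\infty(k_v)$. At each archimedean place $v$ one only has $\pi(\tilde\alpha_v)$ in the $v$-adic connected component of $c$; but since $\pi$ is a local topological covering near $c$ (being \'etale there), path-lifting inside this component allows me to modify $\tilde\alpha_v$ into $(E_\phi)_\infty(k_v)$ without leaving its connected component in $C_\phi(k_v)$. Brauer--Manin pairings are locally constant on real connected components and vanish identically at complex places, so the resulting $\tilde\alpha'$ is still Brauer-orthogonal and defines the same class as $\alpha$ in $C_\phi(\Ab_k)_\bullet$.

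Finally I invoke the known fact that for a zero-dimensional $k$-scheme $Z=\coprod_i\Spec L_i$ the Brauer--Manin obstruction is the only obstruction to the Hasse principle: by Albert--Brauer--Hasse--Noether together with the global reciprocity law, any Brauer-orthogonal adelic tuple on $Z$ must concentrate at a single component $\Spec L_{i_0}$ with $L_{i_0}=k$ at every non-archimedean and real place, and the residual ambiguity among components with $L_j=k$ at complex places is absorbed by passing to $C_\phi(\Ab_k)_\bullet$, because $C_\phi$ is geometrically connected. Thus $[\tilde\alpha']=[x]$ for some $x\in(E_\phi)_\infty(k)\subseteq C_\phi(k)$, giving $\alpha=[x]$. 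The main obstacle I anticipate is exactly this last step: the Brauer-orthogonality of $\tilde\alpha'$ is a priori with respect to $\Br(C_\phi)$, whereas the zero-dimensional criterion uses $\Br((E_\phi)_\infty)$, so one must verify that enough of the obstructing classes for $(E_\phi)_\infty$ lift to $\Br(C_\phi)$ (via, e.g., the Leray spectral sequence for $\pi$ or corestriction along $(E_\phi)_\infty\to\Spec k$), which is the most delicate part of the argument.
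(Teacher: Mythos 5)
The paper disposes of this lemma in two lines by citing Stoll: \cite[Corollary 7.3]{sto07} identifies $C_\phi(\Ab_k)^{\Br}_\bullet$ with $C_\phi(\Ab_k)^{\fab}_\bullet$ (and likewise for $C$), and \cite[Proposition 8.5]{sto07} transports the property ``rational points equal the finite-abelian-descent set'' from $C$ to any curve $C_\phi$ admitting a non-constant morphism to $C$. Your proposal instead tries to prove the transfer from $C$ to $C_\phi$ directly: push the adelic point down to $C$, land in $C(k)$, pass to the zero-dimensional fibre, and invoke the fact that the Brauer--Manin obstruction is the only obstruction to the Hasse principle for finite \'etale $k$-schemes.

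The strategy is reasonable, and is in fact close in spirit to how one \emph{proves} Stoll's Proposition 8.5; but as written it has a genuine gap, which you yourself flag at the end. You have $\tilde\alpha'$ orthogonal to $\Br(C_\phi)$, hence to its image under $\iota^*\colon \Br(C_\phi)\to\Br\bigl((E_\phi)_\infty\bigr)$, whereas the zero-dimensional Hasse-principle criterion requires orthogonality to all of $\Br\bigl((E_\phi)_\infty\bigr)=\prod_i\Br(L_i)$. There is no reason for $\iota^*$ to be surjective: for a smooth projective geometrically integral curve the Brauer group is algebraic, controlled by $H^1(k,\Pic C_\phi^s)$, and its restriction to a single closed point with residue field $L_i$ will in general miss most of $\Br(L_i)$. (Already for $C_\phi=\Pb^1$, $\Br(C_\phi)=\Br(k)$ and the restriction to $\Br(L_i)$ is far from surjective.) Since the zero-dimensional criterion genuinely requires non-constant classes in $\Br(L_i)$---constant classes pair to zero with every adelic point by reciprocity---the argument cannot conclude as stated. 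Closing the gap is essentially the content of Stoll's Proposition 8.5, which replaces the Brauer obstruction with finite abelian descent torsors coming from the Jacobian of $C_\phi$ and shows those suffice to rigidify points in the zero-dimensional fibre. So either cite that proposition (as the paper does, together with Corollary 7.3 to pass from $\Br$ to $\fab$), or supply the missing lifting argument for the obstructing classes; the corestriction and Leray tricks you mention do not obviously produce the needed elements of $\Br(L_i)$.
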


\begin{proof}
By \cite[Corollary 7.3]{sto07}, we have $C_\phi(\Ab_k)^{\br}_{\bullet}=C_\phi(\Ab_k)^{\fab}_{\bullet}$ and $C(\Ab_k)^{\br}_{\bullet}=C(\Ab_k)^{\fab}_{\bullet}$. Then \cite[Proposition 8.5]{sto07} and the fact that $C(k)=C(\Ab_k)^{\br}_{\bullet}$ implies the result.
\end{proof}

Denote by $\phi_\infty \in H^1(k,S_4)$ the map $\Gamma_k \to S_4$ defined by the Galois action on the four roots of $P_\infty$.

\begin{lemma}\label{lemma:phi_not_phi_infty}
Let $\phi \in H^1(\Gamma_k, S_4)$ be such that $\phi \neq \phi_\infty$. Then $C_\phi(k) = \emptyset$.
\end{lemma}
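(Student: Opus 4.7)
The plan is to reduce the non-existence statement to the fact that the fiber of $\pi_\phi \colon E_\phi \to \PP^1$ over $\infty$ has no $k$-point. First, I would observe that any $k$-point $c' \in C_\phi(k)$ projects, under the natural map $C_\phi \to C$, to a $k$-point $c \in C(k)$. The construction of Section~\ref{sec:the_variety} imposes $\gamma(C(k)) = \{\infty\}$, so $\gamma(c) = \infty$. Because $C_\phi = C \times_{\PP^1} E_\phi$, the point $c'$ also produces a $k$-point of $E_\phi$ lying over $\infty$, and therefore the scheme-theoretic fiber $(E_\phi)_\infty = \pi_\phi^{-1}(\infty)$ carries a $k$-point.

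Second, I would identify $(E_\phi)_\infty$ explicitly as a twisted $S_4$-torsor. Since $\tilde P_\infty$ and $\tilde P_0$ are separable and coprime, the section $u^2 P_\infty + v^2 P_0$ specializes at $(u:v)=(1:0)$ to the separable polynomial $P_\infty(w,x)$; hence $\infty \notin F$ and $E \to \PP^1$ is \'etale over $\infty$. Reading off the defining equations of $E'$ at $(1:0)$ shows that the fiber $(E)_\infty$ is the finite \'etale $k$-scheme whose $\bar k$-points are the orderings of the four roots of $P_\infty$; this is an $S_4$-torsor whose class in $H^1(k,S_4)$ is, by the very definition of $\phi_\infty$, equal to $\phi_\infty$. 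Consequently $(E_\phi)_\infty$ is the twist of this torsor by the cocycle $\phi$.

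The last step is the standard fact from non-abelian Galois cohomology---transparent here because $\Gamma_k$ acts trivially on $S_4$, so that $H^1(k,S_4)$ is the set of conjugacy classes of homomorphisms $\Gamma_k \to S_4$---that the twist of an $S_4$-torsor of class $\phi_\infty$ by $\phi$ is trivial (i.e.\ has a $k$-point) if and only if $\phi$ and $\phi_\infty$ coincide in $H^1(k,S_4)$. Since by hypothesis $\phi \neq \phi_\infty$, the fiber $(E_\phi)_\infty$ has no $k$-point, contradicting the first step. Hence $C_\phi(k) = \emptyset$. I do not expect a genuine obstacle here; the only delicate bookkeeping is the twisting convention for non-abelian $H^1$, which reduces to a direct permutation computation once a reference ordering of the roots of $P_\infty$ is fixed.
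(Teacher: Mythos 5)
Your proposal is correct and follows the same route as the paper's (very terse) proof: observe that any $k$-point of $C_\phi = C \times_{\PP^1} E_\phi$ projects to a $k$-point of $C$, which by the construction $\gamma(C(k)) = \{\infty\}$ must lie over $\infty$, and hence produces a $k$-point in $\pi_\phi^{-1}(\infty) \subseteq E_\phi$; then note that since the fiber of $E$ over $\infty$ is the $S_4$-torsor of orderings of the roots of $P_\infty$ (whose class is $\phi_\infty$), its twist by $\phi$ is trivial precisely when $\phi = \phi_\infty$ in $H^1(k, S_4)$. The paper compresses all of this into two sentences; you supply the justification, correctly flagging the torsor-twisting bookkeeping as the only point requiring care.
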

\begin{proof}
Since $\phi \neq \phi_\infty$ we get that $E_\phi(k) \cap  \pi^{-1}_\phi(\infty) = \emptyset$. Since $\gamma(C(k)) = {\infty}$, we get that $C_\phi(k) = \emptyset$.
\end{proof}

Let $\rho_\phi \colon X_\phi \to C_\phi$ denote the map defined earlier. For every $\phi \in H^1(k,S_4)$, we have
$$\rho_\phi({X_\phi(\Ab_k)^{\Br} }_\bullet) \subseteq {C_\phi(\Ab_k)^{\Br} }_\bullet = C_\phi(k) = \emptyset,$$

so $X_\phi(\Ab_k)^{\Br}   =\emptyset$ for $\phi \neq \phi_{\infty}$.

\begin{rem}\label{rem:computable}
It is Lemma \ref{lemma:phi_not_phi_infty} along with VSA for $C$ coming from our condition on the Tate-Shafarevich group that lets us avoid all but one twist. In fact, this has a conceptual explanation. While infinitely many twists might have adelic points (as $\beta^{-1}(C')$ is not proper), all possible elements of the Brauer set lie in the fiber over $C(k)$, which is in fact proper. There are therefore finitely many twists that might have adelic points in this fiber.\end{rem}

\subsubsection{The proof that \texorpdfstring{$X_{\phi_\infty}(\Ab_k)^{\Br}   =\emptyset$}{the Brauer set is empty}}

In this subsection, we shall prove that if $k$ has no real places, then $X_{\phi_\infty}(\Ab_k)^{\Br}  =X_{\phi_\infty}(\Ab_k)_\bullet^{\Br}  = \emptyset$.

Let $p \in C_{\phi_\infty}(k)$. The fiber $\rho^{-1}_{\phi_\infty}(p)$ is isomorphic to the  Ch\^{a}telet surface $\mathcal{V}_\infty$. We shall denote by $\rho_p : \mathcal{V}_\infty \to X_{\phi_\infty}$ the corresponding natural isomorphism onto the fiber $\rho^{-1}_{\phi_\infty}(p)$. Recall that $\mathcal{V}_\infty$ satisfies $\mathcal{V}_\infty(\Ab_k)^{\Br}  = \emptyset$.

\begin{lemma}\label{l:on_fibre}
Let $k$ be a global field with no real places. Let $x \in X_{\phi_\infty}(\Ab_k)_\bullet^{\Br} $. Then there exists $p \in C_{\phi_\infty}(k)$ such that $x \in \rho_p(\mathcal{V}_\infty(\Ab_k)_\bullet )$.
\end{lemma}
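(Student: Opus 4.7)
\textbf{Proof plan for Lemma \ref{l:on_fibre}.} The strategy is to first descend $x$ to the base curve $C_{\phi_\infty}$, identify its image with a rational point $p$, and then show that $x$ is in fact supported on the fibre over $p$ at every place.

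First, by functoriality of the Brauer-Manin pairing (here I am using the analogue of the basic commutative diagram from the definition of $X(\Ab_k)^F$), the map $\rho_{\phi_\infty} \colon X_{\phi_\infty} \to C_{\phi_\infty}$ carries $x$ to an element
\[
\rho_{\phi_\infty}(x) \in C_{\phi_\infty}(\Ab_k)^{\Br}_\bullet.
\]
Lemma~\ref{l:curve_BM} identifies this set with $C_{\phi_\infty}(k)$, so there exists $p \in C_{\phi_\infty}(k)$ with $\rho_{\phi_\infty}(x) = p$ in $C_{\phi_\infty}(\Ab_k)_\bullet$.

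Next I would unpack this equality place by place. For every finite place $v$, the $\bullet$-construction leaves the local factor unchanged, so $\rho_{\phi_\infty}(x_v) = p$ as elements of $C_{\phi_\infty}(k_v)$. Consequently $x_v$ lies in the scheme-theoretic fibre $\rho_{\phi_\infty}^{-1}(p)$, and via the isomorphism $\rho_p \colon \mathcal{V}_\infty \xrightarrow{\sim} \rho_{\phi_\infty}^{-1}(p)$ we obtain $x_v \in \rho_p(\mathcal{V}_\infty(k_v))$, which is exactly what the statement demands at finite places.

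The delicate step, and the only place where the hypothesis that $k$ has no real places is used, is the archimedean one. Each archimedean completion $k_v$ is then isomorphic to $\CC$. Since the $X_\phi$ and $C_{\phi_\infty}$ were shown to be geometrically integral (and in particular geometrically connected), both $X_{\phi_\infty}(\CC)$ and the fibre $\rho_{\phi_\infty}^{-1}(p)(\CC) \cong \mathcal{V}_\infty(\CC)$ are connected in the analytic topology by the standard fact that an irreducible complex variety has a connected analytic space. In particular $X_{\phi_\infty}(k_v)_\bullet$ is a single point, and $\mathcal{V}_\infty(\CC)$ is non-empty (it has local points everywhere by construction of $\mathcal{V}_\infty$, and in any case any variety over $k$ admits $\CC$-points), so the unique class of $x_v$ in $X_{\phi_\infty}(k_v)_\bullet$ automatically lies in $\rho_p(\mathcal{V}_\infty(k_v)_\bullet)$. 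Assembling the local data produces the required element of $\rho_p(\mathcal{V}_\infty(\Ab_k)_\bullet)$.

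The main obstacle is the archimedean analysis: if $k$ possessed a real place $v$, then $C_{\phi_\infty}(\RR)$ and $X_{\phi_\infty}(\RR)$ could have several connected components, and matching the component of $x_v$ with a component of $\rho_p(\mathcal{V}_\infty(\RR))$ would require extra input. Excluding real places lets geometric connectedness do this matching for free, which is why the hypothesis appears precisely in this way.
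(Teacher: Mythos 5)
Your proof is correct and follows essentially the same route as the paper's: functoriality plus Lemma~\ref{l:curve_BM} to descend to a $k$-point $p$ of $C_{\phi_\infty}$, trivial matching at finite places, and geometric integrality at the (necessarily complex) archimedean places to collapse $X_{\phi_\infty}(k_v)_\bullet$ and $\mathcal{V}_\infty(k_v)_\bullet$ to singletons. The only difference is that you spell out the place-by-place bookkeeping slightly more explicitly than the published proof does, which is fine.
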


\begin{proof}
From functoriality and Lemma \ref{l:curve_BM} we get
$$\rho_{\phi_\infty}(x) \in \rho_{\phi_\infty} (X_{\phi_\infty}(\Ab_k)_\bullet^{\Br} ) \subset  {C_{\phi_\infty}(\Ab_k)^{\Br} }_\bullet = C_{\phi_\infty}(k) $$
We denote $p = \rho_{\phi_\infty}(x) \in  C'_{\phi_\infty}(k)$. Now it is clear that in all but maybe the infinite places   $x \in \rho_p(\mathcal{V}_\infty(\Ab_k)  )$. Hence it remains to deal with the infinite places, which by assumption are all complex. But since both $X_{\phi_\infty}$ and $\mathcal{V}_\infty$ are geometrically integral, taking connected components reduces $X(\mathbb{C})$ and $\mathcal{V}_\infty(\mathbb{C})$ to a single point.
\end{proof}

\begin{lemma}\label{l:Br_surj}
Let  $p \in C_{\phi_\infty}(k)$ be a point. Then the map $$\rho_p^* : \br(X_{\phi_\infty}) \to \br(\mathcal{V}_\infty)$$
is surjective.

\end{lemma}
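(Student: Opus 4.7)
The plan is to exhibit an explicit preimage under $\rho_p^*$ for each generator of $\Br(\mathcal{V}_\infty)/\Br(k)$, and then to handle the image of $\Br(k)$ separately by functoriality of the structure map.

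First I would invoke the standard description of the Brauer group of a smooth Ch\^{a}telet surface (Colliot-Th\'el\`ene--Sansuc--Swinnerton-Dyer): writing $\tilde{P}_\infty = \prod_\ell Q_\ell$ as a product of irreducible polynomials over $k$, the quotient $\Br(\mathcal{V}_\infty)/\Br(k)$ is a finite elementary abelian $2$-group generated by quaternion classes of the form $(a,Q_I)$, where $Q_I := \prod_{\ell \in I} Q_\ell$ ranges over subproducts, modulo the relation $(a,\tilde{P}_\infty)=0$ coming from the defining equation of $\mathcal{V}_\infty$. Equivalently, these classes are indexed by $G_k$-stable subsets of the four geometric roots of $\tilde{P}_\infty$.

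The crucial observation is that by construction $E_{\phi_\infty}$ is designed precisely so that over its generic point the polynomial $u^2 P_\infty(w,x) + v^2 P_0(w,x)$ admits four sections $(w_i : x_i)_{i=1}^{4}$, with $G_k$-action on the indexing set given by $\phi_\infty$, i.e., the same action as on the roots of $\tilde{P}_\infty$. Consequently a $G_k$-stable subset $J\subseteq\{1,2,3,4\}$ gives rise to a $G_k$-invariant rational function $R_J = \prod_{j\in J}(xw_j - wx_j)$ on $E_{\phi_\infty}\times\PP^1$ whose restriction to the fiber over $(u:v)=\infty$ recovers the subproduct $Q_I$ of $\tilde{P}_\infty$ corresponding to $J$. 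Pulling $R_J$ back along $X_{\phi_\infty}\to C_{\phi_\infty}\times\PP^1\to E_{\phi_\infty}\times\PP^1$ and forming $A_J := (a,R_J) \in \Br(k(X_{\phi_\infty}))$, I would verify (i) that $A_J$ is unramified, so $A_J\in\Br(X_{\phi_\infty})$, and (ii) that $\rho_p^* A_J = (a, Q_I) \in \Br(\mathcal{V}_\infty)$. Item (ii) is a direct specialization, while item (i) is the main technical step, reducing to a local calculation at the degeneracy divisor of the conic bundle, using the factorization $u^2P_\infty+v^2P_0 = R_J\cdot R_{J^c}$ (up to a unit) together with the identity $(a,R_J R_{J^c})=0$ in the Brauer group of the function field of $X_{\phi_\infty}$, which makes the apparent ramification along $R_J=0$ cancel.

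Ranging over all $G_k$-stable $J$, the classes $\rho_p^* A_J$ exhaust the generators of $\Br(\mathcal{V}_\infty)/\Br(k)$. To promote this to honest surjectivity, I would invoke functoriality of the structure morphism $X_{\phi_\infty}\to\spec k$: the composition $\Br(k)\to\Br(X_{\phi_\infty}) \xrightarrow{\rho_p^*} \Br(\mathcal{V}_\infty)$ coincides with the pullback along the structure morphism of $\mathcal{V}_\infty$, so the subgroup of $\Br(\mathcal{V}_\infty)$ coming from $\Br(k)$ is automatically in the image of $\rho_p^*$; combined with the lifts $A_J$, this finishes the proof. The chief obstacle is the ramification check in item (i); the remaining steps are essentially formal.
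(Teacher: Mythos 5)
Your approach is genuinely different from the paper's. The paper proves surjectivity at the level of geometric Picard groups: it sets up the vertical/horizontal divisor exact sequence for the conic bundles $X_{\phi_\infty} \to B$ and $\mathcal{V}_\infty \to \PP^1$, exhibits compatible $G_k$-equivariant sections $s_1, s_2$ of the vertical map (using the identification of the four roots over $p$ with the roots of $P_\infty$), and thereby produces a $G_k$-equivariant section of $\rho_p^* \colon \Pic(X_{\phi_\infty}^s) \to \Pic(\mathcal{V}_\infty^s)$. This induces a section on $H^1(k,\Pic(-^s))$, which by Hochschild--Serre identifies with $\Br_1/\Br_0$, and then $\Br \mathcal{V}_\infty^s = 0$ closes the loop. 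Your proposal instead works directly with explicit quaternion algebras $(a, Q_I)$ generating $\Br(\mathcal{V}_\infty)/\Br(k)$ and lifts each of them to $(a, R_J)$ on $X_{\phi_\infty}$ using the universal factorization of $s_1$ over $E_{\phi_\infty}$. Both approaches lean on the same key fact (the roots of $s_1$ over $p$ match the roots of $P_\infty$), but yours requires the Colliot-Th\'el\`ene--Sansuc--Swinnerton-Dyer description of $\Br(\mathcal{V}_\infty)$ as an input, while the paper's argument is internal to the Picard-group machinery and is more formal once the sections are in place.

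There is a genuine gap in your proposal, and it sits exactly where you flag it: the unramifiedness of $A_J = (a,R_J)$. The identity $s_1 = R_J\,R_{J^c}$ only holds ``up to a unit,'' but that unit is in fact the leading $(w:x)$-coefficient of $s_1$, which is the degree-$2$ form $u^2 a_4 + v^2 b_4$ in $(u:v)$, not an actual unit. So $(a, R_J R_{J^c}) = (a,s_1) - (a, u^2 a_4 + v^2 b_4) = -(a, u^2 a_4 + v^2 b_4)$ in $\Br(k(X_{\phi_\infty}))$, and the discarded term contributes potential residues along the fibers over the two zeros of $u^2 a_4 + v^2 b_4$ (where one of the four sections $(w_i:x_i)$ goes to infinity). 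These residues need to be shown to cancel, and there are analogous checks at the boundary of $E_{\phi_\infty} \setminus E'_{\phi_\infty}$ and over $F'$, plus the degree-parity constraints on which $Q_I$ actually give classes in $\Br(\mathcal{V}_\infty)$. None of this is insurmountable, but as written the residue verification --- which you correctly call ``the chief obstacle'' --- is precisely the content of the lemma, and the sketch elides the part of it that requires real work. The paper's route via Picard groups is more robust here, because it never needs to touch residues of explicit quaternion classes.
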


We will prove Lemma \ref{l:Br_surj} in Section \ref{sec:surj}.

\begin{lemma}
Let $k$ be global field with no real places. Then $X_{\phi_\infty}(\Ab_k)_\bullet^{\Br} = \emptyset$.
\end{lemma}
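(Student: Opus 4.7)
The plan is to proceed by contradiction and chase the Brauer pairing through the fibration, using both Lemma~\ref{l:on_fibre} and Lemma~\ref{l:Br_surj} in tandem. Suppose there exists $x \in X_{\phi_\infty}(\Ab_k)_\bullet^{\Br}$. First I would apply Lemma~\ref{l:on_fibre} to produce a rational point $p \in C_{\phi_\infty}(k)$ and an adelic point $y \in \mathcal{V}_\infty(\Ab_k)_\bullet$ with $\rho_p(y) = x$. Since $k$ has no real places, the distinction between $\mathcal{V}_\infty(\Ab_k)$ and $\mathcal{V}_\infty(\Ab_k)_\bullet$ disappears, so we may as well regard $y$ as an ordinary adelic point of $\mathcal{V}_\infty$.

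Next I would show that this $y$ lies in $\mathcal{V}_\infty(\Ab_k)^{\Br}$, which will contradict the assumption $\mathcal{V}_\infty(\Ab_k)^{\Br} = \emptyset$ recalled in Section~\ref{sec:the_variety} and finish the proof. Given any $\alpha \in \Br(\mathcal{V}_\infty)$, apply Lemma~\ref{l:Br_surj} to lift it to an element $\beta \in \Br(X_{\phi_\infty})$ with $\rho_p^*(\beta) = \alpha$. By functoriality of the Brauer--Manin pairing with respect to the map $\rho_p \colon \mathcal{V}_\infty \to X_{\phi_\infty}$, one has for each place $v$ the identity $\inv_v(\alpha(y_v)) = \inv_v(\beta(\rho_p(y_v))) = \inv_v(\beta(x_v))$. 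Summing over $v$ and using that $x$ is orthogonal to $\beta$, the total invariant of $\alpha$ paired against $y$ vanishes. Since $\alpha$ was arbitrary, $y \in \mathcal{V}_\infty(\Ab_k)^{\Br}$, the desired contradiction.

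The technical subtlety I want to flag is the compatibility of connected components at infinity with the pairing: Lemma~\ref{l:on_fibre} lives in the $\bullet$-world, and we need the Brauer pairing to be well-defined on connected components at the real places. This is where the hypothesis that $k$ has no real places is essential, since then there is nothing to check. (Were there real places, one would need to argue that $\rho_p$ sends connected components of $\mathcal{V}_\infty(k_v)$ into connected components of $X_{\phi_\infty}(k_v)$, which is automatic, but the lift of $\beta$ to $\alpha$ might not respect the locally constant structure on $\mathcal{V}_\infty(k_v)$ in the way needed.) Since all the nontrivial content has been packaged into Lemmas~\ref{l:on_fibre} and~\ref{l:Br_surj}, the proof itself is a short diagram chase; the main obstacle lies earlier, in establishing the Brauer surjectivity of Lemma~\ref{l:Br_surj}, which is deferred to Section~\ref{sec:surj}.
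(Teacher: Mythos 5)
Your proposal is correct and follows the same route as the paper: contradiction, pull $x$ back to $y \in \mathcal{V}_\infty(\Ab_k)_\bullet$ via Lemma~\ref{l:on_fibre}, lift an arbitrary Brauer class by Lemma~\ref{l:Br_surj}, and use functoriality of the pairing to conclude $y \in \mathcal{V}_\infty(\Ab_k)_\bullet^{\Br} = \emptyset$. One small imprecision in your side remark: the $\bullet$ notation does collapse complex places to a single point (it is not literally the case that $\bullet$ has no effect when $k$ has no real places), but since $\Br(\mathbb{C}) = 0$ the pairing is trivially compatible there, and the ``no real places'' hypothesis actually enters through Lemma~\ref{l:on_fibre}, where it guarantees that $x$ lifts along $\rho_p$ at the Archimedean places.
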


\begin{proof}
Assume that $X_{\phi_\infty}(\Ab_k)_\bullet^{\Br}  \neq \emptyset$. Let $x\in X_{\phi_\infty}(\Ab_k)_\bullet^{\Br} $. By Lemma \ref{l:on_fibre} there
exists a $p \in C_{\phi_\infty}(k)$ such that $x \in \rho_p(\mathcal{V}_\infty(\Ab_k)_\bullet)$. Let $y \in \mathcal{V}_\infty(\Ab_k)_\bullet$ be such that $\rho_p(y)= x$. We shall show that $y \in \mathcal{V}_\infty(\Ab_k)_\bullet^{\Br}$.

Indeed, let $b \in \br(\mathcal{V}_\infty)$. By Lemma \ref{l:Br_surj} there exists
a $\tilde{b} \in \br(X'_{\phi_\infty})$ such that $\rho_p^*(\tilde{b}) = b$. Now
$$ (y,b) = (y,\rho_p^*(\tilde{b})) = (\rho_p(y),\tilde{b}) = (x,\tilde{b}) = 0 $$
But by assumption $x\in X_{\phi_\infty}(\Ab_k)_\bullet^{\Br}$, so we have $(y,b) = (x,\tilde{b}) = 0$. Thus we have $y \in \mathcal{V}_\infty(\Ab_k)_\bullet^{\Br}  = \emptyset$ which is a contradiction.
\end{proof}

\subsubsection{The surjectivity of \texorpdfstring{$\rho_p^*$}{rho p *}}\label{sec:surj}
In this subsection, we shall prove the statement of Lemma \ref{l:Br_surj}. We switch gears for a moment and let $\alpha \colon X \to B$ be an arbitrary conic bundle given by datum $(\mathcal{L},s,a)$.

\begin{lemma} The generic fiber $X^s_\eta$ of $X^s \to  B^s$ is isomorphic to $\PP^1_{\kappa(B^s)}$, where $\kappa(B^s)$ is the field of rational functions on $B^s$ .
\end{lemma}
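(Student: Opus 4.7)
The plan is to exploit the fact that a smooth plane conic over a field $K$ is isomorphic to $\mathbb{P}^1_K$ as soon as it has a $K$-rational point, so that the entire argument reduces to exhibiting one rational point of the generic fiber over the function field $K = \kappa(B^s)$. First, I would unwind what the generic fiber actually looks like: trivializing $\mathcal{L}$ generically and writing down the equation defining $X \subseteq \mathbb{P}\mathcal{E}^\vee$ cut out by $\varepsilon \oplus 1 \oplus (-a) \oplus (-s)$, one sees that the generic fiber $X^s_\eta$ is the smooth plane conic in $\mathbb{P}^2_K$ with homogeneous equation
\[
y^2 + \varepsilon\, yz - a z^2 \;=\; s\, w^2,
\]
where $s$ is interpreted as an element of $K^\times$ after trivialization.

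Next, I would use that $a \in k^\times \subset k_s^\times \subset K^\times$ and that $k_s$ is separably closed to split off the $(y,z)$-part of the quadratic form. The polynomial $t^2 + \varepsilon t - a$ is separable in both characteristics (derivative $2t+\varepsilon$ equals $\varepsilon$, which is a unit when $\mathrm{char}(k)=2$, and equals $2t\neq 0$ generically when $\mathrm{char}(k)\neq 2$), hence it splits over $k_s$, giving $\alpha,\beta\in k_s$ with $y^2+\varepsilon yz-az^2 = (y-\alpha z)(y-\beta z)$. After the linear change of coordinates $u = y-\alpha z$, $v = y-\beta z$ (valid since $\alpha\neq \beta$), the conic takes the shape $uv = sw^2$ in $\mathbb{P}^2_K$.

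The point $(u:v:w) = (0:1:0)$ visibly lies on this conic, and a one-line Jacobian check shows it is a smooth point. Hence the conic has a $K$-rational point, so it is isomorphic to $\mathbb{P}^1_K$, as desired.

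I do not anticipate a real obstacle here: the only mildly subtle step is making sure the factorization of $y^2+\varepsilon yz-az^2$ works uniformly in characteristic $2$ and not $2$, which is exactly what the $\varepsilon$ convention was set up for. Everything else is elementary linear algebra over $K$ together with the classical fact that a smooth conic with a rational point is a projective line.
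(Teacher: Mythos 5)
Your proof is correct and follows essentially the same route as the paper's one-line argument: identify the generic fiber as a smooth plane conic over $\kappa(B^s)$ and exhibit a rational point so that it becomes a $\PP^1$. The paper simply asserts that a rational point exists ``since $a$ is a square in $k_s$,'' which, read literally, only applies when $\mathrm{char}(k)\neq 2$ (where $\eps=0$ and the form factors as $(y-\sqrt a\,z)(y+\sqrt a\,z)$); in characteristic $2$ one should instead observe that $t^2+t-a$ is a \emph{separable} polynomial and therefore has a root in $k_s$. Your version does precisely this by factoring $t^2+\eps t - a$ uniformly using its separability in both characteristics, then changing coordinates to $uv=sw^2$ and pointing to $(0:1:0)$. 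So your argument is a slightly more careful rendering of the paper's proof rather than a different one; the extra care is genuinely needed to make the characteristic-$2$ case airtight. One tiny phrasing nit: rather than saying the derivative ``equals $2t\neq 0$ generically'' in odd characteristic, it is cleaner to note that $\gcd(t^2-a,\,2t)=1$ because $a\in k^\times$, so the polynomial is separable.
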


\begin{proof} It is a smooth plane conic and it has a rational point since $a$ is a square in $k_s \subset \kappa(B^s)$.
\end{proof}

\begin{lemma}
Denote the generic point of $B$ by $\eta$. Let $Z$ be the degeneracy locus. Assume that $Z^s$ is the union of the irreducible components $Z^s = \bigcup_{1\leq i\leq r} Z_i$. Then there is a natural exact sequence of Galois modules.

$$
\xymatrix{
0 \ar[r] & \bigoplus\ZZ Z_i  \ar[r]^-{\rho_1} & \Pic B^s \oplus \bigoplus \ZZ Z^{+}_i \oplus \bigoplus \ZZ Z^{-}_i \ar[r]^-{\rho_2} & \Pic X^s
 \ar[r]_-{\rho_3} \ar[rd]_{deg} &  \Pic {X^s}_\eta \ar[r] \ar@{=}[d] \ar@/_/[l]_-{\rho_4} & 0 \\
 & &  &  &  \mathbb{Z}&
}
$$

where $\rho_4$  is a natural section of $\rho_3$.

\end{lemma}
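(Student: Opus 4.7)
The plan is to realize the exact sequence as a combination of two excision (localization) sequences for Picard groups — one for the complement of the degeneracy locus downstairs in $B^s$, and one for the complement of its preimage upstairs in $X^s$ — glued by the $\mathbb{P}^1$-bundle structure that $\alpha$ acquires on the smooth locus once $\sqrt{a}$ (or an Artin--Schreier root in characteristic $2$) is available over $k_s$.

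\textbf{Step 1 (generic fiber and the section $\rho_4$).} Since $a \in k_s \subseteq \kappa(B^s)$, solving the conic equation $y^2 + \varepsilon yz - az^2 = s$ for $y,z$ as explicit rational functions of $s$ (e.g., $y = (1+s)/2$, $z = (s-1)/(2\sqrt{a})$ when $\varepsilon = 0$) yields a regular section $\sigma \colon B^s \to X^s$ of $\alpha$ and exhibits $X^s_\eta \cong \mathbb{P}^1_{\kappa(B^s)}$. Define $\rho_4$ to send the generator $[\mathcal{O}(1)] \in \Pic X^s_\eta \cong \mathbb{Z}$ to the class in $\Pic X^s$ of the closure of $\sigma(B^s)$; its restriction to the generic fiber is a single $\kappa(B^s)$-point, so $\rho_3 \circ \rho_4 = \mathrm{id}$, and in particular $\rho_3$ (equivalently $\deg$) is surjective.

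\textbf{Step 2 (the middle term, via two excisions).} Apply the excision sequence on the smooth variety $X^s$ along $\alpha^{-1}(Z^s) = \bigcup_i (Z_i^+ \cup Z_i^-)$, obtaining a surjection from $\bigoplus_i \mathbb{Z}[Z_i^+] \oplus \mathbb{Z}[Z_i^-] \oplus \Pic(X^s \setminus \alpha^{-1}(Z^s))$ onto $\Pic X^s$ with kernel describing the relations among vertical divisors. On the complement, $\alpha$ is a smooth conic bundle with section $\sigma$, hence a Zariski-locally trivial $\mathbb{P}^1$-bundle, so $\alpha^*$ together with the relative $\mathcal{O}(1)$ gives an isomorphism $\Pic(B^s \setminus Z^s) \oplus \mathbb{Z} \xrightarrow{\sim} \Pic(X^s \setminus \alpha^{-1}(Z^s))$. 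Combining this with the excision $\bigoplus_i \mathbb{Z}[Z_i] \to \Pic B^s \twoheadrightarrow \Pic(B^s \setminus Z^s)$ and using $\rho_4$ to split off the $\mathbb{Z}$ factor defines $\rho_2$ by $(L,\sum m_i [Z_i^+],\sum n_i[Z_i^-]) \mapsto \alpha^*L + \sum m_i [Z_i^+] + \sum n_i [Z_i^-]$ and identifies $\mathrm{image}(\rho_2) = \ker(\rho_3)$.

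\textbf{Step 3 (relations and injectivity of $\rho_1$).} Over a general point of $Z_i$, the scheme-theoretic fiber of $\alpha$ is $Z_i^+ + Z_i^-$ with multiplicities one, so $\alpha^*[Z_i] = [Z_i^+] + [Z_i^-]$ in $\Pic X^s$. Define $\rho_1([Z_i]) = ([Z_i], -[Z_i^+], -[Z_i^-])$; injectivity is immediate from the projection onto the $\mathbb{Z}[Z_i^+]$ summand. To show $\ker(\rho_2) = \mathrm{image}(\rho_1)$, chase the two excision sequences together with the $\mathbb{P}^1$-bundle identification of Step 2: any element of $\ker(\rho_2)$ restricts to zero in $\Pic(X^s \setminus \alpha^{-1}(Z^s))$, hence its $\Pic(B^s)$ component lies in the image of $\bigoplus \mathbb{Z}[Z_i]$, and the compatibility $\alpha^*[Z_i] = [Z_i^+]+[Z_i^-]$ shows that the only ambiguity is precisely $\rho_1$.

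\textbf{Main obstacle.} The delicate point is controlling $\ker(\rho_2)$: one must rule out extra relations among pullbacks $\alpha^*[W]$ for $W \subseteq B^s$ and the components $Z_i^\pm$ beyond the single relation $\alpha^*[Z_i] = [Z_i^+] + [Z_i^-]$. The $\mathbb{P}^1$-bundle identification of Step 2, together with functoriality of excision with respect to the flat pullback $\alpha^*$, is what ensures no further relations appear when passing from $\Pic(X^s \setminus \alpha^{-1}(Z^s))$ to $\Pic X^s$, and this is where the Galois-equivariance of the whole picture will also need to be noted.
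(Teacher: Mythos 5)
Your argument takes a genuinely different route from the paper's. The paper works directly with divisors on $X^s$: it observes that $\rho_3$ is surjective with $\rho_4$ given by taking Zariski closures, that $\ker(\rho_3)$ is generated by vertical prime divisors (exactly one above each prime of $B^s$ away from $Z$, two above each $Z_i$), and then --- the key input --- that a rational function on $X^s$ with vertical divisor is the pullback of a rational function on $B^s$ (because $\alpha$ is proper with geometrically connected fibers), which yields exactness at the middle term. You instead run two excision sequences, for $(B^s, Z^s)$ and $(X^s, \alpha^{-1}(Z^s))$, and splice them using the observation that $\alpha$ is a Zariski-locally trivial $\mathbb{P}^1$-bundle over $B^s \setminus Z^s$, so that $\Pic(X^s\setminus\alpha^{-1}(Z^s)) \cong \Pic(B^s\setminus Z^s)\oplus\mathbb{Z}$. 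The two approaches rest on the same essential fact ($\alpha_*\mathcal{O}^\times = \mathcal{O}^\times$), but the paper uses it directly on all of $B^s$, while you use it only over the smooth locus where it is visibly a $\mathbb{P}^1$-bundle. Your version is conceptually cleaner and makes the $\mathbb{P}^1$-bundle structure explicit; the paper's is shorter.

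Two points need tightening. First, in Step 1 your explicit formulas $y=(1+s)/2$, $z=(s-1)/(2\sqrt a)$ only make sense after a trivialization of $\mathcal{L}$ and do not globalize, since $s$ is a section of $\mathcal{L}^{\otimes 2}$. The correct regular section, defined globally over $k_s$, is the ``section at infinity'' $(y:z:w)=(\sqrt a:1:0)$ (or $(-\sqrt a:1:0)$) in $\mathbb{P}\mathcal{E}^\vee$, which lies on the conic because $y^2+\varepsilon yz - az^2 = 0 = s\cdot w^2$ there; note this is $G_k$-conjugate to the other choice, so $\rho_4$ is a section of abelian groups but the pair of choices is Galois-stable. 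Second, in Step 3 the phrase ``functoriality of excision'' is not by itself enough to identify $\ker(\rho_2)$: you additionally need that the global units $\Gamma(X^s\setminus\alpha^{-1}(Z^s),\mathcal{O}^\times)$ are precisely $\alpha^*\Gamma(B^s\setminus Z^s,\mathcal{O}^\times)$ (which follows from $\alpha_*\mathcal{O}=\mathcal{O}$ for the $\mathbb{P}^1$-bundle), so that the subgroup of relations in $\bigoplus\mathbb{Z}Z_i^{\pm}$ killed in $\Pic X^s$ is exactly the image, under $Z_i\mapsto Z_i^+ + Z_i^-$, of the relations in $\bigoplus\mathbb{Z}Z_i$ killed in $\Pic B^s$. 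With that stated, the chase closes: any $(M, \sum m_iZ_i^+,\sum n_iZ_i^-)\in\ker\rho_2$ has $M=\sum a_i[Z_i]$, and $\sum(a_i+m_i)Z_i^+ + \sum(a_i+n_i)Z_i^- = 0$ in $\Pic X^s$ forces $m_i=n_i$ and allows one to modify the $a_i$ by a principal relation downstairs so that $a_i=-m_i$, putting the element in $\mathrm{image}(\rho_1)$. This is precisely the content of the paper's ``vertical principal divisor'' observation, rephrased through excision.
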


\begin{proof}
Call a divisor of $X^s$ vertical if it is supported on prime divisors lying above prime divisors
of $B^s$, and horizontal otherwise.
Denote by $Z^\pm_i$ the divisors that lie over $Z_i$ and defined by the additional condition that $y=\pm \sqrt{a} z$, respectively. Now define $\rho_1$ by
$$ \rho_1(Z_i) = (-Z_i, Z^{+}_i, Z^{-}_i) $$
and $\rho_2$ by
$$ \rho_2 (M, 0, 0) = \alpha^* M $$
$$ \rho_2 (0, Z^+_i, 0) = Z^+_i $$
$$ \rho_2 (0, 0,Z^-_i) = Z^-_i $$
Let $\rho_3$ be the map induced by $X^s_\eta \to X^s$. Each $\rho_i$ is $\Gamma_k$-equivariant. Given a prime divisor $D$ on $X^s_\eta$, we take $\rho_4(D) $ to be its Zariski closure in $X^s$. It is clear that $\rho_3 \circ \rho_4 = \Id$, so $\rho_3$ is indeed surjective.

The kernel of $\rho_3$ is generated by the classes of vertical prime divisors of $X$. In fact, there is exactly one
above each prime divisor of $B$, except that above each $Z_i \in  \Div B^s$ we have both $Z^+_i, Z^-_i \in  \Div X^s$. This proves exactness at $\Pic X^s$.

Now, since $\alpha:X^s\to B^s$ is proper, a rational function on $X^s$ with a vertical divisor must be the pullback of a rational function on $B^s$. Using the fact that the image of $\rho_2$ contains only vertical divisors, we prove exactness at
$$ \Pic B^s \oplus \bigoplus \ZZ Z^{+}_i \oplus \bigoplus \ZZ Z^{-}_i $$
The injectivity of $\rho_1$ is then trivial.
\end{proof}

We switch gears once again and let $X$ be as in Poonen's example.

\begin{lemma}
Let $p \in C_{\phi_\infty}(k)$ and $\rho_p : \mathcal{V}_\infty \to  X_{\phi_\infty}$ be the corresponding map as above. Then the map of Galois modules
$$\rho_p^* : \Pic(X_{\phi_\infty}^s) \to \Pic(\mathcal{V}_\infty^s)$$

has a section.
\end{lemma}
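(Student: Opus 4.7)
The plan is to apply the preceding lemma's Picard exact sequence to both conic bundles $\mathcal V_\infty \to \PP^1_w$ (with geometric degeneracy $\{\xi_1,\ldots,\xi_4\}$, the roots of $\tilde P_\infty$) and $X_{\phi_\infty} \to B:=C_{\phi_\infty}\times\PP^1_w$, and then to construct a Galois-equivariant splitting of $\rho_p^*$ by lifting generators of $\Pic(\mathcal V_\infty^s)$ compatibly. The key preliminary observation is that the geometric degeneracy locus of $X_{\phi_\infty}\to B$ splits over $k_s$ as a disjoint union $D_1\sqcup D_2\sqcup D_3\sqcup D_4$ of sections $C^s_{\phi_\infty}\to B^s$, one for each labeled root: this is precisely the content of the twist by $\phi_\infty$, since $E^s_{\phi_\infty}$ trivializes the $S_4$-cover of $\PP^1_u\setminus F$. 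Moreover, the restriction of $D_i$ to the fiber $\{p\}\times\PP^1_w$ is exactly $\xi_i$, since $p$ maps to $\infty\in\PP^1_u$.

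Second, $\rho_p^*$ induces a morphism between the two exact sequences, and I would identify each component: on the degeneracy pieces, it is the Galois-equivariant bijection $[D_i]\leftrightarrow[\xi_i]$ and $[D_i^{\pm}]\leftrightarrow[\xi_i^{\pm}]$; on the base Picards, $\Pic(B^s) = \Pic(C^s_{\phi_\infty})\oplus\mathbb Z\cdot F' \to \mathbb Z\cdot F$ is projection onto the second factor (since the restriction of $\Pic(C^s_{\phi_\infty})$ to a $k$-point is trivial), admitting the Galois-equivariant section $nF\mapsto (0,nF')$; and on the generic-fiber quotients the map is the identity, with a common horizontal lift given by the closure of a $\sqrt a$-rational point of the generic fiber. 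The $k$-point $p$ furthermore gives a Galois-invariant class $[p]\in\Pic(C^s_{\phi_\infty})$ of degree one, hence a Galois-equivariant decomposition $\Pic(C^s_{\phi_\infty}) = \Pic^0(C^s_{\phi_\infty})\oplus\mathbb Z\cdot[p]$, which I will use to produce the corrections in the final step.

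Third, I would assemble these pieces into a section $s\colon \Pic(\mathcal V_\infty^s)\to\Pic(X^s_{\phi_\infty})$. The naive choice $s(\xi_i^{\pm})=D_i^{\pm}$ fails to preserve the relation $\xi_i^++\xi_i^-=F$, because in $\Pic(X^s_{\phi_\infty})$ one computes $D_i^++D_i^-=F'+\alpha^*L_i$, where $L_i = f_i^*\mathcal O_{\PP^1}(1)\in\Pic(C^s_{\phi_\infty})$ is the horizontal component of $[D_i]\in\Pic(B^s)$. The corrected definition is $s(\xi_i^{\pm})=D_i^{\pm}-\alpha^*P_i^{\pm}$, where the $P_i^{\pm}\in\Pic(C^s_{\phi_\infty})$ are required to satisfy $P_i^++P_i^-=L_i$ and to transform under $G_k$ in the same manner as the $D_i^{\pm}$ themselves (permuted by $\phi_\infty$ and swapped for any $\sigma\notin G_{k(\sqrt a)}$).

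The main obstacle will be exhibiting such a Galois-equivariant splitting of each $L_i$. To overcome it, I would exploit the decomposition $\Pic(C^s_{\phi_\infty})=\Pic^0\oplus\mathbb Z\cdot[p]$ and use that the common degree $d=\deg L_i=12\deg\gamma$ is even and independent of $i$ (by symmetry of the construction), so the degree contribution can be split symmetrically as $(d/2)[p]+(d/2)[p]$, which is manifestly Galois-invariant. The remaining obstruction lies in the $\Pic^0$-part, whose Galois action combines the $\phi_\infty$-permutation of indices with the involution induced by the $\sqrt a$-swap; a Galois-equivariant symmetrization in this submodule (essentially an averaging argument over the $\pm$-swap together with the natural pairing of sheets over each $D_i$) produces the desired $P_i^{\pm}$. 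Once constructed, $s$ extends $\mathbb Z$-linearly to all of $\Pic(\mathcal V_\infty^s)$ and is, by construction, a Galois-equivariant splitting of $\rho_p^*$.
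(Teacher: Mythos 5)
Your general strategy matches the paper's: both compare the two conic-bundle Picard exact sequences (for $X_{\phi_\infty}\to B$ and $\mathcal V_\infty\to\PP^1$) and build the section by lifting the generators $[W_i^{\pm}]$, the fiber class, and a horizontal class. The paper packages this as a diagram chase after producing explicit sections $s_1,s_2$ of the two left-hand vertical maps, with $s_1(W_i)=Z_i$ (the matching degeneracy component), $s_2(W_i^{\pm})=Z_i^{\pm}$, and $s_2|_{\Pic\PP^1}=\pr_1^*$, and then checks Galois-equivariance from the explicit coordinates of $p$.

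Where you go beyond the paper is in noticing that the naive lift does not respect the relations. Since the geometric components $D_i$ of the degeneracy locus of $X_{\phi_\infty}\to B$ are graphs of \emph{nonconstant} maps $C^s_{\phi_\infty}\to\PP^1$, one has $[D_i]=\pr_1^*[W_i]+\pr_2^*L_i$ with $\deg L_i>0$, so $D_i^++D_i^-=F'+\alpha^*\pr_2^*L_i\ne F'$, and $\xi_i^{\pm}\mapsto D_i^{\pm}$ does not descend. This is exactly the commutativity $s_2\circ\rho_1^{\mathrm{bot}}=\rho_1^{\mathrm{top}}\circ s_1$ (equivalently $[Z_i]=\pr_1^*[W_i]$ in $\Pic B^s$) that the paper's diagram chase requires, and your observation shows it fails for the paper's choice; so the correction terms you propose are not optional, and the paper's ``compatibility of $s_1$ and $s_2$'' needs the same fix.

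However, the final step of your proposal is a genuine gap. You must produce $P_i^{\pm}\in\Pic(C^s_{\phi_\infty})$ with $P_i^++P_i^-=L_i$ and the equivariance $\sigma(P_i^{\pm})=P_{\phi_\infty(\sigma)(i)}^{\chi(\sigma)\pm}$, where $\chi$ is the quadratic character of $k(\sqrt a)/k$, and the ``Galois-equivariant symmetrization / averaging over the $\pm$-swap'' is not a construction. Averaging requires dividing by $2$: while $\Pic^0(C^s_{\phi_\infty})$ is divisible, a halving is well-defined only up to the nontrivial $2$-torsion of the Jacobian, and there is no canonical, $G_k$-equivariant halving. Concretely, any $\sigma$ with $\phi_\infty(\sigma)=\mathrm{id}$, $\chi(\sigma)=-1$, and trivial action on $\Pic(C^s_{\phi_\infty})$ forces $P_i^+=P_i^-$, hence $2P_i^+=L_i$, and you would then have to choose halvings $M_i$ of the degree-zero parts $L_i^0$ \emph{simultaneously} so that $\sigma(M_i)=M_{\phi_\infty(\sigma)(i)}$ for all $\sigma$; the difference $\sigma(M_i)-M_{\phi_\infty(\sigma)(i)}$ is a $2$-torsion-valued cocycle whose vanishing is precisely the obstruction your sketch does not address. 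Until a concrete, obstruction-free construction of the $P_i^{\pm}$ is supplied (or the need for them is circumvented), the argument is not complete, and this is the crux of the lemma.
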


\begin{proof}
Consider the map $\phi_p: \PP^1 \to \PP^1\times C_{\phi_\infty}$ defined by $x \mapsto (x,p)$.
It is clear that the map $\rho_p:\mathcal{V}_\infty \to X_{\phi_\infty}$ comes from pulling back the conic bundle datum defining $X_{\phi_\infty}$ over $ \PP^1\times C_{\phi_\infty} $ by this map. Let $B =\PP^1\times C_{\phi_\infty}$, and consider the following commutative diagram with exact rows

$$\xymatrix{
0 \ar[r] & \bigoplus\ZZ Z_i  \ar[r] \ar[d] & \Pic B^s \oplus \bigoplus \ZZ Z^{+}_i \oplus \bigoplus \ZZ Z^{-}_i \ar[r] \ar[d] & \Pic X_{\phi_\infty}^s
 \ar[r]_-{deg} \ar[d]^{\rho_p^*}  & \mathbb{Z}  \ar[r] \ar@{=}[d]  \ar@/_/[l] & 0
\\
0 \ar[r] & \bigoplus\ZZ W_i \ar@/^/[u]^{s_1} \ar[r] & \Pic \PP^1_{k_s}  \oplus \bigoplus \ZZ W^{+}_i \oplus \bigoplus \ZZ W^{-}_i \ar@/^/[u]^{s_2} \ar[r] & \Pic \mathcal{V}_\infty^s \ar[r]_-{deg} &  \mathbb{Z} \ar@/_/[l]  \ar[r]& 0 \\
}$$
where  $Z$ is the degeneracy locus of $X_{\phi_\infty}$ over $B$, $W$ is the degeneracy locus of $\mathcal{V}_\infty$ over $\PP^1$, and $Z^s = \bigcup_{1\leq i\leq r} Z_i$ and $W^s = \bigcup_{1\leq i\leq r} W_i$ are decompositions into irreducible components. The existence of a section for $\rho_p^*$ follows by diagram chasing and the existence of the compatible sections $s_1$ and $s_2$.

Every $W_i$ ($1 \leq i \leq 4$ ) is  a point that corresponds to a different root  $(w_i:x_i)$ of the polynomial $P_\infty(x,w)$. We can choose  $Z_i \subset B^s$ to be  Zariski closure of the zero set of  $w_ix-x_iw$,  and similarly $Z^{\pm}_i \subset X_{\phi_\infty}^s$ to be Zariski closure of the zero set of  $y \pm\sqrt{a} z, w_ix-x_iw$.

Now we define: $Z_i = s_1(W_i)$ and $Z^{\pm}_i = s_2(W^{\pm}_i)$ and the map
$s_2:\Pic \PP^1_{k_s} \to  \Pic B^s$ is defined by the unique section of the map $\phi_p: \PP^1 \to \PP^1\times C_{\phi_\infty}$.

It is clear that $s_1$ and $s_2$ are indeed group-theoretic sections. To prove that $s_1$ and $s_2$ also respect the Galois action, we can write
$$ p = (c,((x^0_1:w^0_1),(x^0_2:w^0_3),(x^0_2:w^0_3),(x^0_2:w^0_3))) \in C(k)\times_{\mathbb{P}^1(k)}E_{\phi_\infty}(k),$$
and since $\gamma(C(k))=\{\infty\}$, the four points $\{(x^0_1:w^0_1),(x^0_2:w^0_3),(x^0_2:w^0_3),(x^0_2:w^0_3)\}$ are exactly the four different roots of $P_\infty(x,w)$ .
\end{proof}

\begin{lemma}[Lemma \ref{l:Br_surj}]
Let  $p \in C_{\phi_\infty}(k)$. Then the map $$\rho_p^* : \Br(X_{\phi_\infty}) \to \Br(\mathcal{V}_\infty)$$
is surjective.
\end{lemma}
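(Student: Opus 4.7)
\medskip

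\noindent\textbf{Proof plan.} My plan is to deduce surjectivity on Brauer groups from surjectivity on $H^1(k,\Pic(-^s))$ (which follows from the Galois-equivariant section already constructed) via a diagram chase in the Hochschild--Serre spectral sequence $E_2^{p,q} = H^p(k, H^q(V^s,\Gb_m)) \Rightarrow H^{p+q}(V,\Gb_m)$. The first ingredient I need is that $\mathcal{V}_\infty$ is geometrically rational: over $k_s$ the constant $a$ becomes a square, and the Ch\^{a}telet surface becomes a conic bundle birational to $\Pb^2_{k_s}$. Since $\Br$ is a birational invariant of smooth projective surfaces, this gives $\Br(\mathcal{V}_\infty^s) = 0$, hence $\Br(\mathcal{V}_\infty) = \Br_1(\mathcal{V}_\infty)$, and the image of $\rho_p^*$ automatically lies in $\Br_1$.

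Next, I would write down the relevant piece of the low-degree exact sequence coming from Hochschild--Serre for both $V = X_{\phi_\infty}$ and $V = \mathcal{V}_\infty$ (both are proper and geometrically integral, so $\Gb_m(V^s) = k_s^\times$). This gives a commutative diagram with exact rows
\[
\begin{CD}
\Br(k) @>>> \Br_1(X_{\phi_\infty}) @>>> H^1(k,\Pic(X_{\phi_\infty}^s)) @>{d_3}>> H^3(k,\Gb_m) \\
@| @VV{\rho_p^*}V @VV{\rho_p^*}V @| \\
\Br(k) @>>> \Br(\mathcal{V}_\infty) @>>> H^1(k,\Pic(\mathcal{V}_\infty^s)) @>{d_3}>> H^3(k,\Gb_m)
\end{CD}
\]
where the squares commute by naturality of Hochschild--Serre applied to the morphism $\rho_p$.

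The diagram chase is then the main content. Given $b \in \Br(\mathcal{V}_\infty)$, let $\alpha \in H^1(k,\Pic(\mathcal{V}_\infty^s))$ be its image; then $d_3(\alpha) = 0$. By the previous lemma, the Galois-equivariant section of $\rho_p^*$ on Picard groups induces a section on $H^1(k,-)$, so choose $\widetilde{\alpha} \in H^1(k,\Pic(X_{\phi_\infty}^s))$ with $\rho_p^*(\widetilde{\alpha}) = \alpha$. By commutativity of the right-hand square, $d_3(\widetilde{\alpha}) = d_3(\alpha) = 0$, so by exactness of the top row $\widetilde{\alpha}$ lifts to some $\widetilde{b} \in \Br_1(X_{\phi_\infty})$. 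Then $\rho_p^*(\widetilde{b}) - b$ maps to $0$ in $H^1(k,\Pic(\mathcal{V}_\infty^s))$, so by exactness of the bottom row it is the image of some $c \in \Br(k)$; correcting $\widetilde{b}$ by the pullback of $c$ to $X_{\phi_\infty}$ produces a class in $\Br_1(X_{\phi_\infty})$ whose image under $\rho_p^*$ is exactly $b$.

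The main potential obstacle is the rightmost column: I am using that the transgression $d_3$ commutes with $\rho_p^*$ in the form above, which in turn rests on functoriality of Hochschild--Serre. Everything else is formal once $\Br(\mathcal{V}_\infty^s) = 0$ is in hand and the section on Picard groups of the previous lemma is applied.
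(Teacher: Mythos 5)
Your proof is correct and takes essentially the same route as the paper: pass from the Galois-equivariant section $s_p$ on $\Pic$ to a section on $H^1(k,\Pic(-^s))$, feed this into the functorial Hochschild--Serre five-term sequence, chase the diagram to get surjectivity onto $\Br_1(\mathcal{V}_\infty)$, and close by using that $\mathcal{V}_\infty$ is geometrically rational so $\Br(\mathcal{V}_\infty^s)=0$. The only cosmetic difference is that you track the transgression $d_3$ explicitly, whereas the paper collapses the sequence by (implicitly) invoking $H^3(k,\Gb_m)=0$ for a global field; both are fine here.
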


\begin{proof}

Denote by $s_p: \Pic\overline{(\mathcal{V}_\infty)} \to \Pic(X_{\phi_\infty}^s)$ the section of

$$ \rho_p^* :\Pic(X_{\phi_\infty}^s) \to \Pic(\mathcal{V}_\infty^s) $$

It is clear that $s_p$ induces a section of the map

$$ \rho_p^{**} :H^1(k,\Pic(X_{\phi_\infty}^s)) \to H^1(k,\Pic(\mathcal{V}_\infty^s)) $$

By the Hochschild--Serre spectral sequence for $X$, we have:

$$H^1(k,\Pic(X^s)) = \ker[\Br X \to \Br X^s] / \im[\Br k \to \Br X]$$

Letting $$\Br_1(X) \colonequals  \ker[\Br X \to \Br X^s],$$

we get that the map $\rho_p^*:\br_1(X_{\phi_\infty}) \to \br_1(\mathcal{V}_\infty)$ is surjective. But since $\mathcal{V}_\infty^s$ is a rational surface (it is a Ch\^{a}telet surface), we have $\br\mathcal{V}_\infty^s = 0$, and thus  $\br_1(\mathcal{V}_\infty)=\br(\mathcal{V}_\infty)$. So we get that $\rho_p^* : \br(X_{\phi_\infty}) \to \br(\mathcal{V}_\infty)$ is surjective.
\end{proof}

{\def\section*#1{}

\bibliographystyle{alpha2}
\bibliography{references,etale_homotopy,chabauty-kim,ag}
}

\end{document}